\documentclass[11pt]{amsart}


\usepackage[utf8]{inputenc}
\usepackage{lmodern}
\usepackage[T1]{fontenc}
\usepackage[english]{babel}

\usepackage{geometry}
	\geometry{paper=a4paper, margin=2.5cm}

\usepackage{amsmath, amssymb, amsfonts, amsthm}
\usepackage{mathrsfs}
\usepackage{pgf, tikz, tikz-cd}
	\usetikzlibrary{calc, shapes, arrows, trees, matrix, patterns, decorations}
\usepackage{dynkin-diagrams}

\usepackage{enumitem}
\usepackage{array}

\usepackage{hyperref}
	\hypersetup{pdftex, pdfauthor={Geoffrey Janssens, Doryan Temmerman, and Fran\c{c}ois Thilmany}, bookmarksopen, bookmarksnumbered, colorlinks={true}, linkcolor={gray}, citecolor={gray}, urlcolor={gray}}
\usepackage[hyperpageref]{backref} 
\usepackage[english]{cleveref}


\newcommand{\rA}{\mathrm{A}}

\newcommand{\rC}{\mathrm{C}}
\newcommand{\rD}{\mathrm{D}}
\newcommand{\rE}{\mathrm{E}}

\newcommand{\rM}{\mathrm{M}}
\newcommand{\rN}{\mathrm{N}}

\newcommand{\rP}{\mathrm{P}}

\newcommand{\rR}{\mathrm{R}}



\newcommand{\rr}{\mathrm{r}}


\newcommand{\bB}{\mathbf{B}}

\newcommand{\bG}{\mathbf{G}}
\newcommand{\bH}{\mathbf{H}}

\newcommand{\bP}{\mathbf{P}}
\newcommand{\bQ}{\mathbf{Q}}

\newcommand{\bS}{\mathbf{S}}

\newcommand{\bX}{\mathbf{X}}

\newcommand{\bZ}{\mathbf{Z}}


\newcommand{\cD}{\mathcal{D}}

\newcommand{\cO}{\mathcal{O}}

\newcommand{\cS}{\mathcal{S}}

\let\tempepsilon\epsilon
\let\epsilon\varepsilon
\let\varepsilon\tempepsilon
\let\tempphi\phi
\let\phi\varphi
\let\varphi\tempphi


\newenvironment{smallbmatrix}{\left[\begin{smallmatrix}}{\end{smallmatrix}\right]}


\newcommand{\C}{\mathbb{C}}
\newcommand{\F}{\mathbb{F}}
\renewcommand{\H}{\mathbb{H}} 

\newcommand{\N}{\mathbb{N}}
\newcommand{\Q}{\mathbb{Q}}
\newcommand{\R}{\mathbb{R}}
\newcommand{\Z}{\mathbb{Z}}



\DeclareMathOperator{\im}{im}
\DeclareMathOperator{\End}{End}
\DeclareMathOperator{\Mat}{M}
\DeclareMathOperator{\Span}{Span}

\DeclareMathOperator{\proj}{proj}

\DeclareMathOperator{\Tr}{Tr} 

\DeclareMathOperator{\Nrd}{Nrd}

\DeclareMathOperator{\diag}{diag}


\newcommand{\restr}[2]{ 
	\sbox0{\ensuremath{#1}}
	\ensuremath{
	{#1}\raisebox{-1ex}{%
		\(\mkern2mu {\vrule height 2ex} \mkern2mu {\scriptstyle{#2}}\)}
	}}
\newcommand{\set}[2]{ 
	\ensuremath{\left\{#1\, \middle| \, #2\right\}}
	}


\newcommand{\indx}[2]{\ensuremath \left|#1 : #2 \right|} 

\DeclareMathOperator{\Aut}{Aut}

\DeclareMathOperator{\Stab}{Stab}


\DeclareMathOperator{\rad}{rad}

\DeclareMathOperator{\Char}{char}

\DeclareMathOperator{\GL}{GL}
\DeclareMathOperator{\SL}{SL}

\DeclareMathOperator{\U}{U}

\DeclareMathOperator{\PGL}{PGL}
\DeclareMathOperator{\PSL}{PSL}

\DeclareMathOperator{\Res}{Res}
\DeclareMathOperator{\Ind}{Ind}

\DeclareMathOperator{\Irr}{Irr}


\DeclareMathOperator{\Ad}{Ad}






\newcommand{\wt}[1]{\widetilde{#1}}

\newcommand{\ot}{\otimes}

\newcommand{\join}{\vee}




\renewcommand{\P}{\mathbf{P}}	
\newcommand{\Aa}{A}			
\newcommand{\Bb}{B}			
\newcommand{\Ant}{\Aa^*}		
\newcommand{\Bnt}{\Bb^*}		
\newcommand{\Hnt}{H^*}			
\newcommand{\Fi}{F}			
\renewcommand{\O}{\mathcal{O}}	
\newcommand{\Di}{D}			
\newcommand{\LFi}{K}			
\newcommand{\LDi}{D}			
\newcommand{\Alg}{M}			
\newcommand{\Subalg}{N}		
\newcommand{\fset}{I}			
\newcommand{\rhos}{\rho_{\mathrm{st}}}	
\newcommand{\fod}{\Delta}		

\newcommand{\Pnai}[1]{\textup{({P}\textsubscript{na\"i})}}		
\newcommand{\HNN}[2]{#1 \ast_{#2}}					
\newcommand{\sbic}[1]{\mathrm{sb}_{#1}}				
\newcommand{\bic}[1]{\mathrm{b}_{#1}}					
\newcommand{\qa}[3]{\left(\frac{#1, #2}{#3}\right)}			

\DeclareMathOperator{\ZZ}{\mathrm{Z}}			
\renewcommand{\U}{\operatorname{\mathcal{U}}}	
\DeclareMathOperator{\V}{\mathcal{V}}			
\DeclareMathOperator{\PCI}{PCI}				
\DeclareMathOperator{\Fir}{Fir}					
\DeclareMathOperator{\Cir}{Cir}				
\DeclareMathOperator{\Sir}{Sir}					
\DeclareMathOperator{\Mir}{Mir}				
\DeclareMathOperator{\Bic}{Bic}				
\DeclareMathOperator{\SocA}{SocA}				
\DeclareMathOperator{\Fit}{Fit}					

\newcommand{\llangle}{\langle \! \langle}		
\newcommand{\rrangle}{\rangle \! \rangle}		


\numberwithin{equation}{section}

\theoremstyle{plain}
	\newtheorem{theorem}[equation]{Theorem}
	\newtheorem{lemma}[equation]{Lemma}
	\newtheorem{proposition}[equation]{Proposition}
	\newtheorem{corollary}[equation]{Corollary}
	\newtheorem{conjecture}[equation]{Conjecture}


\theoremstyle{plain}
	\newcommand{\thistheoremname}{}%
	\newtheorem{generictheorem}[equation]{\thistheoremname}
		
	\newtheorem*{generictheorem*}{\thistheoremname}
	\newenvironment{namedtheorem*}[1]
		{\renewcommand{\thistheoremname}{#1}%
		\begin{generictheorem*}}
		{\end{generictheorem*}}

\theoremstyle{definition}
	\newtheorem{definition}[equation]{Definition}
	\newtheorem{question}[equation]{Question}
	\newtheorem{questions}[equation]{Questions}

\theoremstyle{remark}
	\newtheorem{remark}[equation]{Remark}
	
	\newtheorem{examples}[equation]{Examples}
	\newtheorem*{remark*}{Remark}
	\newtheorem*{problem*}{Problem}

\makeatletter
\newcommand{\customitem}[3][item]{%
	\item[#3] 
	\begingroup
	\cref@constructprefix{#1}{\cref@result}%
	\protected@edef\@currentlabel{#3}%
	\protected@edef\@currentlabelname{#3}%
	\protected@edef\cref@currentlabel{%
	[#1][][\cref@result]
	#3}
	\label[#1]{#2}
	\endgroup  
}
\makeatother

\crefname{condition}{condition}{conditions} \crefformat{condition}{Condition~#2#1#3}



\crefname{condition}{condition}{conditions} \crefformat{condition}{Condition~#2#1#3}


\title[Simultaneous ping-pong for finite subgroups of reductive groups]{Simultaneous ping-pong for\\ finite subgroups of reductive groups}

\author{Geoffrey Janssens}
	\address{Geoffrey Janssens \newline Institut de Recherche en Math\'ematiques et Physique, UCLouvain, 1348 Louvain-la-Neuve, Belgium and \newline Department of Mathematics and Data Science, Vrije Universiteit Brussel, Pleinlaan $2$, 1050 Elsene}
	\email{geoffrey.janssens@uclouvain.be}

\author{Doryan Temmerman}
	\address{Doryan Temmerman \newline The AI lab, Vrije Universiteit Brussel, Pleinlaan 9, 1050 Elsene}
	\email{doryan.temmerman@vub.be}

\author{Fran\c{c}ois Thilmany}
	\address{Fran\c{c}ois Thilmany \newline Departement Wiskunde, KU Leuven: 200B Celestijnenlaan, 3001 Leuven, Belgium}
	\email{francois.thilmany@kuleuven.be}

\thanks{The first and third authors are grateful to the Fonds Wetenschappelijk Onderzoek Vlaanderen --- FWO (grants 2V0722N \& 1221722N) and to the Fonds de la Recherche Scientifique --- FNRS (grants 1.B.239.22 \& FC 4057) for their financial support.}

\dedicatory{Dedicated to the memory of Jacques Tits with great admiration for his legacy.}


\begin{document}

\begin{abstract}
Let $\Gamma$ be a Zariski-dense subgroup of a reductive group $\mathbf{G}$ defined over a field $F$. 
Given a finite collection of finite subgroups $H_i$ ($i \in I$) of $\mathbf{G}(F)$ avoiding the center, we establish a criterion to ensure that the set of elements of $\Gamma$ that form a free product with every $H_i$ (the so-called \emph{simultaneous ping-pong partners for $H_i$}) is both Zariski- and profinitely dense in $\Gamma$. 
This criterion applies namely to direct products $\bG$ of inner $\mathbb{R}$-forms of $\operatorname{(P)GL}_n$, and gives a positive answer to this particular case of a question asked by Bekka, Cowling and de la Harpe. 
For torsion elements, a complication arises due to the fact that a finite cyclic group can split into a direct product. 
When $\mathbf{G}$ is the multiplicative group of a semisimple algebra, we also give a more explicit method to obtain free products between two given finite subgroups, via \emph{first-order deformations}. 

In the second half, we investigate the case where $\mathbf{G}$ is the multiplicative group of the group algebra $FG$ of a finite group $G$, and $\Gamma$ is the group of units of an order in $FG$. 
In this regard, we prove that the set of bicylic units that play ping-pong with a given shifted bicyclic unit, is Zariski- and profinitely dense, addressing a long-standing belief in the field of group rings. 
This result is deduced from the criterion above, combined with sharp existence results for well-behaved irreducible representations of $G$ that are \emph{center-preserving} on a given subgroup. 
\end{abstract}

\subjclass[2020]{20E06, 20G25, 20C05}

\keywords{free amalgamated products, simultaneous ping-pong, reductive groups, group rings, faithful irreducible representations, shifted bicyclic units}

\maketitle

\setcounter{tocdepth}{2}
{\tableofcontents}


\section{Introduction} \label{sec:introduction}
\addtocontents{toc}{\protect\setcounter{tocdepth}{1}}

\subsection{Background} 
The construction and study of free products in linear groups is a classical topic going back to the early days of group theory. 
A groundbreaking step was Tits' celebrated alternative \cite{Tits72}, establishing existence of free subgroups in linear groups which are not virtually solvable. 
In fact, he proved the stronger statement that if $\Gamma$ is a finitely generated Zariski-connected linear group over a field $\Fi$, then either $\Gamma$ is solvable, or $\Gamma$ contains a Zariski-dense free subgroup of rank 2. 

Tits' ideas have been used abundantly since, and there are now many generalizations of the alternative. 
For instance, a version for non-Zariski-connected groups was given by Breuillard and Gelander in \cite{BreuillardGelander07}, where they also prove an analogous (in fact, stronger) theorem for the topology induced by a local field. 
Other aspects, such as the speed at which a given finite set produces two elements generating a free subgroup, or the dependency on the field of definition, have been studied. 
We refer the reader to \cite{AbelsMargulisSoifer95,Aoun11,BreuillardGelander08,Breuillard08,Breuillard11,BreuillardGreenGuralnickTao12} for some recent results. 
\medbreak

In the present article, we are interested in constructing free subgroups of linear groups, with a prescribed generator. 
More generally, given a finite subset $\fset$ of a linear group $\Gamma$, the question of interest is: does there exists $\gamma \in \Gamma$ such that for all $h \in \fset$, the subgroup $\langle h, \gamma \rangle$ is \emph{freely} generated by $h$ and $\gamma$? 
Such an element $\gamma$ is called a \emph{simultaneous ping-pong partner (in $\Gamma$) for the set $\fset$}. 

Following \cite{BekkaCowlingdelaHarpe95}, a discrete group $\Gamma$ is said to have \emph{property \Pnai{}} if any finite subset $\fset$ of $\Gamma$ admits a simultaneous ping-pong partner. 
In 1995, Bekka, Cowling and de la Harpe proved \cite[Theorem 3]{BekkaCowlingdelaHarpe95} that Zariski-dense subgroups of connected simple real Lie groups of real rank $1$ with trivial center, have property \Pnai{}, and asked in \cite[Remark 3]{BekkaCowlingdelaHarpe95} whether the same holds for semisimple groups of arbitrary rank. 
This question was again highlighted by de la Harpe \cite[Question 17]{delaHarpe07}, and we record it here under the following form. 

\begin{namedtheorem*}{\Cref{que:delaHarpe}}[Bekka--Cowling--de la Harpe]
Let ${G}$ be a connected adjoint semisimple real Lie group without compact factors, and let $\Gamma$ be a Zariski-dense subgroup of ${G}$. 
Let $\fset$ be a finite subset of $\Gamma$. 
Does there exist an element $\gamma \in \Gamma$ such that for every $h \in \fset$, the subgroup $\langle h, \gamma \rangle$ is canonically isomorphic to the free product $\langle h \rangle \ast \langle \gamma \rangle$? 
\end{namedtheorem*}

It is well known (see \cite[Lemmas 2.1 \& 2.2]{BekkaCowlingdelaHarpe95}) that property \Pnai{} for $\Gamma$ implies that the reduced $\rC^*$-algebra $\rC^*_\rr(\Gamma)$ of $\Gamma$ is simple and has a unique tracial state; this is, in fact, one of the historical reasons for the interest in property \Pnai{}. 
Over the years, the simplicity and unique trace property of $\rC^*_\rr(\Gamma)$ was established for large classes of groups (see namely \cite{Powers75,delaHarpe07,Poznansky06,BreuillardKalantarKennedyOzawa17}). 
The stronger property \Pnai{} however remains poorly understood. 

Besides \cite[Theorem 3]{BekkaCowlingdelaHarpe95} just mentioned, we are aware of the work of Soifer and Vishkautsan \cite[Theorem 1.3]{SoiferVishkautsan10}, which gives a positive answer for $\Gamma = \PSL_n(\Z)$ and $\fset$ consisting of elements whose semisimple part is either biproximal\footnote{\cite{SoiferVishkautsan10} uses for \emph{biproximal} the term `hyperbolic', whereas \cite{Poznansky06} uses `very proximal'; see \Cref{def:proximal} for the terminology used here. } or torsion. 
Contemporary to this work, a positive answer was claimed by Poznansky in his thesis \cite[Theorem 6.5]{Poznansky06}, for arbitrary finite subsets $\fset$ of a semisimple adjoint algebraic group $\bG$ containing no factor of type $\rA_n, \rD_{2n+1}$ or $\rE_6$. 
Unfortunately the proof of this theorem contains an error, as it relies on \cite[Proposition 2.11]{Poznansky06} whose statement is not true. 
Nonetheless, if one assumes that $\fset$ consists of elements satisfying the conclusion of \cite[Proposition 2.11]{Poznansky06} (that is, of elements whose conjugacy class intersect the big Bruhat cell, see \Cref{rem:poznanskypaper} for more details) and generating a subgroup that embeds in an adjoint simple quotient of $\bG$, then the proof of \cite[Theorem 6.5]{Poznansky06} given by Poznansky goes through as far as we know, and was an instructive source for our work. 
\medbreak

In contrast with the general case, the literature on free subgroups with a prescribed generator in the multiplicative group of a semisimple algebra is more abundant. 
In division algebras for example, Lichtman's conjecture \cite{Lichtman87} has attracted a lot of attention (see \cite{GoncalvesShirvani12,BellGoncalves20} for some recent developments). 

For group algebras, there has been a great deal of research about ping-pong with so-called bicyclic units inside group algebras. 
The conjecture that two {generic} bicyclic units generate a free group, has circulated in this field for now almost three decades, and was the initial motivation for the present work. 
We refer the reader to \cite{GoncalvesdelRio13} for a comprehensive survey (until $2013$), and to \cite{GoncalvesPassman06,GoncalvesdelRio08,GoncalvesdelRio11,GoncalvesGuralnickdelRio14,JespersOlteanudelRio12,Raczka21} for some recent results.

\subsection{Outline}
This article essentially consists of two parts. 
\medskip

In the first half, we consider the variant of Bekka, Cowling and de la Harpe's question in which $\fset$ is actually a finite set of \emph{finite subgroups} in a reductive algebraic group $\bG$. 
In this setting, the statement of \Cref{thm:pingpongdense} gives two conditions which joint imply the existence of simultaneous ping-pong partners for $\fset$ inside an arbitrary Zariski-dense subgroup $\Gamma$ of $\bG$. 
When these conditions are satisfied, we also show that the set of simultaneous ping-pong partners for $\fset$ is both Zariski- and profinitely dense in $\Gamma$. 

Subsequently, in \Cref{thm:pingpongdenseSLn}, we establish these conditions for $\bG$ a product of inner $\R$-forms of $\GL_n$ or $\SL_n$. 
As a corollary, we answer in the affirmative (this slightly stronger version of) Bekka, Cowling and de la Harpe's question for subsets $\fset$ consisting of torsion elements of $\bG(\R)$. 

Special care needs to be taken for torsion elements: the answer to \Cref{que:delaHarpe} when $\bG$ is not simple actually depends on the kind of torsion elements appearing in $\fset$ (this was seemingly overlooked by Bekka, Cowling and de la Harpe). 
Indeed, to even admit a ping-pong partner at all, the elements of $\fset$ must (almost) embed in an adjoint simple quotient of $\bG$ (see \Cref{prop:amalgaminproduct} and \Cref{def:almostembed}). 
\Cref{thm:pingpongdenseSLn} shows that this is in fact sufficient. 

A particular instance of the above setting is when $\bG$ is the multiplicative group of a finite-dimensional semisimple algebra $\Alg$ over a number field. 
In that instance, we show in \Cref{thm:pingpongdensesemisimplealgebra} that ping-pong partners for a finite subgroup $H$ of $\Alg^\times$ exist in an order of $\Alg$ if and only if $H$ almost embeds in a factor of $\Alg$ that is neither a field nor a totally definite quaternion algebra. 
Assuming such an almost embedding exists, \Cref{sec:pingpongsemisimplealgebra} aims at providing a concrete method to construct a ping-pong partner for $H$. 
We introduce in \Cref{subsec:deformations} the convenient formalism of first-order deformations of a subgroup (see \Cref{def:firstorderdeformation}), which will be used to deform finite subgroups $A$ and $B$ of $\Alg^\times$ in order to put them in ping-pong position (\Cref{thm:pingpongdeformations}). 
In passing, we record in \Cref{thm:separabledeformationsinner} that, in line with Hochschild's cohomology theory, such deformations are always obtained via conjugation by specific elements of $\Alg$. 
\medskip

In the second half of the article, we delve into the unit group $\U(\Fi G) = (\Fi G)^\times$ of the group algebra of a finite group $G$. 
With units of group rings arises interesting interplay with the representation theory of $G$: whether a subgroup of $G$ almost embeds in an appropriate quotient translates into a problem about the centers of the irreducible representations of $G$. 
As an illustration, it follows from \Cref{thm:almostfaithfulembedding} that a subgroup of $G$ which embeds center-preservingly in an adjoint simple factor of $\Fi G$, also almost embeds in a simple factor that admits proximal dynamics. 
Another interesting piece related to \Cref{thm:almostfaithfulembedding} is the characterization, given in \Cref{thm:DedekindiffimagesFrobeniuscomplement}, of groups whose image under every irreducible representation is a fixed-point-free group. 

The goal of the second part is to obtain answers to old open problems in the field of group rings. 
There has been an active interest in this field for a particular kind of unipotent elements, the so-called {bicyclic units}, which arise naturally in the study of the group ring $RG$ where $R$ is the rings of integers of $\Fi$. 
For several decades, the belief has been that two generic bicyclic units should generate a free group; we substantiate this claim for their shifted analogues in \Cref{cor:pingpongbicyclicwithdeformation}. 

This is done by first proving that the group of bicyclic units is Zariski-dense in the group of unimodular elements of an appropriate factor of the group ring. 
At this stage, using the main results of the first half of the article, one is reduced to determining whether a given subgroup $H$ of $G$ almost embeds in an appropriate simple factor. 
\Cref{thm:almostfaithfulembedding} establishes the existence of such a factor for subgroups $H$ admitting a faithful irreducible representation, with increasing degrees of precision (fixed-point-free, non-division, or non-quaternionic) in terms of the ambient group $G$ and the field $\Fi$. 
\medbreak

We now give a more detailed account of the main results.

\subsection{Simultaneous ping-pong with finite subgroups}

After a short recollection on the structure of free amalgamated products in \Cref{sec:generalamalgams}, we will consider in \Cref{sec:pingpongreductivegroups} a slightly more general version of \Cref{que:delaHarpe}, in which we allow the finite set $\fset$ to consist of finite subgroups (not just elements). 
In fact, our results will also encompass the situation in which both elements of $\fset$ and their ping-pong partners are (conjugates of) prescribed finite subgroups. 
To this end, we study the dynamics of linear transformations on projective spaces over division algebra; the details are contained in \Cref{subsec:proximaldynamics}, and are mostly a rework of classical results of Tits, themselves already revisited by several authors. 
The main result of that section is \Cref{prop:proximaldense}, stating the abundance of simultaneously biproximal elements (when they exist). 

These developments are necessary to prove the main result of \Cref{sec:pingpongreductivegroups}: 

\begin{namedtheorem*}{\Cref{thm:pingpongdense}}
Let $\bG$ be a connected algebraic $\Fi$-group with center $\bZ$. 
Let $\Gamma$ be a Zariski-connected subgroup of $\bG(\Fi)$. Let $(A_i, B_i)_{i \in I}$ be a finite collection of finite subgroups of $\bG(\Fi)$. 
Suppose that for each $i \in I$ there exists a local field $\LFi_i$ containing $\Fi$ and a projective $\LFi_i$-representation $\rho_i: \bG \to \PGL_{V_i}$, where $V_i$ is a finite-dimensional module over a finite-dimensional division $\LFi_i$-algebra $\LDi_i$, with the following two properties: 
\begin{enumerate}[leftmargin=3cm,itemsep=1ex,topsep=1ex]
	\item[\textup{(Proximality)}] $\rho_i(\Gamma)$ contains a proximal element;
	\item[\textup{(Transversality)}] For every $h \in (A_i \cup B_i) \setminus \bZ(F)$ and every $p \in \P(V_i)$, the span of the set $\{\rho_i(xhx^{-1}) p \mid x \in \Gamma \}$ is the whole of $\P(V_i)$.
\end{enumerate}

Let $S$ be the collection of regular semisimple elements $\gamma \in \Gamma$ of infinite order, such that for all $i \in I$, the canonical maps
\begin{align*}
\langle \gamma, C_{A_i} \rangle \ast_{C_{A_i}} A_i &\to \langle \gamma, A_i \rangle && \text{where $C_{A_i} = A_i \cap \bZ(F)$,}\\
\langle \gamma, C_{B_i} \rangle \ast_{C_{B_i}} B_i &\to \langle \gamma, B_i \rangle && \text{where $C_{B_i} = B_i \cap \bZ(F)$,}\\
\intertext{and provided $\indx{A_i}{C_{A_i}} > 2$ or $\indx{B_i}{C_{B_i}} > 2$, also}
\langle A_i, C_i \rangle \ast_{C_i} \langle B_i, C_i \rangle &\to \langle A_i, B_i^{\gamma} \rangle && \text{where $C_i = C_{A_i} \cdot C_{B_i}$,}
\end{align*}
are all isomorphisms. 
Then $S$ is dense in $\Gamma$ for the join of the profinite topology and the Zariski topology. 
\end{namedtheorem*}

The transversality condition gets its name for its role in \Cref{lem:transversality}. 
As hinted in \Cref{lem:transversality} and \Cref{rem:transversalitycondition}, transversality is a kind of ``higher irreducibility'' condition. 
\medbreak

Next, in \Cref{subsec:constructingproximaltransverse} we verify the proximality and transversality conditions for finite subgroups in products of inner forms of $\SL_n$ and $\GL_n$. 
In light of \Cref{thm:pingpongdense}, this proves the abundance of simultaneous ping-pong partners in Zariski-dense subgroups in this setting: 

\begin{namedtheorem*}{\Cref{thm:pingpongdenseSLn}}
Let $\bG$ be a reductive $\R$-group with center $\bZ$. 
Let $\Gamma$ be a subgroup of $\bG(\R)$ whose image in $\Ad \bG$ is Zariski-dense. 
Let $(A_i, B_i)_{i \in I}$ be a finite collection of finite subgroups of $\bG(\R)$. 

Suppose that for each $i \in I$, there exists a quotient of $\bG$ of the form $\PGL_{\LDi_i^{n_i}}$ for $\LDi_i$ some finite-dimensional division $\R$-algebra and $n_i \geq 2$, for which the kernels of the restrictions $A_i, B_i \to \PGL_{\LDi_i^{n_i}}(\R)$ are contained in $\bZ(\R)$. 
Then the set of regular semisimple elements $\gamma \in \Gamma$ of infinite order such that for all $i \in I$, the canonical maps
\begin{align*}
\langle \gamma, C_{A_i} \rangle \ast_{C_{A_i}} A_i &\to \langle \gamma, A_i \rangle && \text{where $C_{A_i} = A_i \cap \bZ(F)$,}\\
\langle \gamma, C_{B_i} \rangle \ast_{C_{B_i}} B_i &\to \langle \gamma, B_i \rangle && \text{where $C_{B_i} = B_i \cap \bZ(F)$,}\\
\intertext{and provided $\indx{A_i}{C_{A_i}} > 2$ or $\indx{B_i}{C_{B_i}} > 2$, also}
\langle A_i, C_i \rangle \ast_{C_i} \langle B_i, C_i \rangle &\to \langle A_i, B_i^{\gamma} \rangle && \text{where $C_i = C_{A_i} \cdot C_{B_i}$,}
\end{align*}
are all isomorphisms, is dense in $\Gamma$ for the join of the profinite topology and the Zariski topology. 
\end{namedtheorem*}

Given a reductive $\Fi$-group $\bG$ with center $\bZ$ and a subgroup $H \leq \bG(\Fi)$, we say that \emph{$H$ almost embeds in a (simple, adjoint) quotient $\bQ$ of $\bG$} if there exists a (simple, adjoint) quotient $\bQ$ of $\bG$ for which the kernel of the restriction $H \to \bQ(\Fi)$ is contained in $\bZ(\Fi)$. 
As will be mentioned in \Cref{rem:almostembeddingnecessary}, this is a necessary condition for the subgroup $H$ to admit a ping-pong partner in $\bG(\Fi)$.

\subsection{The multiplicative group of a semisimple algebra}
In \Cref{sec:pingpongsemisimplealgebra}, we aim to provide more precise results for certain finite subgroups $H$ and Zariski-dense subgroups $\Gamma$ of the multiplicative group $\bG$ of a finite-dimensional semisimple $\Fi$-algebra $\Alg$. 
We also investigate whether two given finite subgroups $A$ and $B$ of $\bG(\Fi) = \Alg^\times$ can appear as the factors of a free product $A \ast B$ inside $\bG(\Fi)$. 
Despite their abundance, the results from \Cref{sec:pingpongreductivegroups} do not give an explicit construction of simultaneous ping-pong partners. 
To this end, we introduce in \Cref{subsec:deformations} \emph{first-order deformations} of $H$, which are linear deformations of $H$ suitable to obtain quasi-proximal dynamics. 
The main result of that section is \Cref{thm:pingpongdeformations}. 

By the Artin--Wedderburn theorem, $\Alg$ is isomorphic as $\Fi$-algebra to
\[
\Alg \cong \Mat_{n_1}(\Di_1) \times \cdots \times \Mat_{n_m}(\Di_m),
\]
for $D_i$ some finite-dimensional division algebras over $\Fi$. 
In particular, the $\Fi$-group $\U(A) = A^\times$ of units of $A$ is 
the reductive group
\(
\bG \cong \GL_{\Di_{1}^{n_1}} \times \cdots \times \GL_{\Di_{m}^{n_m}}. 
\)
If $\Fi$ is a number field and $\mathcal{O}$ is an order in $A$, then by a classical result of Borel and Harish-Chandra, $\Gamma = \U(\O)$ is an arithmetic subgroup of $\U(A)$, placing us in the setting of \Cref{thm:pingpongdenseSLn}. 
We derive from it the following criterion for the existence of simultaneous ping-pong partners for finite subgroups of $\U(A)$. 

\begin{namedtheorem*}{\Cref{thm:pingpongdensesemisimplealgebra}}
Let $\Gamma$ be a Zariski-dense subgroup of $\U(\O)$. 
Let $A$ and $B$ be finite subgroups of $\U(\O)$, and assume that $\indx{A}{A \cap \ZZ(\Alg)} > 2$. 

There exists $\gamma \in \Gamma$ of infinite order with the property that the canonical maps
\begin{align*}
\langle \gamma, C_{A} \rangle \ast_{C_{A}} A &\to \langle \gamma, A \rangle && \text{where $C_{A} = A \cap \ZZ(\Alg)$,}\\
\langle \gamma, C_{B} \rangle \ast_{C_{B}} B &\to \langle \gamma, B \rangle && \text{where $C_{B} = B \cap \ZZ(\Alg)$,}\\
\langle A, C \rangle \ast_{C} \langle B, C \rangle &\to \langle A, B^{\gamma} \rangle && \text{where $C = C_{A} \cdot C_{B}$,}
\end{align*}
are all isomorphisms, if and only if $A$ and $B$ almost embed in $\PGL_{\Di_i^{n_i}}$ for some common index $i$ for which $\Mat_{n_i}(\Di_i)$ is neither a field nor a totally definite quaternion algebra. 

Moreover, in the affirmative, the set of such elements $\gamma$ which are regular semisimple and of infinite order, is dense in $\Gamma$ for the join of the Zariski and the profinite topology. 
\end{namedtheorem*}

\subsection{Center-preserving representations}
In the specific case where $\Alg = \Fi G$ is a group algebra, the simple components $\Mat_{n_i}(\Di_i)$ of $\Alg$ are not arbitrary. 
Indeed, each $\Mat_{n_i}(\Di_i)$ is precisely the span of the image of $G$ under the irreducible $\Fi$-representation corresponding to the $i$th factor of this decomposition. 
In other words, the simple factors of $\Fi G$ are dictated by the representation theory of one common finite group $G$. 
Call a representation $\rho$ of $G$ \emph{center-preserving on $H$} if $H \cap \ZZ(\rho) = H \cap \ZZ(G)$; if $\rho$ is irreducible, this is another way of saying that $H$ almost embeds in the corresponding adjoint simple factor $\PGL_{\rho(\Fi G)}$. 
As an illustration of the impact of this additional group ring structure, we show that the condition of $H$ almost embedding in an appropriate simple quotient (the kind appearing in \Cref{thm:pingpongdensesemisimplealgebra}) can be deduced from $G$ admitting an irreducible representation which is center-preserving on $H$, without further constraints on (the span of) the representation. 
This is a consequence of the main result of \Cref{sec:faithfulembedding}:

\begin{namedtheorem*}{Excerpt of \Cref{thm:almostfaithfulembedding}}
Let $\Fi$ be a number field. 
Let $G$ be a non-abelian finite group and $H \leq G$ be a subgroup admitting a faithful irreducible $\Fi$-representation. 
If $G$ and $H$ satisfy \Cref{cond:NQ}, and
\begin{enumerate}[itemsep=1ex,topsep=1ex,label=\textup{(\roman*)},leftmargin=2em]
\item[\textup{(iii)}] $\Fi$ is not totally real, or $G$ is not isomorphic to $Q_8 \times C_2^n$ for any $n \in \N$,
\end{enumerate}
then there exists an irreducible $\Fi$-representation $\rho$ of $G$ which is center-preserving on $H$ and such that 
\begin{enumerate}[itemsep=1ex,topsep=1ex,label=\textup{(\roman*)},leftmargin=2em]
	\item[\textup{(iii)}] $\rho(\Fi G)$ is neither a field nor a totally definite quaternion algebra. 
\end{enumerate}
\end{namedtheorem*}

Note that on an arbitrary subgroup (even on a cyclic one satisfying \ref{cond:NQ}), $\Fi G$ need not admit a center-preserving representation.  
In that sense, the statement of \Cref{thm:almostfaithfulembedding} is truly representation-theoretical, and so will be its proof. 

Combined with \Cref{thm:pingpongdensesemisimplealgebra}, \Cref{thm:almostfaithfulembedding} implies under the same hypotheses the existence of free amalgamated products in $\U(RG)$.

\begin{namedtheorem*}{\Cref{cor:existenceamalgaminordersharp}}
Let $R$ be the ring of integers of the number field $\Fi$. 
Let $G$ be a finite group, and $H$ be a subgroup of $G$ which admits a faithful irreducible $\Fi$-representation. 
If $G$ and $H$ satisfy \Cref{cond:NQ}, and either $\Fi$ is not totally real, or $G \ncong Q_8 \times C_2^n $ for any $n \in \N$, then the set of elements $\gamma \in \U(RG)$ of infinite order such that
\[
\langle \gamma, H \rangle \cong \langle \gamma, C \rangle \ast_C H, \quad \text{with $C = H \cap \ZZ(G)$},
\]
is dense in $\U(RG)$ for the join of the Zariski and profinite topologies. 
\end{namedtheorem*}

Note that when $G \cong Q_8 \times C_2^n$ and $\Fi$ is totally real, $\U(R G)$ consists of only the obvious units, hence contain no non-trivial free amalgamated products. 
Slightly weaker results are provided when \ref{cond:NQ} does not hold. 
For more information about \Cref{cond:NQ}, the reader should consult \Cref{sec:faithfulembedding} (namely \Cref{rem:conditionNQ}).

\subsection{Bicyclic units generate free groups}
Finally, we focus on the construction of free products with certain specific units in $\U(RG)$. 
Unipotent elements of $RG$ of the form
\[
\bic{h,x} = 1 + \wt{h} x (1-h) \quad \text{and} \quad \bic{x,h} = 1 + (1-h)x \wt{h}, \quad \text{for $x \in RG$, $h \in G$, and $\wt{h} := \textstyle{\sum_{i=1}^{o(h)} h^{i}}$}
\]
are called \emph{bicyclic units}. 
Let $\Bic_R(G) = \langle \bic{h,x}, \bic{x,h} \mid h \in G, x \in RG \rangle$ be the group they generate. Bicyclic units constitute one of the few generic constructions known for units in $RG$.

The conjecture that two {generic} bicyclic units generate a free group, has circled in the field of group rings for almost 30 years. That being said, the meaning to give to `generic' has, to our knowledge, never been made precise. This conjecture was the original motivation for our work.
\medbreak

To start, we propose a new point of view on bicyclic units, as being first-order deformations of torsion units. 
These deformations are group morphisms of the form
\begin{equation*} 
\fod : H \rightarrow \U(\Fi G): h \mapsto \fod(h) = h + \delta_h,
\end{equation*}
see \Cref{def:firstorderdeformation}. 
(Taking $\delta_h = \tilde{h}x(1-h)$ or $(1-h) x \tilde{h}$ recovers bicyclic units.) 
We prove in \Cref{thm:separabledeformationsinner} that any first-order deformation is given by conjugation in the ambient algebra, and in \Cref{thm:pingpongdeformations} that they can be used to construct free products between finite subgroups of $\Fi G$. 

In the literature it has often been an obstacle that, when $\Fi G$ has certain low-rank simple factors, $\Bic_R(G)$ can be of infinite index in $\SL_1(RG)$. 
Nevertheless, \Cref{lem:bicyclicZariskidense} shows that $\Bic_R(G)$ is always Zariski-dense in $\prod \SL_1(\rho(\Fi G))$, where the product is taken over all irreducible representations of $G$ which are not fixed-point-free. 
In consequence, to find ping-pong partners for a bicyclic unit, one should look for irreducible representations whose images are not fixed-point-free. 

Our main contribution in this direction states that if one replaces bicyclic units by the closely related \emph{shifted bicyclic units}, the aforementioned long-standing conjecture holds true for a profinitely and Zariski-dense subset of bicyclic units. 

\begin{namedtheorem*}{\Cref{cor:pingpongbicyclicwithdeformation}}
Let $G$ be a finite group, and pick $x \in RG$. 
Let $h \in G$ have no non-trivial central powers. 
Then the set of elements $\alpha \in \Bic_R(G)$ for which the canonical map
\[
\langle \alpha \rangle \ast \langle h \bic{h,x} \rangle \to \langle \alpha, h \bic{h,x} \rangle
\]
is an isomorphism, is dense in $\Bic_R(G)$ for the join of the profinite and Zariski topologies. 
\end{namedtheorem*}

Much like \Cref{cor:existenceamalgaminordersharp}, \Cref{cor:pingpongbicyclicwithdeformation} is a combination of two results, both already mentioned and interesting in their own right. 
The first is \Cref{thm:almostfaithfulembedding}, which under reasonable assumptions, guarantees the existence of center-preserving representations that are not fixed-point-free. 
The second is \Cref{thm:pingpongdenseSLn}, providing the general machinery to exhibit free products once one has the appropriate representation at hand. 

Altogether, \Cref{cor:pingpongbicyclicwithdeformation} turns out to be an unavoidable but fruitful combination of the structure and dynamics of infinite linear groups, and the representation theory of finite groups.

\subsection*{Acknowledgments} 
Andreas B\"achle was involved in earlier stages of this project, and we thank him heartily for many helpful discussions and enjoyable moments.
We thank Emmanuel Breuillard and \v{S}pela \v{S}penko for helpful conversations on \Cref{sec:pingpongreductivegroups}. 
We also thank Leo Margolis for his is interest and some clarifications around the Zassenhaus conjecture. 
Furthermore, we are grateful to Jean-Pierre Tignol for providing us with a better proof of \Cref{thm:separabledeformationsinner}, and to Pierre-Emmanuel Caprace, Miquel Mart\'inez and Justin Vast for useful discussions about \Cref{sec:faithfulembedding}. 
Lastly, the second author wishes to thank the late Jairo Zacarias Gon\c{c}alves for his hospitality during a visit at the University of S\~{a}o Paulo, where \Cref{sec:pingpongsemisimplealgebra} was started.

\section{Amalgams in almost direct products} \label{sec:generalamalgams}
\addtocontents{toc}{\protect\setcounter{tocdepth}{2}}

In this section, we recall the classical ping-pong lemma for amalgamated products. 
Thereafter we exhibit a necessary condition for a subgroup of an almost direct product to be an amalgamated product. 
\medskip

Given a subgroup $C$ of a group $G$, we will denote by $T_C^G$ a set of representatives of the left cosets of $C$ in $G$, containing the identity element. 
\smallskip

The ping-pong lemma for amalgams and its variant for HNN extensions can be found in \cite[Propositions 12.4 \& 12.5]{LyndonSchupp01}. 
For the convenience of the reader, we provide a proof as it will be instrumental in the rest of this paper.

\begin{lemma}[Ping-pong for amalgams]\label{lem:pingpongamalgam}
Let $A$, $B$ be subgroups of a group $G$ and suppose $C = A \cap B$ satisfies $\indx{A}{C} > 2$. 
Let $G$ act on a set $X$. 
If $P, Q \subset X$ are two subsets with $P \not\subset Q$, such that for all elements $a \in T_C^A \setminus \{e\}$, $b \in T_C^B \setminus \{e\}$ and $c \in C$, we have
\[
aP \subset Q, \quad bQ \subset P, \quad cP \subset P, \quad \text{and} \quad cQ \subset Q,
\]
then the canonical map $A \ast_C B \rightarrow \langle A,B \rangle $ is an isomorphism.
\end{lemma}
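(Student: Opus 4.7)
The strategy is the classical ping-pong argument for amalgamated products. Let $\pi : A \ast_C B \to \langle A, B \rangle$ denote the canonical map; $\pi$ is plainly surjective, so it suffices to show it is injective. To that end, I would express an arbitrary non-identity element $w \in A \ast_C B$ in its coset normal form
\[
w = c_0 \, x_1 x_2 \cdots x_n, \quad c_0 \in C, \quad x_i \in (T_C^A \cup T_C^B) \setminus \{e\},
\]
with consecutive $x_i$'s lying in different factors, and prove that $\pi(w) \neq e$ in $G$. The case $n = 0$ reduces to the inclusion $C \hookrightarrow G$, so the real work lies in the case $n \geq 1$.

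\emph{Principal case: $x_1, x_n \in T_C^A \setminus \{e\}$.} I would apply $\pi(w)$ to a point $p \in P$ and track its orbit letter by letter. By the ping-pong hypotheses, every letter in $T_C^A \setminus \{e\}$ sends $P$ into $Q$, every letter in $T_C^B \setminus \{e\}$ sends $Q$ into $P$, and each $c \in C$ preserves both sets. Starting from $p \in P$, after applying $x_n$ the image lies in $Q$; after $x_{n-1}$ back in $P$; and so on alternately. Since $x_1 \in T_C^A$, the image after $x_1$ is in $Q$, and $c_0$ keeps it there. Therefore $\pi(w) \cdot P \subseteq Q$, and because $P \not\subseteq Q$ any $p \in P \setminus Q$ witnesses $\pi(w) \cdot p \neq p$; in particular $\pi(w) \neq e$.

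\emph{Reduction of the remaining cases.} When $x_1$ or $x_n$ lies in $T_C^B \setminus \{e\}$, I would conjugate $w$ by a suitable $a \in T_C^A \setminus \{e\}$ so that, once rewritten in normal form, $a^{-1} w a$ begins and ends with letters in $T_C^A \setminus \{e\}$; this lands us in the principal case, and as conjugation preserves (non-)triviality in $G$ the conclusion transfers to $w$. The element $a$ must be chosen so that any merger of adjacent $A$-letters (namely $a^{-1} c_0$ with $x_1$ at the front, or $x_n$ with $a$ at the back, whichever applies) does not collapse into $C$ and thereby shorten the word. The forbidden values of $a$ form a single coset of $C$ in $A$, so the hypothesis $\indx{A}{C} > 2$, which provides at least two distinct non-trivial cosets, ensures a valid $a$ always exists.

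\emph{Main obstacle.} The ping-pong bouncing in the principal case is mechanical; the delicate point is the bookkeeping in the reduction step---verifying that the conjugate word remains reduced after normal-form rewriting---and confirming that the coset freedom afforded by $\indx{A}{C} > 2$ is sufficient to sidestep every potential collapse. The hypothesis on the index enters solely here, which explains the asymmetry between $A$ and $B$ in the statement.
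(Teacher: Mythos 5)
Your proof is correct and follows essentially the same route as the paper: both rest on the normal form for $A \ast_C B$, the same $P$/$Q$ bouncing argument for words beginning and ending in $A \setminus C$, and the same use of $\indx{A}{C} > 2$ to choose a conjugating element of $A$ avoiding one extra coset of $C$ so as to reduce the remaining endpoint configurations to that principal case. The only (immaterial) difference is that the paper phrases the argument contrapositively — showing a non-trivial alternating word cannot lie in $C$ — and in one case substitutes a containment argument for your conjugation.
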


As in the case of free products, the proof of \Cref{lem:pingpongamalgam} is straightforward once one knows the normal form for elements in an amalgamated product. 
The normal form also allows us to unambiguously speak of \emph{words starting with $A$} and \emph{words starting with $B$}. 
In the next lemma, these are the elements for which $\dot{a}_1 \notin C$, resp.\ for which $\dot{a}_1 \in C$. 

\begin{lemma}[Normal form in free amalgamated products]\label{lem:amalgamnormalform}
Let $A, B \leq G$ be groups and $C \leq A \cap B$. 
The following are equivalent. 
\begin{enumerate}[itemsep=1ex,topsep=1ex,label=\textup{(\roman*)},leftmargin=2em]
	\item {The canonical map $A \ast_C B \rightarrow \langle A,B \rangle$ is an isomorphism.} 
	\item {Every element in $\langle A,B \rangle$ has a unique decomposition of the form $\dot{a}_1 b_1 \cdots a_n \dot{b}_n c$, 
where $a_i\in T_C^A\setminus \{e\}$, $b_i \in T_C^B\setminus \{e\}$, $\dot{a}_1 \in T_C^A$, $\dot{b}_n \in T_C^B$, and $c \in C$.} 
	\item {Given $a_i \in A \setminus C$, $b_i \in B \setminus C$, $\dot{a}_1 \in A$, and $\dot{b}_n \in B$, the product $\dot{a}_1 b_1 \cdots a_n \dot{b}_n$ belongs to $C$ only if $n =1$ and $\dot{a}_1, \dot{b}_n \in C$.}
\end{enumerate}
In consequence of the affirmative, $C = A \cap B$.
\end{lemma}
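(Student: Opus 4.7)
The plan is to close the circuit (i) $\Rightarrow$ (ii) $\Rightarrow$ (iii) $\Rightarrow$ (i), and then derive the consequence $C = A \cap B$. The implication (i) $\Rightarrow$ (ii) is the classical normal form theorem for amalgamated products (see \cite[Chapter IV]{LyndonSchupp01}), transported through the isomorphism of (i): every element of $A \ast_C B$ admits a unique decomposition of the stated shape, and this normal form is preserved by the canonical isomorphism to $\langle A, B \rangle$.

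For (ii) $\Rightarrow$ (iii), assume that a product $\dot{a}_1 b_1 a_2 \cdots a_n \dot{b}_n$, with $a_i \in A \setminus C$, $b_i \in B \setminus C$, $\dot{a}_1 \in A$, $\dot{b}_n \in B$, lies in $C$. I would put this word in the normal form of (ii) by iteratively writing each factor as a coset representative times an element of $C$, and sliding the $C$-factor one position to the right. The key observation is that left-multiplying a letter in $A \setminus C$ (resp.\ $B \setminus C$) by an element of $C$ keeps it outside $C$, so all interior coset representatives in the normalized word remain non-trivial in $T_C^A \setminus \{e\}$ (resp.\ $T_C^B \setminus \{e\}$). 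The resulting normal form thus has $n$ alternating non-trivial interior letters, while the endpoint representatives vanish precisely when $\dot{a}_1 \in C$, resp.\ $\dot{b}_n \in C$. Since any element of $C$ has the trivial normal form (no non-trivial letters), uniqueness in (ii) forces $n = 1$ and $\dot{a}_1, \dot{b}_n \in C$.

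For (iii) $\Rightarrow$ (i), the canonical map $\phi \colon A \ast_C B \to \langle A, B \rangle$ is surjective by construction. To show injectivity, take $w \in \ker \phi$ and write $w$ in normal form $\dot{a}_1 b_1 \cdots a_n \dot{b}_n c$ inside $A \ast_C B$ (this normal form exists classically within the amalgamated product itself). Its image in $G$ is the same formal product, whence $\dot{a}_1 b_1 \cdots a_n \dot{b}_n = c^{-1} \in C$ in $G$. By (iii), $n = 1$ and $\dot{a}_1, \dot{b}_n \in C$; being coset representatives, $\dot{a}_1 = \dot{b}_n = e$, so $c^{-1} = e$ in $G$, and therefore also in $A \ast_C B$ since $C$ embeds in the amalgam. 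Hence $w = e$. For the consequence $C = A \cap B$, I would apply (iii) with $n = 1$, $\dot{a}_1 := g \in A \cap B$ and $\dot{b}_1 := g^{-1}$: the product $g \cdot g^{-1} = e$ lies in $C$, so (iii) forces $g \in C$, giving $A \cap B \subseteq C$.

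The \emph{main obstacle} is the bookkeeping in the normalization argument of (ii) $\Rightarrow$ (iii), particularly verifying that interior letters really do remain non-trivial modulo $C$ as the $C$-factors are pushed rightward. The remaining implications are essentially direct consequences of the classical normal form theorem for amalgams, which may be assumed as background.
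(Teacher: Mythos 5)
Your proof is correct and follows essentially the same route as the paper's sketch: the classical normal form theorem gives (i)$\Rightarrow$(ii), normalization by coset representatives (pushing the $C$-factors to one end) gives (ii)$\Rightarrow$(iii), and injectivity of the canonical map is read off from normal forms in the abstract amalgam; you close a cycle where the paper proves two biconditionals, but the substance is identical. Your explicit derivation of $C = A \cap B$ by applying (iii) to $g \cdot g^{-1}$ with $g \in A \cap B$ is a nice touch that the paper leaves implicit.
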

\begin{proof}[Sketch of proof.]
The implication (i)$\implies$(ii) is the existence and uniqueness of a normal form (see for instance \cite[Theorem IV.2.6]{LyndonSchupp01}), and its converse amounts to checking the injectivity of the canonical map, which follows from the uniqueness of the decomposition in $\langle A, B \rangle$. 

After replacing $\dot{b}_n, a_n, \dots, b_1, \dot{a}_1$ by the appropriate coset representatives, (ii)$\implies$(iii) becomes obvious. For the contrapositive of its converse, note that two different decompositions of an element in $\langle A, B \rangle$ result in a non-trivial expression of the form $\dot{a}_1 b_1 \cdots a_n \dot{b}_n$ in $C$.
\end{proof}

\begin{proof}[Proof of \Cref{lem:pingpongamalgam}]
Note that the assumptions imply that $a P_1 \subset P_2$ for all $a \in A\setminus C$, $b P_2 \subset P_1$ for all $b \in B \setminus C$, and $c P_1 = P_1$, $cP_2 = P_2$ for every $c \in C$. 

Suppose that given $a_i \in A \setminus C$, $b_i \in B \setminus C$, $\dot{a}_1 \in A$ and $\dot{b}_n \in B$, the non-empty word $c = \dot{a}_1 b_1 \cdots a_n \dot{b}_n$ lies in $C$.
The possible cases for $\dot{a}_1$ and $\dot{b}_n$ to belong to $C$ are: 
\begin{enumerate}[itemsep=1ex,topsep=1ex,label=$\bullet$,leftmargin=\parindent]
	\item $\dot{a}_1 \notin C$, $\dot{b}_n \in C$. 
We have $\dot{b}_n P_1 = P_1$, $a_n \dot{b}_n P_1 \subset P_2$, $b_{n-1} a_n \dot{b}_n P_1 \subset P_1$, etc., so that eventually, $c P_1 = \dot{a}_1 b_1 \cdots a_n P_1 \subset P_2$. 
Since $c P_1 = P_1$ and $P_1 \not\subset P_2$, this case cannot occur. 
	\item $\dot{a}_1 \in C$, $\dot{b}_n \notin C$. 
Pick $a \in A \setminus C$, and let $a' \in A$ and $c' \in C$ be such that $a^{-1} c a = a' c'$. 
We have $a a' \notin C$, hence the word $c' = (a a')^{-1} b_1 \cdots a_n \dot{b}_n a$ starts and ends with an element of $A \setminus C$. 
This case thus reduces to the first one. 
	\item $\dot{a}_1 \notin C$, $\dot{b}_n \notin C$. 
As $\indx{A}{C} > 2$, we may pick $a \in A \setminus (C \cup \dot{a}_1 C)$, so that $a^{-1} \dot{a}_1 P_1 \subset P_2$ hence $\dot{a}_1 P_1 \subset a P_2$.
As in the first case, we have $c P_2 \subset \dot{a}_1 P_1$. 
Since $c P_2 = P_2$, this would imply $a P_1 \subset P_2 \subset a P_2$, hence this case does not occur either. 

	\item $\dot{a}_1 \in C$, $\dot{b}_n \in C$. 
If $n > 1$, replacing $c$ by $c^{-1}$ reduces to the third case. 
The only remaining possibility is thus $n=1$ and $\dot{a}_1,\dot{b}_n \in C$, as expected. 
\end{enumerate} 
We conclude from \Cref{lem:amalgamnormalform} that the canonical map $A \ast_C B \to \langle A,B \rangle$ is an isomorphism. 
\end{proof}

\begin{lemma}\label{lem:preimageamalgam}
Let $A \ast_{C} B$ be a free amalgamated product. If $f$ is a surjective morphism from a group $\Gamma$ to $A \ast_{C} B$, then $\Gamma$ is the free product with amalgamation $f^{-1}(A) \ast_{f^{-1}(C)} f^{-1}(B)$. 

If moreover $\Gamma$ is generated by two subgroups $\Gamma_1, \Gamma_2$ with the properties $f(\Gamma_1) \subseteq A$, $f(\Gamma_2) \subseteq B$, the induced map $\Gamma_1 \to A / C$ is injective, and $\Gamma_1 (\Gamma_2 \cap f^{-1}(C))$ is a subgroup, then $f^{-1}(B) = \Gamma_2$ and $\Gamma \cong (\Gamma_1 f^{-1}(C)) \ast_{f^{-1}(C)} \Gamma_2$. 
\end{lemma}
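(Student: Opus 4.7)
The plan is to reduce both assertions to the normal-form criterion \Cref{lem:amalgamnormalform}(iii). Write $\wt{A} = f^{-1}(A)$, $\wt{B} = f^{-1}(B)$, $\wt{C} = f^{-1}(C)$. For the first assertion, $\Gamma = \langle \wt{A}, \wt{B} \rangle$ follows by lifting: given $\gamma \in \Gamma$, express $f(\gamma)$ as a word in letters from $A \cup B$, choose $f$-preimages in $\wt{A} \cup \wt{B}$ to build some $\gamma' \in \langle \wt{A}, \wt{B} \rangle$ with $f(\gamma') = f(\gamma)$, so that $\gamma (\gamma')^{-1} \in \ker f \subseteq \wt{C} \subseteq \langle \wt{A}, \wt{B} \rangle$. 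To check the amalgam structure, I would apply \Cref{lem:amalgamnormalform}(iii): any expression $\dot{\wt{a}}_1 \wt{b}_1 \cdots \wt{a}_n \dot{\wt{b}}_n$ of the appropriate form lying in $\wt{C}$ maps under $f$ to an analogous expression in $A \ast_C B$ lying in $C$, which by \Cref{lem:amalgamnormalform}(iii) applied inside $A \ast_C B$ forces $n = 1$ and $f(\dot{\wt{a}}_1), f(\dot{\wt{b}}_n) \in C$, i.e., $\dot{\wt{a}}_1, \dot{\wt{b}}_n \in \wt{C}$.

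For the second assertion, set $H = \Gamma_2 \cap \wt{C}$ and $M = \Gamma_1 H$. The injectivity of $\Gamma_1 \to A/C$ translates precisely into $\Gamma_1 \cap \wt{C} = \{e\}$. Combined with the hypothesis that $M$ is a subgroup, this forces the decomposition $M = \Gamma_1 \cdot H$ to be unique on each element, and yields $M \cap \wt{C} = H$. From $\Gamma = \langle \Gamma_1, \Gamma_2 \rangle$ with $\Gamma_1 \leq M$ one gets $\Gamma = \langle M, \Gamma_2 \rangle$, and using $M \leq \wt{A}$, $\Gamma_2 \leq \wt{B}$, plus the identity $\wt{A} \cap \wt{B} = \wt{C}$ (a consequence of the first assertion and \Cref{lem:amalgamnormalform}), one deduces $M \cap \Gamma_2 = H$. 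A new application of \Cref{lem:amalgamnormalform}(iii) inside $\Gamma = \wt{A} \ast_{\wt{C}} \wt{B}$ then yields $\Gamma = M \ast_H \Gamma_2$: indeed, an alternating word in $M \setminus H$ and $\Gamma_2 \setminus H$ is automatically an alternating word in $\wt{A} \setminus \wt{C}$ and $\wt{B} \setminus \wt{C}$.

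It remains to identify $\wt{A}$ with $M$ and $\wt{B}$ with $\Gamma_2$. I would take $\wt{a} \in \wt{A}$, write it in normal form inside $M \ast_H \Gamma_2$, and apply $f$: the resulting word in $A \ast_C B$ has internal factors alternating between $A \setminus C$ and $B \setminus C$, yet its value $f(\wt{a})$ lies in $A$. By uniqueness of normal form in $A \ast_C B$, no factor from $\Gamma_2 \setminus H$ can occur, hence $\wt{a} \in M$; the symmetric argument gives $\wt{B} \subseteq \Gamma_2$. Consequently $\wt{B} = \Gamma_2$, so $\wt{C} \subseteq \Gamma_2$ and $H = \wt{C}$; this forces $\wt{A} = M = \Gamma_1 \wt{C}$ and, finally, $\Gamma \cong (\Gamma_1 \wt{C}) \ast_{\wt{C}} \Gamma_2$. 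The main obstacle in this plan is the bookkeeping between the two amalgam decompositions of $\Gamma$, and the recognition that $M = \Gamma_1 H$---guaranteed to be a subgroup by hypothesis---is the correct intermediate object to wedge between $\Gamma_1$ and $\wt{A}$.
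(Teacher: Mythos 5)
Your proof is correct, and both halves ultimately rest on the same two ingredients as the paper's: the normal-form criterion of \Cref{lem:amalgamnormalform}(iii), the translation of injectivity of $\Gamma_1 \to A/C$ into $\Gamma_1 \cap f^{-1}(C) = \{e\}$, and the hypothesis that $M = \Gamma_1(\Gamma_2 \cap f^{-1}(C))$ is a subgroup. The organization of the second half is genuinely different, though. The paper never invokes the upstairs amalgam $f^{-1}(A) \ast_{f^{-1}(C)} f^{-1}(B)$ until the very end: it takes an arbitrary word $b_0 a_1 b_1 \cdots a_n b_n$ in the generators $\Gamma_1 \cup \Gamma_2$ of $\Gamma$, uses the subgroup hypothesis to commute the central $\Gamma_2$-letters past the $\Gamma_1$-letters until the internal $b_i$ lie outside $f^{-1}(C)$, and then pushes the reduced word down to $A \ast_C B$ to read off $f^{-1}(B) = \Gamma_2$ and $f^{-1}(A) = \Gamma_1 f^{-1}(C)$ directly; the amalgam structure then follows from the first assertion. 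You instead go the other way: you first wedge the subgroup $M$ between $\Gamma_1$ and $f^{-1}(A)$, deduce $\Gamma = M \ast_H \Gamma_2$ by transporting condition (iii) from the triple $(f^{-1}(A), f^{-1}(B), f^{-1}(C))$ (which requires the first assertion), and only afterwards identify $M = f^{-1}(A)$ and $\Gamma_2 = f^{-1}(B)$ by pushing normal forms of that new amalgam down to $A \ast_C B$. Your route buys a cleaner separation of the two uses of the normal form (one upstairs, one downstairs) at the cost of carrying two amalgam decompositions of $\Gamma$ simultaneously; the paper's route avoids the intermediate amalgam entirely but has to perform the word reduction by hand. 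Both are complete arguments.
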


\begin{proof}
The first part of the lemma is standard (see for instance \cite[Lemma 3.2]{Zieschang76}). 
For the second part, let $g = b_0 a_1 b_1 \cdots a_n b_n$ with $a_i \in \Gamma_1 \setminus \{e\}$ and $b_i \in \Gamma_2$, be an element of $f^{-1}(B)$. 
Since $(\Gamma_2 \cap f^{-1}(C)) \Gamma_1 = \Gamma_1 (\Gamma_2 \cap f^{-1}(C))$, after perhaps reducing the expression for $g$, we may assume that $b_i \notin f^{-1}(C)$ for $0 < i < n$. 
Because $f(\Gamma) = A \ast_C B$ and $f(b_i) \in B \setminus C$, \Cref{lem:amalgamnormalform} implies that $n = 0$, hence $g = b_0 \in \Gamma_2$. Thus $f^{-1}(B) = \Gamma_2$, and in consequence $f^{-1}(C) \leq \Gamma_2$. 
On the other hand, if $g = b_0 a_1 b_1 \cdots a_n b_n \in f^{-1}(A)$, we may assume as before that $a_i \neq e$ for $1 \leq i \leq n$ and $b_i \notin f^{-1}(C)$ for $0 < i < n$. 
Applying $f$ again then shows that $n \leq 1$ and $b_i \in f^{-1}(C)$ for $i \leq n$, so that $g \in \Gamma_1 f^{-1}(C)$. 
\end{proof}

The following folkloric terminology is inspired by \Cref{lem:pingpongamalgam}.

\begin{definition}
Let $A$ and $B$ be subgroups of a group $G$. 
We say that $A$ is \emph{a ping-pong partner for $B$}, or that \emph{$A$ and $B$ play ping-pong}, if the subgroup $\langle A, B \rangle$ is freely generated by $A$ and $B$, or in other words if the canonical map
\(
A \ast B \to \langle A, B \rangle 
\)
is an isomorphism. 
Similarly, we say that $a \in A$ is \emph{a ping-pong partner for $B$ in $A$}, or that \emph{$a$ and $B$ play ping-pong}, if the subgroup $\langle a, B \rangle$ is freely generated by $\langle a \rangle$ and $B$; 
same for $a \in A$ and $b \in B$ generating $\langle a, b \rangle$ freely. 
We will also use this terminology for free amalgamated products when the subgroup being amalgamated is unambiguous. 
\end{definition}

In the subsequent sections, we will look to play ping-pong inside a group $G =\prod_{i=1}^n G_i$ which decomposes into a direct product of subgroups $G_i$. 
Using some simple facts about free (amalgamated) products, the next proposition will show that this requires an embedding of the ping-pong partners in one of the factors $G_i$. 

Given subgroups $H_1, \dots, H_n$ of a group $G$, let $[H_1, \dots, H_n] = [H_1, [H_2, \dots, H_n]]$ denote the \emph{left-iterated (or right-normed) commutator subgroup} of the $H_i$. 

\begin{lemma}
\label{lem:normalsubgroupinamalgam}
Let $N$, $N_1$, \dots, $N_n$ be normal subgroups of $A \ast_C B$, where $\indx{A}{C} > 2$. 
\begin{enumerate}[itemsep=1ex,topsep=1ex,label=\textup{(\roman*)},leftmargin=2em]
	\item Either $N \subset C$, or $N$ contains a non-abelian free group. 
	\item If $[N_1, N_2] \subset C$, then either $N_1 \subset C$ or $N_2 \subset C$. 
\end{enumerate}
In consequence, if $[N_1, \dots, N_n]$ admits no non-abelian free subgroups, there exists $i \in \{1, \dots, n\}$ for which $N_i \subset C$. 
\end{lemma}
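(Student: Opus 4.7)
The natural framework is the action of $G = A \ast_C B$ on its Bass--Serre tree $T$: vertices are the cosets $gA, gB$, edges the cosets $gC$, and $G$ acts by left multiplication. The assumption $\indx{A}{C} > 2$ ensures that $T$ is not a line, so $G$ stabilizes no geodesic line in $T$ and (by minimality of the action) no pair of ends. A short normal-form argument shows the normal core of $A$ (and of $B$) in $G$ is contained in $C$: if $n \in A \setminus C$ and $b \in B \setminus C$, then $bnb^{-1}$ is reduced of length $3$ in $A \ast_C B$, hence cannot lie in $A$. In particular the kernel of the $G$-action on $T$ is contained in $C$.

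For (i), suppose $N \not\subset C$, so $N$ acts non-trivially on $T$. I claim $N$ contains a hyperbolic element. If every element of $N$ were elliptic, a Helly-type argument for tree actions forces $N$ to stabilize either a vertex or a unique end of $T$. Normality then propagates this fixed object across its full $G$-orbit; since $G$ has no end-orbit of size less than $3$, $N$ would fix $\geq 3$ ends, hence the center of the tripod they span, and thus a vertex. Fixing vertices across a whole $G$-orbit forces $N$ into the normal core of $A$ or $B$ and thus into $C$, a contradiction. So there is a hyperbolic $h \in N$ with axis $\ell$; choosing $g \in G$ with $g\ell \neq \ell$, the conjugate $h' := ghg^{-1} \in N$ has axis $g\ell \neq \ell$, and the standard tree ping-pong lemma applied to $h^m, h'^m$ for $m$ sufficiently large yields a non-abelian free subgroup of $N$.

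For (ii), suppose toward contradiction that $N_1, N_2 \not\subset C$. By (i) each $N_i$ contains a hyperbolic element; using normality and the absence of $G$-fixed lines, we may choose hyperbolic $h_i \in N_i$ with distinct axes $\ell_i$ (conjugating one if needed). For $n$ sufficiently large, $h_2^n \ell_1$ is disjoint from $\ell_1$, since the displacement grows linearly with $n$. The commutator
\[
[h_1^m, h_2^n] \;=\; h_1^m \cdot (h_2^n \, h_1^{-m} \, h_2^{-n})
\]
is then a product of two hyperbolic isometries of $T$ with disjoint axes $\ell_1$ and $h_2^n \ell_1$, of equal translation length and opposite orientations along the geodesic joining the two axes. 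A standard calculation shows that such a product is itself hyperbolic, with translation length $\geq 2 \, d(\ell_1, h_2^n \ell_1) > 0$; in particular it fixes no edge of $T$ and hence does not lie in $C$, contradicting $[N_1, N_2] \subset C$.

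The final consequence follows by iteration: writing $[N_1, \dots, N_n] = [N_1, [N_2, \dots, N_n]]$, part (i) applied to this normal subgroup gives $[N_1, [N_2, \dots, N_n]] \subset C$, and part (ii) then gives $N_1 \subset C$ or $[N_2, \dots, N_n] \subset C$; applying (ii) repeatedly to the inner commutator in the latter case eventually forces some $N_i \subset C$. The main obstacle is the claim in (i) that a normal subgroup not contained in $C$ must contain hyperbolic elements, which rests on a delicate use of normality to rule out the possibility of a fixed end. Once that is established, the ping-pong argument for (i) and the commutator computation in (ii) are routine manipulations on trees.
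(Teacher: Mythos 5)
Your route through the Bass--Serre tree of $A \ast_C B$ is genuinely different from the paper's proof, which never leaves the normal form: there, one conjugates $x \in N \setminus C$ into a cyclically reduced position and writes down two explicit commutators $w, w'$ whose alternating products are visibly reduced words (for (i)), and exhibits in each of six cases an explicit commutator lying outside $C$ (for (ii)). Your approach would, if completed, be cleaner conceptually and would generalize to other actions on trees; most of the skeleton (normal cores of $A$ and $B$ lie in $C$; an elliptic-only normal subgroup fixes a vertex or an end; normality plus the absence of small end-orbits rules both out) is sound.

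There is, however, a concrete gap that recurs at the two decisive steps: you treat ``distinct axes'' as if it implied ``axes with bounded intersection,'' and in a tree this is false --- two distinct lines can share an entire ray. In (i), choosing $g$ with $g\ell \neq \ell$ is not enough for ping-pong: if $\ell$ and $g\ell$ share a ray toward an end $\xi$, then $h^m$ and $h'^m = gh^mg^{-1}$ agree on a subray of $\ell \cap g\ell$ (both translate points of the common ray by $m\tau(h)$ toward $\xi$), so $h^{-m}h'^m$ is elliptic for every $m$, no power separates the dynamics, and $\langle h^m, h'^m\rangle$ need not be free. In (ii), the claim that $h_2^n\ell_1$ is eventually disjoint from $\ell_1$ fails for the same reason if $\ell_1$ and $\ell_2$ share an end, since $h_2^n\ell_1$ then still contains a subray of that shared ray. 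To repair this you must show the conjugator can be chosen so that the relevant axes have \emph{bounded} overlap. This does follow from what you have already established --- if every $G$-translate of $\ell$ shared an end with $\ell$, the orbit of the pair of ends of $\ell$ would be a pairwise-intersecting family of $2$-sets, forcing either a $G$-orbit of ends of size at most $2$ or a $G$-invariant tripod, both excluded --- but it is precisely the step you cannot wave through with ``$g\ell \neq \ell$.'' (Alternatively, invoke the full classification of group actions on trees: a group fixing no point, no end, and no line contains two hyperbolics with disjoint axes.) Once bounded overlap is secured, the translation-length formula $\tau(ab) = \tau(a) + \tau(b) + 2d(\mathrm{axis}(a),\mathrm{axis}(b))$ for disjoint axes does finish (ii) as you say, and the iteration for the final consequence is fine.
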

\begin{proof}
First, suppose that $N$ is a normal subgroup of $A \ast_C B$ not contained in $C$. 
Pick $x \in N \setminus C$; by \Cref{lem:amalgamnormalform}, we may assume after conjugation that $x$ either belongs to $B \setminus C$, belongs to $A \setminus C$, or is cyclically reduced starting with $a_1 \in A \setminus C$. 

\begin{enumerate}[itemsep=1ex,topsep=1ex,label=$\bullet$,leftmargin=\parindent]
	\item If $x \in B \setminus C$, pick $a, a' \in A \setminus C$ such that $a \notin a'C$. 
Using \Cref{lem:amalgamnormalform}, one readily checks that the cyclically reduced words $w = [a,x]$ and $w' = [a',x]$ generate a free group, as every non-empty word in $w$ and $w'$ remains a non-empty word alternating in elements of $A \setminus C$ and $B \setminus C$. 
(Only simplifications of the form $[a, x] [a',x]^{-1}= a x (a^{-1} a') x^{-1} a'^{-1}$ occur, and the condition on $a$ and $a'$ ensures no further cancellations arise.) 
	\item If $x \in A \setminus C$, pick $b \in B \setminus C$ and $a, a' \in A \setminus C$ such that $a \notin a'C$, and consider $w = [x,bab^{-1}]$ and $w' = [x,ba'b^{-1}]$ instead. 
	\item In the last case, write $x = a_1 b_1 \cdots, a_n b_n$ with $n \geq 1$ and $a_i \in A \setminus C$, $b_i \in B \setminus C$. 
Pick $b \in B \setminus C$ and $a \in A \setminus C$ such that $a \notin a_1 C$. 
Then the words $w = x$ and $w' = aba^{-1} x ab^{-1}a^{-1}$ generate a free group. 
\end{enumerate}
This proves part (i).
\smallbreak

Second, suppose that there exist elements $x \in N_1 \setminus C$ and $x' \in N_2 \setminus C$. 
By \Cref{lem:amalgamnormalform}, we may assume after conjugation that $x$, $x'$ either belong to $A \setminus C$, belongs to $B \setminus C$, or is cyclically reduced starting with $A$. 
We exhibit in each case a commutator in $[N_1, N_2] \setminus C$. 

\begin{enumerate}[itemsep=1ex,topsep=1ex,label=$\bullet$,leftmargin=\parindent]
	\item If $x = a_1$ and $x' = b_1'$, then $[x, x'] \notin C$. 
	\item If $x$ is cyclically reduced starting with $a_1$ and $x' = a_1'$, then $[x, b x'b^{-1}] \notin C$ for any $b \in B \setminus (C \cup b_n^{-1} C)$. 
	\item If $x$ is cyclically reduced starting with $a_1$ and $x' = b_1'$, then $[a^{-1} x a, x'] \notin C$ for any $a \in A \setminus (C \cup a_1 C)$. 
	\item If $x = a_1$ and $x' = a_1'$, then $[x,bx'b^{-1}] \notin C$ for any $b \in B \setminus C$. 
	\item If $x = b_1$ and $x' = b_1'$, then $[axa^{-1}, x'] \notin C$ for any $a \in A \setminus C$. 
	\item If $x$, $x'$ are both cyclically reduced starting with $a_1$, and ending with $b_{n'}'$ respectively, then $[a^{-1} x a, b^{-1} x' b] \notin C$ for any $a \in A \setminus (C \cup a_1C)$ and $b \in B \setminus (C \cup b_{n'}'^{-1}C)$. 
\end{enumerate}
This proves part (ii). 
\smallbreak

Lastly, if $[N_1, \dots, N_n]$ admits no non-abelian free subgroups, we deduce from part (i) that $[N_1, \dots, N_n] \subset C$. 
Part (ii) then implies that either $N_1 \subset C$, or $[N_2, \dots, N_n] \subset C$, and recursively, that eventually $N_i \subset C$ for some $i \in \{1, \dots, n\}$. 
\end{proof}

\begin{definition} \label{def:almostdirectproduct}
Let $\cS$ be a class of groups closed under subquotients and extensions. 
For the purposes of the following proposition, we will say that $G$ is an \emph{$\cS$-almost direct product of $G_1, \dots, G_n$} if $G$ has a normal subgroup $K \in \cS$ such that $G / K$ is the direct product $G_1 \times \cdots \times G_n$. 

Equivalently, if there are normal subgroups $M_1, \dots, M_n$ of $G$ such that $\textstyle \bigcap_{i=1}^n M_i \in \cS$ and $M_i(M_{i+1} \cap \cdots \cap M_n) = G$ for $i = 1, \dots, n-1$, then $G$ is the {$\cS$-almost direct product of $G/M_1, \dots, G/M_n$}. 
Indeed, the second condition ensures that the canonical map
\(
G / \textstyle \bigcap_{i=1}^n M_i \to G/M_1 \times \cdots \times G/M_n
\)
is surjective; conversely, writing $M_i$ for the kernel of $G \to G_i$, it is obvious that $K = \textstyle \bigcap_{i=1}^n M_i$ and $M_j (\textstyle \bigcap_{i\neq j} M_i) = G$. 
\end{definition}

Almost direct products with respect to the class containing only the trivial group are just direct products. 
In the literature, almost direct products appear most often for $\cS$ the class of finite groups. 
Here are a few straightforward observations: 

\begin{enumerate}[itemsep=1ex,topsep=1ex,label=\textup{(\roman*)},leftmargin=2em]
	\item If $\cS \subset \cS'$, then any $\cS$-almost direct product is an $\cS'$-almost direct product (of the same groups). 
	\item Any group in $\cS$ is an empty $\cS$-almost direct product; so of course the notion is meaningful only for groups outside of $\cS$. 
	\item An $\cS$-almost direct product of groups $G_1, \dots, G_n$ themselves $\cS$-almost direct products of respectively $H_{i 1}, \dots, H_{i n_j}$ ($i=1, \dots, n$), is an $\cS$-almost direct product of the $H_{i j}$, $i=1, \dots, n$, $j=1, \dots, n_j$. 
	\item Any quotient or extension of an $\cS$-almost direct product by a group in $\cS$ is again an $\cS$-almost direct product. 
\end{enumerate}

Sometimes, almost direct products are defined by the following variant: 
$G$ is the quotient of a direct product $G_1 \times \cdots \times G_n$ by a normal subgroup $H \in \cS$. 
An almost direct product in this second sense is also an $\cS$-almost direct product in the sense of \Cref{def:almostdirectproduct}. 
Indeed, if $G = (G_1 \times \cdots \times G_n) / H$, denoting $\pi_i$ the projection onto $G_i$ and $K = \pi_1(H) \times \cdots \times \pi_n(H)$, we see that $G / (K/H) \cong (G_1 \times \cdots \times G_n) / K = G_1 / \pi_1(H) \times \cdots \times G_n / \pi_n(H)$. 
The converse however does not always hold, as the images of the factors $G_i$ in $(G_1 \times \cdots \times G_n) / H$ are commuting normal subgroups, and this may not happen in $G$ even if $G/K$ is a direct product. 

\begin{proposition}[Free subgroups in almost direct products]\label{prop:amalgaminproduct}
Let $\cS$ be the class of groups not containing a non-abelian free group. 
Let $G$ be the $\cS$-almost direct product of groups $G_1, \dots, G_m$, and suppose that $G_{n+1}, \dots, G_m$ belong to $\cS$. 
If $A$ and $B$ are subgroups of $G$ whose intersection $C$ satisfies $\indx{A}{C} > 2$, and are such that the canonical map $A \ast_C B \to \langle A, B \rangle$ is an isomorphism, then there exists $i \in \{1, \dots, n\}$ for which the kernel of the projection $\langle A, B \rangle \to G_i$ is contained in $C$. 
\end{proposition}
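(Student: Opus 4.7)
The strategy is to set up a family of normal subgroups of $\langle A, B\rangle = A \ast_C B$ whose iterated commutator has no non-abelian free subgroup, and then invoke the concluding statement of \Cref{lem:normalsubgroupinamalgam}. For each $i \in \{1, \dots, m\}$, let $M_i$ denote the kernel of the composition $G \twoheadrightarrow G/K \twoheadrightarrow G_i$, so that $M_i$ is normal in $G$ and $M_i/K$ corresponds, under the identification $G/K \cong G_1 \times \cdots \times G_m$, to $G_1 \times \cdots \times \hat{G}_i \times \cdots \times G_m$. Set $N_i = \langle A, B\rangle \cap M_i$; since $M_i \trianglelefteq G$, each $N_i$ is normal in $\langle A, B\rangle$, and $N_i$ is precisely the kernel of the projection $\langle A, B\rangle \to G_i$. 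The goal becomes to prove that $N_i \subset C$ for some $i \in \{1, \dots, n\}$.

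The key computation takes place in $G/K$. Commutators in a direct product act componentwise, and the $i$th coordinate of every element of $M_i/K$ is trivial. A straightforward induction on $n$ then shows that
\[
[M_1, M_2, \dots, M_n]/K \;\subset\; 1 \times \cdots \times 1 \times G_{n+1} \times \cdots \times G_m,
\]
where the trivial factors occupy the positions $1, \dots, n$. Since $\cS$ is closed under extensions it is also closed under finite direct products, so the right-hand side lies in $\cS$. Combined with $K \in \cS$ and another application of closure under extensions, this yields $[M_1, \dots, M_n] \in \cS$.

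Consequently, $[N_1, \dots, N_n] \subset [M_1, \dots, M_n] \cap \langle A, B\rangle$ is also in $\cS$ (by closure under subquotients), and in particular contains no non-abelian free subgroup. Since the hypothesis $\indx{A}{C} > 2$ lets us apply the concluding statement of \Cref{lem:normalsubgroupinamalgam} to the amalgam $\langle A, B\rangle = A \ast_C B$ and the normal subgroups $N_1, \dots, N_n$, we conclude that there exists $i \in \{1, \dots, n\}$ such that $N_i \subset C$, as required.

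\textbf{Expected obstacle.} The conceptual content is entirely packaged inside \Cref{lem:normalsubgroupinamalgam}; the only point requiring care is the componentwise commutator calculation in $G/K$, namely tracking which coordinates of the iterated commutator collapse. Everything else — normality of $N_i$, closure of $\cS$ under finite products, and the final extraction of a single index $i \le n$ — is routine.
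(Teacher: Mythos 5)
Your proof is correct and follows essentially the same route as the paper's: define $N_i = \langle A,B\rangle \cap M_i$, show $[N_1,\dots,N_n]$ has no non-abelian free subgroup, and invoke the concluding statement of \Cref{lem:normalsubgroupinamalgam}. The only cosmetic difference is that the paper first discards $G_{n+1},\dots,G_m$ (so that $\bigcap_{i=1}^n M_i \in \cS$ and $[M_1,\dots,M_n]\subseteq\bigcap_{i=1}^n M_i$ immediately), whereas you keep them and reach the same conclusion via the coordinatewise computation in $G/K$ together with closure of $\cS$ under extensions.
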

\begin{proof}
Since $G_{n+1}, \dots, G_m$ belong to $\cS$, it is clear that $G$ is also the $\cS$-almost direct product of $G_1, \dots, G_n$. 
Let $\pi_i$ denote the projection $G \to G_i$ and set $M_i = \ker \pi_i$. 
Identify $\langle A, B \rangle$ with $A \ast_C B$ and set $N_i = M_i \cap (A \ast_C B)$. 

By assumption, $\textstyle \bigcap_{i=1}^n M_i$ does not contain a non-abelian free group. 
The same then holds for $[N_1, \dots, N_n] \subset [M_1, \dots, M_n] \subset \bigcap_{i=1}^n M_i$, and \Cref{lem:normalsubgroupinamalgam} implies the existence of an index $i \in \{1, \dots, n\}$ for which $N_i \subset C$. 
\end{proof}

There are versions of \Cref{lem:normalsubgroupinamalgam} and \Cref{prop:amalgaminproduct} for HNN extensions. 
We leave their statement and proof to the reader.

\section{Simultaneous ping-pong partners for finite subgroups of reductive groups} \label{sec:pingpongreductivegroups}

Let $\Fi$ be a field. Let $\bG$ be a reductive\footnote{In this paper, all reductive (in particular, all semisimple) algebraic groups are connected by definition. This convention sometimes differs in the literature. 
We also call \emph{simple} a non-commutative algebraic group whose proper normal subgroups are finite (sometimes called `almost simple' in the literature). } 
algebraic $\Fi$-group, $\Gamma$ a Zariski-dense subgroup of $\bG(\Fi)$, and $H$ a finite subgroup of $\bG(\Fi)$. 
This section is concerned with finding elements $\gamma$ of $\Gamma$ which are ping-pong partners for $H$.

\subsection{Existence of simultaneous ping-pong partners in linear groups}
The construction and study of free products in linear groups is a classical topic, going back way beyond Tits' celebrated work \cite{Tits72} establishing existence of free subgroups in linear groups which are not virtually solvable. 
Given a subset $F$ of a linear group $G$, the existence of \emph{simultaneous} ping-pong partners for elements of $F$ (that is, elements which are ping-pong partners for every $h \in F$) has also been studied, see namely the works of Poznansky \cite[Theorem 6.5]{Poznansky06} and Soifer and Vishkautsan \cite[Theorem 1.3]{SoiferVishkautsan10}. We also recall from the introduction the open question going back to Bekka, Cowling and de la Harpe, cases of which are answered in the two works just cited. 

\begin{question}[see {\cite[Remark 3]{BekkaCowlingdelaHarpe95}} and {\cite[Question 17]{delaHarpe07}}] \label{que:delaHarpe}
Let $G$ be a connected semisimple adjoint real Lie group without compact factors, and let $\Gamma$ be a Zariski-dense subgroup of $G$. 
Let $F$ be a finite set of non-trivial elements of $\Gamma$. 
Does there exists an element $\gamma \in \Gamma$ of infinite order such that $\langle h, \gamma \rangle \cong \langle h \rangle \ast \langle \gamma \rangle$ for every $h \in F$?
\end{question}

Of course, if $F$ is a subgroup, the condition that $\langle h, \gamma \rangle$ be freely generated for every element $h \in F$ does not imply that the subgroup $\langle F, \gamma \rangle$ is freely generated by $F$ and $\gamma$. 
For instance, even when the subgroup $\langle (h_1,h_2), (\gamma_1, \gamma_2) \rangle$ of $G_1 \times G_2$ is freely generated for every pair $(h_1, h_2) \in F_1 \times F_2$, the group $\langle F_1 \times F_2, (\gamma_1, \gamma_2)\rangle$ is never freely generated, as $(\gamma_1 h_1 \gamma_1^{-1}, 1)$ commutes with $(1, h_2)$. 

Similarly, even if the projections $\langle h_i, \gamma_i \rangle$ ($i=1,2$) of a subgroup $\langle (h_1,h_2), (\gamma_1, \gamma_2) \rangle$ of $G_1 \times G_2$ are freely generated subgroups, it may be that $\langle (h_1,h_2), (\gamma_1, \gamma_2) \rangle$ is itself not freely generated: if $h_1, h_2$ have distinct finite orders $n_1$ and $n_2$, then again $(\gamma_1, \gamma_2) (h_1, h_2)^{n_2} (\gamma_1, \gamma_2)^{-1}$ and $(h_1, h_2)^{n_1}$ commute. 
\Cref{prop:amalgaminproduct} shows in fact that for \Cref{que:delaHarpe} to possibly have a positive answer, one must at the very least require that each $h \in F$ generates a subgroup $\langle h \rangle$ that embeds into one of the factors of $G$ (cf.\ \Cref{rem:almostembeddingnecessary} in general). 

For these reasons and others, we cannot directly use the works mentioned above; but we will use similar techniques to prove the following. 

\begin{theorem}\label{thm:pingpongdense}
Let $\Fi$ be a field. 
Let $\bG$ be a connected algebraic $\Fi$-group with center $\bZ$. 
Let $\Gamma$ be a Zariski-connected subgroup of $\bG(\Fi)$. Let $(A_i, B_i)_{i \in I}$ be a finite collection of finite subgroups of $\bG(\Fi)$. 
Suppose that for each $i \in I$ there exists a local field $\LFi_i$ containing $\Fi$ and a projective $\LFi_i$-representation $\rho_i: \bG \to \PGL_{V_i}$, where $V_i$ is a finite-dimensional module over a finite-dimensional division $\LFi_i$-algebra $\LDi_i$, with the following two properties: 
\begin{enumerate}[leftmargin=3cm,itemsep=1ex,topsep=1ex]
	\item[\textup{(Proximality)}] $\rho_i(\Gamma)$ contains a proximal element;
	\item[\textup{(Transversality)}] For every $h \in (A_i \cup B_i) \setminus \bZ(F)$ and every $p \in \P(V_i)$, the span of the set $\{\rho_i(xhx^{-1}) p \mid x \in \Gamma \}$ is the whole of $\P(V_i)$.
\end{enumerate}

Let $S$ be the collection of regular semisimple elements $\gamma \in \Gamma$ of infinite order, such that for all $i \in I$, the canonical maps
\begin{align*}
\langle \gamma, C_{A_i} \rangle \ast_{C_{A_i}} A_i &\to \langle \gamma, A_i \rangle && \text{where $C_{A_i} = A_i \cap \bZ(F)$,}\\
\langle \gamma, C_{B_i} \rangle \ast_{C_{B_i}} B_i &\to \langle \gamma, B_i \rangle && \text{where $C_{B_i} = B_i \cap \bZ(F)$,}\\
\intertext{and provided $\indx{A_i}{C_{A_i}} > 2$ or $\indx{B_i}{C_{B_i}} > 2$, also}
\langle A_i, C_i \rangle \ast_{C_i} \langle B_i, C_i \rangle &\to \langle A_i, B_i^{\gamma} \rangle && \text{where $C_i = C_{A_i} \cdot C_{B_i}$,}
\end{align*}
are all isomorphisms. 
Then $S$ is dense in $\Gamma$ for the join of the profinite topology and the Zariski topology. 
\end{theorem}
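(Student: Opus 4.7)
The plan is to follow a Tits-style projective ping-pong argument, playing the dynamics of each $\rho_i$ against the transversality hypothesis, and then checking that the resulting set $S$ inherits the joint density from an auxiliary set of well-dynamic elements produced by \Cref{prop:proximaldense}.

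\textbf{Step 1 (producing biproximal $\gamma$).} I would first invoke \Cref{prop:proximaldense}, which under the proximality hypothesis on each $\rho_i$ guarantees a subset $S_0 \subseteq \Gamma$, dense for the join of the Zariski and profinite topologies, such that every $\gamma \in S_0$ is regular semisimple of infinite order and has biproximal image under every $\rho_i$ ($i \in I$), with contraction ratio that can be made arbitrarily large by taking suitable powers. For such $\gamma$, write $p_i^{\pm}$ for the attracting points in $\P(V_i)$ and in its dual, and $H_i^{\pm}$ for the corresponding repelling hyperplanes.

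\textbf{Step 2 (transversal position for $A_i, B_i$).} The ping-pong configurations we aim at require the attracting/repelling data of the conjugates of each $h \in (A_i \cup B_i) \setminus \bZ(F)$ to be in general position with respect to $p_i^{\pm}, H_i^{\pm}$. The transversality hypothesis says precisely that for any $p \in \P(V_i)$, the $\Gamma$-orbit $\{\rho_i(xhx^{-1})p : x \in \Gamma\}$ spans $\P(V_i)$. A standard argument (taking finitely many points in general position and conjugating) then shows that the set of $x \in \Gamma$ for which $\rho_i(xhx^{-1})$ has an eigenline at one of the $p_i^{\pm}$, or leaves $H_i^{\pm}$ invariant, lies in a proper Zariski-closed subset; its complement is Zariski-open and, by Zariski-connectedness of $\Gamma$, dense. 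Intersecting over the finitely many $i \in I$ and $h \in A_i \cup B_i$ yields a Zariski-open dense set of ``good conjugators''. Since the free product structures we want to establish are conjugation-invariant, replacing the $A_i, B_i$ by good conjugates does not alter the statement.

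\textbf{Step 3 (three ping-pong configurations).} For $\gamma \in S_0$ of sufficiently large contraction ratio, and for $A_i, B_i$ put in general position as above, one builds neighborhoods $U_i^{\pm}$ of $p_i^{\pm}$ such that $\rho_i(\gamma)^n$ sends the complement of a neighborhood of $H_i^-$ into a small neighborhood of $p_i^+$, while the non-central coset representatives of $A_i/C_{A_i}$ (resp.\ $B_i/C_{B_i}$) send $U_i^+$ into $U_i^-$. The central elements $c \in C_{A_i} \cup C_{B_i}$ project trivially via $\rho_i$, so preserve every set, as required by \Cref{lem:pingpongamalgam}. Applying that lemma (using the hypothesis $\indx{A_i}{C_{A_i}} > 2$ when amalgamating $A_i$, and symmetrically for $B_i$) yields the first two isomorphisms. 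For the third, I would exploit biproximality of $\gamma$: conjugation by $\gamma^n$ contracts the non-central part of $B_i$ toward $p_i^+$ and expands its repelling data toward $H_i^+$, placing $B_i^{\gamma^n}$ in ping-pong position with $A_i$ amalgamated over $C_i = C_{A_i} C_{B_i}$. Transversality prevents accidental coincidences of invariant subspaces with those of $A_i$, and a final appeal to \Cref{lem:pingpongamalgam} gives the third isomorphism.

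\textbf{Step 4 (density) and the main obstacle.} The conditions imposed on $\gamma$ in Steps 1--3 are all open in the $\LFi_i$-topologies and Zariski-open once one restricts to the regular semisimple locus; hence $S$ contains the intersection of $S_0$ with these open conditions, which remains dense for the join topology since $S_0$ is and since Zariski-connectedness of $\Gamma$ forces any Zariski-open subset to meet every profinite coset. The principal obstacle I expect is Step 3 for the cross-amalgam $\langle A_i, B_i^{\gamma}\rangle$: here $\gamma$ must play simultaneously two roles (ping-pong partner and conjugator), and the conjugation has to put $B_i$ in transversal position with $A_i$ across all $i \in I$ at once. The proximal dynamics of $\gamma$ (collapsing everything outside a repelling hyperplane into a tiny ball around $p_i^+$) ``normalises'' the image of $\gamma B_i \gamma^{-1}$, and transversality ensures that this normalised position avoids the invariant data of $A_i$; combining these is the delicate technical core of the argument.
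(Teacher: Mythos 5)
Your Steps 1--3 track the paper's own proof closely: \Cref{prop:proximaldense} supplies the simultaneously biproximal elements, the transversality hypothesis is converted into a non-empty Zariski-open set of good conjugators (the paper conjugates $\gamma'$ rather than the $A_i,B_i$, but that is equivalent), and the three amalgams are obtained from a single pair of compact sets $P_i, Q_i$ via \Cref{lem:pingpongamalgam}. One imprecision in Step 2 is worth flagging: the genericity condition you impose (``$\rho_i(xhx^{-1})$ has an eigenline at one of the $p_i^{\pm}$, or leaves $H_i^{\pm}$ invariant'') is not the condition ping-pong needs. What is required is that $\rho_i(xhx^{-1})$ maps the attracting points $p_i^{\pm}$ \emph{off} the union of the repelling hyperplanes $H_i^{+}\cup H_i^{-}$; your condition as stated does not prevent $xhx^{-1}\cdot p_i^{+}$ from landing inside $H_i^{-}$, which would destroy the configuration. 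Non-emptiness of the correct Zariski-open locus is exactly what the spanning hypothesis gives (an orbit whose span is all of $\P(V_i)$ cannot lie in a proper subspace), and this is the content of \Cref{lem:transversality}.

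The genuine gap is in Step 4. The conditions imposed on $\gamma$ in Step 3 are \emph{not} Zariski-open conditions: to achieve $\rho_i(\gamma)^{n}Q_i\subset P_i^{+}$ one must pass to a power $n>N$, where $N$ depends on $\gamma$ and on the compact sets $P_i,Q_i$ (which themselves depend on the attracting and repelling data of $\gamma$). A biproximal element with the right transversal position but weak contraction need not play ping-pong, so $S$ does \emph{not} contain $S_0$ intersected with open conditions; it only contains, for each biproximal $\gamma'$ in a coset $\Lambda\gamma_0$ and each good conjugator $\lambda$, the powers $(\lambda^{-1}\gamma'\lambda)^{[n]}$ with $[n]$ large and $[n]\equiv 1 \bmod \indx{\Gamma}{\Lambda}$ (the congruence being needed to stay in the prescribed profinite coset). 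Two further ingredients are then required to deduce Zariski-density of $S\cap\Lambda\gamma_0$, and both are absent from your argument: first, the Noetherian-topology trick showing that the Zariski closure of $\{\gamma^{[n]}\mid n\in\N\}$ contains $\gamma$ itself, so that $\overline{S\cap\Lambda\gamma_0}$ contains every conjugate $\lambda^{-1}\gamma'\lambda$; second, the fact that the set of admissible pairs $(\gamma',\lambda)$ is Zariski-dense in $\Gamma\times\Gamma$ and the conjugation map is dominant, so that these conjugates form a Zariski-dense subset. Without these, the claim that ``$S$ contains the intersection of $S_0$ with these open conditions'' is simply false, and the density conclusion does not follow.
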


\begin{remark}
The conditions defining $S$ in the statement of the theorem amount to the kernel of the canonical maps
\begin{align*}
\langle \gamma \rangle \ast A_i \to \langle \gamma, A_i \rangle, \quad
\langle \gamma \rangle \ast B_i \to \langle \gamma, B_i \rangle, \quad
A_i \ast B_i \to \langle A_i, B_i \rangle
\end{align*}
being respectively $\langle\!\langle [\gamma, C_{A_i}] \rangle\!\rangle$, $\langle\!\langle [\gamma, C_{B_i}] \rangle\!\rangle$, and $\langle\!\langle [C_{A_i}, {B_i}], [{A_i}, C_{B_i}] \rangle\!\rangle$. 

When $\bZ(\Fi)$ is trivial, the theorem states that for any $\gamma \in S$ and for all $i \in I$, the subgroups $\langle \gamma, A_i \rangle$, $\langle \gamma, B_i \rangle$, and $\langle A_i, B_i^{\gamma} \rangle$ of $\bG(\Fi)$ are freely generated. 
\end{remark}

\begin{remark} \label{rem:transversalitycondition}
Note that the transversality condition implies that every $\rho_i$ is irreducible. 
Moreover, the transversality condition holds equivalently for $\Gamma$ or for its Zariski closure (it is a \emph{Zariski-closed} condition). 
Thus, if $\Gamma$ happens to be Zariski-dense (as is most common), this condition can be replaced by the analogue for $\bG(\LFi_i)$:
\emph{\begin{enumerate}[leftmargin=3cm,itemsep=1ex,topsep=1ex]
	\item[\textup{(Transversality$'$)}] For every $h \in H_i \setminus C_i$ and every $p \in \P(V_i)$, the span of the set $\{\rho_i(xhx^{-1}) p \mid x \in \bG(\LFi_i) \}$ is the whole of $\P(V_i)$.
\end{enumerate}}
\end{remark}

\begin{remark}
\Cref{thm:pingpongdense} is only meaningful for pseudo-reductive groups. 
Indeed, the $\Fi$-unipotent radical $\rR_{\mathrm{u},\Fi}(\bG)$ must acts trivially under $\rho_i$, as the fixed-point set of $\rR_{\mathrm{u},\Fi}(\bG)$ is non-empty by the Lie--Kolchin theorem, hence is the whole of $V_i$. 
Thus each $\rho_i$ factors through the pseudo-reductive quotient $\bG / \rR_{\mathrm{u},\Fi}(\bG)$ of $\bG$. 
We remind the reader that if $\Char \Fi = 0$, the full unipotent radical $\rR_{\mathrm{u}}(\bG)$ of $\bG$ is defined over $\Fi$, hence pseudo-reductive groups are reductive (the converse always holding). 

In subsequent sections, we will mostly be concerned with number fields and their archimedean completions, leaving aside the usual complications arising in positive characteristic. 
\end{remark}

\begin{remark}
There is no obvious analogue of \Cref{thm:pingpongdense} for HNN extensions. Indeed, $\bG(\Fi)$ may admit finite subgroups $H$ containing a proper subgroup $H_1$ whose centralizer in $\bG(\Fi)$ is trivial. 
For instance, $\PGL_2(\C)$ contains a copy of the symmetric group on $4$ letters, whose alternating subgroup has trivial centralizer (see for instance \cite[Proposition 1.1]{Beauville10}). 
In such a situation, there is no HNN extension in $\bG(\Fi)$ of $H$ with respect to the identity $H_1 \to H_1$, as any $g \in \bG(\Fi)$ centralizing $H_1$ is trivial, but $\HNN{H}{H_1}$ is not. 
\end{remark}

\subsection{Proximal dynamics in projective spaces} \label{subsec:proximaldynamics}
Before proving \Cref{thm:pingpongdense}, we need to extend a few known facts about the dynamics of the action of $\GL(V)$ on $\P(V)$ to projective spaces over division algebras. 
Foremost, we will need the contents of \cite[\S3]{Tits72} over a division algebra, but the proofs given by Tits are valid with minor adaptations to keep track of the $\LDi$-structure and the fact $\LDi$ is not necessarily commutative. 
All of this is straightforward, so we will not rewrite arguments whenever they apply in the same way. 
\smallbreak

In this subsection, let $\LFi$ be a local field, $\LDi$ a division algebra of dimension $d$ over $\LFi$, and $V$ a finite-dimensional right $\LDi$-module. 
Recall that the absolute value $| \cdot |$ of $\LFi$ extends uniquely to an absolute value on $\LDi$ which will also denote by $| \cdot |$; we have the formula $|x| = |\rN(x)|^{1/d}$ for $x \in \LDi$. 
For each $K$-variety $\mathbf{V}$, the topology of $\LFi$ induces a locally compact topology on $\mathbf{V}(\LFi)$; this topology is often called the \emph{local topology}, to distinguish it from the Zariski topology when needed. 

With little deviation, we will follow the notations and conventions of \cite{Tits71} and \cite{Tits72}, which the reader may consult along with \cite{BorelTits65} for background material on the representation theory of algebraic groups (including over division algebras). 

Recall that $\GL_V$ is the algebraic $\LFi$-group of automorphisms of the $\LDi$-module $V$, so that for any $\Fi$-algebra $A$, the group $\GL_V(A)$ is the group of automorphisms of the right $(\LDi \ot_{\LFi} A)$-module $V \ot_{\LFi} A$. 
Provided $\dim V \geq 2$, the $\LFi$-group $\PGL_V$ is the quotient of $\GL_V$ by its center (which is the multiplicative group of the center of $\LDi$). 
The \emph{projective general linear group} $\PGL_V$ acts on the \emph{projective space} $\P(V)$ of $V$, which is the space of right $\LDi$-submodules of $V$ of dimension $1$. 
The $\LDi$-submodules of $V$ and their images in $\P(V)$ are both called \emph{($\LDi$-linear) subspaces}. 
A projective representation $\rho: \bG \to \PGL_V$ of a $\LFi$-group $\bG$ is called \emph{irreducible} if there are no proper non-trivial linear subspaces of $\P(V)$ stable under $\rho(\bG)$. 
A representation $\bG \to \GL_V$ is then irreducible if and only if its projectivization is. 

Given two subspaces $X, Y$ of $\P(V)$, we denote their span by $X \join Y$. If $X \cap Y = \emptyset$ and $X \join Y = \P(V)$, we denote by $\proj(X,Y)$ the mapping $\pi: X \to Y$ defined by $\{\pi(p)\} = (X \join \{p\}) \cap Y$. 
We will denote by $\mathring{C}$ the interior (for the local topology) of a subset $C$ of $\P(V)$. 

When it is needed to view $V$ as a $\LFi$-module instead of a $\LDi$-module, we will add the corresponding subscript to the notation. 

\begin{definition} \label{def:proximal}
Let $g$ be an element of $\GL_V(\LFi)$ or $\PGL_V(\LFi)$. 
\begin{enumerate}[itemsep=1ex,topsep=1ex,leftmargin=2em]
	\item Momentarily view $V$ as a vector $\LFi$-space, so as to identify $\GL_{V}$ with the subgroup of $\GL_{V,\LFi}$ centralizing the right action of $\LDi$ on $V$, and likewise for $\PGL_{V}$. 
The \emph{attracting subspace of $g$} is the subspace $\rA(g)$ of $V$ which is the direct sum of the generalized eigenspaces (over some algebraic closure) associated to the eigenvalues of maximal absolute value of (any lift to $\GL_V$ of) $g$. 
The complementary set $\rA'(g)$ is defined to be the direct sum of the remaining generalized eigenspaces of $g$. By construction, $V = \rA(g) \oplus \rA'(g)$. 
\end{enumerate}

Note that since the Galois group of any extension of $\LFi$ preserves the absolute value, it permutes the generalized eigenspaces of maximal absolute value, hence $\rA(g)$ and $\rA'(g)$ are stable under the Galois group and are indeed defined over $\LFi$. 
Moreover, if $g$ commutes with the action of $\LDi$, then $\LDi$ preserves the generalized eigenspaces of $g$ (after perhaps extending scalars). In this case, $\rA(g)$ and $\rA'(g)$ are themselves stable under $\LDi$, i.e.\ they are $\LDi$-subspaces of $V$. 

The subspaces $\rA(g)$ and $\rA'(g)$ only depend on the image of $g$ in $\PGL_V$. In what follows, we will often omit projectivization from the notation as long as it causes no confusion between $V$ and $\P(V)$. 

\begin{enumerate}[itemsep=1ex,topsep=1ex,leftmargin=2em,resume]
	\item We call $g$ \emph{proximal} if $\dim_{\LDi} \rA(g) = 1$, in other words if $\rA(g)$ is a point in $\P(V)$. In case $\LDi = \LFi$, this means that $g$ has a unique eigenvalue (counting with multiplicity) of maximal absolute value. In general, this means that $g$ has $d$ (possibly different) eigenvalues of maximal absolute value. If both $\rA(g)$ and $\rA(g^{-1})$ are one-dimensional, we call $g$ \emph{biproximal}\footnote{Biproximal elements are sometimes called `very proximal' or `hyperbolic' in the literature.}. 
We call a (projective) representation $\rho: \Gamma \to (\rP)\!\GL_V(\LFi)$ \emph{proximal} if $\rho(\Gamma)$ contains a proximal element. 
\end{enumerate}

Proximal elements have contractive dynamics on $\P(V)$: if $g$ is proximal, then for any $p \in \P(V) \setminus \rA'(g)$ the sequence $(g^n \cdot p)_{n \in \N}$ converges to the point $\rA(g)$ (see \Cref{lem:proximalcriterion}). 
\end{definition}
\smallbreak

The complement $\P(V) \setminus X$ of a hyperplane $X \subset \P(V)$ can be identified with an affine space over $\LDi$ by choosing for $V$ a system of coordinate functions $\xi= (\xi_0, \dots, \xi_{\dim \P(V)})$, $\xi_i \in V^*$, such that $X = \ker \xi_0$. The functions $\xi_i \xi_0^{-1}$ (${i=1, \dots, \dim \P(V)}$) then define affine coordinates on $\P(V) \setminus X$. 
If $g \in \PGL_V(\LFi)$ stabilizes $X$, its restriction to $\P(V) \setminus X$ need not be an affine map in these coordinates, but will be semiaffine (with respect to conjugation by the factor by which $g$ scales $\xi_0$). In particular, if $\P(V) \setminus X$ is seen as an affine space over $\LFi$, then the restriction of $g$ is $\LFi$-affine. 

For the rest of this section, we fix an \emph{admissible} distance $d$ on $\P(V)$, that is, a distance function $d: \P(V) \times \P(V) \to \P(V)$ inducing the local topology on $\P(V)$ and satisfying the property that for every compact subset $C$ contained in an affine subspace of $\P(V)$, there exist constants $M, M' \in \R$ such that
\[
M \cdot \restr{d_\xi}{{C \times C}} \leq \restr{d}{{C \times C}} \leq M' \cdot \restr{d_\xi}{{C \times C}}. 
\]
Here $d_\xi$ is the supremum distance with respect to the affine coordinates $(\xi_i \xi_0^{-1})_{i=1}^{\dim \P(V)}$ described above. 
Note that two different coordinate systems on the same affine subspace $A$ of $\P(V)$ define comparable distance functions on this affine subspace. Moreover, if instead of using $\LDi$-coordinates one views $A$ as an affine $\LFi$-space, the supremum distance in any set of affine $\LFi$-coordinates will again be comparable to $d_\xi$. 

As indicated by Tits, when $\LFi$ is an archimedean local field, any elliptic metric on $\P(V)$ is admissible. Tits also indicates in \cite[\S3.3]{Tits72} how to construct an admissible metric in the non-archimedean case by patching together different $d_\xi$'s; this construction works identically over a division algebra. 

Having fixed an (admissible) distance $d$ on $\P(V)$, the \emph{norm} of a mapping $f: X \to \P(V)$ defined on some subset $X \subset \P(V)$ is the quantity
\[
\|f\| = \sup_{\substack{p, q \in X\\ p \neq q}} \frac{d(f(p),f(q))}{d(p,q)}. 
\]
Note that the norm is submultiplicative: given mappings $f: X \to \P(V)$ and $g: Y \to X$, we have $\|f \circ g \| \leq \|f\| \cdot \|g\|$. 
Projective transformations always have finite norm \cite[Lemma 3.5]{Tits72}. Indeed, given $g \in \PGL_V(\LFi)$, the distance function $d^g$ defined by
\(
d^g(p,q) = d(g p, g q)
\)
is again admissible. Since $\P(V)$ is compact, it can be covered by finitely many compact sets contained in affine subspaces, on which the ratio between $d^g$ and $d$ is uniformly bounded, by admissibility. 
\medskip

We can now state the needed results from \cite[\S3]{Tits72} in our setting. 
The following two lemmas describe the dynamics of $\LDi$-linear transformations. 

\begin{lemma}[Lemma 3.8 in \cite{Tits72}] \label{lem:proximalcriterion}
Let $g \in \PGL_V(\LFi)$, let $C$ be a compact subset of $\P(V)$ and let $r \in \R_{>0}$. 

\begin{enumerate}[itemsep=1ex,topsep=1ex,label=\textup{(\roman*)},leftmargin=2em]
	\item \label{lem:proximalcriterion1}
Suppose that $g$ is proximal and that $C \cap \rA'(g) = \emptyset$. Then there exists an integer $N$ such that $\|\restr{g^n}{C}\|< r$ for all $n > N$; and for any neighborhood $U$ of $\rA(g)$, there exists an integer $N'$ such that $g^n C \subset U$ for all $n > N'$. 

	\item \label{lem:proximalcriterion2}
Assume that, for some $m \in \N$, one has $\|\restr{g^m}{C}\| < 1$ and $g^m C \subset \mathring{C}$. Then $\rA(g)$ is a point contained in $\mathring{C}$. 
\end{enumerate}
\end{lemma}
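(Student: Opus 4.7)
My plan is to follow Tits' original argument in \cite[Lemma 3.8]{Tits72}, which the authors have already signalled adapts to the division algebra setting with only bookkeeping changes. The key point is that the dynamics of $g$ on $\P(V)$ only depend on the induced action on $\P(V)$ viewed as a variety over $\LFi$, while the $\LDi$-structure merely enters through the identification of the subspaces $\rA(g)$ and $\rA'(g)$ as $\LDi$-submodules.

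For part \ref{lem:proximalcriterion1}, I would fix coordinates adapted to $V = \rA(g) \oplus \rA'(g)$. Since $g$ is proximal, $\rA(g)$ is a $\LDi$-line so $[\rA(g)]$ is a single point of $\P(V)$, and the disjointness assumption places $C$ inside the affine chart $\P(V) \setminus \P(\rA'(g))$ with $[\rA(g)]$ as origin. Viewing this chart as an $\LFi$-affine space, $g$ acts affinely with linear part of $\LFi$-spectral radius strictly less than $1$ (normalising by the scale on $\rA(g)$, every eigenvalue on $\rA'(g)$ has strictly smaller absolute value than those on $\rA(g)$). Thus in some equivalent $\LFi$-norm some iterate of $g$ is a strict contraction near the origin, so by compactness $g^n C$ is pushed arbitrarily close to $[\rA(g)]$ (giving the second assertion) and the supremum affine norm $\|\restr{g^n}{C}\|_{d_\xi}$ goes to $0$. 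Since $C$ and all $g^n C$ (for $n$ large) lie in a compact subset of the affine chart, admissibility lets one transfer this to $\|\restr{g^n}{C}\|$ computed with $d$, giving the first assertion.

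For part \ref{lem:proximalcriterion2}, the hypotheses make $g^m$ a strict contraction of the non-empty compact set $C$ into its interior, so the Banach fixed point theorem furnishes a unique fixed point $p \in C$, necessarily lying in $g^m C \subset \mathring{C}$, and $g^{mn}$ converges uniformly on $C$ to the constant map $p$. On the other hand, from the Jordan analysis in \ref{lem:proximalcriterion1} one knows that, for any $q$ represented by a vector with non-zero $\rA(g)$-component, the projective orbit $(g^{mn} q)$ accumulates on the projective point determined by that component in $[\rA(g)]$. Uniform convergence to the single point $p$ over the non-empty open set $\mathring{C}$ then forces these accumulation points to coincide for a Zariski-dense family of components, which can only happen if $\dim_{\LDi} \rA(g) = 1$. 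Hence $g$ is proximal and $\rA(g) = p \in \mathring{C}$.

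The one place where a little care is needed is the interplay of the admissible distance $d$ with the affine supremum distance $d_\xi$: the norms $\|\cdot\|$ appearing in both statements are defined via $d$, whereas the contraction estimates naturally live in $d_\xi$-coordinates. The two are comparable on any compact subset of an affine chart, which is exactly the setting produced by the hypotheses, so this is handled by the same patching argument as in \cite[\S 3.3]{Tits72}. The non-commutativity of $\LDi$ plays no further role beyond forcing us to treat the chart as an $\LFi$-affine space when invoking affine dynamics; no substantively new obstacle arises.
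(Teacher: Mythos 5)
Part (i) of your proposal is correct and is essentially the paper's own argument: work in the $\LFi$-affine chart complementary to $\rA'(g)$, where $g$ acts $\LFi$-affinely with linear part of spectral radius $<1$, and transfer the resulting contraction estimates from $d_\xi$ back to $d$ using admissibility on compact subsets of the chart.

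Part (ii), however, has a genuine gap. From the contraction hypothesis you extract only the uniform convergence of $g^{mn}$ on $C$ to the constant map $p$, and you then argue that the accumulation points of the orbits $g^{mn}q$ are determined by the $\rA(g)$-component of $q$, so that their coincidence over the open set $\mathring{C}$ forces $\dim_{\LDi}\rA(g)=1$. This inference is false. If a representative of $g$ restricted to $\rA(g)$ has the form $\lambda(1+N)$ with $N$ nilpotent and non-zero, then $g^{mn}[v]$ converges to $[N^{j}v]$ for $j$ maximal with $N^{j}v\neq 0$, and this limit is one and the same point for all $v$ in a dense open set; more generally, non-trivial Jordan structure on $\rA(g)$ is perfectly compatible with all generic projective orbits converging to a single point. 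Orbit convergence alone therefore cannot exclude $\dim_{\LDi}\rA(g)\geq 2$, and your "Zariski-dense family of components" step does not go through.

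What excludes it is the metric hypothesis $\|\restr{g^m}{C}\|<1$, which in your write-up is used only to produce the fixed point. Since the fixed point $p$ lies in $g^mC\subset\mathring{C}$, the set $C$ contains a neighbourhood of $p$, so $\|\restr{g^m}{C}\|$ dominates the local Lipschitz constant of the projective action of $g^m$ at $p$. Normalise the eigenvalues of $g$ on the $\LDi$-line $p$ to have absolute value $1$ (they need not be equal over $\LDi$, but they share an absolute value). If $\rA(g)\neq\{p\}$, i.e.\ if there is another generalized eigenvector of maximal absolute value or a non-trivial Jordan block above $p$, then in the corresponding direction the derivative of the projective action of $g^m$ at $p$ has absolute value at least $1$ (as in the example $t\mapsto t/(1+mt)$, whose derivative at $0$ is $1$), contradicting $\|\restr{g^m}{C}\|<1$. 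This derivative estimate at the fixed point is exactly the adaptation the paper makes of Tits' argument, and it is what must replace your accumulation-point reasoning.
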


Note that in loc.~cit.~Tits assumes the existence of a semisimple proximal element; but as he indicates in the footnotes, this assumption is superfluous and the proof of the lemma is identical with an arbitrary proximal element. 

\begin{proof}
The argument given by Tits applies, taking into account the following adaptations.

In part (i), the transformation $g$ restricted to $\P(V) \setminus A'(g)$ is not necessarily $\LDi$-linear, as was already mentioned. It is nevertheless $\LFi$-linear, with eigenvalues of absolute value strictly smaller than 1 by assumption. So one can apply \cite[Lemma 3.7 (i)]{Tits72} over $\LFi$ and use that the norms defined over $\LDi$ or $\LFi$ are comparable to conclude. 

In part (ii), one cannot pick a representative of $g$ in $\GL_V$ whose eigenvalues corresponding to the fixed point $p \in \P(V)$ equal one (as $g$ may have different eigenvalues on the $\LDi$-line $p$). Nevertheless, they are all of the same absolute value, which we can assume to be $1$. If there is another eigenvalue of the same absolute value (i.e.\ if $\rA(g) \neq \{p\}$), then the restriction of $g$ to $\rA(g)$ is a block-upper-triangular matrix in a well-chosen basis. Since the compact set $C$ has non-empty interior, this contradicts the hypothesis of (ii). 
\end{proof}

\begin{lemma}[Lemma 3.9 in \cite{Tits72}] \label{lem:proximaldynamics}
Let $g \in \PGL_V(\LFi)$ be semisimple, let $\bar{g} \in \GL_V(\LFi)$ be a representative of $g$, let $\Omega$ be the set of eigenvalues of $\bar{g}$ (over an appropriate field extension of $\LFi$) whose absolute value is maximum, let $C$ be a compact subset of $\P(V) \setminus \rA'(g)$, set $\pi= \proj(\rA'(g), \rA(g))$, and let $U$ be a neighborhood of $\pi(C)$ in $\P(V)$. 

\begin{enumerate}[itemsep=1ex,topsep=1ex,label=\textup{(\roman*)},leftmargin=2em]
	\item There exists an infinite set $N \subset \N$ such that $\displaystyle \lim_{\substack{n \in N\\ n \to \infty}} (\lambda^{-1} \mu)^n = 1$ for all $\lambda, \mu \in \Omega$.
	\item The set $\{\|\restr{g^n}{C}\| \mid n \in \N\}$ is bounded. 
	\item If $N$ is as in (i), $g^n C \subset U$ for almost all $n \in N$. 
\end{enumerate}
\end{lemma}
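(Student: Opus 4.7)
The plan is to follow Tits' argument in \cite[Lemma 3.9]{Tits72}, adapted to the projective setting over the division algebra $\LDi$. Semisimplicity of $g$, together with the $g$-invariant decomposition $V = \rA(g) \oplus \rA'(g)$, allows a clean separation of the dynamics: on $\rA(g)$ the action is bounded (once scalars are normalised), while on $\rA'(g)$ it contracts. The main adaptation compared to Tits is that $g$, though $\LDi$-linear on $V$, need not preserve any $\LDi$-structure on individual generalised eigenspaces (cf.\ the discussion preceding \Cref{lem:proximalcriterion}), so the eigenvalues will be taken in an extension of $\LFi$ and one keeps separate track of the underlying $\LFi$-linear structure when computing norms.

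For (i), I would use a compactness argument. The ratios $\lambda^{-1}\mu$ (for $\lambda, \mu \in \Omega$) all have absolute value one and live in some finite extension $\LFi'$ of $\LFi$, which is itself a local field; hence the unit sphere $S = \{x \in \LFi' : |x|=1\}$ is compact, and so is the finite product $T = \prod_{(\lambda,\mu) \in \Omega \times \Omega} S$. The tuple $t = (\lambda^{-1}\mu)_{(\lambda,\mu)} \in T$ generates a cyclic subsemigroup of $T$ whose closure is a compact subsemigroup, hence a subgroup. In particular the identity of $T$ lies in this closure, and extracting an infinite subset $N \subset \N$ along which $t^n \to 1$ componentwise yields the claim.

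For (ii), after rescaling a representative $\bar g \in \GL_V(\LFi)$ of $g$ so that the eigenvalues in $\Omega$ lie on the unit sphere, the block form of $\bar g$ relative to $V = \rA(g) \oplus \rA'(g)$ shows that the iterates $\bar g^n|_{\rA(g)}$ remain in a bounded subset of $\GL(\rA(g))$ (by semisimplicity and unit-sphere eigenvalues), while $\|\bar g^n|_{\rA'(g)}\| \to 0$ exponentially. Since $C$ is compact and disjoint from $\rA'(g)$, it may be covered by finitely many compact pieces each contained in an affine chart of $\P(V)$ whose hyperplane at infinity contains $\rA'(g)$. In such a chart, $g$ acts semiaffinely over $\LDi$, and hence affinely over $\LFi$; admissibility of the distance, combined with the bounds above, yields a uniform bound on the norms of the restrictions of $g^n$ to each piece, hence on $\|g^n|_C\|$. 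The main technical point is this bookkeeping between the $\LDi$-action and the underlying $\LFi$-affine structure in the charts, but it is essentially formal once coordinates have been fixed.

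For (iii), I would combine (i) with the contraction on $\rA'(g)$. For $n$ in the set $N$ from (i), the ratios $(\lambda^{-1}\mu)^n$ tend to one, so after dividing by any chosen $\lambda^n$, the restriction $\bar g^n|_{\rA(g)}$ converges to a scalar; projectively, $g^n|_{\rA(g)}$ converges to the identity along $N$. For an arbitrary point $p = [v+w] \in \P(V)$ with $v \in \rA(g) \setminus \{0\}$ and $w \in \rA'(g)$, we have $g^n p = [\bar g^n v + \bar g^n w]$, and since $\|\bar g^n v\|$ remains bounded below while $\|\bar g^n w\| \to 0$, the projective point $g^n p$ converges to $[v] = \pi(p)$ as $n \to \infty$ in $N$. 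Compactness of $C$ and its disjointness from $\rA'(g)$ upgrade this to uniform convergence on $C$, whence $g^n C \subset U$ for almost every $n \in N$.
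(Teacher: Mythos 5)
Your plan is essentially a from-scratch re-proof of Tits' Lemma 3.9 adapted to the division-algebra setting, whereas the paper takes a much shorter route: it views $V$ as a vector $\LFi$-space and \emph{reduces} to Tits' original lemma. Part (i) is purely a statement about the eigenvalues of $\bar g$ as an $\LFi$-linear map, so it is immediate; for (ii) and (iii) the paper pulls back $C$ and $U$ along the canonical $\GL_V$-equivariant map $q: \P_{\LFi}(V) \to \P(V)$, which is proper and continuous, so that $C' = q^{-1}(C)$ is compact, $U' = q^{-1}(U)$ is open, and $q \circ \pi_{\LFi} = \pi \circ q$; Tits' lemma applied to $C'$, $U'$ then yields the conclusions over $\LDi$ because the norms of $g^n$ restricted to $C$ and to $C'$ bound each other. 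What your approach buys is self-containedness and an explicit picture of the dynamics; what the paper's reduction buys is that all of the chart and $\LDi$-versus-$\LFi$ bookkeeping is handled once and for all by the equivariant projection. Your part (i) (compactness of the unit sphere in a finite extension of $\LFi$, closed subsemigroups of compact groups are subgroups) and part (iii) are sound and match the standard argument.

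One step of your part (ii) is stated too strongly: a hyperplane $X$ whose kernel contains $\rA'(g)$ need not be $g$-stable (when $\dim_{\LDi}\rA(g) > 1$, this would require a $g$-invariant hyperplane of $\rA(g)$, which a semisimple $g$ need not possess over $\LFi$), so $g$ does not act semiaffinely on the chart $\P(V)\setminus X$, and moreover $g^n$ may carry the chart off itself. This is exactly the "bookkeeping" you defer as formal, but it is the crux of (ii); it can be repaired either by following Tits' actual norm estimates (which compare the chart at time $0$ with the image chart, using that the closure of $\{\bar g^n|_{\rA(g)}\}$ is compact after normalizing $\Omega$ to the unit sphere) or, more efficiently, by the paper's reduction via $q$. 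As written, this step is a gap in the sketch, though the surrounding ingredients (boundedness on $\rA(g)$, decay on $\rA'(g)$, admissibility of the metric) are the right ones.
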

\begin{proof}
The easiest way to obtain this lemma over the division algebra $\LDi$ is to take a representative of $g$ in $\GL_V$, see it as an $\LFi$-linear transformation in $\GL_{V,\LFi}$ and apply Tits' original lemma \cite[Lemma 3.9]{Tits72}. Part (i) is then immediate.

For part (ii) and (iii), denote $\P_{\LFi}(V)$ the projective space of $V$ seen as a vector $\LFi$-space. Since the canonical $\GL_V$-equivariant map $q: \P_{\LFi}(V) \to \P(V)$ is proper and continuous, $C' = q^{-1}(C)$ is compact, and $U' = q^{-1}(U)$ is open. Thus \cite[Lemma 3.9]{Tits72} applies with $C'$ and $U'$ over $\LFi$, and in turn yields the same conclusions over $\LDi$, since the norms of $g$ restricted to $C$ and $C'$ bound each-other. 
\end{proof}

We will also make use of a version of part (i) of \Cref{lem:proximaldynamics} for multiple representations, due to Margulis and Soifer. 
They initially stated it for multiple vector spaces over the same local field, but as already observed in \cite[Lemma 3.1]{Poznansky06}, the proof is identical. 

\begin{lemma}[Lemma 3 in \cite{MargulisSoifer81}] \label{lem:eigenvalueaccumulation}
Let $\{\LFi_i\}_{i \in I}$, be a finite collection of local fields and $V_i$ be a finite-dimensional vector $\LFi_i$-space. 
Let ${g}_i$ be a semisimple element of $\GL_{V_i}(\LFi)$, and let $\Omega({g}_i)$ be the set of eigenvalues of ${g}_i$ whose absolute value is maximum. 
There exists an infinite subset $N \subset \N$ such that $\displaystyle \lim_{\substack{n \in N\\ n \to \infty}} (\lambda^{-1} \mu)^n = 1$ for all $i \in I$ and $\lambda, \mu \in \Omega({g}_i)$. 
\end{lemma}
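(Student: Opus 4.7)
I would prove the lemma as a straightforward simultaneous recurrence statement: collect all the ratios $\lambda^{-1}\mu$ into a single element of a compact metrizable abelian topological group, and then invoke the well-known recurrence of cyclic orbits in such groups.

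\textbf{Key steps, in order.} First, for each $i \in I$, all eigenvalues of $g_i$ lie in a finite Galois extension $L_i$ of $\LFi_i$. Since $\LFi_i$ is a local field, the absolute value $|\cdot|_i$ extends uniquely to $L_i$, and that extension makes $L_i$ into a locally compact field again. By definition of $\Omega(g_i)$, every $\lambda,\mu \in \Omega(g_i)$ has the same (nonzero) absolute value, so each ratio
\[
r_{i,\lambda,\mu} \;:=\; \lambda^{-1}\mu \;\in\; L_i
\]
lies in the norm-one subgroup $U_i := \{x \in L_i : |x|_i = 1\}$. Second, this $U_i$ is a compact metrizable subgroup of $L_i^\times$: in the non-archimedean case it is the unit group of the valuation ring of $L_i$, and in the archimedean case it is either $\{\pm 1\}$ or the unit circle of $\C$. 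Since the indexing set over the triples $(i,\lambda,\mu)$ is finite,
\[
K \;:=\; \prod_{i \in I}\prod_{\lambda,\mu\, \in\, \Omega(g_i)} U_i
\]
is a compact metrizable abelian topological group. Third, assemble the ratios into the single element $r := (r_{i,\lambda,\mu})_{i,\lambda,\mu} \in K$, and consider the closure $H := \overline{\langle r \rangle}$ in $K$. This $H$ is itself a compact metrizable abelian group, so the sequence $(r^n)_{n \in \N}$ has convergent subsequences; if $r^{n_k} \to h$ in $H$ then $r^{n_{k+1} - n_k} \to 1$, which shows that every neighborhood of the identity is visited by $(r^n)_{n \in \N}$ infinitely often. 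A diagonal extraction along a shrinking neighborhood basis of $1 \in K$ then produces an infinite $N \subset \N$ such that $r^n \to 1$ in $K$ along $N$. Reading off coordinates gives $(\lambda^{-1}\mu)^n \to 1$ simultaneously for all $i \in I$ and all $\lambda,\mu \in \Omega(g_i)$, as required.

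\textbf{Main obstacle.} There is no substantial difficulty — the content of the lemma is exactly the recurrence of a monothetic subgroup of a compact abelian group. The only point that requires care is making sure the eigenvalue ratios really do sit in a single compact group: one must use the uniqueness of the extension of $|\cdot|_i$ to a finite extension of the local field $\LFi_i$ in order to place all $r_{i,\lambda,\mu}$ on the ``unit circle'' of $L_i$, and (unlike in the earlier \Cref{lem:proximaldynamics}) the semisimplicity hypothesis on $g_i$ plays no role in the argument.
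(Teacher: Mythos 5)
Your proof is correct, and it is essentially the standard argument behind the cited source: the paper itself gives no proof of this lemma (it simply quotes Lemma 3 of Margulis--Soifer, via Poznansky), and the underlying content is exactly the recurrence of $(r^n)_{n\in\N}$ in the compact abelian group you build from the unit groups of the (uniquely valued) finite extensions $L_i$. The only compressed step is the passage from ``$r^{n_{k+1}-n_k}\to 1$'' to ``arbitrarily large exponents $n$ with $r^n\in V$'': if the differences $n_{k+1}-n_k$ take only finitely many values then some value $d$ recurs and $r^d=1$, so $N=d\N$ works outright, and otherwise the differences are unbounded and the diagonal extraction goes through; your remark that semisimplicity of $g_i$ is irrelevant here is also accurate.
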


We are now ready to prove the following slight generalization of \cite[Corollary 3.7]{Poznansky06}, which is itself a refinement of both \cite[Proposition 3.11]{Tits72} and \cite[Lemma 8]{MargulisSoifer81}. This proposition is a crucial piece of the proof of \Cref{thm:pingpongdense}: it will be used to find enough biproximal elements in $\Gamma$. 

\begin{proposition}[Abundance of simultaneously biproximal elements] \label{prop:proximaldense}
Let $\bG$ be a connected algebraic $\Fi$-group and let $\Gamma$ be a Zariski-dense subgroup of $\bG(\Fi)$. 
Let $\{\LFi_i\}_{i \in I}$ be a finite collection of local fields each containing $\Fi$. 
For each $i \in I$, let $\rho_i: \bG \to \PGL_{V_i}$ be an irreducible projective $\LFi_i$-representation, where $V_i$ is a finite-dimensional module over a finite-dimensional division $\LFi_i$-algebra $\LDi_i$. 

Suppose that for each $i \in I$, $\rho_i(\Gamma)$ contains a proximal element. Then the set of regular semisimple elements $\gamma \in \Gamma$ such that $\rho_i(\gamma)$ is biproximal for every $i \in I$, is dense in $\Gamma$ for the join of the Zariski topology and the profinite topology. 
\end{proposition}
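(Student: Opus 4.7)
My proof would follow the two-step strategy going back to Tits~\cite{Tits72} (Proposition~3.11) and refined to multiple representations by Margulis--Soifer~\cite{MargulisSoifer81} and Poznansky~\cite{Poznansky06}: first establish the existence of a single regular semisimple $\gamma \in \Gamma$ that is simultaneously biproximal under all $\rho_i$, then upgrade to density by exploiting the flexibility of the construction. The dynamical tools already at hand suffice: the contracting behaviour of proximal elements (\Cref{lem:proximalcriterion}), the attracting behaviour of semisimple elements (\Cref{lem:proximaldynamics}), and, crucially, the multi-representation eigenvalue accumulation of \Cref{lem:eigenvalueaccumulation}.

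For the existence step, I would first apply for each $\rho_i$ separately the classical construction of a biproximal element from a proximal one in a Zariski-dense, irreducibly acting subgroup: the Zariski-density of $\Gamma$ and the irreducibility of $\rho_i$ imply that the condition ``$\rho_i(h) \rA(g_i) \not\subset \rA'(g_i)$'' is a non-empty Zariski-open condition on $h \in \Gamma$, which combined with \Cref{lem:proximalcriterion,lem:proximaldynamics} applied to the proximal element $g_i \in \rho_i(\Gamma)$ provided by hypothesis yields biproximal elements in $\rho_i(\Gamma)$ individually. To synchronize these constructions into a single element of $\Gamma$, I would apply \Cref{lem:eigenvalueaccumulation} to produce a common infinite subset $N \subset \N$ of exponents along which the iterates $\rho_i(\gamma^n)$ of an appropriately chosen $\gamma \in \Gamma$ all behave as in \Cref{lem:proximaldynamics}(iii). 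Combined with the simultaneous transversality of the attracting and repelling data, this gives an element of $\Gamma$ that is simultaneously biproximal under every $\rho_i$; regular semisimplicity, being a non-empty Zariski-open condition on $\bG$, may be enforced by a further generic perturbation within $\Gamma$.

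To prove density, let $V = U \cap g_0\Gamma'$ be a basic open set of the join topology, with $U \subset \bG$ a non-empty Zariski-open subset, $g_0 \in \Gamma$, and $\Gamma' \leq \Gamma$ a finite-index subgroup. Since $\Gamma'$ is still Zariski-dense in $\bG$ and each $\rho_i(\Gamma')$ still contains a proximal element (a suitable power of one in $\rho_i(\Gamma)$), the existence step applied to $\Gamma'$ yields a regular semisimple $\gamma_1 \in \Gamma'$ with every $\rho_i(\gamma_1)$ biproximal. For any $h \in \bG(\Fi)$ satisfying the Zariski-open transversality conditions $\rho_i(h)\rA(\rho_i(\gamma_1)) \not\subset \rA'(\rho_i(\gamma_1))$ and $\rho_i(h^{-1})\rA(\rho_i(\gamma_1^{-1})) \not\subset \rA'(\rho_i(\gamma_1^{-1}))$ for all $i \in I$, and for $n \in N$ sufficiently large (with $N$ the common subsequence from \Cref{lem:eigenvalueaccumulation} applied to the $\rho_i(\gamma_1)$), the dynamics of \Cref{lem:proximalcriterion,lem:proximaldynamics} force $h\gamma_1^n$ to be simultaneously biproximal. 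Requiring additionally $n \equiv 0 \pmod{[\Gamma : \Gamma']}$ (still holding for infinitely many $n \in N$) ensures $\gamma_1^n \in \Gamma'$, so the membership $h\gamma_1^n \in g_0\Gamma'$ reduces to $h \in g_0\Gamma'$, and $h\gamma_1^n \in U$ reduces to the Zariski-open condition $h \in U\gamma_1^{-n}$. Since the coset $g_0\Gamma'$ is Zariski-dense in $\bG$, it meets the non-empty intersection of $U\gamma_1^{-n}$ with the transversality conditions; picking such $h \in \Gamma$ and $n$ large then produces an element $h\gamma_1^n \in V \cap S$, after a final generic adjustment to ensure regular semisimplicity (also a Zariski-open condition).

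The main technical obstacle is synchronising the biproximality conditions across the representations $\rho_i$, which live over possibly different local fields $\LFi_i$ with their own spectral data. \Cref{lem:eigenvalueaccumulation} is the indispensable tool for this synchronisation, producing a single infinite subsequence of exponents along which the attracting dynamics of \Cref{lem:proximaldynamics}(iii) hold uniformly in $i \in I$. The rest of the argument is a careful but routine check that the Zariski-open conditions (transversality, regular semisimplicity, membership in $U\gamma_1^{-n}$) and the profinite condition (membership in $g_0\Gamma'$) are jointly realisable by an element of $\Gamma$, which ultimately rests on the Zariski-density of cosets of finite-index subgroups of $\Gamma$ in $\bG$.
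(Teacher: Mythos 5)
Your overall architecture (existence first, then density, using \Cref{lem:proximalcriterion}, \Cref{lem:proximaldynamics} and \Cref{lem:eigenvalueaccumulation}) matches the paper's, but both halves have gaps as written. In the existence step, the synchronisation is not actually carried out. The hypothesis only provides, for each $i$, \emph{some} element of $\Gamma$ proximal under $\rho_i$, and these elements differ with $i$; powers of a single ``appropriately chosen $\gamma$'' taken along the subsequence $N$ of \Cref{lem:eigenvalueaccumulation} cannot create proximality where there is none, since $\rA(\rho_j(\gamma)^n) = \rA(\rho_j(\gamma))$ keeps the same dimension for all $n \geq 1$. The mechanism you are missing is the inductive product construction: assuming $g$ is proximal under $\rho_i$ for $i \neq j$ and $h$ is proximal under $\rho_j$, one forms words $g^m y h^n x$ with $x, y$ generic and $m, n$ in the common subsequence of \Cref{lem:eigenvalueaccumulation}, using \Cref{lem:proximaldynamics} to control the factor that is merely semisimple and \Cref{lem:proximalcriterion} for the proximal one; a further word of the shape $g^{-n} x g^n x^{-1} y$ is then needed to upgrade simultaneous proximality to simultaneous biproximality. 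Naming \Cref{lem:eigenvalueaccumulation} identifies the right tool, but without this construction the existence step does not go through.

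The density step has a genuine quantifier circularity. For $h \gamma_1^n$ to be simultaneously biproximal, the threshold $N_h$ on $n$ depends on $h$: the compact neighbourhoods of $\rA(\rho_i(\gamma_1)^{\pm 1})$ must be chosen so that their images under $\rho_i(h)^{\pm 1}$ avoid the repelling subspaces, and the norms $\|\rho_i(h)\|$ enter the estimates of \Cref{lem:proximalcriterion}. On the other hand, the Zariski-open condition $h \in U \gamma_1^{-n}$ depends on $n$. Picking $n$ first, nothing guarantees that the $h$ you subsequently find in $g_0\Gamma' \cap U\gamma_1^{-n} \cap (\text{transversality})$ satisfies $n > N_h$; picking $h$ first, you lose control of $U\gamma_1^{-n}$. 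The way out, used in the paper, is not to hit $U$ directly but to show that the Zariski \emph{closure} of $S \cap g_0\Gamma'$ is all of $\bG$: the closure of $\{\gamma_1^n \mid n > N_h\}$ is stable under multiplication by a power of $\gamma_1$, hence by Noetherianity contains $\gamma_1$ itself, so the closure of $S \cap g_0\Gamma'$ contains the Zariski-dense family of elements obtained by letting $h$ range over a dense open subset of the coset. The same structural issue affects your closing sentence: one cannot make ``a final generic adjustment'' to a single already-constructed element to render it regular semisimple; instead one first establishes density of $S \cap g_0\Gamma'$ and then intersects with the Zariski-open locus of regular semisimple elements.
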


\begin{proof}
We follow the line of arguments given in \cite{Tits72,MargulisSoifer81,Poznansky06}, keeping track of the different representations, and using the extension of Tits' work to projective representations over a division algebra laid out above. 

Given an arbitrary element $g \in \bG(\Fi)$, let us abbreviate $\rho_i(g)$ by $g_i$.
\medbreak

\noindent{\underline{{Step 1:}} The set of simultaneously proximal elements in $\Gamma$ is Zariski-dense if it is non-empty. }\smallskip

Let $g \in \Gamma$ be such that $g_i$ is proximal for all $i \in I$. 
Since $\rho_i$ is irreducible, for each $i \in I$ the set of elements $x$ of $\bG(\Fi)$ such that $x_i\rA(g_i) \not\in \rA'(g_i)$ is non-empty and Zariski-open. Because $\bG$ is Zariski-connected, the intersection of these sets remains non-empty (and Zariski-open). Let us then pick $x \in \Gamma$ satisfying $x_i \rA(g_i) \not\in \rA'(g_i)$ for every $i \in I$. 

By construction of $x$, we can pick a compact neighborhood $C_i$ of $\rA(g_i)$ in $\P(V_i)$ such that $x_i C_i$ is disjoint from $A'(g)$. 
Since projective transformations have finite norm, we have $\max_{i \in I}\|\restr{x_i}{{C_i}}\| < r$ for some $r \in \R$. 
By \Cref{lem:proximalcriterion}.\ref{lem:proximalcriterion1}, for each $i \in I$ there exists an integer $N_i$ such that
\begin{align*}
\|\restr{g_i^n}{{x_i C_i}}\| &< r^{-1}
&\text{and}
&&g_i^n (x_i C_i) &\subset \mathring{C}_i
&&\text{for $n > N_i$.}
\intertext{\indent Set $N_x = \max_{i \in I} N_i$. Then for any $i \in I$, we have that}
\|\restr{g_i^n x_i}{C_i}\| &< 1
&\text{and}
&&(g_i^n x_i) C_i &\subset \mathring{C}_i
&&\text{for $n > N_x$.}
\end{align*}
We deduce from \Cref{lem:proximalcriterion}.\ref{lem:proximalcriterion2} that $g_i^n x_i = \rho_i(gx)$ is proximal for every $n > N_x$. 

Observe that the Zariski closure $Z$ of $\{g^n \mid n > N_x\}$ in $\Gamma$ has the property that $gZ \subset Z$. Since the Zariski topology is Noetherian, we deduce that $g^{m+1} Z = g^m Z$ for some $m \in \N$. This implies that $g^n Z = Z$ for every $n \in \Z$, and in particular that $g \in Z$.
Let now $\overline{S}$ denote the Zariski closure in $\Gamma$ of the set $S$ of elements of $\Gamma$ which are proximal under every $\rho_i$. We have shown that $S$ contains $g^n x$ for each $x \in \Gamma$ chosen as above and $n > N_x$. 
By our last observation, $\overline{S}x^{-1}$ contains $g$, hence $gx \in \overline{S}$. As this holds for every $x$ in a Zariski-dense (open) subset of $\Gamma$, we conclude that $\overline{S}$ contains $g\Gamma = \Gamma$, as claimed. 

\medbreak

\noindent{\underline{{Step 2:}} $\Gamma$ contains a semisimple element that is simultaneously proximal. }\smallskip

We argue by induction on $\# I$. 
Fix $j \in I$, and suppose that there are elements $g, h \in \Gamma$ such that $\rho_j(h)$ is proximal and $\rho_i(g)$ is proximal for $i \in I \setminus \{j\}$. By Step 1, we may in addition assume that $g$ and $h$ are semisimple. 
Write $\pi_i = \proj(\rA'(h_i),\rA(h_i))$ for $i \neq j$, and $\pi_j = \proj(\rA'(g_j),\rA(g_j))$. 

Let $N \subset \N$ be an infinite set such as afforded by \Cref{lem:eigenvalueaccumulation} applied to the elements $h_i$ for $i \neq j$ and $g_j$ for $i = j$, so that we have $\displaystyle \lim_{\substack{n \in N\\ n \to \infty}} (\lambda^{-1} \mu)^n = 1$ for $\lambda, \mu \in \Omega(h_i)$ if $i \neq j$, and for $\lambda, \mu \in \Omega(g_j)$. 

Since $\rho_i$ is irreducible and $\Gamma$ is Zariski-dense, as before we can fix $x \in \Gamma$ such that 
\begin{align*}
x_i \rA(g_i) &\not\subset \rA'(h_i) \quad \text{for every $i \in I$. }
\intertext{Similarly, the elements $y \in \bG(\Fi)$ satisfying }
y_i \cdot \pi_i(x_i \rA(g_i)) &\not\in \rA'(g_i) \quad \text{for $i \in I \setminus \{j\}$,}\\
\text{and} \quad y_j \rA(h_j) &\not\in \left(x_j^{-1} \rA'(h_j) \cap \rA(g_j)\right) \join \rA'(g_j),
\end{align*}
form a non-empty Zariski-open subset of $\bG(\Fi)$. Let us then fix $y$ such an element in $\Gamma$. 

For $i \neq j$, let $B_i$ be a compact neighborhood of $y_i \cdot \pi_i(x_i \rA(g_i))$ disjoint from $\rA'(g_i)$, and let $B_j$ be a compact neighborhood of $x_j \cdot \pi_j(y_j\rA(h_j))$ disjoint from $\rA'(h_j)$. 
The latter exists because $\pi_j^{-1}(x_j^{-1} \rA'(h_j)) \subset (x_j^{-1} \rA'(h_j) \cap \rA(g_j)) \vee \rA'(g_j)$ does not contain $y_j\rA(h_j)$. 
We also choose for $i \neq j$ a compact neighborhood $C_i$ of $\rA(g_i)$ disjoint from $x_i^{-1}\rA'(h_i)$ and small enough to satisfy $y_i \cdot \pi_i(x_i C_i) \subset \mathring{B}_i$; and choose a compact neighborhood $C_j$ of $\rA(h_j)$ disjoint from $y_j^{-1} \rA'(g_j)$ and satisfying $x_j \cdot \pi_j(y_j C_j) \subset \mathring{B}_j$. 

The careful choice of $B_i$, $C_i$ and $N$ sets us up for the following applications of \Cref{lem:proximaldynamics,lem:proximalcriterion}. 
By \Cref{lem:proximaldynamics}, for each $i \neq j$ there exists $r_i \in \R$ and $N_i \in \N$ such that
\begin{align*}
\|\restr{h_i^n}{x_i C_i}\| < r_i \text{ for $n \in \N$ }
&&\text{and}
&&y_i h_i^n x_i C_i &\subset \mathring{B}_i
&&\text{for $n \in N$, $n > N_i$.}
\intertext{Similarly, there exists $N_j \in \N$ and $r_j \in \R$ such that }
\|\restr{g_j^n}{y_j C_j}\| < r_j \text{ for $n \in \N$ }
&&\text{and}
&&x_j g_j^n y_j C_j &\subset \mathring{B}_j
&&\text{for $n \in N$, $n > N_j$.}
\intertext{By \Cref{lem:proximalcriterion}\ref{lem:proximalcriterion1}, for each $i \neq j$ there exists $N'_i \in \N$ such that }
\|\restr{g_i^n}{B_i}\| < \big( \|\restr{y_i}{y_i^{-1} B_i}\| \cdot r_i \cdot \|\restr{x_i}{C_i}\| \big)^{-1} 
&&\text{and}
&&g_i^n B_i &\subset \mathring{C}_i
&&\text{for $n > N'_i$.}
\intertext{Similarly, there exists $N'_j \in \N$ such that }
\|\restr{h_j^n}{B_j}\| < \big( \|\restr{x_j}{x_j^{-1}B_j}\| \cdot r_j \cdot \|\restr{y_j}{C_j}\| \big)^{-1}
&&\text{and}
&&h_j^n B_j &\subset \mathring{C}_j
&&\text{for $n > N'_j$.} \\
\intertext{\indent Set $N' = \{n \in N \mid n > N_i \text{ and } n > N'_i \text{ for all $i \in I$}\}$. 
For $i \neq j$, we have by construction that }
\| g_i^m y_i h_i^n \restr{x_i}{C_i} \| < 1 
&&\text{and}
&&g_i^m y_i h_i^n x_i C_i &\subset \mathring{C}_i
&&\text{for $m, n \in N'$.}
\intertext{Similarly, we have that }
\| h_j^n x_j g_j^m \restr{y_j}{C_j} \| < 1
&&\text{and} 
&&h_j^n x_j g_j^m y_j C_j &\subset \mathring{C}_j
&&\text{for $m, n \in N'$.}
\end{align*}
We conclude from \Cref{lem:proximalcriterion}.\ref{lem:proximalcriterion2} that for all $m, n \in N'$, the element $g_i^m y_i h_i^n x_i$ is proximal for $i \neq j$, and so is $h_j^n x_j g_j^m y_j$. But $h_j^n x_j g_j^m y_j$ and $g_j^m y_j h_j^n x_j$ are conjugate, so $g^m y h^n x \in \Gamma$ is proximal under $\rho_i$ for every $i \in I$. 

In view of Step 1, the set of simultaneously proximal elements in $\Gamma$ is Zariski-dense, so there is also a semisimple one as claimed. 
\medbreak

\noindent{\underline{{Step 3:}} $\Gamma$ contains an element which is simultaneously biproximal. }\smallskip

By Steps 1--2, there is a semisimple element $g \in \Gamma$ such that $\rho_i(g^{-1})$ is proximal for every $i \in I$. 
Let $N$ be an infinite set such as afforded by \Cref{lem:eigenvalueaccumulation}. 
Replacing $N$ by an appropriate subset, we may assume that the set $g^N = \{g^n \mid n \in N\}$ is Zariski-connected. 

Since $\rho_i$ is irreducible and $\Gamma$ is Zariski-dense, the elements $x \in \bG(\Fi)$ such that
\[
x_i \rA(g_i) \not\subset \rA'(g_i^{-1}) \quad \text{and} \quad x_i^{-1} \rA(g_i) \not\subset \rA'(g_i^{-1}) \quad \text{for every $i \in I$}
\]
form a non-empty Zariski-open subset. Fix such an element $x \in \Gamma$. 
For the same reasons, the set $U$ of elements $y \in \bG(\Fi)$ satisfying
\begin{align*}
y_i \rA(g_i^{-1}) &\not\in x_i \rA'(g_i) \join (x_i \rA(g_i) \cap \rA'(g_i^{-1})), &&\\
\text{and} \quad y_i^{-1} x_i \rA(g_i^{-1}) &\not\in \rA'(g_i) \join (\rA(g_i) \cap x_i \rA'(g_i^{-1})) &&\text{for every $i \in I$}
\end{align*}
is also non-empty and Zariski-open; fix $y \in U \cap \Gamma$. 

Write $\pi_i = \proj(\rA'(g_i),\rA(g_i))$ and $\pi_i' = \proj(x_i \rA'(g_i),x_i \rA(g_i))$. 
For each $i \in I$, let $B_i$ be a compact neighborhood of $\pi_i'(y_i \rA(g_i^{-1}))$ disjoint from $\rA'(g_i^{-1})$, and let $B_i'$ be a compact neighborhood of $\pi_i(y_i^{-1} x_i \rA(g_i^{-1}))$ disjoint from $x_i \rA'(g_i^{-1})$. 
We also choose a compact neighborhood $C_i$ of $\rA(g_i^{-1})$ disjoint from $y_i^{-1} x_i \rA'(g_i)$ satisfying $\pi_i'(y_i C_i) \subset \mathring{B}_i$, and a compact neighborhood $C_i'$ of $y_i^{-1} x_i \rA(g_i^{-1})$ disjoint from $\rA'(g_i)$ satisfying $\pi_i(C_i') \subset \mathring{B}_i'$.

By \Cref{lem:proximaldynamics} (ii), for each $i \in I$ there exist $N_i, N_i' \in \N$ and $r_i, r_i' \in \R$ such that
\begin{align*}
\|\restr{x_i g_i^n x_i^{-1}}{y_i C_i}\| < r_i \text{ for $n \in \N$ }
&&\text{and}
&&x_i g_i^n x_i^{-1} y_i C_i &\subset \mathring{B}_i
&&\text{for $n \in N$, $n > N_i$,} \\
\|\restr{g_i^n}{C_i'}\| < r_i' \text{ for $n \in \N$ }
&&\text{and}
&&g_i^n C_i' &\subset \mathring{B}_i'
&&\text{for $n \in N$, $n > N_i'$.}
\intertext{By \Cref{lem:proximalcriterion}.\ref{lem:proximalcriterion1}, for each $i \in I$ there exist $M_i, M_i' \in \N$ such that }
\|\restr{g_i^{-n}}{B_i}\| < \big(r_i \cdot \|\restr{y_i}{C_i}\| \big)^{-1}
&&\text{and}
&&g_i^{-n} B_i &\subset \mathring{C}_i
&&\text{for $n > M_i$,} \\
\|\restr{x_i g_i^{-n} x_i^{-1}}{B_i'}\| < \big( \|\restr{y_i^{-1}}{y_i C_i'}\| \cdot r_i' \big)^{-1}
&&\text{and}
&&x_i g_i^{-n} x_i^{-1} B_i' &\subset y_i \mathring{C}_i'
&&\text{for $n > M_i'$.}
\intertext{\indent Set $N_{x,y} = \{n \in N \mid n > \max \bigcup_{i \in I} \{N_i, N_i', M_i, M_i'\} \}$. 
We then have by construction that }
\| g_i^{-n} x_i g_i^n x_i^{-1} \restr{y_i}{C_i} \| < 1 
&&\text{and}
&&g_i^{-n} x_i g_i^n x_i^{-1} y_i C_i &\subset \mathring{C}_i
&&\text{for $n \in N_{x,y}$,} \\
\| y_i^{-1} x_i g_i^{-n} x_i^{-1} \restr{g_i^n}{C_i'} \| < 1
&&\text{and} 
&&y_i^{-1} x_i g_i^{-n} x_i^{-1} g_i^n C_i' &\subset \mathring{C}_i'
&&\text{for $n \in N_{x,y}$.}
\end{align*}
We conclude from \Cref{lem:proximalcriterion}.\ref{lem:proximalcriterion2} that for all $n \in N_{x,y}$ and for each $i \in I$, the element $g^{-n} x g^n x^{-1} y$ is biproximal under $\rho_i$. 

\medbreak

\noindent{\underline{{Step 4:}} The set of regular semisimple simultaneously biproximal elements is dense. }\smallskip

Let $S$ denote the set of elements in $\Gamma$ which are biproximal under every $\rho_i$. 
Let $\Lambda$ be a normal subgroup of finite index in $\Gamma$, and let $\gamma \in \Gamma$. 
Because the set of regular semisimple elements is Zariski-open, it suffices to show that $S \cap \Lambda \gamma$ is Zariski-dense to prove the proposition. 

Since $\Gamma$ is Zariksi-connected and $\Lambda$ has finite index in $\Gamma$, every coset of $\Lambda$ is Zariski-dense. 
Moreover, if $h \in \Gamma$ is such that $h_i$ is proximal, then $h^{\indx{\Gamma}{\Lambda}}$ is also proximal under $\rho_i$, and belongs to $\Lambda$. 
We can thus apply Steps 1--3 to $\Lambda$, to find an element $g \in \Lambda$ such that $g_i$ is biproximal for every $i \in I$. 

As before, the set $U$ of elements $x \in \bG(\Fi)$ such that
\[
x_i \gamma_i \rA(g_i) \not\in \rA'(g_i) \quad \text{and} \quad \gamma_i^{-1} x_i^{-1} \rA(g_i^{-1}) \not\in \rA'(g_i^{-1}) \quad \text{for every $i \in I$}
\]
is Zariski-open and non-empty. In particular, $\Lambda \cap U$ is Zariski-dense in $\Gamma$; pick $x \in \Lambda \cap U$. 

Let $C_i^\pm$ be a compact neighborhood of $\rA(g_i^{\pm 1})$ such that $(x \gamma)_i^{\pm 1} C_i^{\pm}$ is disjoint from $\rA'(g_i^{\pm 1})$. 
Since projective transformations have finite norm, we have that 
\(
\max_{i \in I} \| \restr{(x \gamma)_i^{\pm 1}}{C_i^{\pm}} \| < r
\) 
for some $r \in \R$. 
By \Cref{lem:proximalcriterion}.\ref{lem:proximalcriterion1}, there exist integers $N_i^+$ and $N_i^-$ such that
\begin{align*}
\|\restr{g_i^{n}}{x_i \gamma_i C_i^{+}}\| &< r^{-1}
&\text{and}
&&g_i^{n} x_i \gamma_i C_i^{+} &\subset \mathring{C}_i^{+}
&&\text{for $n > N_i^{+}$.}\\
\|\restr{g_i^{-n}}{(x \gamma)_i^{-1} C_i^{-}}\| &< r^{-1}
&\text{and}
&&g_i^{-n} (x \gamma)_i^{-1} C_i^{-} &\subset \mathring{C}_i^{-}
&&\text{for $n > N_i^{-}$.}
\intertext{\indent For $N_x = \max \bigcup_{i \in I} \{N_i^+, N_i^-\}$, we then have for every $i \in I$ that }
\|\restr{g_i^{n} x_i \gamma_i}{C_i^{+}}\| &< 1 
&\text{and}
&&g_i^{n} x_i \gamma_i C_i^{+} &\subset \mathring{C}_i^{+}
&&\text{for $n > N_x$.}\\
\|\restr{g_i^{-n} \gamma_i^{-1} x_i^{-1}}{C_i^{-}}\| &< 1 
&\text{and}
&&g_i^{-n} \gamma_i^{-1} x_i^{-1} C_i^{-} &\subset \mathring{C}_i^{-}
&&\text{for $n > N_x$.}
\end{align*}
We deduce from \Cref{lem:proximalcriterion}.\ref{lem:proximalcriterion2} that $g_i^{n} x_i \gamma_i$ and $g_i^{-n} \gamma_i^{-1} x_i^{-1}$ are proximal for every $i \in I$ and for $n > N_x$. But $g_i^{-n} \gamma_i^{-1} x_i^{-1}$ and $\gamma_i^{-1} x_i^{-1} g_i^{-n}$ are conjugate, so $g_i^{n} x_i \gamma_i$ is in fact biproximal for every $i \in I$. 
Of course $g^{n} x \gamma \in \Lambda \gamma$, so we have shown that $S \cap \Lambda \gamma$ contains $g^n x \gamma$ for every $x \in \Lambda \cap U$ and $n > N_x$. 

As was observed in Step 1, the Zariski closure of $\{g^n \mid n > N_x\}$ in $\Gamma$ contains $g$. 
Thus the Zariski closure of $S \cap \Lambda \gamma$ contains $g x \gamma$ for every $x \in \Lambda \cap U$. As $\Lambda \cap U$ is Zariski-dense, so is $S \cap \Lambda \gamma$. This concludes the proof of the proposition. 
\end{proof}

\subsection{Towards the proof of \texorpdfstring{\Cref{thm:pingpongdense}}{Theorem \ref{thm:pingpongdense}}} \label{subsec:pingpongdense}

Before starting the proof of \Cref{thm:pingpongdense}, we record the following lemmas. 

\begin{lemma} \label{lem:attractingorbitclosed}
Let $\LFi$, $\LDi$ and $V$ be as in \S\ref{subsec:proximaldynamics}. 
Let $\bG$ be a connected $\LFi$-subgroup of $\PGL_V$, acting irreducibly on $\P(V)$. Suppose that $\bG(\LFi)$ contains a proximal element $g_0$. Then the set
\[
X = \{\rA(g) \mid \text{$g \in \bG(\LFi)$ is proximal}\} \subseteq \P(V)
\]
coincides with the orbit $\bG(\LFi) \cdot \rA(g_0)$ and constitutes the unique irreducible projective subvariety of $\P(V)$ stable under $\bG(\LFi)$. In consequence, $\Stab_{\bG}(\rA(g_0))$ is a parabolic subgroup of $\bG$. 
\end{lemma}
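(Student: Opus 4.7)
Write $p_0 := \rA(g_0)$ and $P_0 := \rA'(g_0)$. The inclusion $\bG(\LFi) \cdot p_0 \subseteq X$ is immediate, since for every $h \in \bG(\LFi)$, the conjugate $hg_0 h^{-1}$ is proximal with attracting subspace $h \cdot p_0$. The plan is to base everything else on the following \emph{key claim}: every non-empty Zariski-closed $\bG$-invariant subvariety $Y$ of $\P(V)$ contains $p_0$. To prove it, one observes that the $\LDi$-linear span of $Y$ is a non-zero $\bG$-stable $\LDi$-subspace of $V$, hence equals $V$ by irreducibility; consequently $Y \not\subseteq P_0$. Passing to a finite (hence still local) extension $\LFi'/\LFi$ in which $Y$ admits a point $y \notin P_0$, and applying \Cref{lem:proximalcriterion} over $\LFi'$ to $g_0$ and the singleton $\{y\}$, we obtain $g_0^n y \to p_0$ in the local topology of $\P(V)(\LFi')$; the Zariski-closedness of $Y$ (hence its closedness for the local topology) and its $g_0$-invariance then force $p_0 \in Y$.

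Two consequences follow from the key claim. First, $\bG \cdot p_0$ is Zariski-closed: otherwise $\overline{\bG \cdot p_0} \setminus \bG \cdot p_0$ would be a non-empty Zariski-closed $\bG$-invariant subvariety of $\P(V)$, and applying the claim to it would yield $p_0 \in \overline{\bG \cdot p_0} \setminus \bG \cdot p_0$, a contradiction. Hence $\Stab_{\bG}(p_0)$ is parabolic. Second, every non-empty closed $\bG$-invariant subvariety of $\P(V)$ contains this now closed orbit $\bG \cdot p_0$; as $\bG$ is connected, $\bG \cdot p_0$ is irreducible, so it is the unique minimal closed $\bG$-invariant subvariety of $\P(V)$.

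For the reverse inclusion $X \subseteq \bG(\LFi) \cdot p_0$, let $g \in \bG(\LFi)$ be proximal with attracting point $q$. Applying the key claim with $g$ in place of $g_0$ to $Y = \bG \cdot p_0$ shows $q \in \bG \cdot p_0$, so $q$ is an $\LFi$-point of the closed orbit. To upgrade this to $q \in \bG(\LFi) \cdot p_0$, one invokes the implicit function theorem: the orbit map $\bG(\LFi) \to (\bG \cdot p_0)(\LFi)$, $h \mapsto hp_0$, is open for the local topology, so every $\bG(\LFi)$-orbit in $(\bG \cdot p_0)(\LFi)$ is open and hence also closed. Since $\bG(\LFi) \cdot p_0$ is Zariski-dense in $\bG \cdot p_0$ (because $\bG(\LFi)$ is Zariski-dense in the smooth connected group $\bG$) and $\bG \cdot p_0 \not\subseteq \rA'(g)$, one may pick $y \in \bG(\LFi) \cdot p_0$ outside $\rA'(g)$. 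The iterates $g^n y$ stay in $\bG(\LFi) \cdot p_0$ and converge locally to $q$; the local closedness of $\bG(\LFi) \cdot p_0$ in $(\bG \cdot p_0)(\LFi)$ then forces $q \in \bG(\LFi) \cdot p_0$, as desired.

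The main obstacle is navigating the interplay between the Zariski topology (where the key claim and the closedness of $\bG \cdot p_0$ live) and the local topology (where the contractive dynamics of \Cref{lem:proximalcriterion} take place), together with the passage from $\bG$-orbits to $\bG(\LFi)$-orbits—essentially a Galois-cohomological issue, defused here by combining Zariski density with the openness of the local orbit map.
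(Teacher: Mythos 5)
Your proof is correct in substance, but it follows a genuinely different route from the paper's. The paper simply invokes Chevalley's closed orbit lemma to produce a Zariski-closed orbit $Y$, shows via the contractive dynamics that $\rA(g)\in Y$ for every proximal $g$ (picking $y\in Y\setminus\rA'(g)$ directly among the $\LFi$-points of the orbit), and concludes $X=Y$ because $X$ is non-empty, $\bG(\LFi)$-stable, and contained in the single orbit $Y$; completeness of $\bG/\bP\cong Y$ then gives parabolicity. You instead prove your ``key claim'' from scratch and \emph{derive} the closedness of $\bG\cdot p_0$ from the emptiness of its boundary, avoiding Chevalley altogether; you also explicitly bridge the gap between the geometric orbit $\bG\cdot p_0$ and the rational orbit $\bG(\LFi)\cdot p_0$ via openness of the orbit map, a point the paper's proof passes over silently (a closed $\bG$-orbit need not a priori be a single $\bG(\LFi)$-orbit, nor contain $\LFi$-points). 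What the paper's route buys is brevity; what yours buys is a cleaner logical structure (closedness as output rather than input) and a more honest treatment of rationality.

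One step deserves more care: in the proof of the key claim you pass to a finite extension $\LFi'/\LFi$ to find $y\in Y(\LFi')\setminus P_0$ and then ``apply \Cref{lem:proximalcriterion} over $\LFi'$''. But $\LDi\ot_{\LFi}\LFi'$ need not be a division algebra, so $\P(V)$ base-changed to $\LFi'$ is no longer literally a projective space of the type covered by \S\ref{subsec:proximaldynamics}. This is fixable --- e.g.\ run the dynamics in $\P_{\LFi'}(V\ot_{\LFi}\LFi')$ and push down along the proper $\GL_V$-equivariant surjection onto $\P(V)$, exactly as in the paper's proof of \Cref{lem:proximaldynamics} --- but as written the step is not justified. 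Note also that your uses of Zariski-density of $\bG(\LFi)$ in $\bG$ and of smoothness of the orbit map tacitly assume characteristic $0$ (or unirationality), though the paper operates at the same level of rigor on this point.
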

\begin{proof}
By a theorem of Chevalley, there is a Zariski-closed $\bG(\LFi)$-orbit $Y \subseteq \P(V)$. 
Let $g \in \bG(\LFi)$ be proximal. Because $\bG$ acts irreducibly on $\P(V)$, there exists $y \in Y \setminus \rA'(g)$. 
We then have $g^n \cdot y \xrightarrow{n \to \infty} \rA(g)$, thus $\rA(g)$ lies in the closure of $Y$ in the local hence in the Zariski topology. As $Y$ was Zariski-closed, $\rA(g) \in Y$. Since this happens for any proximal element $g$, we deduce that $X \subseteq Y$. As $X$ is $\bG(\LFi)$-stable and $Y$ is a single orbit, equality holds. 
It is now clear that $X$ is the set of $\LFi$-points of a projective variety $\bX$, which is irreducible because $\bG$ is. 

Let $\bP = \Stab_{\bG}(\rA(g))$ denote the stabilizer of $\rA(g)$ in $\bG$. The above shows that orbit map yields an isomorphism $\bG / \bP \to \bX$, hence $\bG / \bP$ is a complete variety, meaning that $\bP$ is parabolic. 
The same holds for every other proximal element. 
\end{proof}

\begin{remark}
\Cref{lem:attractingorbitclosed} can also be proven by arguing that if $g_0$ is proximal, $\rA(g_0)$ must be a highest-weight line. 
\end{remark}

\begin{lemma}[Transversality] \label{lem:transversality}
Let $\bG$ be as in \Cref{lem:attractingorbitclosed}, and suppose that $\bG(\LFi)$ contains a proximal element $g$. For any $h \in \bG(\LFi)$, the set
\[
U_{h,g} = \{x \in \bG(\LFi) \mid xhx^{-1} \rA(g) \not\in \rA'(g) \cup \rA'(g^{-1})\}
\]
is Zariski-open in $\bG(\LFi)$. If $h \in \bG(\LFi)$ is such that the span of $\{xhx^{-1} \rA(g) \mid x \in \bG(\LFi) \}$ is the whole of $\P(V)$, then $U_{h,g}$ is non-empty. 
\end{lemma}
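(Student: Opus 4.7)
The plan is to handle openness and non-emptiness separately.

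For openness, I would observe that $\phi \colon \bG \to \P(V)$, $x \mapsto x h x^{-1} \rA(g)$, is a regular $\LFi$-morphism, being the composition of conjugation of $h$ by $x$ (regular in $x$) with the $\PGL_V$-action on the point $\rA(g) \in \P(V)$. Since $g$ is proximal, $\rA(g)$ is an $\LDi$-line and $\rA'(g)$ is a proper $\LDi$-hyperplane in $V$; likewise $\rA'(g^{-1})$ is a proper $\LDi$-subspace (for $\dim V \geq 2$, proximality of $g$ forces $g^{-1}$ to have eigenvalues of at least two distinct absolute values, so $\rA'(g^{-1}) \neq V$ and $\rA'(g^{-1})$ is a proper subspace; the case $\dim V = 1$ is trivial). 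Both subspaces are Zariski-closed in $\P(V)$, so
\[
U_{h,g} = \phi^{-1}\bigl(\P(V) \setminus (\rA'(g) \cup \rA'(g^{-1}))\bigr)
\]
is Zariski-open in $\bG$, hence in $\bG(\LFi)$ with the induced topology.

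For non-emptiness under the spanning hypothesis, the key remark is that a subset of a proper $\LDi$-subspace of $V$ cannot span $V$. Set
\[
Z = \phi^{-1}(\rA'(g) \cup \rA'(g^{-1})) = \phi^{-1}(\rA'(g)) \cup \phi^{-1}(\rA'(g^{-1})),
\]
a Zariski-closed subset of $\bG$. Because $\bG$ is Zariski-connected, hence irreducible, if one had $Z = \bG$ then one of the two closed pieces would already equal $\bG$, say $\phi(\bG) \subseteq \rA'(g)$. But then $\phi(\bG(\LFi)) \subseteq \rA'(g)$, so its span would lie in $\rA'(g) \neq V$, contradicting the spanning hypothesis. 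Hence $Z \subsetneq \bG$, so $\bG \setminus Z$ is a non-empty Zariski-open subset of $\bG$. Using that $\bG(\LFi)$ is Zariski-dense in $\bG$ (as $\bG$ is smooth connected and $\LFi$ is infinite), this open set has $\LFi$-points, i.e.\ $U_{h,g} \neq \emptyset$.

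The only delicate point I anticipate is the topological bookkeeping between $\bG$ and $\bG(\LFi)$: it would be incorrect to conclude non-emptiness of $\phi^{-1}(\P(V) \setminus \rA'(g)) \cap \phi^{-1}(\P(V) \setminus \rA'(g^{-1}))$ merely from non-emptiness of each factor without invoking irreducibility of $\bG$, and one must separately invoke Zariski-density of $\bG(\LFi)$ in $\bG$ to pass from a non-empty geometric open set to a non-empty set of $\LFi$-points. Once this is in place, the remaining steps are routine.
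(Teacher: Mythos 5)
Your proof is correct, but it diverges from the paper's argument in one substantive way. The paper splits $U_{h,g}$ into the two open pieces $U_1 = \{x \mid xhx^{-1}\rA(g) \notin \rA'(g)\}$ and $U_2 = \{x \mid xhx^{-1}\rA(g) \notin \rA'(g^{-1})\}$ and proves non-emptiness of $U_2$ exactly as you do (a set contained in the proper subspace $\rA'(g^{-1})$ cannot span $\P(V)$), but for $U_1$ it uses a structural argument instead: $h$ lies in some minimal parabolic $\LFi$-subgroup $\bB$, and by \Cref{lem:attractingorbitclosed} the stabilizer of $\rA(g)$ is parabolic, so some conjugate $x\bB x^{-1}$ fixes $\rA(g)$, whence $xhx^{-1}\rA(g) = \rA(g) \notin \rA'(g)$. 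You instead observe that the spanning hypothesis rules out $\phi(\bG(\LFi)) \subseteq \rA'(g)$ just as well as $\phi(\bG(\LFi)) \subseteq \rA'(g^{-1})$, since both are proper subspaces; this is simpler and perfectly adequate for the lemma as stated. What the paper's route buys is the extra information that $U_1$ is non-empty for \emph{every} $h$, with no spanning hypothesis — only the $\rA'(g^{-1})$ half genuinely needs it — though this stronger fact is not exploited later. Both arguments must then intersect the two non-empty open conditions, which requires irreducibility of $\bG$ and Zariski-density of $\bG(\LFi)$; you make this bookkeeping explicit where the paper leaves it implicit, and that is a point in your favour. One cosmetic remark: your justification that $\rA'(g^{-1})$ is proper is more complicated than needed — $\rA(g^{-1})$ is always non-zero (there is always an eigenvalue of maximal absolute value), so $\rA'(g^{-1}) \neq V$ holds for any $g$ whatsoever, without appealing to proximality of $g$.
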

\begin{proof}
The two sets
\begin{align*}
U_1 &= \{x \in \bG(\LFi) \mid xhx^{-1} \rA(g) \not\in \rA'(g)\} \\
U_2 &= \{x \in \bG(\LFi) \mid xhx^{-1} \rA(g) \not\in \rA'(g^{-1})\}
\end{align*}
are Zariski-open by a standard argument: for any subspaces $W_1, W_2 \subseteq V$, the set $\{x \in \bG(\LFi) \mid x \cdot W_1 \subseteq W_2\}$ is Zariski-closed. We have to show they are both non-empty. 

There is a minimal parabolic $\LFi$-subgroup $\bB$ of $\bG$ that contains $h$. By \Cref{lem:attractingorbitclosed}, there is a conjugate $x\bB x^{-1}$ of $\bB$ which fixes $\rA(g)$. But then for this choice of $x$, we surely have $xhx^{-1} \rA(g) \not\in \rA'(g)$. This shows that $U_1$ is not empty. 

Finally, $U_2$ is non-empty because of the assumption made on $h$. Indeed, $U_2$ being empty means $xhx^{-1} \rA(g) \in \rA'(g^{-1})$ for every $x \in \bG(\LFi)$, but the latter is a proper subspace of $\P(V)$. 
\end{proof}

\begin{remark}\label{rem:poznanskypaper}
At first glance, \Cref{lem:transversality} above may seem to be weaker than \cite[Proposition 2.17]{Poznansky06}. Unfortunately, the proof of \cite[Proposition 2.17]{Poznansky06} relies on \cite[Proposition 2.11]{Poznansky06}, whose statement is erroneous. The set of elements whose conjugacy class intersects a big Bruhat cell is in fact smaller than stated there (see for instance \cite{EllersGordeev04,EllersGordeev07,ChanLuTo10} for a description in the case of $\SL_n$). In consequence, the results of \cite{Poznansky06} are only valid under the additional assumption that the conjugacy classes of the elements $h$ under consideration intersect a big Bruhat cell. 
Note that there are non-central torsion elements whose conjugacy class does not intersect the big Bruhat cell. 
We will address this in the next section by arranging for the transversality assumption of \Cref{lem:transversality} and \Cref{thm:pingpongdense} to hold. 

We note in addition that the proof of \cite[Theorem 6.5]{Poznansky06} overlooks the possibility that the subgroup generated by a given torsion element $h$ may not embed in any simple quotient of $\bG$. 
As will be emphasized in \Cref{rem:almostembeddingnecessary}, this condition is necessary for constructing a ping-pong partner for $h$. 
\end{remark}

\bigbreak

\begin{proof}[Proof of \Cref{thm:pingpongdense}]
For an arbitrary element $g \in \bG(\Fi)$, let us abbreviate $\rho_i(g)$ by $g_i$. 
For simplicity, we also write $\Ant_i = A_i \setminus \bZ(\Fi)$ and $\Bnt_i = B_i \setminus \bZ(\Fi)$. 
Recall that $C_{A_i} = A_i \cap \bZ(\Fi)$, $C_{B_i} = B_i \cap \bZ(\Fi)$, and $C_{i} = C_{A_i} \cdot C_{B_i}$. 
\medbreak

Fix a normal subgroup $\Lambda$ of finite index in $\Gamma$, and fix $\gamma_0 \in \Gamma$. 
First, because of the proximality hypothesis, \Cref{prop:proximaldense} applied to the Zariski-closure $\bH$ of $\Gamma$ in $\bG$ states that the set $S'$ of regular semisimple elements $\gamma' \in \Lambda \gamma_0$ such that $\gamma'_i = \rho_i(\gamma')$ is biproximal for every $i \in I$, is Zariski-dense in $\Gamma$. Pick $\gamma' \in S'$. 
\smallbreak

Second, using the transversality hypothesis on $\rho_i$, we exhibit a simultaneously biproximal element in $\Lambda \gamma_0$ acting transversely to $\Ant_i$ and $\Bnt_i$ for all $i \in I$. 
By \Cref{lem:transversality}, for every $i \in I$ and every $h \in \Ant_i \cup \Bnt_i$ the sets
\[
U_{i,h,{\gamma'}^{\pm 1}} = \{x \in \bH(\Fi) \mid x_i h_i x_i^{-1} \rA({\gamma'_i}^{\pm 1}) \not\in \rA'(\gamma'_i) \cup \rA'({\gamma'_i}^{-1})\}
\]
are Zariski-open and non-empty. 
In consequence, we can pick an element $\lambda$ in the Zariski-dense set $\Lambda \cap U_{\gamma'}$, where $U_{\gamma'} = \bigcap_{i \in I} \bigcap_{h \in \Ant_i \cup \Bnt_i} (U_{i,h,\gamma'} \cap U_{i,h,{\gamma'}^{-1}})$. 
Setting $\gamma = \lambda^{-1} \gamma' \lambda$, we see that $\gamma \in S'$, while for any $h \in \Ant_i \cup \Bnt_i$, 
\[
h_i \rA(\gamma_i) \not\in \rA'(\gamma_i) \cup \rA'({\gamma_i}^{-1}) \quad \text{and} \quad h_i \rA({\gamma_i}^{-1}) \not\in \rA'(\gamma_i) \cup \rA'({\gamma_i}^{-1}).
\]
\medbreak

Next, we construct the sets that will allow us to apply \Cref{lem:pingpongamalgam}. 
Given $i \in I$, let $P_i^\pm$ be a compact neighborhood of $\rA({\gamma_i}^{\pm 1})$ in $\P(V_i)$ small enough to achieve 
\[
\big((\Ant_i \cup \Bnt_i) \cdot P_i^\pm \big) \cap \big(\rA'(\gamma_i) \cup \rA'({\gamma_i}^{-1})\big) = \emptyset. 
\]
Such a set exists by construction of $\gamma$: by local compactness, the complement of the closed set $(\Ant_i \cup \Bnt_i) \cdot \big(\rA'(\gamma_i) \cup \rA'({\gamma_i}^{-1})\big)$ contains a compact neighborhood of $\rA({\gamma_i}^{\pm 1})$. 
In the same way, we can arrange that also 
\[
\big((\Ant_i \cup \Bnt_i) \cdot P_i^\pm \big) \cap (P_i^+ \cup P_i^-) = \emptyset.
\]
Note that $\bZ(\Fi)$ fixes $\rA(\gamma_i)$ and $\rA({\gamma_i}^{-1})$. 
The finite intersection $\bigcap_{c \in C_i} (c \cdot P_i^\pm)$ is thus again a compact neighborhood of $\rA({\gamma_i}^{\pm 1})$. Replacing $P_i^\pm$ by this intersection, we will further assume that $P_i^\pm$ is stable under $C_i$, hence under $C_{A_i}$ and $C_{B_i}$. 

Set $P_i = P_i^+ \cup P_i^-$ and set 
\[
Q_i = (\Ant_i \cup \Bnt_i) \cdot P_i;
\]
these two subsets of $\P(V_i)$ are compact, disjoint, and preserved by $C_i$. 
As $Q_i$ is disjoint from $\rA'(\gamma_i) \cup \rA'({\gamma_i}^{-1})$, \Cref{lem:proximalcriterion}.\ref{lem:proximalcriterion1} shows that there exists $N \in \N$ such that for any $n > N$, 
\[
\gamma_i^n Q_i \subset P_i^+ \quad \text{and} \quad \gamma_i^{-n} Q_i \subset P_i^- \quad \text{for each $i \in I$}.
\]
Pick $N_1 > N$ with $N_1 = 1 \mod \indx{\Gamma}{\Lambda}$, so that ${\gamma}^{N_1 + n\indx{\Gamma}{\Lambda}} \in \Lambda \gamma_0$ for every $n \in \Z$. 
For $n \in \N$, put $[n] = {N_1 + n\indx{\Gamma}{\Lambda}}$ and note that $[n] > N$ and $[n] = 1 \mod \indx{\Gamma}{\Lambda}$. 
\smallbreak

For each $i \in I$, \Cref{lem:pingpongamalgam} now applies to the following triples of subgroups of $\bG(\Fi)$:
\begin{equation*} \tag{$\ast$} \label{eq:listamalgams}
\begin{gathered}
\langle \gamma^{[n]}, C_{A_i} \rangle \text{ and } A_i \text{ along } C_{A_i}, \quad
\langle \gamma^{[n]}, C_{B_i} \rangle \text{ and } B_i \text{ along } C_{B_i}, \\ 
\langle A_i, C_i \rangle \text{ and } \langle \gamma^{[n]} B_i \gamma^{-[n]}, C_i \rangle \text{ along } C_{i}, 
\end{gathered}
\end{equation*}
with the same sets $P_i$ and $Q_i$ constructed above! 
Indeed, by construction, for all $m \in \Z \setminus \{0\}$ we have
\begin{equation*}
\begin{gathered}
\gamma_i^{m[n]} Q_i \subset P_i, \quad
\Ant_i \cdot P_i \subset Q_i, \quad 
\Bnt_i \cdot P_i \subset Q_i, \\
\gamma_i^{[n]} \Bnt_i \gamma_i^{-[n]} \cdot Q_i \subset \gamma_i^{[n]} \Bnt_i \cdot P_i^- \subset \gamma_i^{[n]} \cdot Q_i \subset P_i^+ \subset P_i.
\end{gathered}
\end{equation*}
But $\{\gamma^{m[n]} \mid m \neq 0 \}$ clearly represents every non-trivial coset of $C_{A_i}$ (resp.\ of $C_{B_i}$) in the group they generate, and the same holds for $\Ant_i$ and $\gamma^{[n]} \Bnt_i \gamma^{-[n]}$ with respect to $C_i$. 
We conclude that for every $i \in I$ and all $n \in \N$, the subgroups $\langle \gamma^{[n]}, A_i \rangle$, $\langle \gamma^{[n]}, B_i \rangle$, and $\langle A_i, \gamma^{[n]} B_i \gamma^{-[n]} \rangle$ are the free amalgamated products of the triples given in (\ref{eq:listamalgams}) above. 
\medbreak

This establishes that $S \cap \Lambda \gamma_0$ contains $\gamma^{[n]}$ for every $n \in \N$; it remains to show that $S \cap \Lambda \gamma_0$ is Zariski-dense. 

The Zariski closure $Z$ of $\{\gamma^{[n]} \mid n \in \N \}$ satisfies ${\gamma}^{\indx{\Gamma}{\Lambda}} Z \subset Z$. Since the Zariski topology is Noetherian, it follows that ${\gamma}^{(m+1) \indx{\Gamma}{\Lambda}} Z = {\gamma}^{m \indx{\Gamma}{\Lambda}} Z$ for some $m \in \N$, and in turn that ${\gamma} \in Z$. 

We have seen that $S'$ is Zariski-dense, and that for each $\gamma' \in S'$, the set $\Lambda \cap U_{\gamma'}$ is Zariski-dense. 
In consequence, the set $S'' = \{(\gamma', \lambda) \in \Gamma \times \Gamma \mid \gamma' \in S', \lambda \in \Lambda \cap U_{\gamma'}\}$ is Zariski-dense in $\Gamma \times \Gamma$. 
Indeed, its closure contains $\overline{\{\gamma'\} \times \Lambda \cap U_{\gamma'}} = \{\gamma'\} \times \Gamma$ for each $\gamma' \in S'$, therefore contains $\overline{S' \times \{\gamma\}} = \Gamma \times \{\gamma\}$ for each $\gamma \in \Gamma$. 

Since the conjugation map $\bH \times \bH \to \bH: (x,y) \mapsto y^{-1}xy$ is dominant, it sends $S''$ to a Zariski-dense subset of $\Gamma$. 
Following the thread of the argument, we see that the Zariski closure of $S \cap \Lambda \gamma_0$ contains the image of $S''$. 
This proves the theorem. 
\end{proof}

\medbreak

\begin{remark} \label{rem:necessityofconditions}
Each of the two properties assumed in \Cref{thm:pingpongdense} can be satisfied individually. 
Given a finitely generated Zariski-dense subgroup of a (connected) semisimple algebraic group, the existence of a local field and a representation satisfying the proximality property was first shown by Tits (see the proof of \cite[Proposition 4.3]{Tits72}). A refinement to non-connected simple groups can also be found in \cite[Theorem 1]{MargulisSoifer81}. 

The second property, transversality, can be established for a given non-central element using representation-theoretic techniques. 
However, it is not always possible to find a representation that works for all $h \in A_i \cup B_i$ at the same time. 

Even so, it may not always be possible to find a single representation which satisfies both properties of \Cref{thm:pingpongdense} simultaneously. 
Our next task will be to construct for real inner forms of $\SL_n$ and $\Res_{\C/\R} (\SL_n)$ a representation which does. 
This will be sufficient for the applications appearing in \S 4 \& \S 5. 
\end{remark}

\subsection{Constructing a proximal and transverse representation for \except{toc}{inner \texorpdfstring{$\R$}{R}-forms of }\texorpdfstring{$\SL_n$}{SLn} and \texorpdfstring{$\GL_n$}{GLn}} \label{subsec:constructingproximaltransverse}

Let $D$ be a finite-dimensional division $\R$-algebra and set $d = \dim_\R D$. Let $n \geq 2$ and let $\bH$ be any algebraic $\R$-group in the isogeny class of $\SL_{D^n}$ or $\GL_{D^n}$, viewing $D^n$ as a right $D$-module. 
For example, if $D = \C$ this means that $\bH$ is a quotient of the $\R$-group $\Res_{\C/\R} (\SL_n)$ or $\Res_{\C/\R} (\GL_n)$ by a (finite) central subgroup. 
The \emph{standard projective representation} of $\bH$ is the canonical morphism $\rhos: \bH \to \PGL_{D^n}$. 
This is the projective representation which will exhibit both proximal and transverse elements. 
\medbreak

First, we recall that an element $g \in \bG(\R)$, in some reductive $\R$-group $\bG$, is called \emph{$\R$-regular} if the number of eigenvalues (counted with multiplicity) of $\Ad(g)$ of absolute value 1 is minimal. 
Any \emph{$\R$-regular} element is semisimple (see \cite[Remark 1.6.1]{PrasadRaghunathan72}), and when $\bG$ is split, every {$\R$-regular} element is regular. 

With $\bH$ as specified above, an element $g \in \bH(\R)$ is $\R$-regular if and only if some (any) representative of $\rhos(g)$ in $\GL_{D^n}(\R)$ is conjugate to a diagonal $n$-by-$n$ matrix with entries in $D$ of distinct absolute values. 
Indeed, if $\rhos(g)$ is represented by $\diag(a_1, \dots, a_n)$ with $|a_i| \neq |a_j|$ for $i \neq j$, the absolute values of the eigenvalues of $\Ad(g)$ are $\{|a_i a_j^{-1}|\}_{1 \leq i, j \leq 1}$ (with the correct multiplicities) and are equal to $1$ only for $i = j$, which are the least possible occurrences. 
Conversely, if $g$ is $\R$-regular, the centralizer of the $\R$-regular element $\rhos(g)$ contains a unique maximal $\R$-split torus $\bS$ of $\PGL_{D^n}$ (see \cite[Lemma 1.5]{PrasadRaghunathan72}). 
Thus $\rhos(g)$ belongs to the centralizer of $\bS(\R)$, which, up to conjugation, is the subgroup of (classes of) diagonal $n$-by-$n$ matrices with entries in $D$; say $\rhos(g)$ is represented by $\diag(a_1, \dots, a_n)$. 
The absolute values of the eigenvalues of $\Ad(g)$ are again $\{|a_i a_j^{-1}|\}_{1 \leq i, j \leq 1}$. 
From the $\R$-regularity of $\rhos(g)$, we deduce that each value $|a_i a_j^{-1}|$ with $i \neq j$ must differ from $1$, as claimed. 

It follows from this description that if $\ell_\mathrm{max}$ (resp.\ $\ell_\mathrm{min}$) denotes the $D$-line in $D^n$ on which a $\R$-regular element $g \in \bH(\R)$ acts by multiplication by an element of $D^\times$ of largest (resp.\ smallest) absolute value, then $\ell_\mathrm{max} = \rA(g)$ is the attracting subspace of $g$ (resp.\ $\ell_\mathrm{min} = \rA(g^{-1})$), so that $g$ is biproximal.\footnote{Conversely, there exists a representation under which any proximal element is $\R$-regular, see \cite[Lemma 3.4]{PrasadRaghunathan72}. }
We record this here. 

\begin{lemma} \label{lem:Rregularstandardproximal}
Let $\bH$ and $\rhos$ be as above. 
Any $\R$-regular element $g \in \bH(\R)$ is biproximal under $\rhos$. 
\end{lemma}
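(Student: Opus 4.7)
The lemma is essentially a matter of unwinding the definitions in light of the preceding discussion, so the proof will be short.

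My plan is as follows. Let $g \in \bH(\R)$ be $\R$-regular. By the characterization of $\R$-regularity recalled in the paragraph preceding the lemma, I may choose a representative of $\rhos(g)$ in $\GL_{D^n}(\R)$ which, after conjugation, is the diagonal matrix $\diag(a_1, \dots, a_n)$ with $a_i \in D^\times$ and $|a_1|, \dots, |a_n|$ pairwise distinct. Re-ordering the basis, assume $|a_1| > |a_2| > \cdots > |a_n|$, and let $\ell_{\max} = e_1 D$ and $\ell_{\min} = e_n D$ be the corresponding $D$-lines.

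Next I would argue that $\rA(\rhos(g)) = \ell_{\max}$. Recall from \Cref{def:proximal} that $\rA(\rhos(g))$ is computed by viewing $D^n$ as an $\R$-vector space and taking the sum of the generalized eigenspaces (over $\ol{\R} = \C$) of the chosen lift corresponding to eigenvalues of maximal absolute value. On the $\R$-subspace $e_i D \subset D^n$, the lift acts as left multiplication by $a_i$, so the eigenvalues appearing on $e_i D$ are (up to multiplicity) the eigenvalues of left multiplication by $a_i$ on $D$, each of absolute value $|a_i|$ (since the absolute value on $D$ satisfies $|x| = |\rN(x)|^{1/d}$, and any eigenvalue of left multiplication by $a_i$ is a root of its reduced characteristic polynomial, whose roots all have modulus $|a_i|$). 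Because the $|a_i|$ are pairwise distinct, $|a_1|$ is strictly larger than the absolute value of any eigenvalue appearing off $\ell_{\max}$. Hence the sum of the generalized eigenspaces of maximal absolute value is exactly $\ell_{\max}$, which is a single $D$-line, showing $\dim_D \rA(\rhos(g)) = 1$.

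Applying the same argument to $g^{-1}$, whose diagonal representative is $\diag(a_1^{-1}, \dots, a_n^{-1})$, the maximal absolute value is now $|a_n^{-1}|$, and we conclude $\rA(\rhos(g^{-1})) = \ell_{\min}$ is also a single $D$-line. Thus both $\rhos(g)$ and $\rhos(g)^{-1}$ are proximal, i.e.\ $\rhos(g)$ is biproximal, as claimed.

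I do not anticipate any real obstacle: the only mildly delicate point is ensuring that, when computing $\rA$, one does not confuse eigenvalues of the $\R$-linear operator with the scalars $a_i \in D$ themselves. Both viewpoints give the same absolute values, which is exactly what the admissibility of the norm on $D$ guarantees.
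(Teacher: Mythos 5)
Your proof is correct and follows essentially the same route as the paper, which derives the lemma directly from the characterization of $\R$-regular elements given in the preceding paragraph: the diagonal representative $\diag(a_1,\dots,a_n)$ with pairwise distinct $|a_i|$ acts on each $D$-line $e_iD$ as left multiplication by $a_i$, whose eigenvalues over $\C$ all have modulus $|a_i|$, so $\rA(\rhos(g)) = \ell_{\max}$ and $\rA(\rhos(g)^{-1}) = \ell_{\min}$ are single $D$-lines. The only cosmetic slip is in your closing remark: the compatibility of absolute values you invoke comes from the unique extension of $|\cdot|$ from $\R$ to $D$ (the formula $|x| = |\rN(x)|^{1/d}$), not from \emph{admissibility}, a term the paper reserves for distances on $\P(V)$.
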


So, in order to exhibit proximal elements in $\rhos(\Gamma)$ for $\Gamma \leq \bH(\R)$ a Zariski-dense subgroup, it suffices to show $\Gamma$ admits a $\R$-regular element. 
This is the content of the following theorem, due to Benoist and Labourie \cite[A.1 Th\'{e}or\`{e}me]{BenoistLabourie93}. We also refer the reader to the direct proof given by Prasad in \cite{Prasad94}. 

\begin{theorem}[Abundance of $\R$-regular elements, A.1 {Th\'{e}or\`{e}me} in \cite{BenoistLabourie93}] \label{thm:Rregulardense}
Let $\bG$ be a reductive $\R$-group. 
Let $\Gamma$ be a Zariski-dense subgroup of $\bG(\R)$. 
The subset of $\R$-regular elements in $\Gamma$ is Zariski-dense. 
\end{theorem}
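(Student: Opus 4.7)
The plan is to bootstrap from the paper's own \Cref{prop:proximaldense} via a representation-theoretic characterization of $\R$-regularity. The starting point is a classical observation going back to Tits, in the spirit of \cite[Lemma~3.4]{PrasadRaghunathan72}: for a connected reductive $\R$-group $\bG$ with maximal $\R$-split torus $\bS$ and simple restricted roots $\alpha_1, \dots, \alpha_r$, one can find a finite family of irreducible $\R$-representations $\{\rho_i : \bG \to \GL(V_i)\}_{i=1}^r$, with $\rho_i$ of highest $\bS$-weight a positive multiple of the $i$-th fundamental weight, such that an element $g \in \bG(\R)$ is $\R$-regular if and only if $\rho_i(g)$ is biproximal for each $i$. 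The ``only if'' direction is a direct computation in the restricted root system; for the ``if'' direction one uses that the other extremal $\bS$-weights of $\rho_i$ differ from its highest by positive multiples of $\alpha_i$, so that biproximality of every $\rho_i(g)$ forces the Cartan projection of $g$ to land in the open Weyl chamber.

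Equipped with this characterization, the strategy is to feed the family $\{\rho_i\}_{i=1}^r$ into \Cref{prop:proximaldense}. Its conclusion is a set of regular semisimple elements of $\Gamma$, simultaneously biproximal under every $\rho_i$ and dense in $\Gamma$ for the join of the Zariski and profinite topologies; by the characterization above, every such element is $\R$-regular, and the theorem follows (we may harmlessly assume $\bG$ is $\R$-isotropic, as otherwise $\bG(\R)$ is compact and the statement is trivial). The only hypothesis of \Cref{prop:proximaldense} left to verify is that $\rho_i(\Gamma)$ contains a proximal element for each $i$.

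This last point is the main obstacle, and it is precisely the classical existence result of Tits from the proof of \cite[Proposition~4.3]{Tits72}, refined by \cite[Theorem~1]{MargulisSoifer81} for non-connected groups: using the Cartan decomposition of $\bG(\R)$ together with the fact that $\Gamma$, being Zariski-dense, acts strongly irreducibly via $\rho_i$ and has Cartan projection unbounded in every direction of the Weyl chamber, one constructs by iteration and conjugation an element whose largest singular value in $\rho_i$ separates uniformly from the next. An alternative route that bypasses \Cref{prop:proximaldense} altogether would follow \cite{BenoistLabourie93} directly, or the cleaner reworking by Prasad in \cite{Prasad94}: one first exhibits a single $\R$-regular element in $\Gamma$ by arranging, via iterated products, for its Cartan projection to land deep inside the Weyl chamber, and then deduces Zariski-density in $\Gamma$ by running the construction over a Zariski-dense family of parameters, much as in the final step of the proof of \Cref{thm:pingpongdense}.
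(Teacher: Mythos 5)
First, be aware that the paper does not prove \Cref{thm:Rregulardense} at all: it is imported verbatim from \cite{BenoistLabourie93}, with a pointer to Prasad's direct proof \cite{Prasad94}, so there is no internal argument to compare against. Your first two paragraphs are a sound reduction, and in fact the very route the authors gesture at in the remark following \Cref{cor:standardproximal}: the characterization of $\R$-regularity by simultaneous biproximality in a family of irreducible $\R$-representations $\rho_1, \dots, \rho_r$ with highest weights positive multiples of the fundamental restricted weights is classical (see \cite[3.12--3.14]{AbelsMargulisSoifer95}, resting on \cite{Tits71,BorelTits65}), and once each $\rho_i(\Gamma)$ is known to contain a proximal element, \Cref{prop:proximaldense} delivers a dense set of simultaneously biproximal, hence $\R$-regular, elements. (A minor slip: it is the Jordan projection, not the Cartan projection, that proximality controls; but that is exactly what $\R$-regularity measures, so no harm done.)

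The genuine gap is in your third paragraph, which is where all the content of the theorem actually sits. The results you invoke --- the proof of \cite[Proposition 4.3]{Tits72} and \cite[Theorem 1]{MargulisSoifer81} --- produce a proximal element only after a judicious choice of local field and of representation (typically a non-archimedean completion at which some eigenvalue fails to be a unit); they give no control over the prescribed real representations $\rho_i$. Moreover, the assertion that $\Gamma$ ``has Cartan projection unbounded in every direction of the Weyl chamber'' is essentially a restatement of the theorem being proved (it says the limit cone of $\Gamma$ meets the open chamber), so it cannot serve as an input. The correct patch is \cite[Theorem 6.3]{GoldsheidMargulis89} (or \cite[3.12--3.14]{AbelsMargulisSoifer95}): a sub-semigroup of $\GL(V)$ whose Zariski closure acts irreducibly and contains a proximal element must itself contain one; since $\rho_i(\bG(\R))$ contains proximal elements (any regular element of a maximal $\R$-split torus will do, once one reduces, as you rightly do, to $\bG$ being $\R$-isotropic), this closes the argument. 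Note, however, that this replaces the theorem of Benoist--Labourie by one of Goldsheid--Margulis of comparable depth --- which is precisely why the paper treats \Cref{thm:Rregulardense} as a black box.
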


\begin{corollary} \label{cor:standardproximal}
Let $\bH$ and $\rhos$ be as above. 
Let $\Gamma$ be a Zariski-dense subgroup of $\bH(\R)$. 
The elements $g \in \Gamma$ such that $\rhos(g)$ is biproximal, form a Zariski-dense subset of $\Gamma$. 
\end{corollary}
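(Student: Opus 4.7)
The plan is to deduce this corollary directly by combining the two immediately preceding results. Specifically, I would argue that the set of $\R$-regular elements of $\Gamma$ is contained in the set whose image under $\rhos$ is biproximal, and then invoke the theorem of Benoist--Labourie.

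More precisely, first I would apply \Cref{thm:Rregulardense} to the reductive $\R$-group $\bH$ and its Zariski-dense subgroup $\Gamma$, to conclude that the set
\[
\Gamma_{\R\text{-reg}} = \{g \in \Gamma \mid g \text{ is } \R\text{-regular in } \bH\}
\]
is Zariski-dense in $\Gamma$. Then, \Cref{lem:Rregularstandardproximal} provides the inclusion $\Gamma_{\R\text{-reg}} \subseteq \{g \in \Gamma \mid \rhos(g) \text{ is biproximal}\}$. The Zariski density of the latter set follows.

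There is no real obstacle to overcome: both ingredients have been established just above, and the corollary is essentially the observation that they combine cleanly. The only conceptual step worth noting is that $\R$-regularity is defined with respect to $\bH$, whereas biproximality is a property of the image in $\PGL_{D^n}$; but this is exactly the content of \Cref{lem:Rregularstandardproximal}, which characterizes $\R$-regular elements of $\bH(\R)$ via the distinct absolute values of the diagonal entries of a representative of $\rhos(g)$ and identifies the attracting subspaces $\rA(\rhos(g)^{\pm 1})$ as the corresponding extreme $D$-lines.
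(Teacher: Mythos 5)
Your proposal is correct and is exactly the argument the paper intends: the corollary is the immediate combination of \Cref{thm:Rregulardense} (Zariski-density of $\R$-regular elements in $\Gamma$) with \Cref{lem:Rregularstandardproximal} ($\R$-regular $\Rightarrow$ biproximal under $\rhos$). Nothing further is needed.
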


\begin{remark}
The existence of elements proximal under $\rhos$ in any Zariski-dense sub(semi)group can also be established using the results of Goldsheid and Margulis \cite[Theorem 6.3]{GoldsheidMargulis89} (see also \cite[3.12--14]{AbelsMargulisSoifer95}). This approach is more tedious, as the standard representation of $\GL_{D^n}$ does not admit proximal elements if $D^n$ is seen as a vector $\R$-space 
(which is in fact one of the motivations to extend the framework of \cite{Tits72} to division algebras). 
Instead, one should embed $\P_D(D^n)$ inside $\P_\R(\bigwedge_\R^d D^n)$ via the Plücker embedding, and exhibit proximal elements in that projective representation. 
\end{remark}
\medbreak

Next, we move on to the question of transversality. 
It turns out that under $\rhos$, every non-central element $h \in \bH(\R)$ satisfies the transversality condition of \Cref{thm:pingpongdense}. 

\begin{proposition} \label{prop:standardtransversal}
Let $\bH$ and $\rhos$ be as above. 
Let $h \in \bH(\R)$ be non-central. 
For every $p \in \P(D^n)$, the span of $\{\rhos(xhx^{-1})p \mid x \in \bH(\R) \}$ is the whole of $\P(D^n)$. 
\end{proposition}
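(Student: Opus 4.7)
My approach hinges on strengthening the conclusion to a pointwise statement. I claim that the set $\{\rhos(xhx^{-1}) \cdot p : x \in \bH(\R)\}$ already equals the whole of $\P(D^n)$ as a set of $D$-lines; in particular it contains a $D$-basis of $V = D^n$, so its $D$-span is all of $V$ and the corresponding projective span is all of $\P(V)$.

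Two ingredients drive this. First, because $\rhos: \bH \to \PGL_{D^n}$ is a central isogeny onto the adjoint group, the $\R$-point image $\rhos(\bH(\R))$ is an open subgroup of finite index in $\PGL_n(D)(\R)$. A direct computation with the parabolic stabilizing a given line, which modulo scalars consists of block-upper-triangular matrices acting transitively on the complementary affine chart, shows that the identity component $\rhos(\bH(\R))^{\circ}$ acts doubly transitively on $\P^{n-1}(D)$. Second, since the kernel of a central isogeny onto the adjoint group is precisely the center and $h \notin \bZ(\bH)(\R)$ by hypothesis, the element $\rhos(h)$ is non-trivial in $\PGL_{D^n}(\R)$, so its fixed-point locus in $\P(V)$ is a proper Zariski-closed subset.

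Fix $p \in \P(V)$ and let $q \neq p$. I rewrite $\rhos(xhx^{-1}) \cdot p = q$ by setting $y = \rhos(x)^{-1}$ and $z = y \cdot p$, converting the equation into the system $y \cdot p = z$ and $y \cdot q = \rhos(h) \cdot z$. By the second ingredient I may choose $z \in \P(V)$ with $\rhos(h) \cdot z \neq z$; since moreover $p \neq q$, the double transitivity of $\rhos(\bH(\R))^\circ$ produces $y$ in the image satisfying both constraints, and $y^{-1}$ lifts to some $x \in \bH(\R)$. Together with the trivial case $q = p$, this shows every point of $\P(V)$ lies in $\{\rhos(xhx^{-1}) \cdot p : x \in \bH(\R)\}$, which concludes the argument.

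The one point deserving care is the double transitivity for the identity component $\rhos(\bH(\R))^\circ$, rather than for $\PGL_n(D)(\R)$ itself, since the latter can be disconnected (for instance when $D = \R$ and $n$ is even). This reduces to checking that the stabilizer of a line inside the identity component still acts transitively on the complementary chart, which is immediate from the block-triangular description above, as any sign constraint on the determinant can be absorbed by adjusting a single sub-block.
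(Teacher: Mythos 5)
Your argument is essentially correct but follows a genuinely different route from the paper. The paper works linearly: it decomposes the conjugation representation of $\SL_{D^n}$ on $\End_D(D^n)$ into the trivial representation plus the (irreducible) adjoint representation on $\fsl_n(D)$, deduces that the $\R$-span of the conjugacy class of $h$ contains all of $\fsl_n(D)$, and concludes via $\fsl_n(D)\cdot v = D^n$. You instead prove a pointwise statement: using that $\rhos(\bH(\R))$ is open in $\PGL_{D^n}(\R)$ (hence contains the identity component, which is doubly transitive on $\P^{n-1}(D)$) and that the non-trivial element $\rhos(h)$ moves some line, you solve $\rhos(xhx^{-1})p = q$ directly for every $q \neq p$. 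Your version yields more (the orbit, not just its span, is almost everything), while the paper's Lie-algebra argument is the one that transfers to other representations and other groups, as alluded to in \Cref{rem:otherrepresentations}; both establish the transversality needed for \Cref{thm:pingpongdense}. The reductions you flag (double transitivity of the identity component, well-definedness of the sign of the determinant on $\PGL_n(\R)$ for $n$ even) do check out.

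One genuine, though harmless, slip: the case $q = p$ is not ``trivial'', and your claim that the orbit is \emph{all} of $\P(D^n)$ is false in general. The point $p$ lies in $\{\rhos(xhx^{-1})p\}$ only if some conjugate of $h$ fixes $p$, equivalently only if $\rhos(h)$ fixes some line; a non-central rotation in $\SL_2(\R)$ fixes no line of $\P^1(\R)$, so its conjugation orbit of $p$ is exactly $\P^1(\R)\setminus\{p\}$. This does not damage your proof of the proposition: since $n \geq 2$, the set $\P(D^n)\setminus\{p\}$ still contains a $D$-basis of $D^n$, so the span is the whole of $\P(D^n)$ regardless. You should simply delete the appeal to the case $q=p$ and observe that the complement of a point already spans. (Two further cosmetic points: for $\bH$ in the isogeny class of $\GL_{D^n}$ the map $\rhos$ is a central quotient with positive-dimensional kernel rather than an isogeny, though its image on $\R$-points is still open; and double transitivity requires the point stabilizer to act transitively on the full complement of the point, not merely on an affine chart --- your block-triangular computation does prove the former.)
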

\begin{proof}
Taking preimages in $\GL_{D^n}$, we may without loss of generality work with the action of $\GL_{D^n}$ on $D^n$ instead of $\rhos(\bH) = \PGL_{D^n}$ on $\P(D^n)$. 
We will show in this setting that, for every non-zero $v \in D^n$ and every non-central $h \in \GL_{D^n}(\R)$, the $\R$-span of $\{xhx^{-1} \cdot v \mid x \in \SL_{D^n}(\R) \}$ is the whole of $D^n$. 
The statement of the proposition then follows immediately by projectivization. 

Viewing $\End_D(D^n)$ as a vector $\R$-space, the conjugation action defines a linear representation of $\SL_{D^n}$ on $\End_D(D^n)$. 
This representation decomposes into two irreducible components: a copy of the trivial representation given by the action of $\SL_{D^n}$ on the center of $\End_D{D^n}$, and a copy of the adjoint representation given by the action of $\SL_{D^n}$ on the subspace $\mathfrak{sl}_n(D)$ of traceless endomorphisms. 

When $h$ is not central, it admits a distinct conjugate $xhx^{-1}$ of the same trace, hence the $\R$-span $W_h$ of $\{xhx^{-1} \mid x \in \SL_{D^n}(\R)\}$ contains for some $g \in \SL_{D^n}(\R)$ the nonzero traceless element $h' = h - ghg^{-1}$. 
In turn, $W_h$ contains the $\R$-span $W_{h'}$ of $\{xh'x^{-1} \mid x \in \SL_{D^n}(\R)\}$, a $\SL_{D^n}$-stable subspace of $\mathfrak{sl}_n(D)$ which must equal $\mathfrak{sl}_n(D)$, as the latter is irreducible for the adjoint action. 
Thus, either $W_h = \mathfrak{sl}_n(D)$ if $\Tr(h) = 0$, or $W_h = \End_D(D^n)$ if $\Tr(h) \neq 0$. 

Finally, for any non-zero $v \in D^n$ we have that $\mathfrak{sl}_n(D) \cdot v = D^n$, from which we conclude that the $\R$-span of $\{xhx^{-1} \cdot v \mid x \in \SL_{D^n}(\R) \}$ contains $W_h \cdot v = D^n$. 
\end{proof}
\medbreak

\begin{definition} \label{def:almostembed}
Let $\bG$ be a reductive $\Fi$-group with center $\bZ$, and $H \leq \bG(\Fi)$. 
For the purposes of this paper, we will say that \emph{$H$ almost embeds in a quotient $\bQ$ of $\bG$} if the kernel of the restriction $H \to \bQ(\Fi)$ of the quotient map is contained in $\bZ(\Fi)$. 
If there exists a simple (resp.\ adjoint simple) quotient $\bQ$ of $\bG$ in which $H$ almost embeds, we will say for short that \emph{$H$ almost embeds in a simple} (resp.\ \emph{adjoint simple}) \emph{quotient}. 

It is clear that if $\bQ$ is a simple factor of $\bG$ and $H$ is a subgroup of $\bQ(\Fi)$, then $H$ embeds in $\bQ$, and almost embeds in $\Ad \bQ$. 
In particular, when $\bG$ is itself simple, every subgroup of $\bG(\Fi)$ almost embeds in an adjoint simple quotient. 
\end{definition}

With this, we are ready to prove the following application of \Cref{thm:pingpongdense}, establishing the abundance of simultaneous ping-pong partners for finite subgroups in products of inner forms of $\SL_n$ and $\GL_n$ which almost embed in an adjoint simple quotient. 

\begin{theorem} \label{thm:pingpongdenseSLn}
Let $\bG$ be a reductive $\R$-group with center $\bZ$. 
Let $\Gamma$ be a subgroup of $\bG(\R)$ whose image in $\Ad \bG$ is Zariski-dense. 
Let $(A_i, B_i)_{i \in I}$ be a finite collection of finite subgroups of $\bG(\R)$. 

Suppose that for each $i \in I$, there exists a quotient of $\bG$ of the form $\PGL_{\LDi_i^{n_i}}$ for $\LDi_i$ some finite-dimensional division $\R$-algebra and $n_i \geq 2$, for which the kernels of the restrictions $A_i, B_i \to \PGL_{\LDi_i^{n_i}}(\R)$ are contained in $\bZ(\R)$. 
Then the set of regular semisimple elements $\gamma \in \Gamma$ of infinite order such that for all $i \in I$, the canonical maps
\begin{align*}
\langle \gamma, C_{A_i} \rangle \ast_{C_{A_i}} A_i &\to \langle \gamma, A_i \rangle && \text{where $C_{A_i} = A_i \cap \bZ(F)$,}\\
\langle \gamma, C_{B_i} \rangle \ast_{C_{B_i}} B_i &\to \langle \gamma, B_i \rangle && \text{where $C_{B_i} = B_i \cap \bZ(F)$,}\\
\intertext{and provided $\indx{A_i}{C_{A_i}} > 2$ or $\indx{B_i}{C_{B_i}} > 2$, also}
\langle A_i, C_i \rangle \ast_{C_i} \langle B_i, C_i \rangle &\to \langle A_i, B_i^{\gamma} \rangle && \text{where $C_i = C_{A_i} \cdot C_{B_i}$,}
\end{align*}
are all isomorphisms, is dense in $\Gamma$ for the join of the profinite topology and the Zariski topology. 
\end{theorem}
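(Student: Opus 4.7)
The plan is to apply \Cref{thm:pingpongdense} with, for each $i \in I$, the local field $\LFi_i = \R$, the division algebra $\LDi_i$, and the standard projective representation $\rho_i \colon \bG \to \PGL_{\LDi_i^{n_i}}$ afforded by the hypothesis. Each such $\rho_i$ is surjective, and $\PGL_{\LDi_i^{n_i}}$ acts irreducibly on $\P(\LDi_i^{n_i})$, so each $\rho_i$ is an irreducible projective $\R$-representation as required.

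Before checking the two conditions of \Cref{thm:pingpongdense}, I would reduce to the Zariski-connected case. Let $\bH$ denote the Zariski closure of $\Gamma$ in $\bG$ and $\bH^\circ$ its identity component. Since $\Ad \bG$ is connected and the image of $\Gamma$ there is Zariski-dense, the image of the finite-index subgroup $\Gamma^\circ = \Gamma \cap \bH^\circ(\R)$ in $\Ad \bG$ remains Zariski-dense, and in particular $\rho_i(\Gamma^\circ)$ is Zariski-dense in $\PGL_{\LDi_i^{n_i}}$ for every $i$. I would then verify the two conditions of \Cref{thm:pingpongdense} for $\Gamma^\circ$. \emph{Proximality} is immediate from \Cref{cor:standardproximal}, which produces Zariski-many biproximal elements in $\rho_i(\Gamma^\circ)$. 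For \emph{transversality}, the kernel hypothesis on $A_i, B_i$ ensures that $\rho_i(h)$ is non-central for every $h \in (A_i \cup B_i) \setminus \bZ(\R)$; \Cref{prop:standardtransversal} then gives the required spanning property for the full action of $\PGL_{\LDi_i^{n_i}}(\R)$, and the Zariski-closed nature of the condition (\Cref{rem:transversalitycondition}) transfers it to the Zariski-dense subgroup $\rho_i(\Gamma^\circ)$.

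An application of \Cref{thm:pingpongdense} to $\Gamma^\circ$ then yields that the set of desired ping-pong partners is dense in $\Gamma^\circ$ for the join of the Zariski and profinite topologies. The remaining step is to upgrade this density to all of $\Gamma$. Since $\Gamma^\circ$ has finite index in $\Gamma$, its cosets are clopen in both topologies, and it suffices to establish density in each coset $\Gamma^\circ \gamma_0$ for $\gamma_0 \in \Gamma$. For this, I would rerun the final density step in the proof of \Cref{thm:pingpongdense}: having fixed $\gamma_0 \in \Gamma$, let $m$ be the order of $\gamma_0$ in the finite quotient $\Gamma / \Gamma^\circ$, and replace the exponent $[n] = N_1 + n \indx{\Gamma}{\Lambda}$ used there by one of the form $[n] = 1 + n \cdot m \indx{\Gamma^\circ}{\Lambda}$ (with $\Lambda \triangleleft \Gamma^\circ$ a well-chosen finite-index normal subgroup), so that the resulting powers $\gamma^{[n]}$ remain inside $\Gamma^\circ \gamma_0$. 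The key dynamical input is unchanged, and the fact that $\Gamma^\circ \gamma_0$ is Zariski-dense in the irreducible translate $\bH^\circ \gamma_0$ ensures that the Noetherian Zariski-closure argument goes through \emph{mutatis mutandis}.

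The main obstacle is precisely this final step: the arguments of \Cref{prop:proximaldense} and \Cref{thm:pingpongdense} implicitly exploit a Zariski-connected ambient framework, so one must carefully verify that the iterated conjugation, raising-to-power, and Zariski-closure manipulations respect the coset structure of $\Gamma$ modulo $\Gamma^\circ$, and not merely the coset structure of $\Gamma^\circ$ modulo one of its own normal finite-index subgroups. I expect this to be a routine but somewhat tedious bookkeeping, with no genuinely new ideas required beyond those already present in the proof of \Cref{thm:pingpongdense}.
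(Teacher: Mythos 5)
Your proposal is correct and follows the same route as the paper's proof: take $\rho_i$ to be the quotient map $\bG \to \PGL_{\LDi_i^{n_i}}$, get proximality of $\rho_i(\Gamma)$ from \Cref{cor:standardproximal} (via Zariski-density of the image in $\Ad\bG$), get transversality from \Cref{prop:standardtransversal} together with \Cref{rem:transversalitycondition}, and then invoke \Cref{thm:pingpongdense}. The one place where you diverge is the reduction to the Zariski-connected case. The paper applies \Cref{thm:pingpongdense} to $\Gamma$ directly and does not comment on the fact that its hypotheses ask for $\Gamma$ to be Zariski-connected, whereas \Cref{thm:pingpongdenseSLn} only assumes that the image of $\Gamma$ in $\Ad\bG$ is Zariski-dense — and the latter does not imply the former (the Zariski closure $\bH$ of $\Gamma$ satisfies $\bH = \bH^\circ\cdot(\bH\cap\bZ)$ and can be disconnected, e.g.\ $\Gamma = \{\pm 1\}\times\Lambda$ in $\GL_1\times\PGL_2$). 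So you are right that there is a hypothesis to discharge, and your fix is the natural one: since the components of $\bH$ are central translates of $\bH^\circ$, the representations $\rho_i$ remain irreducible on $\bH^\circ$, Step 4 of \Cref{prop:proximaldense} and the final Noetherian-closure argument of \Cref{thm:pingpongdense} run verbatim on each coset $\Gamma^\circ\gamma_0$ (the exponents $[n]$ just need to be taken $\equiv 1$ modulo $\indx{\Gamma}{\Lambda}$ for a normal finite-index $\Lambda \leq \Gamma^\circ$), and the ping-pong step itself never uses connectedness. Your version is therefore slightly more careful than the paper's; the extra bookkeeping is, as you anticipate, routine.
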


\begin{proof}
For $i \in I$, let $\rho_i$ denote the quotient map $\bG \to \PGL_{\LDi_i^{n_{i}}}$ afforded by the statement. 
The morphism $\rho_i$ is an irreducible projective representation of $\bG$ over $\LDi_{i}$. 
Note that $\rho_i$ factorizes via $\bG \to \Ad \bG \to \PGL_{\LDi_i^{n_{i}}}$.
In particular the image of $\Gamma$ in $\PGL_{\LDi_i^{n_{i}}}$ is Zariski-dense. 

\Cref{cor:standardproximal} shows that the set of elements in $\rho_i(\Gamma)$ which are biproximal is Zariski-dense in $\PGL_{\LDi_i^{n_{i}}}$; a fortiori, $\rho_i(\Gamma)$ contains a proximal element. 

Moreover, since by assumption the kernel of the restriction of $\rho_i$ to $A_i$ (resp.\ $B_i$) is contained in $\bZ(\R)$, every $h \in (A_i \cup B_i) \setminus \bZ(\R)$ maps to a non-trivial element in $\PGL_{\LDi_i^{n_{i}}}(\R)$. 
\Cref{prop:standardtransversal} then precisely states that $\rho_i$ satisfies the transversality condition of \Cref{thm:pingpongdense} (see \Cref{rem:transversalitycondition}). 
We are thus at liberty to apply \Cref{thm:pingpongdense} to $\Gamma \leq \bG(\R)$ and the collection $(A_i, B_i)_{i \in I}$, deducing this theorem. 
\end{proof}

\begin{remark} \label{rem:almostembeddingnecessary}
Let $\Fi$ be any field, and let $\bG$ be a reductive $\Fi$-group with center $\bZ$. 
In order for a subgroup $H \leq \bG(\Fi)$ to admit a ping-pong partner in $\bG(\Fi)$, it is necessary that $H$ embeds in an adjoint simple factor. 
In fact, if the subgroup $\langle \gamma, H \rangle$ is the free amalgamated product of $\langle \gamma, C \rangle$ and $H$ over $C = H \cap \bZ(\Fi)$, then in the quotient $\bG / \bZ$, the image of $\langle \gamma, H \rangle$ is the free product of the images of $\langle \gamma \rangle$ and $H$. 
Indeed, the kernel of the quotient map being central in $\langle \gamma, C \rangle \ast_C H$, it lies in $C$, hence coincides with $C$. 
But $\bG / \bZ$ is the direct product of adjoint simple quotients of $\bG$, so by \Cref{prop:amalgaminproduct}, $H / C$ embeds in (the $\Fi$-points of) one of these factors. 

Similarly, if $A$ and $B$ are such that $\langle A, B \rangle \leq \bG(\Fi)$ is the free amalgamated product of $\langle A, C \rangle$ and $\langle B, C \rangle$ over $C = \langle A \cap \bZ(\Fi), B \cap \bZ(\Fi) \rangle$, then in the quotient $\bG / \bZ$, the image of $\langle A, B \rangle$ is again the free product of the images of $A$ and $B$. 
By \Cref{prop:amalgaminproduct} once more, $A / C$ and $B / C$ both embed in (the $\Fi$-points of) a common adjoint simple factor. 

In other words, \Cref{thm:pingpongdenseSLn} states that a collection of finite subgroups $(H_i)_{i \in I}$ in an $\R$-group $\bG$ whose simple quotients are each isogenous to some $\PGL_{n}(D)$ with $n \geq 2$, admits simultaneous ping-pong partners in $\Gamma$ \emph{if and only if} each $H_i$ embeds in an adjoint simple factor. 
Similarly, two finite subgroups $A$, $B$ of $\bG(\Fi)$ admit conjugates by $\Gamma$ which play ping-pong \emph{if and only if} $A$ and $B$ embed in the same adjoint simple factor. 
\end{remark}

\begin{remark} \label{rem:otherrepresentations}
There are versions of \Cref{thm:pingpongdenseSLn} for semisimple $\R$-groups of other types, but proving them requires a more careful study of the representation theory of $\bG$ to exhibit a representation playing the role of $\rhos$. 
However, as indicated in \Cref{rem:necessityofconditions}, there are also cases where one needs additional information on the $H_i$ to get a representation satisfying the transversality assumption of \Cref{thm:pingpongdense}. 

There are also versions of the theorem for other local fields. 
However, to prove those one needs additional information on $\Gamma$. Indeed, over a local field different from $\R$, bounded Zariski-dense subgroups exist, and a bounded subgroup obviously never admits proximal elements. 
Nevertheless, in the extreme case where $\bG$ is defined over a number field $\Fi$ and $\Gamma = \bG(\Fi)$, we prove below a version of \Cref{thm:pingpongdenseSLn} using the same method, which only requires to assume $n_i \geq 2$ when $\LDi_i$ is a field. 
\end{remark}

\begin{theorem} \label{thm:pingpongdenseSLnrationalpoints}
Let $\bG$ be a reductive group defined over a number field $\Fi$ with center $\bZ$. 
Let $(A_i,B_i)_{i \in I}$ be a finite collection of finite subgroups of $\bG(\Fi)$. 

Suppose that for each $i \in I$, there exists a (non-trivial) simple quotient of $\bG$ of the form $\PGL_{D_i^{n_i}}$ for $D_i$ some finite-dimensional division $\Fi$-algebra, for which the kernels of the restrictions $A_i, B_i \to \PGL_{D_i^{n_i}}(\Fi)$ are contained in $\bZ(\Fi)$. 
Then the set of regular semisimple elements $\gamma \in \bG(\Fi)$ of infinite order such that for all $i \in I$, the canonical maps
\begin{align*}
\langle \gamma, C_{A_i} \rangle \ast_{C_{A_i}} A_i &\to \langle \gamma, A_i \rangle && \text{where $C_{A_i} = A_i \cap \bZ(F)$,}\\
\langle \gamma, C_{B_i} \rangle \ast_{C_{B_i}} B_i &\to \langle \gamma, B_i \rangle && \text{where $C_{B_i} = B_i \cap \bZ(F)$,}\\
\intertext{and provided $\indx{A_i}{C_{A_i}} > 2$ or $\indx{B_i}{C_{B_i}} > 2$, also}
\langle A_i, C_i \rangle \ast_{C_i} \langle B_i, C_i \rangle &\to \langle A_i, B_i^{\gamma} \rangle && \text{where $C_i = C_{A_i} \cdot C_{B_i}$,}
\end{align*}
are all isomorphisms, is dense in $\bG(\Fi)$ for the join of the profinite topology and the Zariski topology. 
\end{theorem}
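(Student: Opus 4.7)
The plan is to follow the strategy used for \Cref{thm:pingpongdenseSLn}, substituting the real field by a non-archimedean completion of $\Fi$ at which each $D_i$ splits; the main result to invoke will remain \Cref{thm:pingpongdense} itself. Concretely, for each $i \in I$, by Albert--Brauer--Hasse--Noether the central simple $\Fi$-algebra $D_i$ splits at almost all places of $\Fi$; I will choose one such splitting place $v_i$ and set $L_i = \Fi_{v_i}$. Writing $d_i = \deg D_i$, the base change $\PGL_{D_i^{n_i}} \times_\Fi L_i$ is $L_i$-isomorphic to $\PGL_{d_i n_i, L_i}$, so that the composition
\[
\rho_i : \bG \longrightarrow \PGL_{D_i^{n_i}} \longrightarrow \PGL_{d_i n_i, L_i}
\]
is an irreducible projective $L_i$-representation of $\bG$, to which \Cref{thm:pingpongdense} will be applied with $\Gamma = \bG(\Fi)$ (Zariski-connected because $\bG$ is). The non-triviality of the simple quotient $\PGL_{D_i^{n_i}}$ forces $d_i n_i \geq 2$.

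For the proximality condition, I will use that $\bG(\Fi)$ is Zariski-dense in $\bG$ (as $\Fi$ is infinite), so that $\rho_i(\bG(\Fi))$ is Zariski-dense in $\PGL_{d_i n_i, L_i}$. Applying the Benoist--Labourie theorem on the abundance of $L_i$-regular elements (\Cref{thm:Rregulardense}), which is valid over every local field of characteristic zero (cf.\ \cite{Prasad94}), to this Zariski-dense subgroup produces a Zariski-dense set of $L_i$-regular elements in $\rho_i(\bG(\Fi))$. The argument of \Cref{lem:Rregularstandardproximal}, which relies only on the existence of an absolute value on $L_i$ together with the structure of $\Ad$, then shows that these elements are biproximal under the standard projective representation of $\PGL_{d_i n_i, L_i}$; in particular, $\rho_i(\bG(\Fi))$ contains proximal elements.

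The transversality condition will follow from the hypothesis on the kernels: since $\PGL_{D_i^{n_i}}(\Fi) \hookrightarrow \PGL_{D_i^{n_i}}(L_i)$ is injective, any non-central $h \in A_i \cup B_i$ maps to a non-trivial element $\rho_i(h) \in \PGL_{d_i n_i}(L_i)$. Using \Cref{rem:transversalitycondition}, it is enough to verify transversality on $\bG(L_i)$, which then reduces to \Cref{prop:standardtransversal} - a statement whose proof is purely representation-theoretic, depending only on the irreducibility of $\mathfrak{sl}_{d_i n_i}(L_i)$ under the adjoint action of $\SL_{d_i n_i}(L_i)$ and on the transitivity of $\mathfrak{sl}_{d_i n_i}(L_i)$ on non-zero vectors of $L_i^{d_i n_i}$, both of which hold in characteristic zero. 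The main point requiring care is thus to confirm that \Cref{thm:Rregulardense}, \Cref{lem:Rregularstandardproximal}, and \Cref{prop:standardtransversal}, formulated in the paper over $\R$ for concreteness, transfer verbatim to an arbitrary local completion $L_i$ of $\Fi$; once that is checked, \Cref{thm:pingpongdense} directly yields the claimed density statement.
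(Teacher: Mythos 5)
Your overall architecture is the right one (base change to a non-archimedean splitting place, then invoke \Cref{thm:pingpongdense}), and your treatment of transversality matches the paper's. But the proximality step has a genuine gap: you assert that \Cref{thm:Rregulardense} (Benoist--Labourie/Prasad) ``is valid over every local field of characteristic zero'' and apply it to the merely Zariski-dense subgroup $\rho_i(\bG(\Fi))$ of $\PGL_{d_i n_i}(L_i)$. That transfer is false. Over a non-archimedean local field there exist \emph{bounded} Zariski-dense subgroups --- $\SL_n(\Z_p)$ inside $\SL_n(\Q_p)$, say --- and a bounded subgroup contains no $L_i$-regular (indeed no proximal) elements whatsoever, since all eigenvalues of $\Ad(g)$ then have absolute value $1$. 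The paper flags exactly this obstruction in \Cref{rem:otherrepresentations}: Zariski density alone does not buy proximality away from the archimedean place, which is why \Cref{thm:pingpongdenseSLn} is stated over $\R$ and why the rational-points version needs a different input.

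The missing ingredient is \emph{weak approximation}. The paper's proof lets $S_0$ be the finite set of places where weak approximation fails for $\bG$, and chooses the splitting place $v_i$ \emph{outside} $S_0$, so that $\bG(\Fi)$ is dense in $\bG(L_i)$ for the \emph{local} topology (not just the Zariski topology). The set $R$ of $L_i$-regular elements of $\PGL_{d_i n_i}(L_i)$ is open and non-empty in the local topology (witnessed by a diagonal matrix with entries of pairwise distinct absolute values), so $\rho_i^{-1}(R)$ is a non-empty local-open subset of $\bG(L_i)$ that must meet $\bG(\Fi)$; any such rational point is $L_i$-regular, hence biproximal under $\rho_i$ by the argument of \Cref{lem:Rregularstandardproximal}. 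Note also that \Cref{thm:pingpongdense} only requires \emph{one} proximal element in $\rho_i(\Gamma)$ (the abundance is then produced internally by \Cref{prop:proximaldense}), so you do not need, and cannot get by your route, a Zariski-dense set of regular elements at the outset. With the Benoist--Labourie step replaced by this weak-approximation argument (and the splitting place chosen to avoid $S_0$), the rest of your proof goes through.
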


\begin{proof}
For $i \in I$, let $\rho_i$ denote the quotient map $\bG \to \PGL_{D_i^{n_i}}$ afforded by the statement. 

Let $S_0$ be the finite (possibly empty) set of places of $\Fi$ where the weak approximation property fails for $\bG$ (see \cite[\S7.3 \& Theorem 7.7]{PlatonovRapinchuk94}). 
Pick for each $i \in I$ a completion $\LFi_i$ of $\Fi$, not belonging to $S_0$, and over which the division algebra $D_i$ splits; 
this means that $\PGL_{D_i^{n_i}}$ becomes isomorphic to $\PGL_{n_i d_i}$ over $\LFi_i$, for $d_i = \deg_\Fi(D_i)$. 

For each $i \in I$, the set $R$ of $\LFi_i$-regular elements in $\PGL_{n_i d_i}(\LFi_i)$ (that is, of elements $g$ for which the number of eigenvalues of $\Ad(g)$ of absolute value $1$ is minimal), is open in the local topology. 
Moreover, $R$ obviously intersects the image of the simply connected group $\SL_{n_i d_i}(\LFi_i)$, as witnessed by any diagonal matrix of determinant $1$ whose entries have pairwise distinct absolute values. 
In consequence, the preimage $\rho_i^{-1}(R)$ is a non-empty, open subset of $\bG(\LFi_i)$ (for the local topology). 
By weak approximation, $\bG(\Fi)$ is dense in $\bG(\LFi_i)$, hence we can find an element $\gamma_i \in \bG(\Fi) \cap \rho_i^{-1}(R)$. 
By construction, $\gamma_i$ is $\LFi_i$-regular, hence biproximal, under $\rho_i$. 

In order to apply \Cref{thm:pingpongdense} to $\Gamma = \bG(\Fi)$, the finite groups $(A_i,B_i)_{i \in I}$, and the pairs $(\LFi_i, \rho_i)_{i \in I}$, it only remains to check the transversality condition. 
Taking into account \Cref{rem:transversalitycondition}, the latter is a direct consequence of the homologue of \Cref{prop:standardtransversal} over a general local field, whose proof is identical (having replaced $\R$ by said local field). 
\end{proof}

As highlighted in \Cref{rem:almostembeddingnecessary}, the existence of a free amalgamated product $A \ast_C B$ in $\bG$ is in general subject to the existence of almost embeddings of $A$ and $B$ in an appropriate common factor of $\Ad \bG$. 
Of course, if $\Ad \bG$ is a simple group to begin with, this embedding condition is void. 

In this regard, \Cref{thm:pingpongdenseSLnrationalpoints} implies at once that if $\bG$ is isogenous to $\PGL_{\Di^n}$ with $\Di$ a finite-dimensional division $\Fi$-algebra, and $A$, $B$ are finite subgroups of $\bG(\Fi)$, then there exists $\gamma \in \bG(\Fi)$ for which $\langle A, B^{\gamma} \rangle$ is freely amalgamated along $C = (A \cap \bZ(\Fi)) \cdot (B \cap\bZ(\Fi))$. 

One can wonder whether in the same setting, there is a conjugate of $B$ for which the subgroup $\langle A, B^{\gamma} \rangle$ is freely amalgamated along $A \cap B$ instead. 
This fairly natural question currently eludes us.

\section{Free products between finite subgroups of units in a semisimple algebra}\label{sec:pingpongsemisimplealgebra}

In this section and the next ones, we adopt the following notation. 
\begin{enumerate}[label=$\bullet$,leftmargin=\parindent]
	\item $\PCI(\Alg)$ is the set of primitive central idempotents of a finite-dimensional (semisimple) $\Fi$-algebra $\Alg$. 
Each $e \in \PCI(\Alg)$ corresponds to an irreducible factor algebra $Me$ of $M$, and $\pi_e : \Alg \to \Alg e$ denotes the projection of $\Alg$ onto $\Alg e$. 
	\item $\U(\Alg) = \Alg^\times = \GL_1(\Alg) = \GL_\Alg(\Fi)$ all denote the group of units of the $\Fi$-algebra $\Alg$. 
	\item $RG$ denotes the group ring of $G$ with coefficients in a ring $R$, and $\V(RG)$ the group of units in $RG$ whose augmentation equals 1. 
\end{enumerate}

\subsection{Simultaneous partners in the unit group of an order }\label{subsec:pingpongunitgroupoforder}

\noindent By Wedderburn's theorem, every semisimple $\Fi$-algebra $\Alg$ factors as
\[
\Alg = \End(V_1) \times \cdots \times \End(V_m),
\]
for $V_i$ an $n_i$-dimensional right module over some finite-dimensional division $\Fi$-algebra $\Di_i$, $i = 1, \dots m$. 
In consequence, the $\Fi$-group of units of $\Alg$ identifies with the reductive group
\begin{align*} \label{eq:decompositionunitsalgebra}
\bG &= \GL_M = \GL_{\Di_1^{n_1}} \times \cdots \times \GL_{\Di_m^{n_m}}, 
\intertext{whose adjoint is}
\Ad \bG &= \PGL_M = \PGL_{\Di_1^{n_1}} \times \cdots \times \PGL_{\Di_m^{n_m}}. 
\end{align*}
The factors of $\bG$ are in one-to-one correspondence with those of $\Alg$, while the non-trivial factors of $\Ad \bG$ correspond to the factors of $\Alg$ which are not fields. 

The original motivation for this project was the study of free amalgamated products inside $\U(\O)$, the unit group of an order $\O$ in $\Alg$; more precisely, the aim was to answer \Cref{conj:bicyclicgenericallyfree}. 
In this context, the following neat necessary and sufficient condition for a given finite subgroup of $\U(\O)$ to admit a ping-pong partner, is a direct consequence of \Cref{thm:pingpongdenseSLn}. 

\begin{theorem} \label{thm:pingpongdensesemisimplealgebra}
Let $\Fi$ be a number field, $\Alg$ be a finite-dimensional semisimple $\Fi$-algebra, and $\O$ be an order in $\Alg$. 
Let $\Gamma$ be a subgroup of $\U(\O)$ for which the Zariski closure of the image of $\Gamma$ in $\PGL_\Alg$ contains the Zariski-connected component of the image of $\U(\O)$.\footnote{This condition is satisfied if $\Gamma$ is Zariski-dense in $\U(\O) = \GL_1(\O)$ or in $\SL_1(\O)$, e.g.\ $\Gamma$ is a finite-index subgroup of $\U(\O)$. } 
Let $A$ and $B$ be finite subgroups of $\U(\O)$, and assume that $\indx{A}{A \cap \ZZ(\Alg)} > 2$. 

There exists $\gamma \in \Gamma$ of infinite order with the property that the canonical maps
\begin{align*}
	\langle \gamma, C_{A} \rangle \ast_{C_{A}} A &\to \langle \gamma, A \rangle && \text{where $C_{A} = A \cap \ZZ(\Alg)$,}\\
	\langle \gamma, C_{B} \rangle \ast_{C_{B}} B &\to \langle \gamma, B \rangle && \text{where $C_{B} = B \cap \ZZ(\Alg)$,}\\
	\langle A, C \rangle \ast_{C} \langle B, C \rangle &\to \langle A, B^{\gamma} \rangle && \text{where $C = C_{A} \cdot C_{B}$,}
\end{align*}
are all isomorphisms, if and only if $A$ and $B$ almost embed in $\PGL_{\Alg e}$ for some common $e \in \PCI(\Alg)$ for which $\Alg e$ is neither a field nor a totally definite quaternion algebra. 

Moreover, in the affirmative, the set of such elements $\gamma$ which are regular semisimple and of infinite order, is dense in $\Gamma$ for the join of the Zariski and the profinite topology. 
\end{theorem}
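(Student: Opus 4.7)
The plan is to establish the equivalence in both directions; the density statement then follows as a direct application of Theorem~\ref{thm:pingpongdenseSLn}.

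For the \emph{only if} direction, I would exploit the Wedderburn decomposition $\U(\Alg) = \prod_{e \in \PCI(\Alg)} \U(\Alg e)$, whose quotient by the center produces an embedding
\[
\U(\Alg)/\ZZ(\U(\Alg)) \hookrightarrow \prod_{e \in \PCI(\Alg)} \PGL_{\Alg e}(\Fi).
\]
Assume $\gamma$ satisfies the three listed isomorphism conditions. The centrality of $C = C_A \cdot C_B$ in $\Alg$, together with the hypothesis $\indx{A}{C_A} > 2$, ensures that the image of $\langle A, B^\gamma \rangle$ in the above product is the non-trivial free product $(A/C_A) \ast (B/C_B)$. I would then apply Proposition~\ref{prop:amalgaminproduct} to this direct product decomposition with $\cS$ the class of groups containing no non-abelian free subgroup. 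Among the factors $\PGL_{\Alg e}(\Fi)$, those with $\Alg e$ a field are trivial, while those with $\Alg e$ a totally definite quaternion algebra embed diagonally into the compact group $\prod_{v \mid \infty} \PGL_1(\H)$ over the real places of $\ZZ(\Alg e)$ and are therefore amenable; in both cases they lie in $\cS$. The proposition then provides an index $e$ for which $\Alg e$ is neither a field nor a totally definite quaternion algebra and for which $A/C_A$, $B/C_B$ inject into $\PGL_{\Alg e}(\Fi)$---equivalently, $A$ and $B$ almost embed in $\PGL_{\Alg e}$.

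For the \emph{if} direction together with the density statement, the strategy is to reduce to Theorem~\ref{thm:pingpongdenseSLn} applied to the reductive $\R$-group $\bG = \Res_{\Fi/\Q}(\U(\Alg)) \times_\Q \R$, in which $\Gamma \leq \U(\O)$ sits diagonally. Given an index $e$ provided by the hypothesis, I will set $E = \ZZ(\Alg e)$ and select an archimedean place $v$ of $E$ such that $\Alg e \otimes_E E_v \cong \Mat_n(\LDi)$ for some $n \geq 2$ and some finite-dimensional division $\R$-algebra $\LDi$. The composition $\bG \to \PGL_{\Alg e \otimes_E E_v} = \PGL_{\LDi^n}$ will then be a quotient of the shape required by Theorem~\ref{thm:pingpongdenseSLn}. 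For the kernel condition, note that any $a \in A$ is $\Fi$-rational; if $\pi_e(a) \in \ZZ(\Alg e \otimes_E E_v) = E_v$, then $\pi_e(a)$ necessarily lies in $\Alg e \cap E_v = E = \ZZ(\Alg e)$, and the almost embedding of $A$ in $\PGL_{\Alg e}$ forces $a \in C_A \subseteq \bZ(\bG)(\R)$; the same argument applies to $B$. The hypothesis on $\Gamma$, combined with Borel--Harish-Chandra density of $\U(\O)$ as an arithmetic lattice in $\bG(\R)$, ensures that the image of $\Gamma$ in $\Ad \bG$ is Zariski-dense. Theorem~\ref{thm:pingpongdenseSLn} then yields the desired dense set of regular semisimple elements $\gamma \in \Gamma$ of infinite order.

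The main technical point is the choice of the archimedean place $v$ of $E$. Writing $\Alg e = \Mat_m(\LDi_0)$ by Wedderburn with $\LDi_0$ a central division $E$-algebra of index $d$, the case $m \geq 2$ is immediate since any archimedean place works. If $m = 1$ and $d \geq 3$, the local index at every archimedean place divides $d$ and is at most $2$, so the matrix size in $\Alg e \otimes_E E_v$ is at least $d/2 \geq 2$. The delicate case is $m = 1$, $d = 2$: at a real place $v$, $\Alg e \otimes_E E_v$ is either $\Mat_2(\R)$ (if split) or $\H$ (if ramified), while at any complex place one automatically obtains $\Mat_2(\C)$. The hypothesis that $\Alg e$ is not totally definite is precisely that either $E$ has a complex place, or $\Alg e$ is split at some real place of $E$---which provides a usable $v$ in every permitted case.
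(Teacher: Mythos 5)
Your overall architecture (restriction of scalars to $\R$, choice of a good archimedean place, \Cref{prop:amalgaminproduct} for necessity, \Cref{thm:pingpongdenseSLn} for sufficiency) is the same as the paper's, but both directions have a genuine gap at the same spot: the anisotropic factors. In the \emph{only if} direction, the claim that $\PGL_{\Alg e}(\Fi)$ is amenable because it embeds into the compact group $\prod_{v\mid\infty}\PGL_1(\H)$ is false: an abstract embedding into a compact group does not give amenability (free groups embed densely in $\SO(3)$), and indeed for $\Alg e$ a totally definite quaternion algebra the group $\PGL_{\Alg e}(\Fi)$ is Zariski-dense in a form of $\PGL_2$ and contains non-abelian free subgroups by the Tits alternative. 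So these factors do \emph{not} lie in $\cS$, and \Cref{prop:amalgaminproduct} cannot be invoked as you state it. What rescues the argument is that $A$, $B$ and $\gamma$ lie in $\U(\O)$: the projection of $\U(\O)$ to such a factor is a \emph{discrete} subgroup of a compact group, hence finite modulo center. This is why the paper splits $\bG=\bG_1\times\bG_2$, proves that the image of $\U(\O)$ in $\bG_2(\Fi\ot_\Q\R)$ is finite, and only then applies \Cref{prop:amalgaminproduct} (twice). You must replace "the factor is in $\cS$" by "the image of $\U(\O)$ in that factor is in $\cS$", which requires the arithmeticity you never use in this direction.

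The same oversight breaks the \emph{if} direction. If $\Alg$ has a factor that is a totally definite quaternion algebra, the corresponding factor of $\Ad\bG(\R)$ is a nontrivial compact group in which the image of the arithmetic group $\U(\O)$ is finite; hence the image of $\Gamma$ in $\Ad\bG$ is \emph{not} Zariski-dense, and \Cref{thm:pingpongdenseSLn} does not apply to the full group $\bG=\Res_{\Fi/\Q}\GL_\Alg\times_\Q\R$ as you set it up (Borel density only gives Zariski-density of a lattice when there are no compact factors). The fix, as in the paper, is to pass first to the isotropic part $\bG_{\mathrm{is}}$ (the product of the $\R$-factors $\GL_{\LDi_{ij}^{n r_{ij}}}$ with $n r_{ij}\geq 2$), where Borel--Harish-Chandra does yield Zariski-density of the image of $\Gamma$, apply \Cref{thm:pingpongdenseSLn} there, and then lift the resulting amalgams back through the projection $f:\bG(\R)\to\bG_{\mathrm{is}}(\R)$ using \Cref{lem:preimageamalgam}; the density of $f^{-1}(S)\cap\Gamma$ in the join topology also needs a short separate argument. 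The parts of your sufficiency argument that do go through are the choice of archimedean place (your case analysis on $m$ and $d$ is correct, and "not totally definite" is correctly unpacked) and the verification of the kernel condition via $\Alg e\cap E_v=E$.
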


\begin{proof}
We can base-change $\bG = \GL_\Alg$ to the $\R$-group $\Res_{\Fi / \Q} \bG \times_\Q \R$,
whose $\R$-points $\bG(\Fi \ot_\Q \R)$ are a product of groups of the form $\GL_n(\R)$, $\GL_n(\C)$, or $\GL_n(\H)$, for various $n \geq 1$. 

\medbreak

Any subgroup $H$ of $\U(\Alg) = \bG(\Fi)$ embeds in $\bG(\Fi \ot_\Q \R)$. 
In fact, $H$ (almost) embeds in a $\Fi$-simple factor of $\bG$ if and only if it does so in any $\R$-simple factor of $\Res_{\Fi / \Q} \bG \times_\Q \R$. 
More precisely, let $\LFi_1, \dots, \LFi_s$ denote the summands of the étale $\R$-algebra $\Fi \ot_\Q \R$; they are precisely the different archimedean completions of $\Fi$. 
Given a finite-dimensional division algebra $\Di$ over $\Fi$, let $\Di_{ij}$ be the division $\R$-algebras such that $\Di \ot_\Fi K_i \cong \prod_{j=1}^{m_i} \rM_{r_{ij}}(\Di_{ij})$ as $\R$-algebras. 
The group ${\Res_{\Fi / \Q} \GL_{\Di^n}} \times_{\Q} \R$ then factors into the product $\prod_{i=1}^s \prod_{j=1}^{m_i} \GL_{\Di_{ij}^{n r_{ij}}}$. 
The image of $\GL_{\Di^n}(\Fi)$ in this product is obtained by embedding it diagonally using the canonical maps $\GL_{\Di^n}(\Fi) \to \GL_{\Di^n}(K_i) \to \GL_{\Di_{ij}^{n r_{ij}}}(\R)$. 
Thus if $H$ (almost) embeds in a factor $\PGL_{\Di^n}$ over $\Fi$, then it does so in any of the $\PGL_{\Di_{ij}^{n r_{ij}}}$ over $\R$, and the converse is obvious. 

Now, an adjoint simple quotient $\PGL_{\Di_{ij}^{nr_{ij}}}$ over $\R$ of a given factor $\GL_{\Di^{n}}$ of $\bG$ satisfies $nr_{ij} = 1$, if and only if the $j$th factor in $\Alg e \ot_\Fi \LFi_i$ is a division algebra, where $e$ is the projection onto the factor of $\Alg$ corresponding to $\GL_{\Di^{n}}$. 
In other words, the factor $\GL_{\Di^{n}}$ has a simple quotient $\PGL_{\Di_{ij}^{nr_{ij}}}$ with $nr_{ij} \geq 2$ for some $i, j$, if and only if $\Alg e$ is not a division algebra that remains so under every archimedean completion of its center.
This amounts in turn to $\Alg e$ not being a field nor a totally definite quaternion algebra. 
\medbreak

Next, let $\bG_{\mathrm{is}}$ denote the $\R$-subgroup of $\Res_{\Fi / \Q} \bG \times_\Q \R$ which is the direct product of those subgroups $\GL_{D_{ij}^{nr_{ij}}}$ for which $nr_{ij} \geq 2$. 
Since $\U(\O)$ is an arithmetic subgroup of $\U(\Alg) = \bG(\Fi)$, a classical theorem of Borel and Harish-Chandra \cite[Theorem 7.8]{BorelHarishChandra62} attests that the connected component of $\U(\O)$ in $\Res_{\Fi / \Q} \bG \times_\Q \R$ is a lattice in the derived subgroup $\cD \bG_{\mathrm{is}}$ of $\bG_{\mathrm{is}}$. 
In consequence, the image of $\U(\O)$ hence of $\Gamma$ in $\Ad \bG_{\mathrm{is}}$ is Zariski-dense. 
Let $f$ denote the canonical map $\bG(\R) \to \bG_{\mathrm{is}}(\R)$, whose kernel is the product of the compact factors of $\bG(\R)$. 
Note that $\ker f$ commutes with $\bG_{\mathrm{is}}(\R)$, and that $\ker f \cap \Gamma$ is finite. 

In view of all the above, given that the subgroups $A$ and $B$ satisfy the embedding condition stated in the theorem, we deduce from \Cref{thm:pingpongdenseSLn} applied to $\bG_{\mathrm{is}}$ the existence of a dense set $S \subset f(\Gamma)$ of ping-pong partners for $f(A)$ and $f(B)$. 
By \Cref{lem:preimageamalgam}, the preimage $f^{-1}(S) \cap \Gamma$ consists of elements $\gamma \in \Gamma$ for which the canonical maps
\begin{equation*}
\langle \gamma, C_{A} \rangle \ast_{C_{A}} A \to \langle \gamma, A \rangle, \quad
\langle \gamma, C_{B} \rangle \ast_{C_{B}} B \to \langle \gamma, B \rangle, \quad
\langle A, C \rangle \ast_{C} \langle B, C \rangle \to \langle A, B^{\gamma} \rangle
\end{equation*}
are isomorphisms. 

As $S$ is dense in the join of the Zariski and the profinite topology, the same holds for $f^{-1}(S) \cap \Gamma$. 
Indeed, if $\Lambda \gamma_0$ is a coset of finite index in $\Gamma$, and $U$ is a Zariski-open subset of $\Gamma$ intersecting it, perhaps after shrinking and translating by $\ker f \cap \Gamma$, we can arrange that $\Lambda \gamma_0$ and $U$ are contained in the connected component $\Gamma^\circ$ of $\Gamma$, and that $(\ker f \cap \Gamma^\circ) \cdot U = U$. 
Then $f(\Lambda \gamma_0 \cap U)$ equals the open set $f(\Lambda \gamma_0) \cap f(U)$. 
We may thus pick $x \in S \cap f(\Lambda \gamma_0 \cap U)$, implying that $f^{-1}(S) \cap \Lambda \gamma_0 \cap U$ is non-empty. 
\medbreak

It remains to verify that the embedding condition is necessary. 
Suppose $\gamma \in \Gamma$ is such that the canonical map $A \ast_C B \to \langle A, B^{\gamma} \rangle$ is an isomorphism. 
Let $\bG_1$ (resp.\ $\bG_2$) denote the product of the factors of $\bG$ over $\Fi$ for which the corresponding factor $\Alg e$ of $\Alg$ is not (resp.\ is) a field or a totally definite quaternion algebra. 
Because this product decomposition is defined over $\Fi$, the projections of $\U(\O)$ in $\bG_1(F \ot_\Q \R)$ and $\bG_2(F \ot_\Q \R)$ are discrete. 
Since $\cD \bG_2(F \ot_\Q \R)$ is compact, the image of $\U(\O)$ in $\bG_2(F \ot_\Q \R)$ is in fact finite. 

As $\bG = \bG_1 \times \bG_2$, \Cref{prop:amalgaminproduct} shows that one of the kernels $N_1$, $N_2$ of the respective projections $\pi_i: \langle A, B^{\gamma} \rangle \to \bG_i(F \ot_\Q \R)$, is contained in $C$. 
Of course, $N_2$ can not be contained in $C$, otherwise the image of $\U(\O)$ in $\bG_2(F \ot_\Q \R)$ would contain the infinite group $(A/N_2) \ast_{C/N_2} (B/N_2)$. 
We deduce that $N_1 \subset C$, that is, $\langle A, B^{\gamma} \rangle$ almost embeds in $\bG_1$. 
Another application of \Cref{prop:amalgaminproduct} then shows that $\langle A, B^{\gamma} \rangle$ almost embeds in some factor of $\Ad \bG_1$ over $\Fi$, associated with a factor of $\Alg$ which is not a field nor a totally definite quaternion algebra as claimed. 
\end{proof}

\begin{remark} \label{rem:partialpingpongsemisimplealgebra}
In the unfortunate event that $\indx{A}{A \cap \ZZ(\Alg)} = \indx{B}{B \cap \ZZ(\Alg)} = 2$, one can draw no conclusion about the map $\langle A, C \rangle \ast_{C} \langle B, C \rangle \to \langle A, B^{\gamma} \rangle$. 
However, having removed this line from the statement, the remainder of \Cref{thm:pingpongdensesemisimplealgebra} still holds (with the same proof). 
Indeed, in the same manner as in the proof, \Cref{prop:amalgaminproduct} can be used to show that the existence of a free product $\langle \gamma \rangle \ast_{C_A} A$ inside $\U(\O)$ implies that $A$ almost embeds inside $\PGL_{\Alg e}$ for some factor $\Alg e$ which is neither a field nor a totally definite quaternion algebra. 

Thus, formally speaking, if there were a free product of the form $\langle \gamma \rangle \ast A$ in $\U(\O)$, one could infer that $A$ embeds in an appropriate quotient $\PGL_{\Alg e}$ of $\GL_{\Alg}$, and then apply \Cref{thm:pingpongdensesemisimplealgebra} with $A = B$ to deduce the existence of $A \ast A$ in $\U(\O)$. 
Similarly, if $A \ast B$ occurs in $\U(\O)$, one could infer that $A$ and $B$ embed in an appropriate quotient $\PGL_{\Alg e}$, and then apply \Cref{thm:pingpongdensesemisimplealgebra} to deduce that $\langle \gamma \rangle \ast A$ and $\langle \gamma \rangle \ast B$ also occur. 
But of course both these deductions are self-evident, as $\langle A, A^{\gamma} \rangle$ is a freely generated subgroup of $\langle \gamma \rangle \ast A$, and the same holds for $\langle ab, A \rangle$ and $\langle ab, B \rangle$ in $A \ast B$ (for non-trivial $a \in A$, $b \in B$). 
\end{remark}

\begin{remark}
There is an $S$-arithmetic version of \Cref{thm:pingpongdensesemisimplealgebra}, in which a ping-pong partner $\gamma$ for $A$ and $B$ exists in a Zariski-dense subgroup $\Gamma$ of an $S$-arithmetic order $\cO[S^{-1}]$, if and only if over some completion $\Fi_v$ with $v \in S$, $A$ and $B$ almost embed in a common factor of $\PGL_\Alg(\Fi_v)$ in which $\Gamma$ admits proximal elements. 
The proof is essentially the same, but requires an $S$-arithmetic version of \Cref{thm:pingpongdenseSLnrationalpoints}, which in turn requires a more careful analysis of the dynamics of $\Gamma$ at the places of $S$. 
\end{remark}

Although \Cref{thm:pingpongdenseSLn} and \Cref{thm:pingpongdensesemisimplealgebra} are nice existence results, they leave open the following two questions.
\begin{questions}\label{que:existencefaithfulembeddings}
With the notation of \Cref{thm:pingpongdensesemisimplealgebra}:
\begin{enumerate}[itemsep=1ex,topsep=1ex,label=\textup{(\roman*)},leftmargin=2em]
	\item \label{que:existencefaithfulembeddings1}
When do $A$ and $B$ (almost) embed in a simple factor of $\PGL_\Alg$? 
If so, do they (almost) embed in a common simple factor of $\PGL_\Alg$? 
	\item \label{que:existencefaithfulembeddings2}
How can we construct a ping-pong partner $\gamma$ concretely? 
\end{enumerate}
\end{questions}
 
In the remainder of this section, we present a method to approach question \ref{que:existencefaithfulembeddings2}, which will reduce the problem to constructing certain \emph{deformations} of $A$ or $B$ (see \Cref{def:firstorderdeformation}); the main result is \Cref{thm:pingpongdeformations}. 
Question \ref{que:existencefaithfulembeddings1} will be addressed in \Cref{sec:faithfulembedding}.

\subsection{Deforming finite subgroups and subalgebras} \label{subsec:deformations}

Keeping the notation of \Cref{subsec:pingpongunitgroupoforder}, the aim of this section is to introduce an explicit linear method that allows to replace a finite subgroup $H \leq \U(\Alg)$ by an isomorphic copy which has the necessary ping-pong dynamics. 
Concretely, we want to construct a group morphism of the form
\begin{equation*}
\fod : H \rightarrow \U(\Alg): h \mapsto \fod(h) = h + \delta_h.
\end{equation*}
This is possible when the map $\delta = \fod - 1$ satisfies the following conditions.

\begin{definition} \label{def:firstorderdeformation}
Let $H$ be a subgroup of $\U(\Alg)$. 
We call an $\Fi$-linear map $\fod: H \rightarrow \Alg$ \emph{a first-order deformation of $H$} if the map $\delta = \fod - 1$ satisfies the following conditions:
\begin{enumerate}[leftmargin=3cm,itemsep=1ex,topsep=1ex]
	\item[\textup{(Derivation)}] \label{eq:derivation}
$\delta_{hk} = \delta_{h} k + h \delta_{k}$ for all $h,k \in H$; 
	\item[\textup{(Order~1)}] \label{eq:order1}
$\delta_h \delta_k = 0$ for all $h,k \in H$. 
\end{enumerate}

A straightforward calculation shows that if $\fod$ is a first-order deformation of $H$, then the maps 
\(
\fod_t(h) = h + t \delta_h
\)
for $t \in \Fi$ are {group morphisms} from $H$ to $\U(\Alg)$, interpolating between the identity $\fod_0$ and $\fod_1 = \fod$. 
In fact, if a linear map $\fod: H \to \Alg$ is a group morphism and $\delta = \fod - 1$ satisfies either (Derivation) or (Order~1), then $\delta$ also satisfies the remaining property. 
Moreover, since $\fod_t$ is assumed to be linear, it extends uniquely to an algebra morphism $\Fi H \to \Alg$. 
We define a \emph{first-order deformation} of a subalgebra $\Subalg$ of $\Alg$ analogously, so that first-order deformations of subalgebras are algebra morphisms, and the linear extension of a deformation of $H$ is a deformation of $\Fi H$. 
We say that $\fod$ is an \emph{inner (first-order) deformation} when the derivation $\delta$ is inner over $\Alg$, that is, when $\delta_h = [n,h]$ for some $n \in \Alg$. 
\end{definition}

\begin{examples}\label{ex:elementarydeformations}
Let $H \leq \U(\Alg)$. 
\begin{enumerate}[itemsep=1ex,topsep=1ex,label=\textup{(\arabic*)},leftmargin=2em]
	\item \label{ex:elementarydeformations1}
If $n$ is an element of $\Alg$ satisfying $n h n = 0$ for all $h \in H$, then the assignment
\[
\delta_h = [n,h]
\]
defines a first-order deformation of $H$, which is actually given by the conjugation
\[
\fod(h) = (1 + n) h (1+n)^{-1} = h + [n, h]. 
\]
This deformation is inner by construction. 

	\item \label{ex:elementarydeformations2}
If $m \in \Alg$ satisfies $m h = m$ for all $h\in H$, then the assignment
\[
\delta_h = (1-h) m
\]
defines a first-order deformation of $H$. 
(The assignment $\delta_h = m(1-h) = 0$ defines the trivial deformation.) 

Assume for a moment that $H$ is finite and that $\Char \Fi$ does not divide $|H|$; set $e = \frac{1}{|H|} \sum_{h \in H} h$. 
Then this deformation is in fact of the first kind with $n = (1-e)m$, as $(1-e)m \cdot h \cdot (1-e)m = 0$ since $m(1-e) = 0$, and
\[
\delta_h = [(1-e)m , h] = (1-e)m - (h-e)m = (1-h)m. 
\]
Note that under this additional assumption on $H$, the condition $mh=m$ for all $h \in H$ is equivalent to $me = m$. 
This deformation might be trivial, for instance when $m = e$, but if $H$ is $\Fi$-linearly independent and not central, one can find some $m$ for which it is not.

	\item \label{ex:elementarydeformations3}
Any example of the second kind obviously satisfies $\delta_g h = \delta_g$ for all $g, h \in H$. 
The converse holds under the assumption that $H$ is finite and that $\Char \Fi$ does not divide $|H|$. Set $e = \frac{1}{|H|} \sum_{h \in H} h$ as above. 
If $\fod = 1 + \delta$ is a first-order deformation which happens to satisfy $\delta_g h = \delta_g$ for all $g, h \in H$, then the equation $\delta_e = \delta_{he} = \delta_h e + h \delta_e = \delta_h + h \delta_e$ implies that
\[
\delta_h = (1-h) \delta_e. 
\]
Thus, this deformation is of the second kind with $m = \delta_e$. 
Since $\delta_{e} = e \delta_e + \delta_e e$ implies $n = (1-e) \delta_e = \delta_e$, we deduce as in the first two examples that
\[
\fod(h) = (1+ \delta_e) h (1+\delta_e)^{-1}. 
\]
\end{enumerate}

Note that when $\fod$ is an inner first-order deformation, $\fod(h) = h$ for every $h \in H \cap \ZZ(\Alg)$. 
Also, examples (ii) and (iii) above can only occur if $H \cap \Fi = \{1\}$. 
\end{examples}

\begin{lemma}\label{lem:basicdeformationinjective}
The kernel of a first-order deformation $\fod: H \to \Alg$ consists of unipotent elements. 
In consequence, if $H$ is finite and $\Char \Fi$ does not divide $|H|$, every first-order deformation $H \to \Alg$ is injective. 
\end{lemma}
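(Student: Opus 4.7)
The plan is to exploit the Order-1 condition satisfied by $\delta = \fod - 1$, which constrains the elements of $\ker \fod$ to a very rigid form. Concretely, if $h \in \ker \fod$, then by definition $\fod(h) = h + \delta_h = 1$, so $\delta_h = 1 - h$. Applying the Order-1 condition to the pair $(h,h)$ then yields $\delta_h^2 = 0$, that is, $(1-h)^2 = 0$. Hence $h - 1$ is nilpotent in $\Alg$ (of nilpotency index at most $2$), which is exactly to say that $h$ is unipotent. This settles the first assertion of the lemma.

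For the second assertion, I would combine the first part with a standard semisimplicity argument. Writing $n = |H|$, every $h \in H$ satisfies $h^n = 1$, so its minimal polynomial over $\Fi$ divides $x^n - 1$. The hypothesis $\Char \Fi \nmid n$ ensures that $x^n - 1$ is separable, hence so is the minimal polynomial of $h$. Thus every $h \in H$ is semisimple in $\Alg$. Combining this with the first part, any $h \in \ker \fod$ is simultaneously semisimple and unipotent in $\Alg$, which forces $h = 1$; hence $\ker \fod$ is trivial and $\fod$ is injective.

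I do not foresee any significant obstacle in this argument: the identity $(1-h)^2 = 0$ falls out immediately from the definition of a first-order deformation, and the separability step is classical. The only minor point to keep in mind is that \emph{unipotent} is meant here in the ambient algebra $\Alg$; but since $\Alg$ is finite-dimensional over $\Fi$, this coincides with the usual notions via, e.g., the regular representation, so no ambiguity arises.
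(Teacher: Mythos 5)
Your proof is correct and follows essentially the same route as the paper: the identity $\delta_h = 1-h$ for $h \in \ker\fod$ combined with the Order-1 condition gives $(1-h)^2 = 0$, hence unipotence. For the second part the paper notes instead that a non-trivial unipotent element has order $p$ or $\infty$, while you invoke semisimplicity of torsion elements of order prime to $\Char\Fi$; these are interchangeable formulations of the same standard fact, so there is no substantive difference.
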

\begin{proof}
An element $h$ lies in the kernel of $\fod$ if and only if $\delta_h = 1-h$. 
By assumption, $(\delta_h)^2 = (1-h)^2 = 0$, showing that $h$ is a unipotent element of $\Alg^\times$. 

Over a field of characteristic $p$ (resp.\ 0), any non-trivial unipotent element has order $p$ (resp.\ infinite order); so if $H$ has no elements of order $p$ nor $\infty$, the map $\fod$ is injective. 
\end{proof}

If $H$ is infinite, or if $\Char \Fi$ divides $|H|$, there are first-order deformations which are not injective. 
For instance, assuming $\Char \Fi \neq 2$, the trivial map
\[
H = \left\{{\begin{bmatrix} 1&x\\0&1\end{bmatrix}} \middle| \, x \in \Fi \right \} \to \rM_2(F): h \to 1
\]
is an inner first-order deformation, associated with $\delta_h = [n,h] = 1-h$ for $n = {\begin{smallbmatrix} 1/2&0\\0&-1/2\end{smallbmatrix}}$. 
\medskip

In more generality, we have the following description of first-order deformations of semisimple subalgebras, which may be of independent interest. 
\begin{theorem} \label{thm:separabledeformationsinner}
Let $\Alg$ be an $\Fi$-algebra, and let $\Subalg$ be a separable subalgebra of $\Alg$. 
Then every first-order deformation $\fod: \Subalg \to \Alg$ is given by conjugation, that is, there exists $a \in \Alg$ such that $\fod(b) = a b a^{-1}$ for every $b \in \Subalg$. 
In particular, any such deformation extends to an automorphism of $\Alg$ and fixes $\Subalg \cap \ZZ(\Alg)$. 
\end{theorem}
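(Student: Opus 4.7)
My plan is to reduce the theorem to producing an explicit unit $a \in \Alg$ that conjugates the inclusion $\Subalg \hookrightarrow \Alg$ to the map $\Phi := 1 + \delta$, built from a separability idempotent of $\Subalg$. First, a direct check gives
\[
\Phi(b)\Phi(c) = bc + (b\delta_c + \delta_b c) + \delta_b \delta_c = bc + \delta_{bc} = \Phi(bc),
\]
where the Derivation and Order~1 conditions are each used once. Hence $\Phi$ is a unital $\Fi$-algebra embedding, and the theorem amounts to a Wedderburn--Malcev-type statement: the two embeddings $\iota, \Phi : \Subalg \hookrightarrow \Alg$ are conjugate by a unit of $\Alg$. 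Separability of $\Subalg$ is exactly the classical hypothesis enabling this.

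Concretely, I would fix a separability idempotent $e = \sum_i x_i \otimes y_i \in \Subalg \otimes_\Fi \Subalg$ satisfying $\sum_i x_i y_i = 1$ and $\sum_i b x_i \otimes y_i = \sum_i x_i \otimes y_i b$ for every $b \in \Subalg$, and set
\[
a \ := \ \sum_i \Phi(x_i)\, y_i \ = \ 1 + m, \qquad m \ := \ \sum_i \delta(x_i)\, y_i.
\]
Applying the $\Fi$-bilinear map $(p,q) \mapsto \Phi(p)\, q$ to the centrality relation on $e$ yields $\sum_i \Phi(b x_i) y_i = \sum_i \Phi(x_i) y_i b$. By multiplicativity of $\Phi$ the left-hand side is $\Phi(b)\, a$, while the right-hand side is $a\, b$ by definition of $a$. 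So $\Phi(b)\, a = a\, b$ for every $b \in \Subalg$, and $\Phi$ is conjugation by $a$ provided $a$ is a unit of $\Alg$.

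The step I expect to require real care is precisely the invertibility of $a$; this is where the Order~1 condition finally enters (it played no role in the steps above). I would compute $m^2$ by rewriting $y_i \delta(x_j) = \delta(y_i x_j) - \delta(y_i) x_j$ via the Leibniz rule, which gives
\[
m^2 = \sum_{i,j} \delta(x_i)\,\delta(y_i x_j)\, y_j \;-\; \sum_{i,j} \delta(x_i)\,\delta(y_i)\, x_j y_j.
\]
Each summand contains a factor of the form $\delta(\cdot)\delta(\cdot)$ with both arguments in $\Subalg$, hence vanishes by Order~1. Therefore $m^2 = 0$, so $a = 1 + m$ is a unit of $\Alg$ with $a^{-1} = 1 - m$, and $\Phi(b) = a b a^{-1}$ for every $b \in \Subalg$. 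The extension to an automorphism of $\Alg$ is then conjugation by $a$, and any $b \in \Subalg \cap \ZZ(\Alg)$ is automatically fixed because it commutes with $a$. The conceptual content is that the two halves of the definition of a first-order deformation decouple cleanly: the Leibniz rule makes $a$ an intertwiner (via the morphism $\Phi$ and the rewriting of $y_i\delta(x_j)$), while Order~1 is what makes $m^2$ collapse.
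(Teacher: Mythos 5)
Your proposal is correct and follows essentially the same route as the paper: both arguments build the conjugating unit out of a separability idempotent $e = \sum_i x_i \otimes y_i$, and your element $a = 1 + \sum_i \delta(x_i)\, y_i$ is literally the paper's $1 + f(e-1)$, where $f$ is the bimodule map with $\delta = f \circ d$ obtained from the universal property of the bimodule of differentials. The only difference is presentational: you verify the intertwining relation $\fod(b)a = ab$ and the nilpotency $m^2 = 0$ by direct computation with the idempotent in coordinates, whereas the paper routes the same facts through Bourbaki's universal bimodule $I = \ker \mu$.
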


\begin{proof}
Recall that the separability of $\Subalg$ means that $\Subalg$ admits a \emph{separating idempotent}, that is, there is an element $e \in \Subalg \ot_\Fi \Subalg$ whose image under the multiplication map $\mu: \Subalg \ot_\Fi \Subalg \to \Subalg$ is $1$ and which satisfies $(b \ot 1)e = e(1 \ot b)$ for every $b \in \Subalg$. 

Let $I$ denote the kernel of the $(\Subalg,\Subalg)$-bimodule map $\mu$. 
Recall that the canonical map 
\[
d: \Subalg \to I: b \mapsto b \ot 1 - 1 \ot b
\]
identifies $I$ with the bimodule of (non-commutative) differentials of the algebra $\Subalg$. 
That is, for any derivation $\delta: \Subalg \to V$ to a $(\Subalg,\Subalg)$-bimodule $V$, there exists a unique bimodule map $f: I \to V$ such that $\delta = f \circ d$ (see \cite[A III §10 N°10 Proposition 17]{BourbakiAlgebre3}). 
Applying this to the derivation $\delta = \fod - 1$, for $\fod$ a first-order deformation $\Subalg \to \Alg$, we obtain existence of a unique bimodule map $f: I \to \Alg$ satisfying $\delta(b) = f(b \ot 1 - 1 \ot b)$ for every $b \in \Subalg$. 

Note that $e-1 \in I$ by construction. 
Inside $\Alg$, we now compute
\begin{align*}
[f(e-1),b] 	&= f(e-1)\cdot b - b \cdot f(e-1) = f\big((e-1)(1 \ot b) - (b \ot 1)(e-1)\big) \\
		&= f(e(1 \ot b) - 1 \ot b - e(1 \ot b) + b \ot 1) \\
		&= f(b \ot 1 - 1 \ot b) = f(d(b)) = \delta(b),
\end{align*}
showing that the derivation $\delta: \Subalg \to \Alg$ is inner, given by the adjoint of $f(e-1) \in \Alg$. 

The bimodule $I$ of differentials of $\Subalg$ is generated as a left (respectively right) module by the image of $d$ (see \cite[A III §10 N°10 Lemme 1]{BourbakiAlgebre3}), hence the same holds for the image of $f$; 
in other words, $f(e-1) \in \Subalg \cdot \delta(B) = \delta(\Subalg) \cdot \Subalg$. 
As by assumption $\delta(\Subalg) \cdot \delta(\Subalg) = 0$, we conclude that $f(e-1) b f(e-1) = 0$ for every $b \in \Subalg$. 
This shows that
\[
\fod(b) = b + f(e-1) b - b f(e-1) = (1+f(e-1)) b (1 - f(e-1))
\]
is given by conjugating $b$ by $1+f(e-1)$, proving the theorem. 
\end{proof}

\begin{remark}
Recall that an $\Fi$-algebra $\Subalg$ is separable if and only if $\Subalg$ is absolutely semisimple, which is in turn equivalent to $\Subalg$ being semisimple with étale center (see respectively \cite[A VIII §13 N°5 Théorème 2 \& N°3 Théorème 1]{BourbakiAlgebre8}). 
The center of a finite-dimensional semisimple algebra is always a product of fields, and is automatically étale if the base field $\Fi$ is perfect. 
The example above the theorem illustrates why it is essential to assume that $\Subalg$ is semisimple. 

It follows from the general theory that when $\Subalg$ is a separable algebra, every derivation of $\Subalg$ with values in a $(\Subalg,\Subalg)$-bimodule is inner (see \cite[A VIII §13 N°7, Corollaire]{BourbakiAlgebre8}). 
However, without additional information on $n$, this does not formally imply that a first-order deformation $\fod: \Subalg \to \Alg: b \mapsto b + [n,b]$ is given by conjugation. 
The proof of \Cref{thm:separabledeformationsinner} exhibits a suitable such element $n$. 
\end{remark}

\subsection{Ping-pong between two given finite subgroups of \texorpdfstring{$\U(\Alg)$}{U(M)}}
Given two finite subgroups $\Aa$, $\Bb$ in $\GL_n(\LDi)$, the aim of this subsection is to provide a constructive method to obtain a subgroup of $\GL_n(\LDi)$ isomorphic to $\Aa \ast \Bb$ by deforming $\Bb$ using the first-order deformations introduced in \Cref{subsec:deformations}.

\begin{theorem}\label{thm:pingpongdeformations}
Let $\LFi$ be a local field of characteristic $0$, $\LDi$ be a finite-dimensional division $\LFi$-algebra, and $\Aa$, $\Bb$ be finite subgroups of $\GL_n(\LDi)$. 
Set $C = \Aa \cap \Bb$, and assume that $\indx{\Aa}{C} > 2$ or $\indx{\Bb}{C} > 2$. 
Suppose that $\fod_t: \Bb \rightarrow \GL_n(\LDi):h \mapsto h + t \delta_h$ (with $t \in \Fi^\times$) is a family of first-order deformations satisfying
\begin{enumerate}[itemsep=1ex,topsep=1ex,label=\textup{(\roman*)},leftmargin=2em]
	\item \label{item:pingpongdeformations1} $\delta_{g} = 0 \iff g \in C$, and
	\item \label{item:pingpongdeformations2} $a \im(\delta_h) \cap \ker(\delta_{h'}) = \{0\}$ for every $a \in \Aa \setminus C$, $h, h' \in \Bb \setminus C$. 
\end{enumerate}
Then there exists $N \in \R$ such that
\[
\langle \Aa, \fod_t(\Bb) \rangle \cong \Aa \ast_C \fod_t(\Bb) \cong \Aa \ast_{C} \Bb
\]
when $|t| \geq N$.
\end{theorem}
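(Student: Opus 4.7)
The plan is to apply the ping-pong \Cref{lem:pingpongamalgam} to the subgroups $\Aa$ and $\fod_t(\Bb)$ acting on $\P(\LDi^n)$, for $|t|$ sufficiently large. First, condition \ref{item:pingpongdeformations1} forces $\delta$ to vanish on $C$, so each $\fod_t$ restricts to the identity on $C$; and since $\LFi$ has characteristic $0$, \Cref{lem:basicdeformationinjective} ensures $\fod_t$ is injective on the finite group $\Bb$. Thus $\fod_t|_\Bb$ is a group isomorphism onto $\fod_t(\Bb)$ fixing $C$ pointwise, which yields the second claimed isomorphism $\Aa \ast_C \fod_t(\Bb) \cong \Aa \ast_C \Bb$ for free and reduces the theorem to producing the first isomorphism via ping-pong. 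I will assume $\indx{\Aa}{C} > 2$ without loss of generality (otherwise swap the roles of $\Aa$ and $\Bb$).

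Set $I_b := \im \delta_b$ and $K_b := \ker \delta_b$ for $b \in \Bb$. For each $b \in \Bb \setminus C$, $\delta_b \neq 0$ by \ref{item:pingpongdeformations1}, while the \emph{Order~1} relation forces $\delta_b^2 = 0$, so $0 \neq I_b \subseteq K_b \subsetneq \LDi^n$. Condition \ref{item:pingpongdeformations2}, applied with $h = b'$ and $h' = b$, then gives $a I_{b'} \cap K_b = \{0\}$ for every $a \in \Aa \setminus C$ and all $b, b' \in \Bb \setminus C$; combined with the inclusion $I_b \subseteq K_b$, this is the key mechanism guaranteeing that the compact sets
\[
I := \bigcup_{b \in \Bb \setminus C} \P(I_b), \qquad \Aa I := \bigcup_{\substack{a \in \Aa \setminus C \\ b \in \Bb \setminus C}} a \cdot \P(I_b)
\]
are disjoint from each other, and that $\Aa I$ is furthermore disjoint from $\bigcup_{b \in \Bb \setminus C} \P(K_b)$. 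Both $I$ and $\Aa I$ are $C$-stable, since $c \delta_b = \delta_{cb}$ gives $c \cdot \P(I_b) = \P(I_{cb})$, and left-multiplication by $c$ permutes the non-trivial cosets in $\Aa$ and in $\Bb$.

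I will then use compactness to pick a $C$-stable compact neighborhood $P$ of $I$ and a $C$-stable compact neighborhood $Q$ of $\Aa I$ with $P \cap Q = \emptyset$ and $Q \cap \bigcup_b \P(K_b) = \emptyset$; shrinking $P$ further (a finite condition, since $\Aa \setminus C$ is finite), I can also ensure $a P \subseteq Q$ for every $a \in \Aa \setminus C$. The dynamical heart of the argument is the claim that $\fod_t(b) Q \subseteq P$ for every $b \in \Bb \setminus C$ once $|t|$ is large enough: for a lift $v$ of any point of $Q$, one has $v \notin K_b$ and therefore $\delta_b v \neq 0$, while $t^{-1} \fod_t(b) v = t^{-1} b v + \delta_b v$ tends to $\delta_b v$ as $|t| \to \infty$, uniformly in $v$ by compactness of $Q$. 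Projectively this means $\fod_t(b) \cdot \P(v)$ tends to a point of $\P(I_b) \subseteq I$, so a large enough common threshold $N$ works for every $b \in \Bb \setminus C$ simultaneously. Since the conditions for $c \in C$ hold by $C$-stability and $P \not\subset Q$ follows from $P \cap Q = \emptyset$ and $I \neq \emptyset$, \Cref{lem:pingpongamalgam} applies for $|t| \geq N$ and delivers the theorem.

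The main obstacle in this route is isolating the two disjointness statements from condition \ref{item:pingpongdeformations2}; once one notices that the inclusion $I_b \subseteq K_b$ coming from \emph{Order~1} is precisely the bridge needed, the ping-pong sets $P$ and $Q$ essentially design themselves, and the rest is straightforward projective dynamics in the spirit of the preceding sections.
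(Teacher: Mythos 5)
Your proof is correct and follows essentially the same route as the paper's: the same $C$-stable compact neighbourhoods of $\bigcup_{b}\im(\delta_b)$ and of its $(\Aa\setminus C)$-translates, the same use of the inclusion $\im(\delta_b)\subseteq\ker(\delta_b)$ to extract the two disjointness statements from condition (ii), and the same unipotent dynamics (your inline limit argument is exactly \Cref{lem:unipotentdynamics}). One phrasing caveat: when only $\indx{\Bb}{C}>2$ holds you cannot literally ``swap the roles of $\Aa$ and $\Bb$,'' since only $\Bb$ is deformed; instead keep the construction as is and apply \Cref{lem:pingpongamalgam} with $\fod_t(\Bb)$ as the first factor and the sets $P$ and $Q$ interchanged, which your setup already supports.
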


In practice, checking the conditions $a \im(\delta_h) \cap \ker(\delta_{h'}) = \{0\}$ can be difficult, but luckily many are superfluous. 
For example, one can prove that $\ker(\delta_h) = \ker(\delta_{h^t})$ when $\gcd(o(h),t)=1$. 
Building on \Cref{ex:elementarydeformations}, we will lay out in \Cref{subsec:Bovdimaps} a way to construct a first-order deformation $\fod_t$ to which \Cref{thm:pingpongdeformations} applies, in the case where $\Aa$ and $\Bb$ are finite subgroups of the unit group of a group ring.

Note that the conditions from \Cref{thm:pingpongdeformations} imply that 
\(
\Aa \cap \Fi^\times = \Bb \cap \Fi^\times \subset C.
\)
Indeed, if $a \in \Aa \cap \Fi^\times \setminus C$ and $h \in \Bb \setminus C$, then the second condition implies that $\im(\delta_h) \cap \ker(\delta_h) = \{0\}$. 
Since $\im(\delta_h) \subset \ker(\delta_h)$, it would follow that $\delta_h =0$, a contradiction. 
Similarly, if $h \in \Bb \cap \Fi^\times$, then it is a general property of first-order deformations that $\delta_h = 0$, hence $h \in C$ by the first condition.
\medskip

When $\LDi$ is a division algebra over a local field $\LFi$ of characteristic $0$, and $\Aa$, $\Bb$ are subgroups of $\GL_{\LDi^n}(\LFi)$, \Cref{thm:pingpongdenseSLnrationalpoints} implies on the nose that there is a free amalgamated product of the form $\Aa \ast_C \Bb^{\gamma}$ in $\GL_{\LDi^n}(\LFi)$. 
The point of \Cref{thm:pingpongdeformations} is to express such conjugate of $\Bb$ more explicitly. 

As bibliographical context, note that the related results in \cite{Passman04, GoncalvesPassman06} can be reformulated as follows: in the special case where $\Bb$ admits a conjugate $\Bb^x$ for which $\Aa \cap \Bb^x$ is trivial and $\langle \Aa, \Bb^x \rangle$ is finite, then there is a first-order deformation $\Bb'$ of $\Bb$ which plays ping-pong with $\Aa$. 
In this situation, in \cite{Passman04} is actually constructed a free product of the form $\langle A,B^x \rangle \ast \Z$. 

\bigskip

The next lemma will serve to replace \Cref{lem:proximaldynamics} in the proof of \Cref{thm:pingpongdeformations}. 

\begin{lemma} \label{lem:unipotentdynamics}
Let $h \in \GL_V(\LFi)$, let $n$ be a non-zero nilpotent transformation in $\End_\LDi(V)$, and set $h_t = h + tn$ for $t \in \LFi$. 
Let $C$ be a compact subset of $\P(V) \setminus \ker(n)$, and let $U$ be a neighborhood of $\im n$ in $\P(V)$. 
There exists $N > 0$ such that if $|t| \geq N$, then $h_t \in \GL_V(K)$ and $h_t C \subset U$. 
\end{lemma}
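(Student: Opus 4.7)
The plan is to handle the two conclusions separately: first the invertibility of $h_t$ for $|t|$ large, then the dynamical containment $h_t C \subset U$.

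For invertibility, I would look at the reduced norm $t \mapsto \Nrd(h + tn) \in \ZZ(\LDi)$, which is a polynomial function on $\LFi$. Since it equals $\Nrd(h) \neq 0$ at $t = 0$, it is not identically zero and hence has only finitely many (bounded) roots in $\LFi$. Thus $h_t \in \GL_V(\LFi)$ as soon as $|t|$ exceeds some $N_0 > 0$.

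For the dynamics, the key observation is that for any representative $v \in V$ of a point $p \in C$ one has $n(v) \neq 0$ (since $p \notin \ker n$), so that for $t \neq 0$
\[
[h_t(v)] = [h(v) + t\, n(v)] = [t^{-1} h(v) + n(v)] \xrightarrow{|t|\to \infty} [n(v)] \in \im n.
\]
Pointwise, the projective image $h_t \cdot p$ thus converges to a point of $\im n \subset U$. To upgrade this to uniform convergence over the compact set $C$, I would extend the $t$-parameter to $\P^1(\LFi) = \LFi \cup \{\infty\}$ and consider the map
\[
F \colon (\P(V) \setminus \ker n) \times \P^1(\LFi) \to \P(V), \qquad ([v], [s:r]) \mapsto [r\, h(v) + s\, n(v)],
\]
with the convention $F([v], \infty) = [n(v)]$. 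Granted continuity of $F$ on a neighborhood of $C \times \{\infty\}$, the fact that $F(C \times \{\infty\}) \subset \im n \subset U$ combined with compactness of $C$ yields a neighborhood $W$ of $\infty$ in $\P^1(\LFi)$ with $F(C \times W) \subset U$. Since $W$ contains every $t$ of sufficiently large absolute value, this produces a bound $N_1$, and the lemma follows with $N = \max(N_0, N_1)$.

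The main technical point, and chief obstacle, is verifying that $F$ is indeed continuous at points $([v], \infty)$ with $[v] \in C$ in a way compatible with the $\LDi$-module structure. My preferred route is via the homogeneous lift $\wt{F}(v, (s, r)) = r\, h(v) + s\, n(v)$ on $V' \times (\LFi^2 \setminus \{0\})$, where $V' = V \setminus n^{-1}(0)$: this map is continuous, and on a neighborhood of the preimage of $C \times \{\infty\}$ it never vanishes (precisely because $n(v) \neq 0$ there), so composing with the projections to $\P(V)$ and $\P^1(\LFi)$ exhibits $F$ as continuous near $C \times \{\infty\}$. A more pedestrian alternative is to choose continuous local sections $p \mapsto v_p$ of unit norm on a finite subcover of $C$, note that $\|n(v_p)\|$ is bounded below and $\|h(v_p)\|$ bounded above by compactness, and then invoke the admissible metric on $\P(V)$ to obtain the uniform estimate $d([h_t(v_p)], [n(v_p)]) = O(|t|^{-1})$.
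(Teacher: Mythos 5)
Your proof is correct and follows essentially the same route as the paper's: invertibility of $h_t$ for large $|t|$ via the nonvanishing of the polynomial $\Nrd(h+Xn)$, and the containment $h_tC\subset U$ via the pointwise convergence $h_t\cdot p\to n\cdot p\in\im n$ upgraded to uniformity over the compact set $C$. The paper organizes the compactness step by covering $C$ with neighborhoods $U_p$ carrying thresholds $N_p$ and extracting a finite subcover, rather than compactifying the parameter $t$ inside $\P^1(\LFi)$ as you do, but this is only a cosmetic difference.
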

\begin{proof}
Since $\Nrd(h) \neq 0$, the polynomial $\Nrd(h+Xn) \in \LFi[X]$ is not zero, hence has only finitely many roots. 
For $|t|$ strictly larger than the maximum absolute value $N_0$ of these roots, $h+tn \in \GL_V(K)$. 

Let now $p \in \P(V) \setminus \ker n$, and let $v$ represent $p$ in $V$. 
Since $h_t(v) = h(v) + tn(v)$, it follows that $h_t p$ converges to $n p \in \P(V)$ as $|t| \to \infty$. 
So for each $p \in C$, there exists $N_p > 0$ and a neighborhood $U_p$ of $p$ in $\P(V)$ such that $h_t(U_p) \subset U$ if $|t| \geq N_p$. 
By the compactness of $C$, there is a finite collection $U_{p_1}, \dots, U_{p_r}$ covering $C$. 
Setting $N = \max\{N_0, N_{p_1}, \dots, N_{p_r}\}$, we see that $h_t C \subset U$ when $|t| \geq N$, as claimed. 
\end{proof}

\begin{remark}
When $\LFi$ is a non-archimedean local field, care has to be taken that the condition $|t| \geq N$ is not preserved by addition. 
In other words, if the conclusion of the lemma holds for $h_t$, one cannot deduce that $gC \subset U$ for every $g \in \langle h_t \rangle$. 
This fails already for $h = 1$, in which case the subgroup $\langle 1 + tn \rangle$ accumulates at 1. 

This mistake was made in \cite{GoncalvesPassman06}. 
Some of the results in this paper, namely \cite[Theorems 2.3, 2.6, and 2.7]{GoncalvesPassman06}, therefore only hold over archimedean local fields. 
\end{remark}

\begin{proof}[Proof of \Cref{thm:pingpongdeformations}]
As in the proof of \Cref{thm:pingpongdense}, it will be convenient to write $\Hnt = H \setminus C$ and $\Ant = A \setminus C$.

Let $W$ denote the union of the proper subspaces $\im(\delta_h)$ for $h \in \Hnt$, and let $W'$ denote the union of the proper subspaces $\ker(\delta_h)$ for $h \in \Hnt$, all viewed in $\P(V)$. 
Note right away that $W \subset W'$. 
By the first assumption, if $c \in C$ then $c \im(\delta_h) = \im(c \delta_h) = \im(\delta_{ch})$, so $W$ is stable under $C$. 
Moreover, $aW \cap W \subset aW \cap W' = \emptyset$ for every $a \in \Ant$. 
Indeed, if this last intersection were non-empty, then so would be
\(
a \im(\delta_h) \cap \ker(\delta_{h'})
\)
for some $h, h' \in \Hnt$, contradicting the second assumption. 

In consequence, we can construct a compact neighborhood $P$ of $W$ with the properties that $P$ is stable under $C$, and $aP \cap (P \cup W') = \emptyset$ for every $a \in \Ant$. 
For instance, start with a neighborhood of $W$ whose translates under $\Ant$ are disjoint from $W'$, remove from it a sufficiently small open neighborhood of the union of its translates under $\Ant$, then intersect the result with its translates under $C$. 
Set $Q = \Ant \cdot P = \bigcup_{a \in \Ant} aP$; by construction, $Q$ is a compact set disjoint from $P$, and from $\ker(\delta_h)$ for every $h \in \Hnt$. 

In order to conclude the proof, it remains to verify the conditions of \Cref{lem:pingpongamalgam}. 
We already arranged for $\Ant \cdot P \subset Q$ and $C \cdot P = P$.
The fact that $C \cdot Q = Q$ is an obvious consequence of $C \cdot \Ant = \Ant$. 
Lastly, since $Q$ is disjoint from $\ker(\delta_h)$, \Cref{lem:unipotentdynamics} yields for each choice of $h \in \Hnt$ a positive number $N_h$ such that $\fod_t(h) = h + t \delta_h$ sends $Q$ into $P$ when $|t| \geq N_h$. 
Set $N = \max_{h \in \Hnt} N_h$ and pick $|t| \geq N$, so that $\fod_t(h) \cdot Q \subset P$ for every $h \in \Hnt$. 
An application of \Cref{lem:pingpongamalgam} (to $A$ and $\fod_t(H)$ with the sets $P$ and $Q$) finally shows that when $|t| \geq N$, $\langle A, \im (\fod_t) \rangle$ is the free product of $A$ and $\fod_t(H)$ amalgamated along their intersection $C$.
\end{proof}

\section{The embedding condition for group rings} \label{sec:faithfulembedding}

In \Cref{sec:pingpongsemisimplealgebra} we established, subject to the appropriate embedding condition, the existence of ping-pong partners for finite subgroups of the unit group of an order in a finite-dimensional semisimple algebra $\Alg$. 
We also proposed, via the first-order deformations introduced in \Cref{def:firstorderdeformation}, a more constructive method to obtain free products between such finite subgroups. 

The next two sections focus on the case where $\Alg = \Fi G$ is the group algebra of a finite group $G$ and $\Gamma = \U(RG)$ is the group of units of $\Alg$ over some order $R$ in $\Fi$. 
To this choice of $\Alg$ and $\Gamma$ the results of \Cref{sec:pingpongreductivegroups,sec:pingpongsemisimplealgebra} are more readily applicable, thanks to the fact that the simple factors of $\Fi G$ are the images $\rho(\Fi G)$ for $\rho$ ranging over the irreducible representations of $G$. 
\smallskip

For now, our main task is to address \Cref{que:existencefaithfulembeddings}.\ref{que:existencefaithfulembeddings1}, that is, to determine when a subgroup $H \leq G$ embeds in an appropriate simple quotient of $\Fi G$. 
This will be the matter of \Cref{subsec:faithfulembedding}; 
\Cref{thm:almostfaithfulembedding} shows that this happens if $G$ is not a Dedekind group and if $H$ admits a faithful irreducible representation (e.g.\ any subgroup $H$ whose Sylow subgroups have cyclic center). 
Consequently, for such a pair $H \leq G$ we obtain in \Cref{cor:existenceamalgaminorder,cor:existenceamalgaminordersharp} the existence inside $\U(RG)$ of a non-trivial free amalgamated product $H \ast_{C} H$.
\Cref{subsec:centerpreservingrepresentations,subsec:notFrobeniuscomplement} consist of technical prerequisites needed for the proof of \Cref{thm:almostfaithfulembedding}, contained in \Cref{subsec:proofthmalmostfaithfulembedding}; but first, we review some families of groups that will play a role in \Cref{subsec:faithfulembedding}. 

\smallskip

Unless specified otherwise, in this entire section $H$ is a subgroup of the finite group $G$, and $\Fi$ is a field whose characteristic does not divide $|G|$.

\subsection{Recollections on Dedekind groups and Frobenius complements} \label{subsec:reviewDedekindFrobenius}

\subsubsection*{Dedekind groups}
A group is called \emph{Dedekind} if all of its subgroups are normal. 
The Baer--Dedekind classification theorem \cite{Dedekind97,Baer33} states that a finite Dedekind group is either abelian, or isomorphic to $Q_8 \times C_2^n \times A$ with $n \in \N$ and $A$ an abelian group of odd order.\footnote{The statement for finite groups is in fact Dedekind's original contribution; Baer extended it to infinite groups. }

\subsubsection*{Frobenius groups}
A finite group $G$ is a \emph{Frobenius group} if it is the semidirect product $G = H \ltimes K$ of two subgroups $H$ and $K$ such that every element in $G \setminus K$ is conjugate to an element of $H$. 
The normal subgroup $K$ is called the \emph{Frobenius kernel of $G$}, and $H$ is called a \emph{Frobenius complement of $G$ for $K$}. 
More generally, any group $K$ (resp.\ $H$) appearing as the Frobenius kernel (resp.\ a Frobenius complement) of some Frobenius group $G$ is called a \emph{Frobenius kernel} (resp.\ a \emph{Frobenius complement}). 

There are many equivalent definitions of a Frobenius group. 
For instance, a finite group $G$ is a {Frobenius group} if and only if it admits a non-trivial proper subgroup $H$ such that $H \cap H^{g} = 1$ for every $g \in G \setminus H$; if so, $K = \big(G \setminus \bigcup_{g \in G} H^g \big) \cup \{1\}$ is a Frobenius kernel of $G$ with Frobenius complement $H$. 
We refer the reader to \cite{Isaacs06,Serre22} for a more detailed overview; for the main structural results concerning Frobenius complements, we refer the reader to \cite[Section 11.4]{JespersdelRio16} and \cite[Section 18]{Passman68}. 
In the scope of this paper, Frobenius complements mostly play a role through their characterization in terms of fixed-point-free representations. 

\subsubsection*{Fixed-point-free groups and representations}
A $\Fi$-representation $\rho$ of a group $G$ is called \emph{fixed-point-free} if for every $g \in G$, either $\rho(g)$ is the identity or $\rho(g)$ has no non-zero fixed vectors. 

For $g \in G$, set $\wt{g} = \sum_{i=1}^{o(g)} g^i \in \Fi G$. 
When the characteristic of $\Fi$ does not divide $|G|$, it is straightforward to see that for a primitive central idempotent $e \in \PCI(\Fi G)$, the following are equivalent: 
\begin{equation} \begin{gathered} \label{eq:fpfviaidempotent}
\text{the corresponding irreducible representation $\pi_e$ of $G$ is fixed-point-free} \\
\iff \text{$\wt{g}e = 0$ for every $g \in G \setminus \ker \pi_e$} 
\iff \text{$\wt{g}e$ is central for every $g \in G$}. 
\end{gathered} \end{equation}
Indeed, the image of the linear transformation $\wt{g}e \in \Fi Ge$ is the set of vectors fixed by $\pi_e(g)$. 
Thus, if $\pi_e$ is fixed-point-free, then either $\pi_e(g) = 1$ or $\wt{g}e = 0$. 
It is obvious that both $\pi_e(g) = 1$ and $\wt{g}e = 0$ imply that $\wt{g}e$ is central. 
Lastly, if $\wt{g}e$ is central and $\pi_e(g)$ fixes a non-zero vector, then $\wt{g}e \neq 0$, hence the image of $\wt{g}e$ is the whole space (by Schur's lemma), implying that $g \in \ker \pi_e$. 
\smallskip

We say that the group $G$ itself is \emph{fixed-point-free} if it admits a faithful fixed-point-free representation over some field $\Fi$. 
If so, the characteristic of $\Fi$ is necessarily prime to the order of $G$, and such a representation exists as well over any field whose characteristic is prime to $|G|$ (see \cite[Section 6.5]{Serre22}). 
If $G$ is fixed-point-free, it also admits an irreducible faithful fixed-point-free representation (over any field whose characteristic does not divide $|G|$): any irreducible constituent of a faithful fixed-point-free representation, to wit. 
\smallskip

Zassenhaus \cite{Zassenhaus35} (see also \cite[Theorem 6.13]{Serre22}) proved that a group is fixed-point-free if and only if it is a Frobenius complement (of some Frobenius group). 
This fact will be used repeatedly in what follows, perhaps without mention.

\subsection{On the embedding condition for group rings} \label{subsec:faithfulembedding}
In this subsection, we wish to determine when a finite subgroup $H$ of $\U(RG)$ satisfies the almost embedding condition from \Cref{thm:pingpongdenseSLn,thm:pingpongdensesemisimplealgebra}, aiming to find a ping-pong partner for $H$. 

We start by stating a three-headed theorem, establishing the existence of a certain center-preserving irreducible representation of $G$ under increasingly weaker assumptions. 
The three parts will be proved together, after some technical preliminaries concerning faithful irreducible representations and Frobenius complements. 

\begin{definition}
As usual, a representation $\rho$ of $G$ is called \emph{faithful on $H$} if $H \cap \ker(\rho) = 1$. 
We will say that $\rho$ is \emph{center-preserving on $H$} if $H \cap \ZZ(\rho) = H \cap \ZZ(G)$. 
Here and elsewhere, $\ZZ(\rho)$ denotes the \emph{center of $\rho$}, that is, $\ZZ(\rho) = \rho^{-1}(\ZZ(\rho(G)))$. 
If a representation $\rho$ of $G$ is faithful on $G$ then it is of course center-preserving on $G$, but on proper subgroups the two notions are distinct. 
\end{definition}

\begin{theorem} \label{thm:almostfaithfulembedding}
Let $G$ be a non-abelian finite group. 
Let $\Fi$ be a field whose characteristic does not divide $|G|$, assumed to be a number field for the purpose of part \textup{(iii)}. 
Let $H$ be a subgroup of $G$, and suppose that $G$ admits an irreducible $\Fi$-representation which is center-preserving on $H$. 
If respectively
\begin{enumerate}[itemsep=1ex,topsep=1ex,label=\textup{(\roman*)},leftmargin=2em]
	\item \label{item:notFrobenius} 
$G$ is not a Dedekind group,
	\item \label{item:notdivision} 
$\Fi G$ is not a product of division algebras,
	\item \label{item:notquaternion} 
$\Fi$ is not totally real, or $G$ is not isomorphic to $Q_8 \times C_2^n$ for any $n \in \N$,
\end{enumerate}
then there exists an irreducible $\Fi$-representation $\rho$ of $G$ such that $\indx{H\cap \ZZ(\rho)}{H \cap \ZZ(G)} \leq 2$ and satisfying respectively
\begin{enumerate}[itemsep=1ex,topsep=1ex,label=\textup{(\roman*)},leftmargin=2em]
	\item $\rho(G)$ is {not} a Frobenius complement,
	\item $\rho(\Fi G)$ is not a division algebra,
	\item $\rho(\Fi G)$ is neither a field nor a totally definite quaternion algebra. 
\end{enumerate}

If moreover, 
\begin{enumerate}[itemsep=1ex,topsep=1ex,label=\textup{(\roman*)},leftmargin=3em]
\customitem[condition]{cond:NQ}{\textup{(NQ)}}
there does not exist an irreducible $F$-representation $\sigma$ of $G$ such that $\sigma(G)$ is a non-abelian Frobenius complement having a generalized quaternion group $Q_{2^n}$ as $2$-Sylow subgroup, and for which $\sigma(H) \cap Q_{2^n}$ has an element of order $4$ generating a normal subgroup,
\end{enumerate}
then in fact $\rho$ can be taken to be center-preserving on $H$. 
\end{theorem}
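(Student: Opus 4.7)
The plan is to start from the given center-preserving irreducible $\Fi$-representation $\sigma$ and to modify it only when it fails the target conclusion. If $\sigma(G)$ is not a Frobenius complement (for part (i)), or $\sigma(\Fi G)$ is not a division algebra (for part (ii)), or $\sigma(\Fi G)$ is neither a field nor a totally definite quaternion algebra (for part (iii)), we simply take $\rho = \sigma$ and are done. So the work concentrates on the ``bad'' case, where $\sigma$ satisfies one of these restrictive structural constraints.

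In that case, I would first produce an auxiliary irreducible representation $\tau$ of $G$, \emph{a priori} without center-preservation on $H$, whose image satisfies the desired conclusion. For (i), the hypothesis that $G$ is not Dedekind, combined with the companion characterization \Cref{thm:DedekindiffimagesFrobeniuscomplement} (groups whose every irreducible image is a Frobenius complement are exactly the Dedekind groups), yields an irrep $\tau$ of $G$ with $\tau(G)$ not a Frobenius complement. For (ii), the Artin--Wedderburn decomposition of $\Fi G$ directly provides a factor of the form $\Mat_n(\Di)$ with $n \geq 2$, and $\tau$ is the corresponding projection. For (iii), the exclusion $G \not\cong Q_8 \times C_2^n$ (over totally real $\Fi$) places $G$ outside the class in which every non-field Wedderburn component is a totally definite quaternion algebra; via the Baer--Dedekind classification and a careful Schur-index analysis of rational quaternion components of $\Fi G$, this yields the required $\tau$.

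The second and key step is to upgrade $\tau$ into a representation $\rho$ that is simultaneously (almost) center-preserving on $H$ and retains the good property of $\tau$. The strategy is to examine the irreducible constituents of a combination of $\sigma$ and $\tau$ (for instance $\sigma \otimes \tau$, or a representation induced from a common normal subgroup), possibly followed by a twist by a linear character of $G/[G,G]$ that corrects the behaviour on $H \cap \ZZ(G)$. Each of the good properties in (i)--(iii) is phrased in terms of the image group or image algebra, and transforms predictably under such twists; this lets us arrange that $\rho$ inherits the good property from $\tau$. On the center side, any $h \in H$ with $\rho(h)$ central forces $\sigma(h)$ to lie in $\ZZ(\sigma(G))$, and hence $h \in \ZZ(G)$ by the center-preservation of $\sigma$, up to a residual factor of $C_2$ coming from the unique involution of a generalized quaternion Sylow $2$-subgroup that might appear in the image; this is the origin of the bound $\indx{H\cap \ZZ(\rho)}{H \cap \ZZ(G)} \leq 2$.

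Finally, this residual factor of $C_2$ is precisely what \ref{cond:NQ} rules out: the only way it can persist is if some irreducible image is a non-abelian Frobenius complement with generalized quaternion $2$-Sylow $Q_{2^n}$ and $H$ meets $Q_{2^n}$ in a normal cyclic subgroup containing an element of order $4$. Under \ref{cond:NQ} this scenario does not occur, so $\rho$ can be taken fully center-preserving on $H$. The hardest part of the argument is case (iii): producing $\tau$ there reduces to proving that a finite group all of whose irreducible rational components are fields or totally definite quaternion algebras must be of the form $Q_8 \times C_2^n$ over totally real $\Fi$, which will require a delicate interplay between the structure theorems for Frobenius complements and Schur-index computations over number fields.
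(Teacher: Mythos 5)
Your opening move (take $\rho = \sigma$ if it already works, otherwise repair) matches the paper, and your diagnosis that the residual index-$2$ defect comes from generalized quaternion Sylow $2$-subgroups is correct in spirit. But the central mechanism you propose for the repair step does not work, and it replaces the actual key idea of the proof.

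The gap is in your ``second and key step.'' You propose to produce an auxiliary irreducible $\tau$ with the good image property and then extract $\rho$ from the constituents of $\sigma \otimes \tau$ (or an induced representation), corrected by a linear twist. There is no control here: an irreducible constituent of $\sigma \otimes \tau$ need not inherit either the center-preservation of $\sigma$ on $H$ or the non-Frobenius-complement property of $\tau$, and your claim that $\rho(h)$ central forces $\sigma(h)$ central has no justification for such a constituent. The paper's argument does not combine two representations at all. Instead, the engine is \Cref{lem:centralquotientFrobeniuscomplement}: if $\sigma(G)$ is a non-abelian Frobenius complement not of the form $Q_8 \times C_m$, there is a central cyclic $p$-subgroup $N \leq \ZZ(\sigma(G))$ such that $\sigma(G)/N$ is \emph{not} a Frobenius complement, still admits a faithful irreducible representation, and satisfies $\indx{\pi^{-1}(\ZZ(\sigma(G)/N))}{\ZZ(\sigma(G))} \leq 2$; one takes $\rho$ to be the inflation to $G$ of that faithful irreducible. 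This is where the bound $\indx{H \cap \ZZ(\rho)}{H \cap \ZZ(G)} \leq 2$ and the precise shape of \Cref{cond:NQ} come from (the $\langle a^{2^{n-3}}\rangle$ normality criterion in that lemma), none of which is recoverable from a tensor-product construction. The exceptional case $\sigma(G) \cong Q_8 \times C_m$ is then handled by inducing a faithful irreducible of $H$ up to $G$ (Frobenius reciprocity gives faithfulness of each constituent on $H$) and, in the worst case, by the separate structural analysis of subgroups of $Q_8^t \times A$ in \Cref{lem:embeddingalmostdedekind}. You also miss the preliminary reduction to the case where $H$ itself has a faithful irreducible representation (by passing to $\sigma(H) \leq \sigma(G)$, which are fixed-point-free in the bad case), which is what makes \Cref{thm:faithfulcenterpreservingrepresentation} applicable. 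Finally, parts (ii) and (iii) do not need independent constructions of $\tau$: once (i) is proved, a non-Frobenius-complement image is automatically not contained in a division algebra, so only the Dedekind case remains, which is settled by \Cref{lem:embeddingalmostdedekind}(iii); your proposed Schur-index analysis for (iii) addresses the easy half of that case and not the center control.
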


For the sake of brevity, we will refer to the cumbersome condition on the pair $H \leq G$ ensuring the equality $H \cap \ZZ(\rho) = H \cap \ZZ(G)$ in \Cref{thm:almostfaithfulembedding}, as \Cref{cond:NQ}. 
It holds in particular if: no image of $G$ under an irreducible representation is a non-abelian Frobenius complement, or $G$ does not have a $2$-Sylow subgroup mapping onto $Q_8$, or $H$ has no element of order $4$, or $2$ does not divide $|H \cap \ZZ(G)|$, or $2$ does not divide $|H / H \cap \ZZ(G)|$. 

\begin{remark}
If $G$ is a Dedekind group (resp.\ $\Fi G$ is a product of division algebras, $\Fi$ is totally real and $G \cong Q_8 \times C_2^n$), then for every $\rho \in \Irr_\Fi(G)$, the image $\rho(G)$ is a Frobenius complement (resp.\ $\rho(\Fi G)$ is a division algebra, $\rho(\Fi G)$ is either a field or a totally definite quaternion algebra), hence no irreducible representation of $G$ can possibly satisfy Conclusion \ref{item:notFrobenius} (resp.\ \ref{item:notdivision}, \ref{item:notquaternion}). 
In this sense, the assumptions made on $G$ in \Cref{thm:almostfaithfulembedding} are sharp. 

Conversely, if $\rho(G)$ is a Frobenius complement for every $\rho \in \Irr_\Fi(G)$, then the particular case $H = 1$ in \Cref{thm:almostfaithfulembedding}.\ref{item:notFrobenius} implies that $G$ is a Dedekind group. 
We will state and prove this characterization of Dedekind groups separately in \Cref{thm:DedekindiffimagesFrobeniuscomplement}, before proving \Cref{thm:almostfaithfulembedding}. 

Note also that some condition is necessary to ensure that $H \cap \ZZ(\rho) = H \cap \ZZ(G)$, even when $G$ itself has a faithful irreducible representation. 
Indeed, the irreducible representations of the generalized quaternion group $Q_{2^n}$ over a totally real number field $\Fi$ fall into two categories: those inflated from the dihedral central quotient $D_{2^{n-1}} \cong Q_{2^n} / \ZZ(Q_{2^n})$, and the quaternionic representations. 
Under a quaternionic representation, $\rho(\Fi G)$ is evidently a totally definite quaternion algebra, whereas the center of any representation inflated from $D_{2^{n-1}}$ contains the preimage of $\ZZ(D_{2^{n-1}})$, which is cyclic of order $4$ and properly contains $\ZZ(Q_{2^n})$. 
\end{remark}

\begin{remark} \label{rem:Frobeniuscomplement}
Note that if $\rho$ is a representation such that $\rho(G)$ is not a Frobenius complement (for instance, if $\rho$ is afforded by \Cref{thm:almostfaithfulembedding}), then $\rho(\Fi G)$ is certainly not a division algebra.
Indeed, it is easy to see that a finite subgroup of a division $\Fi$-algebra is {fixed-point-free}. 
(Recall that a group is fixed-point-free if and only if it is a Frobenius complement  \cite{Zassenhaus35}.) 

The condition that $\Fi G$ be not a product of division algebras can also be reformulated in terms of $G$. 
Indeed, by \cite[Theorem 3.5]{Sehgal75}, $\Q G$ is a product of division algebras if and only if $G$ is abelian or isomorphic to $Q_8 \times C_2^n \times A$ with $n \in \N$ and $A$ an abelian group such that $|A|$ and the order of $2$ in $(\Z / |A|\Z)^\times$ are both odd. 
It follows that $\Fi G$ is a product of division algebras if and only if $G$ is of this form, and when $G$ is not abelian, $\Fi$ is not a splitting field of Hamilton's quaternion algebra $\qa{-1}{-1}{\Q}$. 

In the frame of this paper, the importance of having an irreducible representation $\rho$ for which $\rho(\Fi G)$ is neither a field nor a totally definite quaternion algebra stems from \Cref{thm:pingpongdenseSLn,thm:pingpongdensesemisimplealgebra}, which will be used to construct ping-pong partners. 
The relevance of the stronger property that $\rho(G)$ be not a Frobenius complement lies in its use to obtain a \emph{bicyclic} ping-pong partner; the details can be found in \Cref{subsec:bicyclicgenericallyfree}. 
For now, combining \Cref{thm:almostfaithfulembedding}.\ref{item:notquaternion} and \Cref{thm:pingpongdensesemisimplealgebra}, we record the following interesting results.
\end{remark}

\begin{corollary}\label{cor:existenceamalgaminorder}
Let $\Fi$ be a number field and $R$ be its ring of integers. 
Let $H \leq G$ be finite groups, and suppose that $G$ admits an irreducible $\Fi$-representation which is center-preserving on $H$. 
Assume that either $\Fi$ is not totally real, or $G \ncong Q_8 \times C_2^n $ for any $n \in \N$.
Then there exists a subgroup $C \leq H$ with $\indx{C}{H \cap \ZZ (G)} \leq 2$, and an element $\gamma \in \U(RG)$ of infinite order such that $\langle \gamma, H \rangle$ and $\langle \gamma, C \rangle \ast_C H$ are canonically isomorphic. 
\end{corollary}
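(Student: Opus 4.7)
The plan is to combine \Cref{thm:almostfaithfulembedding}\,\ref{item:notquaternion} with \Cref{thm:pingpongdensesemisimplealgebra}, applying the latter inside the centralizer of $C$ rather than in all of $\U(RG)$. Under the hypotheses of the corollary, \Cref{thm:almostfaithfulembedding}\,\ref{item:notquaternion} yields an irreducible $\Fi$-representation $\rho$ of $G$ satisfying $\indx{H \cap \ZZ(\rho)}{H \cap \ZZ(G)} \leq 2$ and such that $\rho(\Fi G)$ is neither a field nor a totally definite quaternion algebra. I set $C := H \cap \ZZ(\rho)$; this immediately has the required index over $H \cap \ZZ(G)$. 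Let $e \in \PCI(\Fi G)$ correspond to $\rho$, so that $\Fi G e \cong \rho(\Fi G) =: \Alg_e$; write $\bar H := \pi_e(H)$ and $\bar C := \pi_e(C) = \bar H \cap \ZZ(\Alg_e)^\times$.

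Because $\bar C$ is central in $\Alg_e$, the $\Fi$-algebra $\Cent_{\Fi G}(C)$ decomposes as the direct product $\Alg_e \times \Cent_{\Fi G(1-e)}(\pi_{1-e}(C))$. Consequently $\Cent_{\U(RG)}(C)$ is commensurable with a product of two arithmetic groups, the first factor being commensurable with $\U(\O_e)$ for some order $\O_e$ of $\Alg_e$. Since $\Alg_e$ is neither a field nor a totally definite quaternion algebra, Borel density ensures that the projection $\Gamma := \pi_e(\Cent_{\U(RG)}(C))$ satisfies the Zariski-density hypothesis of \Cref{thm:pingpongdensesemisimplealgebra}. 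I now apply that theorem (together with \Cref{rem:partialpingpongsemisimplealgebra} in the event that $\indx{\bar H}{\bar C} \leq 2$) to $\Alg = \Alg_e$, $\O = \O_e$, this $\Gamma$, and $A = B = \bar H$: the almost embedding of $\bar H$ in $\PGL_{\Alg_e}$ is tautological, so we obtain an element $\gamma' \in \Gamma$ of infinite order with $\langle \gamma', \bar H \rangle \cong \langle \gamma', \bar C \rangle \ast_{\bar C} \bar H$. Any lift $\gamma \in \Cent_{\U(RG)}(C)$ of $\gamma'$ then commutes with $C$ in $\U(RG)$ and has infinite order (since its image $\gamma'$ does).

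It remains to verify that the canonical map $\langle \gamma, C \rangle \ast_C H \to \langle \gamma, H \rangle$ is an isomorphism. Since $\gamma$ commutes with $C$, has infinite order, and $C$ is finite, we have $\langle \gamma, C \rangle = \langle \gamma \rangle \cdot C$ with $\langle \gamma \rangle \cap C = \{1\}$. Any non-trivial reduced word in the amalgamated product therefore has the form $c_0\, \gamma^{n_1} h_1 \gamma^{n_2} h_2 \cdots \gamma^{n_k} h_k$ with $c_0 \in C$, $n_i \in \Z \setminus \{0\}$, and $h_i \in H \setminus C$. Its image under $\pi_e$ is $\bar{c}_0\, (\gamma')^{n_1} \pi_e(h_1) \cdots (\gamma')^{n_k} \pi_e(h_k)$, which remains a reduced word in $\langle \gamma', \bar C \rangle \ast_{\bar C} \bar H$: indeed, $(\gamma')^{n_i} \notin \bar C$ because $\gamma'$ has infinite order and $\bar C$ is finite, while $\pi_e(h_i) \notin \bar C$ because $h_i \notin H \cap \ZZ(\rho) = C$. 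Hence the image, and so the original word, is non-trivial in $\langle \gamma, H \rangle$. The main delicate point is arranging for the ping-pong partner $\gamma$ to commute with $C$: this is why one must work inside the centralizer $\Cent_{\U(RG)}(C)$ and verify that its projection under $\pi_e$ remains Zariski-dense enough in $\GL_{\Alg_e}$ for \Cref{thm:pingpongdensesemisimplealgebra} to apply.
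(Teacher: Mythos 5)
Your proof is correct, and its first half (invoking \Cref{thm:almostfaithfulembedding}.\ref{item:notquaternion} to produce $\rho$, setting $C = H \cap \ZZ(\rho)$, and applying \Cref{thm:pingpongdensesemisimplealgebra} together with \Cref{rem:partialpingpongsemisimplealgebra} to the simple factor $\rho(\Fi G)$) coincides with the paper's. Where you diverge is in how the free amalgamated product is transported back to $\U(RG)$. The paper takes $\gamma$ in $\U(RG \cap \rho(\Fi G))$, i.e.\ inside an order of the factor itself, and then applies \Cref{lem:preimageamalgam} to the surjection $\rho: \langle \gamma, H\rangle \to \langle \gamma, \rho(C)\rangle \ast_{\rho(C)} \rho(H)$ to lift the amalgam structure in one stroke. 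You instead work in $\Cent_{\U(RG)}(C)$, use the splitting $\Cent_{\Fi G}(C) = \Fi Ge \times \Cent_{\Fi G(1-e)}(\pi_{1-e}(C))$ (valid precisely because $\pi_e(C) \subseteq \ZZ(\Fi Ge)$) and commensurability of orders to see that the projection $\pi_e(\Cent_{\U(RG)}(C))$ meets the density hypothesis of \Cref{thm:pingpongdensesemisimplealgebra}, then lift only the element $\gamma'$ and verify injectivity of $\langle \gamma, C\rangle \ast_C H \to \langle \gamma, H\rangle$ directly: a reduced word maps under $\pi_e$ to a reduced word because $h_i \notin C = H \cap \ZZ(\rho)$ forces $\pi_e(h_i) \notin \bar C$ and $(\gamma')^{n_i}$ has infinite order. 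This buys you something the paper's write-up glosses over: your $\gamma$ lies honestly in $\U(RG)$ from the outset, whereas a unit of the order $RG \cap \Fi Ge$ must still be completed by $1-e$ (and adjusted by a congruence condition) to become a unit of $RG$. The price is the extra centralizer/Borel--Harish-Chandra argument and the hand-check of normal forms, which the paper avoids via \Cref{lem:preimageamalgam}. One cosmetic remark: your displayed word shape $c_0\,\gamma^{n_1}h_1\cdots\gamma^{n_k}h_k$ does not exhaust all reduced words (those beginning with an element of $H \setminus C$ or ending with a power of $\gamma$ are omitted), but the same image-remains-reduced argument covers every case of \Cref{lem:amalgamnormalform}(iii), so nothing is lost.
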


\begin{proof}
Let $\rho$ be a representation afforded by \Cref{thm:almostfaithfulembedding}.\ref{item:notquaternion} and set $C = H \cap \ZZ(\rho)$. 
By construction, the simple $\Fi$-algebra $\rho(\Fi G)$ is not a field nor a totally definite quaternion algebra. 

Because the Wedderburn decomposition $\Fi G = \bigoplus_{\sigma \in \Irr_\Fi(G)} \sigma(\Fi G)$ is defined over $\Fi$, the intersection $RG \cap \rho(\Fi G)$ is an order in $\rho(\Fi G)$, and $\Gamma = \U(R G \cap \rho(\Fi G))$ is an arithmetic subgroup of $\GL_{\rho(\Fi G)}(\Fi) = \U(\rho(\Fi G))$. 
In particular, the image of $\Gamma$ in the adjoint group $\PGL_{\rho(\Fi G)}$ is Zariski-dense, and we can apply \Cref{thm:pingpongdensesemisimplealgebra} (see also \Cref{rem:partialpingpongsemisimplealgebra}) with $\Alg = \rho(\Fi G)$ and $A = B = \rho(H)$ to deduce the existence of an element $\gamma \in \Gamma$ of infinite order for which the canonical map
\[
\langle {\gamma}, \rho(C) \rangle \ast_{\rho(C)} \rho(H) \to \langle {\gamma}, \rho(H) \rangle
\]
is an isomorphism. 
Note that by construction, $C$ is the preimage in $H$ of $\rho(C)$ under $\rho$, and $\gamma = \rho(\gamma)$ commutes with $C$. 
An application of \Cref{lem:preimageamalgam} to the surjective map 
\[
\rho: \langle \gamma, H \rangle \to \langle {\gamma}, \rho(C) \rangle \ast_{\rho(C)} \rho(H)
\]
then shows that $\langle {\gamma}, H \rangle$ and $\langle {\gamma}, C \rangle \ast_C H$ are canonically isomorphic. 
\end{proof}

When \Cref{cond:NQ} is satisfied, the proof of \Cref{cor:existenceamalgaminorder} can be cut short, for one can then apply \Cref{thm:pingpongdensesemisimplealgebra} to $\Alg = \Fi G$ straight away. 
We record this more precise result next. 

\begin{corollary}\label{cor:existenceamalgaminordersharp}
Let $\Fi$, $R$, $G$, and $H$ be as in \Cref{cor:existenceamalgaminorder}, and assume in addition that \Cref{cond:NQ} holds. 
Set $C = H \cap \ZZ (G)$.
Then the set 
\[
S = \set{\gamma \in \U(RG) \textup{ of infinite order}}{\langle \gamma, H \rangle \cong \langle \gamma, C \rangle \ast_C H \textup{ canonically}}
\]
is dense in $\U(RG)$ for the join of the Zariski and profinite topologies. 
\end{corollary}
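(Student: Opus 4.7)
The plan is to deduce this corollary from a direct combination of \Cref{thm:almostfaithfulembedding}(iii) and \Cref{thm:pingpongdensesemisimplealgebra}, specialized to $\Alg=\Fi G$ and $A=B=H$.

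First, I would invoke \Cref{thm:almostfaithfulembedding} part \textup{(iii)}: the hypotheses of \Cref{cor:existenceamalgaminorder} (which provide an irreducible $\Fi$-representation of $G$ that is center-preserving on $H$, together with the condition ``$\Fi$ not totally real or $G \ncong Q_8 \times C_2^n$''), combined with the extra \Cref{cond:NQ} now assumed, yield an irreducible $\Fi$-representation $\rho$ of $G$ which is center-preserving on $H$ (so $H \cap \ZZ(\rho) = H \cap \ZZ(G) = C$) and whose image $\rho(\Fi G)$ is neither a field nor a totally definite quaternion algebra. Let $e \in \PCI(\Fi G)$ be the primitive central idempotent attached to $\rho$, so that $\Fi G e \cong \rho(\Fi G)$. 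Because $\rho$ is irreducible, $\ZZ(\rho(G)) = \rho(G) \cap \ZZ(\rho(\Fi G))$, so $H \cap \ZZ(\rho)$ equals the kernel of the composite $H \hookrightarrow G \to \GL_{\Fi G e}(\Fi) \to \PGL_{\Fi G e}(\Fi)$. The center-preserving property therefore says precisely that $H$ almost embeds in the adjoint simple factor $\PGL_{\Fi G e}$, and this factor is not of the excluded form.

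Next, I would check the hypotheses of \Cref{thm:pingpongdensesemisimplealgebra}. By the Borel--Harish-Chandra theorem, $\U(RG)$ is an arithmetic subgroup of the algebraic group $\GL_{\Fi G}$, and in particular is Zariski-dense in it; hence taking $\Gamma = \U(RG)$ trivially satisfies the density assumption of the theorem. I would then apply \Cref{thm:pingpongdensesemisimplealgebra} with $\Alg = \Fi G$, $\O = RG$, and $A = B = H$. When $\indx{H}{C} > 2$, the theorem applies directly; when $\indx{H}{C} \leq 2$, the mild variant recorded in \Cref{rem:partialpingpongsemisimplealgebra} applies with the same proof (it is only the third canonical map, which we do not need, that becomes unavailable). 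In either case, the ``only if'' direction has been arranged in the previous step, so the conclusion yields density in $\U(RG)$, for the join of the Zariski and profinite topologies, of the set of regular semisimple elements $\gamma$ of infinite order for which the canonical map $\langle \gamma, C \rangle \ast_C H \to \langle \gamma, H \rangle$ is an isomorphism. Since this set is contained in $S$, and density passes to supersets, $S$ itself is dense in the stated topology.

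The genuine mathematical content, namely the construction of a suitable center-preserving irreducible representation $\rho$ with $\rho(\Fi G)$ neither a field nor a totally definite quaternion algebra, is absorbed by \Cref{thm:almostfaithfulembedding}; given that result and the general ping-pong machinery of \Cref{sec:pingpongreductivegroups,sec:pingpongsemisimplealgebra}, no further obstacle arises, and the corollary reduces to the verification of hypotheses carried out above.
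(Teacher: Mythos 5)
Your proposal is correct and follows essentially the same route as the paper's own (much terser) proof: invoke \Cref{thm:almostfaithfulembedding}.\ref{item:notquaternion} together with \ref{cond:NQ} to get a center-preserving $\rho$ with $\rho(\Fi G)$ neither a field nor a totally definite quaternion algebra, then feed the resulting almost embedding into \Cref{thm:pingpongdensesemisimplealgebra} with $\Alg = \Fi G$ and $A = B = H$ (using \Cref{rem:partialpingpongsemisimplealgebra} when $\indx{H}{C} \leq 2$, exactly as the paper does in \Cref{cor:existenceamalgaminorder}). The only quibble is that $\U(RG)$ need not be Zariski-dense in $\GL_{\Fi G}$; but the density hypothesis of the theorem is phrased relative to $\U(\O)$ itself, so $\Gamma = \U(RG)$ satisfies it tautologically, as you in any case note.
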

\begin{proof}
We can take the representation $\rho$ afforded by \Cref{thm:almostfaithfulembedding}.\ref{item:notquaternion} to satisfy $H \cap \ZZ(\rho) = H \cap \ZZ(G)$, because $G$, $H$ satisfy \ref{cond:NQ}. 
This means that $H$ almost embeds in $\PGL_{\rho(\Fi G)}$, and as $\rho(\Fi G)$ is not a field nor a totally definite quaternion algebra, the statement follows at once from \Cref{thm:pingpongdensesemisimplealgebra}. 
\end{proof}

Note again that if $\Fi$ is totally real and $G \cong Q_8 \times C_2^n$, then every simple factor of $\Fi G$ is either a field or a totally definite quaternion algebra, implying that $\U(RG) / \ZZ(\U(RG))$ is a finite group. 
The attentive reader will notice that the converse must also hold: if $\U(RG) / \ZZ(\U(RG))$ is finite, then either $G$ is abelian or $\Fi$ is totally real and $G \cong Q_8 \times C_2^n$ for some $n \in \N$. 
Indeed, $\U(RG) / \ZZ(\U(RG))$ is commensurable with $\SL_1(RG)$ by \cite[Proposition 5.5.1]{JespersdelRio16}, and the same criterion for the finiteness of $\SL_1(RG)$ was established by Kleinert \cite{Kleinert00} (see also \cite[Corollary 5.5.7]{JespersdelRio16}). 
Thus, in this situation, there is no hope to observe any non-trivial free amalgamated products inside $\U(RG)$. 
\medskip

For $H$ a cyclic group of prime order $p$, the existence of a free amalgamated product $H \ast_{H \cap \ZZ(G)} H$ inside $\U(\Z G)$ was first obtained by Gon\c{c}alves and Passman \cite{GoncalvesPassman04}. 
They proved furthermore that $C_p \ast C_\infty$ is a subgroup of $\U(\Z G)$ if and only if $G$ has a non-central element of order $p$. This last fact also follows from our results thanks to the positive solution to the Kimmerle problem for prime order elements \cite[Corollary 5.2.]{KimmerleMargolis17}, which states that any element of order $p$ in $\U(\Z G)$ can be conjugated (inside some larger algebra) to one in $G$. 

\begin{remark}
Knowing from the start that $G$ has an irreducible representation which is center-preserving on two subgroups $A$ and $B$ at the same time, one can establish the existence of $A \ast_C B$ in $\U(RG)$ using the same argument. 
Having this in mind, it would be interesting to know whether there is a variant of \Cref{thm:almostfaithfulembedding} for two subgroups simultaneously. 
\end{remark}

\begin{remark} \label{rem:conditionNQ}
As convenient as it was for regrouping reasonable sufficient conditions, \Cref{cond:NQ} is not strictly necessary to obtain the equality $H \cap \ZZ(\rho) = H \cap \ZZ(G)$ in \Cref{thm:almostfaithfulembedding}. 
The details of the proof of \Cref{thm:almostfaithfulembedding} do describe when exactly one can arrange that $H \cap \ZZ(\rho)$ and $H \cap \ZZ(G)$ coincide. 

For the purpose of \Cref{cor:existenceamalgaminorder}, the case $\indx{C}{H\cap \ZZ(G)} = 2$ need only be considered if every irreducible representation $\sigma$ of $G$ which is center-preserving on $H$, is such that $\sigma(FG)$ is a field or a totally definite quaternion algebra. 
This characterization is however not very practical, as it pretty much relies on the knowledge of all representations which are center-preserving on $H$. 
Nevertheless, below \Cref{thm:almostfaithfulembedding} are indicated more user-friendly conditions under which \ref{cond:NQ} holds, hence the representation $\rho$ from \Cref{thm:almostfaithfulembedding} can be taken to satisfy $H \cap \ZZ(\rho)= H \cap \ZZ(G)$, and in turn, the conclusion of \Cref{cor:existenceamalgaminordersharp} holds. 
\end{remark}

\begin{remark}
\Cref{thm:almostfaithfulembedding} (and its corollaries) can also be applied to subgroups $H$ of conjugates of $G$ in $\U(\Fi G)$. 
This extension to conjugates of $G$ is reminiscent of the Zassenhaus conjectures. 
The strongest Zassenhaus conjecture asserts that for a finite group $G$, any subgroup $H \leq \U(\Z G)$ is conjugated under $\U(\Q G)$ to a subgroup of $\pm G$. 
Counterexamples to this conjecture for $H$ not cyclic were obtained by Roggenkamp and Scott \cite{Scott92}, and for $H$ cyclic by Eisele and Margolis \cite{EiseleMargolis18}. 
However, the strongest Zassenhaus conjecture does hold for some classes of groups: for instance, if $G$ is {nilpotent} and $H$ is any subgroup \cite{Weiss88,Weiss91}, or for $G$ {cyclic-by-abelian} and $H$ {cyclic} \cite{CaicedoMargolisdelRio13}. 
We refer to \cite{MargolisdelRio19} for an overview of this topic. 

As a consequence of these facts, when $\Fi = \Q$ and $G$ is nilpotent, the statements of \Cref{thm:almostfaithfulembedding} hold for all finite subgroups $H$ of $\U(\Fi G)$, not just for those contained in a group basis.
\end{remark}
\bigbreak

Recall that for every primitive central idempotent $e \in \PCI(\Fi G)$, we denote by
\[
\pi_e : \Fi G \to \Fi Ge \cong \Mat_{n_e}(D_e) : x \mapsto xe
\]
the projection onto the the associated simple quotient $\Fi Ge$ of $\Fi G$. 
Somewhat abusively, we will also use $\pi_e$ to denote its restriction $\U(\Fi G) \to \GL_{n_e}(D_e)$ between unit groups, as well as the corresponding irreducible $\Fi$-representation of $G$ on $D_e^{n_e}$. 
It will be convenient to switch back and forth between the language of primitive central idempotents of $\Fi G$, and of irreducible $\Fi$-representations of $G$. 
On the representation side, $\Irr_\Fi(G)$ denotes the set of irreducible $\Fi$-representations of $G$; so the two sets $\Irr_\Fi(G)$ and $\PCI(\Fi G)$ can naturally be identified. 

In the remainder of the article, we will make use of the following two sets, gathering the irreducible $\Fi$-representations of $G$ which are faithful, respectively center-preserving, on $H$:
\begin{equation} \label{eq:notationEmb}
\begin{aligned}
\Fir^G_\Fi(H) &= \{ e \in \PCI(\Fi G) \mid H \cap \ker(\pi_e)=1 \}, \\
\Cir^G_\Fi(H) &= \{ e \in \PCI(\Fi G) \mid H \cap \ZZ(\pi_e) \leq \ZZ(G)\}.
\end{aligned}
\end{equation}
When $G = H$, we will simply write $\Fir_\Fi(H)$ and $\Cir_{\Fi}(H)$; when the field $\Fi$ is clear from context, we will write $\Fir^{G}(H)$ and $\Cir^{G}(H)$. 

With this notation, \Cref{thm:almostfaithfulembedding}.\ref{item:notFrobenius} implies for example that if $\Cir_{\Fi}^G(H) \neq \emptyset$ and \Cref{cond:NQ} holds, there exists $e \in \Cir^G_\Fi(H)$ such that $\pi_e(G)$ is not a Frobenius complement.
\smallbreak

\begin{remark}
In \Cref{thm:almostfaithfulembedding}, one cannot replace the condition that $\Cir^G_{\Fi}(H) \neq \emptyset$ by the weaker condition that there exists a (not necessarily faithful) irreducible $\Fi$-representation $\sigma$ of $G$ such that $\ker(\sigma) \leq H \cap \ZZ(G)$. 
Indeed, for $G$ a group which satisfies \ref{cond:NQ} and has no image under an irreducible representation that is a non-abelian Frobenius complement, \Cref{thm:almostfaithfulembedding} tautologically states that $\Cir^G_{\Fi}(H) \neq \emptyset$. 

Pairs $H \leq G$ admitting such an irreducible representation $\sigma$, but no center-preserving one exist: for example, consider the group
\[
G = C_4 \rtimes D_8 = \langle a,b,c \mid c^4=a^4=b^2=1, b^{-1}ab=a^{-1}, a^{-1}ca=c^{-1} ,b^{-1}cb=c^{-1} \rangle,
\]
and take $H = G$. 
It can be shown that the irreducible representations $\rho$ of $G$ of degree at least $2$ are inflated either from $D_8$ or from the central product $C_4 \circ D_8$. 
In particular, these images $\rho(G)$ are not Frobenius complements. 
Those inflated from $C_4 \circ D_8$ have a central kernel, but none of the irreducible representations of $G$ are center-preserving! 
\end{remark}
\medbreak

Before starting the proof of \Cref{thm:almostfaithfulembedding}, we gather several needed results about faithful irreducible representations and Frobenius complements.

\subsection{Faithful irreducible representations and their center} \label{subsec:centerpreservingrepresentations}
The existence of faithful irreducible (complex) representations of finite groups has been intensively studied, see \cite[Section 2]{Szechtman16} for a brief survey of its history. 
To prove \Cref{thm:almostfaithfulembedding}, we will need to understand the existence of such representations over an arbitrary field $\Fi$ of characteristic not dividing $|G|$, and determine when they preserve the center. 
The following lemma summarizes some basic facts concerning the existence of faithful representations over $\Fi$. 

\begin{lemma} \label{lem:firdifferentfields}
Let $G$ be a finite group, $F \subseteq L$ be fields of characteristic not dividing $|G|$, and $H$ be a finite subgroup of $\U(FG)$. 
\begin{enumerate}[itemsep=1ex,topsep=1ex,label=\textup{(\roman*)},leftmargin=2em]
	\item If $e \in \PCI(FG)$, then $\ZZ (G)/\ker(\pi_e)$ is cyclic.
	\item If $\Fir^G_L(H) \neq \emptyset$ then $\Fir^G_F(H) \neq \emptyset$; if $\Cir^G_L(H) \neq \emptyset$ then $\Cir^G_F(H) \neq \emptyset$.
	\item If all Sylow subgroups of $G$ have a cyclic center, then $\Fir_{F}(G) \neq \emptyset$.
	\item If $G$ is nilpotent, then $\Fir_F(G) \neq \emptyset$ if and only if $\ZZ(G)$ is cyclic.
\end{enumerate}
\end{lemma}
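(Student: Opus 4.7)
For (i), the image $\pi_e(\ZZ(G))$ sits inside $\ZZ(FGe) = \ZZ(D_e)$, the center of the simple algebra $FGe \cong \Mat_{n_e}(D_e)$. Since $\ZZ(D_e)$ is a field, its finite subgroups are cyclic, whence $\ZZ(G)/(\ZZ(G) \cap \ker \pi_e) \cong \pi_e(\ZZ(G))$ is cyclic.

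For (ii), I would write any $f \in \PCI(FG)$ as $f = \sum_{e \in S_f} e$ with $S_f \subseteq \PCI(LG)$. Because $gf = f$ iff $ge = e$ for each $e \in S_f$, one has $\ker \pi_f = \bigcap_{e \in S_f} \ker \pi_e$, so an $e \in \Fir_L^G(H)$ lying in the decomposition of some $f \in \PCI(FG)$ satisfies $H \cap \ker \pi_f \subseteq H \cap \ker \pi_e = 1$, placing $f$ in $\Fir_F^G(H)$. For the $\Cir$-variant, the inclusion $\ZZ(FGf) \hookrightarrow \ZZ(LGf) = \bigoplus_{e \in S_f}\ZZ(LGe)$ together with $\ZZ(\pi_e(G)) = \pi_e(G) \cap \ZZ(LGe)$ (which holds because $\pi_e(G)$ spans $LGe$ as an $L$-algebra) imply that every $h \in H \cap \ZZ(\pi_f)$ has each $\pi_e(h) \in \ZZ(\pi_e(G))$, forcing $h \in \ZZ(G)$ by the assumption $e \in \Cir_L^G(H)$.

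For (iv), the forward implication is immediate from (i): any $e \in \Fir_F(G)$ is in particular faithful on $\ZZ(G)$, and (i) then gives $\ZZ(G)$ cyclic. Conversely, writing the nilpotent $G = \prod_p P_p$, cyclicity of $\ZZ(G) = \prod_p \ZZ(P_p)$ forces every $\ZZ(P_p)$ to be cyclic, so Roquette's classical theorem furnishes a faithful irreducible $\overline F$-representation of each $P_p$. Their outer tensor product is a faithful irreducible $\overline F$-representation of $G$, which descends to $F$ via (ii).

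The hardest case is (iii). After reducing to $\overline F$ via (ii), I would invoke Gasch\"utz's criterion, namely that $G$ admits a faithful irreducible complex representation iff the socle $\mathrm{soc}(G)$ is generated by a single conjugacy class. The Sylow hypothesis rules out any pair of distinct minimal normal elementary abelian $p$-subgroups $N_1, N_2$ of $G$: each $N_i \cap \ZZ(P)$ for $P \supseteq N_1 N_2$ a Sylow $p$-subgroup is nontrivial (a normal subgroup of a $p$-group meets the center), and $N_1 \cap N_2 = 1$ would produce two distinct subgroups of order $p$ in the cyclic group $\ZZ(P)$. Combined with the standard observation that each single minimal normal subgroup is generated by one conjugacy class, and a coprime-order merging argument across distinct primes, this yields the Gasch\"utz hypothesis. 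The main obstacle is precisely this assembly into Gasch\"utz's criterion; parts (i), (ii), and (iv) are essentially bookkeeping by comparison.
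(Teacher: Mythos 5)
Your proposal is correct and, for parts (i) and (ii), is essentially the paper's argument verbatim: (i) via $\pi_e(\ZZ(G))$ being a finite subgroup of the multiplicative group of the field $\ZZ(FGe)$, and (ii) via the decomposition of primitive central idempotents under scalar extension (the paper phrases it as $\pi_e$ factoring through $L \ot_F FGf$, you phrase it as $f = \sum_{e \in S_f} e$; same content). Part (iii) is also the paper's proof: uniqueness of the minimal normal $p$-subgroup from the cyclic center of a Sylow $p$-subgroup, then coprime merging of the generators $x_p$. Two points to tighten. First, in (iii) you invoke Gasch\"utz for the \emph{full} socle but only verify the condition for the abelian minimal normal subgroups, and the coprime-order merging does not apply to non-abelian minimal normal subgroups; either use the formulation in terms of the abelian part of the socle (as the paper does), or add the standard commutator argument showing the non-abelian part is always generated by a single class and can be absorbed. (Also, over positive characteristic the target is a faithful irreducible $\overline{F}$-representation rather than a complex one; the paper notes Gasch\"utz's argument is characteristic-free away from $|G|$.) Second, your converse in (iv) genuinely differs from the paper's, which simply feeds the cyclicity of each $\ZZ(P_p)$ back into (iii); your route via the $p$-group fact and an outer tensor product also works, but you should note that faithfulness of a tensor product of faithful irreducibles is \emph{not} automatic (the two sign characters of $C_2 \times C_2$ tensor to a non-faithful character) — it holds here precisely because the Sylow factors have coprime orders, forcing the scalars $\lambda_p$ with $\prod_p \lambda_p = 1$ to be trivial.
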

\begin{proof}
Since $FGe$ is simple, $\ZZ(FGe)$ is a field. 
Thus, $\pi_e(\ZZ(G)) \subseteq \ZZ(FGe)^\times$ is a finite subgroup of the multiplicative group of a field, hence is cyclic. 
This proves (i). 
\smallbreak

Next, note that 
\[ \textstyle
\bigoplus_{e \in \PCI(LG)} LGe \cong LG \cong L \ot_F FG \cong \bigoplus_{f \in \PCI(FG)} \left(L \ot_F FGf \right).
\]
Pick $e \in \PCI(LG)$, and let $f \in \PCI(FG)$ be the idempotent for which the quotient map $\pi_e: LG \to LGe$ factors through the summand $L \ot_F FGf$. 
The kernel of the restriction to $G$ of $\pi_e$ then obviously contains that of $\pi_f$. 
Similarly, the preimage of the center of $LGe$ under $\pi_e$ contains that of $FGf$ under $\pi_f$. 
Hence if $e \in \Fir_L^G(H)$, resp.\ $\Cir_L^G(H)$, we deduce that $f \in \Fir_F^G(H)$, resp.\ $\Cir_F^G(H)$, proving (ii). 
\smallbreak

Part (iii) is well-known, and can be deduced from the classical result of Gasch\"utz \cite{Gaschutz54}, stating that $\Fir_{\Fi}(G) \neq \emptyset$ if and only (the abelian part of) the socle of $G$ is generated by a single conjugacy class. 
(Although Gasch\"utz proves this result over the complex numbers, the argument works identically over any field of characteristic not dividing $|G|$. 
For a short proof in the present generality, we also refer the reader to \cite{CapraceJanssensThilmany25}.) 

Let $\SocA(G)$ denote the abelian part of the socle of $G$. 
For $p$ prime divisor of $|\SocA(G)|$, let $S_p$ denote the $p$-Sylow subgroup of $\SocA(G)$, so that $\SocA(G) = \prod_{p \text{ prime}} S_p$. 
Pick $N_p$ a minimal normal subgroup of $G$ contained in $S_p$, and $G_p$ a $p$-Sylow of $G$ containing $S_p$. 
Because $N_p$ is normal, it intersects the center of $G_p$ non-trivially. 
By assumption, the center of $G_p$ contains a unique minimal non-trivial subgroup, namely the subgroup generated by any element $x_p$ of order $p$ in the cyclic group $\ZZ(G_p)$. 
Therefore $x_p \in N_p$, and since $N_p$ is a minimal normal subgroup, $\llangle x_p \rrangle_G = N_p$. 
It follows that any two minimal normal subgroups of $S_p$ coincide, i.e.\ $S_p = N_p$.
Let now $x = \prod_{p \text{ prime}} x_p$ be the product of the $x_p$ over all prime divisors of $|\SocA(G)|$. 
As they commute, $x_p \in \langle x \rangle$, hence $N_p = \llangle x_p \rrangle_G \leq \llangle x \rrangle_G$. 
This implies at once that $\SocA(G)$ coincides with $\llangle x \rrangle_G$, as desired. 
\smallbreak

Finally, for part (iv), the necessity of $\ZZ(G)$ being cyclic follows from (i). 
The sufficiency follows from (iii), since nilpotent groups are the direct product of their Sylow subgroups. 
\end{proof}

The existence of irreducible representations faithful of a group $G$ can be characterized in several ways. 
The most common criterion is due to Gasch\"utz, and states that $G$ has a faithful irreducible representation over some (any) field of characteristic not dividing $|G|$, if and only if the socle of $G$ is generated by a single conjugacy class. 
The literature contains multiple generalizations of this fact, but somewhat surprisingly, we could not find any results providing sufficient control on the center of the representation in order to apply \Cref{thm:pingpongdenseSLn}. 

With the aim to fill this gap, it is shown in \cite{CapraceJanssensThilmany25} that if $H$ admits a faithful irreducible $\Fi$-representation, then in fact $\Fir^G_F(H) \cap \Cir^G_F(H)$ is non-empty for any finite group $G$ containing $H$. 
This result will form the first step in proving \Cref{thm:almostfaithfulembedding}, and the remainder of the proof essentially amounts to upgrading this representation to fit our needs. 

\begin{theorem}[{\cite{CapraceJanssensThilmany25}}] \label{thm:faithfulcenterpreservingrepresentation}
Let $G$ be a finite group and let $\Fi$ be a field whose characteristic does not divide $|G|$. 
Let $H \leq G$ be a subgroup possessing a faithful irreducible $\Fi$-representation. 
Then $G$ has an irreducible $\Fi$-representation $\sigma$ whose restriction to $H$ is faithful, and such that $H \cap \ZZ(\sigma) \leq \ZZ(G)$. 
\end{theorem}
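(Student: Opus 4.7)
The plan is to construct $\sigma$ as a well-chosen irreducible constituent of the induced representation $\Ind_H^G \tau$, where $\tau$ is the faithful irreducible $\Fi$-representation of $H$ provided by the hypothesis.

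First, I would observe that any irreducible constituent $\sigma$ of $\Ind_H^G \tau$ automatically has $\sigma|_H$ faithful: by Frobenius reciprocity, $\tau$ appears as a direct summand of $\Res_H^G \sigma$, and the faithfulness of $\tau$ immediately propagates to $\sigma|_H$. Moreover, for any $h \in H \cap \ZZ(\sigma)$, the element $\sigma(h)$ lies in the center of the simple algebra $\sigma(\Fi G)$, acting by a scalar from the Schur field of $\sigma$; restricting this action to the $\tau$-isotypic component of $\Res_H^G \sigma$ forces $\tau(h)$ to be central in $\tau(\Fi H)$. Since $\tau$ is faithful irreducible on $H$, one deduces $\ZZ(\tau) = \ZZ(H)$, and therefore $h \in \ZZ(H)$. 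Thus for \emph{every} irreducible constituent $\sigma$ of $\Ind_H^G \tau$, I already get the weak bound $H \cap \ZZ(\sigma) \leq \ZZ(H)$, which disposes of the first required property and reduces the second to refining $\ZZ(H)$ to $\ZZ(G)$.

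The core of the proof is then a careful choice of $\sigma$ to achieve this refinement. When $H$ is normal in $G$, Clifford's theorem decomposes $\Res_H^G \sigma$ as a direct sum of $G$-conjugates $\tau^g$ with uniform multiplicity, indexed by cosets of the stabilizer $T = \Stab_G(\tau)$. For $h \in H \cap \ZZ(\sigma)$ (already known to lie in $\ZZ(H)$), the centrality of $\sigma(h)$ forces $\tau(g^{-1}hg) = \tau(h)$ for every $g$ in a transversal of $T$; the faithfulness of $\tau$ turns these into honest identities $g^{-1}hg = h$. For $t \in T$, the intertwiner $A_t$ from Clifford's theorem satisfies $\tau(t^{-1}ht) = A_t \tau(h) A_t^{-1}$, and since $\tau(h)$ is scalar it commutes with $A_t$, giving $t^{-1}ht = h$ by faithfulness again. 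Combined, this forces $h \in \ZZ(G)$.

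The main obstacle is extending this Clifford-theoretic analysis when $H$ is not normal in $G$. I would proceed by induction in stages, factoring $\Ind_H^G \tau = \Ind_N^G \Ind_H^N \tau$ through an intermediate subgroup $H \leq N \trianglelefteq G$, selecting an irreducible constituent $\rho$ of $\Ind_H^N \tau$ that retains faithfulness on $H$, and then applying the Clifford argument above to the normal inclusion $N \trianglelefteq G$. The subtle point is that $\rho$ is typically faithful on $H$ but not on $N$, which weakens the conclusion of the Clifford step from $[h,G] = 1$ to merely $[h,G] \subseteq \ker\rho$; bridging this gap using the Mackey double-coset description of $\Res_H^N \Ind_H^N \tau$, together with the faithfulness of $\rho$ on $H$ (not on $N$), is where I expect the genuine combinatorial work to reside.
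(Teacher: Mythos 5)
The paper does not actually contain a proof of this statement --- it is imported verbatim from \cite{CapraceJanssensThilmany25}, with the reader referred there for the argument --- so I can only assess your attempt on its own merits. Your opening moves are sound and coincide with what the authors do elsewhere (in the proof of \Cref{thm:almostfaithfulembedding} they likewise induce a faithful irreducible representation $\psi$ of $H$ up to $G$ and invoke Frobenius reciprocity to get faithfulness of every constituent on $H$). Your second observation, that $H \cap \ZZ(\sigma) \leq \ZZ(H)$ for every constituent $\sigma$ of $\Ind_H^G\tau$, is also correct. But this reduction buys essentially nothing in the hardest situations: when $H$ is abelian (e.g.\ cyclic, the case most relevant to the applications in this paper), $\ZZ(H) = H$ and the bound is vacuous. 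A concrete example showing that the choice of constituent is genuinely forced on you: take $G = C_3 \rtimes C_4$ (the dicyclic group of order $12$), $H = \langle b\rangle \cong C_4$ the acting factor, and $\tau$ a faithful character of $H$. Then $\Ind_H^G\tau$ decomposes as $\chi \oplus \sigma$ with $\chi$ a linear character; $\chi$ is faithful on $H$, yet $H \cap \ZZ(\chi) = H \not\leq \ZZ(G) = \langle b^2\rangle$. So ``bad'' constituents really occur, the entire content of the theorem lies in the selection mechanism, and that mechanism is precisely the part you explicitly leave open (``where I expect the genuine combinatorial work to reside''). As written, the proof is therefore incomplete. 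It is moreover not clear that the constituents of $\Ind_H^G\tau$ even form a large enough search space: a representation $\sigma$ that is faithful on $H$ need not have a faithful irreducible constituent on restriction to $H$, so the $\sigma$ whose existence the theorem asserts might a priori live outside your candidate set.

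Two further remarks on the parts you did write out. First, the normal case does not require Clifford theory at all: if $H \trianglelefteq G$, $\sigma|_H$ is faithful and $h \in H \cap \ZZ(\sigma)$, then for every $g \in G$ the commutator $[g,h]$ lies in $H \cap \ker\sigma = 1$, so $h \in \ZZ(G)$ immediately. Second, the Clifford argument you do give has a gap of its own: the claim that $\tau(h)$ ``is scalar, hence commutes with $A_t$'' is unjustified over a general field $\Fi$. What you know is that $\tau(h)$ lies in $\ZZ(\tau(\Fi H))$, a field which may properly contain $\Fi$; the intertwiner $A_t$ normalizes $\tau(\Fi H)$ and hence permutes this center by a field automorphism, but need not centralize it, so you cannot conclude $\tau(t^{-1}ht) = \tau(h)$ this way. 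This particular flaw is moot given the one-line argument above, but it signals that the same over-reliance on absolute irreducibility could resurface in the non-normal induction you propose. For the record, the quantitative discussion surrounding \Cref{thm:almostfaithfulembedding} (where the authors only obtain $\indx{H\cap\ZZ(\sigma_i)}{H\cap\ZZ(G)} \leq 2$ for constituents of $\Ind_H^G\psi$ and must work case by case to do better) strongly suggests that the genuine proof of the present theorem does not proceed by naively scanning the constituents of the induced representation, but rather by a Gasch\"utz-type analysis; you should consult \cite{CapraceJanssensThilmany25} before investing more in this route.
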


We refer the reader to \cite{CapraceJanssensThilmany25} for the proof of this theorem, as well as additional context and a reminder of the proof of Gasch\"utz' criterion. 
\medbreak

Let us immediately record a consequence of \Cref{thm:faithfulcenterpreservingrepresentation} concerning the existence of free products in $\U(\Fi G)$. 
This corollary is less precise than \Cref{cor:existenceamalgaminorder}, as it exhibits ping-pong partners that need not lie in a proper subring of $\Fi G$, but in return presents the advantage of avoiding the intricacies of the representation theory of $\Fi G$. 

\begin{corollary} \label{cor:existenceamalgamarbitraryfield}
Let $\Fi$ be a field of characteristic $0$. 
Let $G$ be a finite group, and let $H \leq G$ be a subgroup possessing a faithful irreducible $F$-representation. 
Set $C = H \cap \ZZ(G)$. 
The set of regular semisimple elements $\gamma \in \U(\Fi G)$ of infinite order with the property that the canonical map
\[
\langle \gamma, C \rangle \ast_{C} H \to \langle \gamma, H \rangle
\]
is an isomorphism, is Zariski-dense in $\U(FG)$. 
\end{corollary}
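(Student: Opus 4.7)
The plan is to reduce the problem to the case $\Fi = \Q$ by descending the representation of $G$, then invoke Theorem~\ref{thm:pingpongdenseSLnrationalpoints}. I would first apply Theorem~\ref{thm:faithfulcenterpreservingrepresentation} to obtain $e \in \PCI(\Fi G)$ with $\pi_e$ faithful on $H$ and $H \cap \ZZ(\pi_e) = C$. Letting $f \in \PCI(\Q G)$ be the primitive central idempotent through which $\Fi G \to \Fi Ge$ factors, the argument from the proof of Lemma~\ref{lem:firdifferentfields}(ii) (namely $\ker(\pi_f|_G) \leq \ker(\pi_e|_G)$ and $\ZZ(\pi_f) \leq \ZZ(\pi_e)$) ensures that $\pi_f : G \to (\Q Gf)^\times$ inherits both properties, i.e.\ is faithful on $H$ with $H \cap \ZZ(\pi_f) = C$.

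Writing $\Q Gf \cong \Mat_n(D)$ with $D$ a finite-dimensional division $\Q$-algebra, the group $\PGL_{D^n}$ becomes a simple quotient of $\bG := \GL_{\Q G}$, and the induced map $H \to \PGL_{D^n}$ has kernel $H \cap \ZZ(\pi_f) = C \leq \ZZ(G) \leq \ZZ(\bG)(\Q)$, which matches the almost-embedding hypothesis of Theorem~\ref{thm:pingpongdenseSLnrationalpoints}. Assuming $\PGL_{D^n}$ is non-trivial, I would apply that theorem with $A=B=H$ to obtain a subset $S_\Q \subseteq \U(\Q G)$ of regular semisimple infinite-order elements $\gamma$ satisfying $\langle \gamma, C \rangle \ast_C H \cong \langle \gamma, H\rangle$, dense in $\U(\Q G)$ for the join of the Zariski and profinite topologies.

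The remaining subcase occurs when $\Q Gf$ is a field: then $\pi_f(G)$ is abelian, and together with the faithfulness of $\pi_f$ on $H$ this forces $H \leq \ZZ(\pi_f) \cap G$, hence $H \leq \ZZ(G)$, and the amalgamation condition is vacuous (since $C = H$). If $G$ is non-abelian, I would apply Theorem~\ref{thm:pingpongdenseSLnrationalpoints} to $\bG$ with $A=B=\{1\}$ and any non-commutative simple factor of $\Q G$ to retrieve density of regular semisimple infinite-order elements; if $G$ is abelian, $\U(\Fi G)$ is a torus in which this density is elementary.

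Finally, to transfer the density from $\U(\Q G)$ to $\U(\Fi G)$, I would use that $\GL_{\Fi G, \Fi} = \GL_{\Q G, \Q} \times_\Q \Fi$: the defining ideal of the Zariski closure of $S_\Q$ in $\GL_{\Q G, \Q}$, trivial by the previous step, base-changes to the trivial ideal over $\Fi$, so $S_\Q$ remains Zariski-dense in $\U(\Fi G)$. The amalgamation condition being purely group-theoretic and preserved by the inclusion $\U(\Q G) \subseteq \U(\Fi G)$, we have $S_\Q \subseteq S$, whence $S$ is Zariski-dense. The hardest part of this plan is securing the simultaneous descent of faithfulness \emph{and} the center-preserving property to a $\Q$-representation; without it, the embedding hypothesis of Theorem~\ref{thm:pingpongdenseSLnrationalpoints} could fail at the $\Q$ level even while holding over $\Fi$.
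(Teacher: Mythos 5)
Your proposal is correct and follows essentially the same route as the paper: reduce to $\Q$, produce an irreducible $\Q$-representation of $G$ that is center-preserving on $H$, and feed the corresponding simple quotient $\PGL_{D^n}$ of $\GL_{\Q G}$ into Theorem~\ref{thm:pingpongdenseSLnrationalpoints} with $A=B=H$, then use density of $\U(\Q G)$ in $\U(\Fi G)$. The only (immaterial) difference is the order of operations --- the paper first descends the faithful irreducible representation of $H$ to $\Q$ via Lemma~\ref{lem:firdifferentfields} and then invokes Theorem~\ref{thm:faithfulcenterpreservingrepresentation} over $\Q$, whereas you apply that theorem over $\Fi$ and descend the resulting representation of $G$; your explicit treatment of the degenerate case where the factor is a field (so $H$ is central) is a small point the paper leaves implicit.
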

\begin{proof}
Let $\bG = \GL_{\Q G}$.
Recall that $\bG$ is a reductive group, and that the simple factors of $\Ad \bG$, each one of the form $\PGL_{D^n}$ for $D$ some finite-dimensional division algebra over $\Q$, are in natural correspondence with the irreducible $\Q$-representations of $G$. 

As the field $\Fi$ contains $\Q$, it suffices to prove the corollary inside $\U(\Q G)$. 
Indeed, $\U(\Q G)$ is Zariski-dense in $\bG$, hence also in the extension of scalars of $\bG$ to $\Fi$ (whose $\Fi$-points form the group $\U(\Fi G)$). 

Since $H$ possesses a faithful irreducible $\Fi$-representation, it also has a faithful $\Q$-representation by \Cref{lem:firdifferentfields}. 
\Cref{thm:faithfulcenterpreservingrepresentation} then produces a representation $\rho \in \Cir_\Q^G(H)$.\footnote{\Cref{thm:faithfulcenterpreservingrepresentation} allows us to arrange that $\rho$ be also faithful on $H$, although this will not be used here.} 
In other words, the kernel $C$ of the projection from $H$ to the simple adjoint factor $\Ad \bG_\rho = \PGL_{D_\rho^{n_\rho}}$ corresponding to the representation $\rho$, is contained in $\ZZ(G)$, hence equals $H \cap \ZZ(G)$. 
The corollary is now an immediate consequence of \Cref{thm:pingpongdenseSLnrationalpoints} applied to $\bG$, the collection of subgroups $\{(H,H)\}$, and the simple quotient $\Ad \bG_\rho$. 
\end{proof}
\medbreak

\subsection{Representations whose images are (not) Frobenius complements} \label{subsec:notFrobeniuscomplement}
In order to prove \Cref{thm:almostfaithfulembedding}, we need to understand how to avoid representations whose images are Frobenius complements. 

We start by investigating groups which are themselves Frobenius complements. 
Recall that by the Frobenius--Thompson--Zassenhaus Theorem \cite[Theorem 11.4.5]{JespersdelRio16}, any Frobenius complement $B$ has the following restrictions on its Sylow subgroups: 
\begin{enumerate}[label=$\bullet$,leftmargin=2em]
	\item Every odd‐prime Sylow subgroup of $B$ is cyclic.
	\item The Sylow $2$-subgroups of $B$ are either cyclic or a generalized quaternion group 
\begin{equation}\label{eq:presentationquaternion}
Q_{4m} = \langle a,b \mid a^{2m} = 1, b^2= a^{m}, b^{-1}ab = a^{-1} \rangle, \quad \text{for $m = 2^{n-2}$ and $n \geq 2$. }
\end{equation}
\end{enumerate}
Note in addition that when $|B|$ is even, $a^m$ is the unique element of order $2$ in $B$ (see \cite[Theorem 11.4.5]{JespersdelRio16}), and is therefore central in $B$. 
The shape of the Sylow $2$-subgroup of $B$ turns out to play an important role, as illustrated by the following technical lemma. 

\begin{lemma}\label{lem:centralquotientFrobeniuscomplement}
Let $B$ be a non-abelian Frobenius complement. 
There exists a cyclic $p$-group $N \leq \ZZ(B)$, with $p$ prime, such that the following properties hold:
\begin{enumerate}[label=$\bullet$,leftmargin=2em]
	\item $N \cong C_2$ if some Sylow $2$-subgroup of $B$ is non-abelian,
	\item $B/N$ is {not} a Frobenius complement, 
	\item $B/N$ is abelian if and only if $B$ is isomorphic to $Q_8 \times C_m$ with $m$ odd.
\end{enumerate}
If moreover $B \ncong Q_8 \times C_m$ with $m$ odd, then $B/N$ has a faithful irreducible representation, $\indx{\pi^{-1}(\ZZ(B/N))}{\ZZ(B)} \leq 2$ for $\pi$ the quotient map $B \to B/N$, and the following are equivalent:
\begin{enumerate}[itemsep=1ex,topsep=1ex,label=\textup{(\roman*)},leftmargin=2em]
	\item \label{item:centralquotientFrobeniuscomplement1} 
    $\pi^{-1}(\ZZ(B/N)) = \ZZ(B)$,
	\item \label{item:centralquotientFrobeniuscomplement2} 
    $B$ does {not} have a Sylow $2$-subgroup isomorphic to $Q_{2^n}$ ($n \geq 3$) such that $\langle a^{2^{n-3}} \rangle$ is normal in $B$,
	\item \label{item:centralquotientFrobeniuscomplement3} 
    $B$ is a Z-group, or $\Fit(B)_2$, the Sylow $2$-subgroup of the Fitting subgroup of $B$, is isomorphic to $Q_8$ and $3$ divides $\indx{B}{\rC_B(\Fit(B)_2)}$. 
\end{enumerate}
If $B \ncong Q_8 \times C_m$ with $m$ odd, and the equivalent conditions \labelcref{item:centralquotientFrobeniuscomplement1,item:centralquotientFrobeniuscomplement2,item:centralquotientFrobeniuscomplement3} do not hold, then $\pi^{-1}(\ZZ(B/N)) = \langle \ZZ(B), a^{2^{n-3}} \rangle$ with $a$ as in \eqref{eq:presentationquaternion} above. 
\end{lemma}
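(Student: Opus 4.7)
The proof splits naturally into two cases according to the structure of the Sylow 2-subgroup of $B$. Since $B$ is a non-abelian Frobenius complement, Zassenhaus's theorem (odd-order fixed-point-free groups are cyclic) forces $|B|$ to be even, and every Sylow 2-subgroup $P$ is either cyclic or generalised quaternion $Q_{2^n}$ with $n \geq 3$. In the quaternion case (Case A) I take $N = \langle a^{2^{n-2}}\rangle$, the unique involution of $Q_{2^n}$; by the uniqueness-of-involutions property of Frobenius complements this is central in $B$. In the cyclic Sylow 2 case (Case B), $B$ is a non-abelian $Z$-group, which I present as $\langle a\rangle \rtimes \langle b\rangle$ with $|a|=m$, $|b|=n$, $\gcd(m,n)=1$, $bab^{-1} = a^r$, and the fixed-point-free condition for $Z$-groups (every $pq$-subgroup cyclic) implies that $e := \operatorname{ord}(r \bmod m)$ divides $n/\rad(n)$; in particular every prime $p \mid e$ also divides $n/e = |\ZZ(B)|$. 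I pick any such $p$ and let $N$ be the Sylow $p$-subgroup of $\ZZ(B) = \langle b^e\rangle$.

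The first three bullets then follow by direct inspection. In Case A, $P/N$ equals $C_2 \times C_2$ (if $n=3$) or $D_{2^{n-1}}$ (if $n \geq 4$); both contain a Klein four subgroup, so $B/N$ is not a Frobenius complement. For $B/N$ to be abelian one needs $[P,P] = \langle a^2\rangle \leq N$, forcing $n = 3$; since the odd-order subgroups of $\Aut(Q_8) \cong S_4$ are $1$ or $C_3$ and the $C_3$-action yields a non-abelian $B/N$, the odd Hall subgroup $H$ centralises $P$, giving $B = Q_8 \times H$ with $H$ an odd-order Frobenius complement, hence cyclic. In Case B, $\bar b^{(n/|N|)/p}$ still acts non-trivially on $\bar a$, producing a non-cyclic $p\ell$-subgroup in $B/N$ for a suitable prime $\ell \mid m$; the derived subgroup $[B,B] = \langle a^{r-1}\rangle$ has order $m$ coprime to $|N|$, so $B/N$ is non-abelian, consistent with $B \ncong Q_8 \times C_m$ because $P$ is cyclic. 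For the ``moreover'' part I would produce a faithful irreducible representation of $B/N$ via Gasch\"utz's criterion, verifying that the socle of $B/N$ is generated by a single conjugacy class using that abelian normal subgroups of a Frobenius complement are cyclic (a property inherited by $B/N$).

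The index bound $[\pi^{-1}(\ZZ(B/N)) : \ZZ(B)] \leq 2$ and its explicit form follow by direct commutator computation. In Case B the preimage of $\ZZ(B/N) = \langle \bar b^e\rangle$ is $\langle b^e, N\rangle = \ZZ(B)$, giving index $1$; conditions (i)--(iii) all hold trivially since $B$ is a $Z$-group. In Case A, the element $a^{2^{n-3}}$ of order $4$ has image central in $P/N$, and an analysis of the conjugation action of $B$ on $P$ shows that the only element (modulo $\ZZ(B)$) of $B$ that can acquire a central image in $B/N$ is $a^{2^{n-3}}$, and this image is central precisely when $\langle a^{2^{n-3}}\rangle$ is normal in $B$ (the two conjugates $a^{\pm 2^{n-3}}$ collapsing modulo $N$). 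This proves \labelcref{item:centralquotientFrobeniuscomplement1} $\iff$ \labelcref{item:centralquotientFrobeniuscomplement2} and the final description of $\pi^{-1}(\ZZ(B/N))$. The main obstacle will be the equivalence \labelcref{item:centralquotientFrobeniuscomplement2} $\iff$ \labelcref{item:centralquotientFrobeniuscomplement3}: translating the normality of the order-4 subgroup $\langle a^{2^{n-3}}\rangle$ in $B$ into Fitting-theoretic language requires invoking the Wolf--Zassenhaus classification of Frobenius complements, identifying the types in which $\Fit(B)_2 \cong Q_8$ is acted on by an element of order $3$ (the $SL_2(3)$-type families) as precisely those where \labelcref{item:centralquotientFrobeniuscomplement2} fails, and verifying that in the remaining non-$Z$-group types $\langle a^{2^{n-3}}\rangle$ is already non-normal.
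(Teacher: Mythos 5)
Your construction of $N$ agrees with the paper's in both cases --- the central involution of a generalized quaternion Sylow $2$-subgroup, and in the $Z$-group case the Sylow $p$-subgroup of $\ZZ(B)=\langle b^{d}\rangle$ for a prime $p$ dividing the order $d$ of $r$ --- and the first three bullets are handled essentially as in the paper. The difficulties are all in the second half of the statement, and there your proposal has genuine gaps.

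First, your justification for the faithful irreducible representation of $B/N$ relies on the claim that abelian normal subgroups of $B/N$ are cyclic; this is false. For $B = Q_{16}$ one has $B/N \cong D_8$, which has normal Klein four subgroups. The paper instead observes that once $B \ncong Q_8 \times C_m$, every Sylow subgroup of $B/N$ has cyclic center, which suffices by \Cref{lem:firdifferentfields}.

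Second, the core of the lemma --- the index bound, the equivalences (i)--(iii), and the identification of $\pi^{-1}(\ZZ(B/N))$ --- is compressed into the unproved assertion that ``an analysis of the conjugation action of $B$ on $P$ shows that the only element (modulo $\ZZ(B)$) that can acquire a central image is $a^{2^{n-3}}$''. This cannot be extracted from the action on $P$ alone: one must first show that odd-order elements of $\pi^{-1}(\ZZ(B/N))$ are already central and that the preimage is contained in $\Fit(B)$, and the decisive case is exactly the one your sketch misses, namely when the Sylow $2$-subgroup is $Q_8$, so that $P/N \cong C_2 \times C_2$ is entirely central in $P/N$ and whether $\pi(a)$ or $\pi(b)$ is central in $B/N$ is settled only by the odd-order elements acting on $\Fit(B)_2$. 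This is precisely where the clause ``$3$ divides $\indx{B}{\rC_B(\Fit(B)_2)}$'' in (iii) comes from, and the paper has to run through the classification of Frobenius complements (non-solvable, nilpotent, and Passman's three solvable non-nilpotent types) to settle it. Finally, your plan for (ii) $\Leftrightarrow$ (iii) inverts the equivalence: the types with an order-$3$ action on $\Fit(B)_2 \cong Q_8$ are precisely the non-$Z$-groups where (ii) \emph{holds} (the order-$3$ element permutes the order-$4$ cyclic subgroups, so none is normal), whereas in the remaining non-$Z$-group types $\langle a^{2^{n-3}} \rangle$ \emph{is} normal, (ii) fails, and the preimage grows. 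As written, your verification plan would be attempting to prove the opposite of the lemma in these cases.
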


Note that in the last assertion, the subgroup $\langle \ZZ(B), a^{2^{n-3}} \rangle$ does not depend on the choice of the Sylow $2$-subgroup, as $\langle a^{2^{n-3}} \rangle$ lies in the intersection of all of them when it is normal in $B$. 

\begin{proof}
We will use the notation $x_p$ (resp. $x_{p'}$) to denote the $p$-part (resp $p'$-part) of an element $x \in B$, with $p$ some prime. 
We distinguish two cases straight away, based on the isomorphism type of the Sylow $2$-subgroups of $B$. 
\medskip

\noindent \underline{{Case A:}} All Sylow subgroups of $B$ are cyclic. 
\smallskip

Such groups are called $Z$-groups, and it was proven by Zassenhaus that they are of the form
\begin{equation*} \label{eq:definitionZ-group}
B \cong C_m \rtimes_r C_n
:=\langle a,b\mid a^m=1,\;b^n=1,\;b^{-1}ab=a^r\rangle,
\end{equation*}
with $r^n\equiv1\mod m$ and $\gcd(m,n)=\gcd(m,r-1)=1$ (see \cite[Theorem 18.2]{Passman68}). 
The presentation shows that Sylow $p$-subgroups are normal if and only if $p \mid m$. 
Furthermore, 
\begin{equation}\label{eq:Z-groupFrobenius}
C_m \rtimes_r C_n \text{ is a Frobenius complement} \quad \iff \quad r^{n/\rad(n)} \equiv 1 \mod m, 
\end{equation}
where $\rad(n)$ is the product of all prime divisors of $n$. 
In other words, $C_m \rtimes_r C_n$ is a Frobenius complement if and only if all elements of prime order commute (cf.\ the proof of \cite[Theorem 18.2]{Passman68}); or yet reformulated, if and only if $r^{n/p} \equiv 1\mod m$ for all primes $p$ dividing $n$ (see \cite[Theorem 11.4.9]{JespersdelRio16}).

First note that the center of $B$ is
\begin{equation}\label{eq:centerZ-group}
\ZZ(B)=\langle b^d\rangle, \text{ for $d$ the order of $r$ in $(\Z/m\Z)^\times$}.
\end{equation}
Indeed, let $a^i b^j$ with $0 \leq i < m$ and $0 \leq j < n$, be an arbitrary element in $B$. 
Since $b^{-1}(a^i b^j) b = a^{i r} b^{j}$, in order for $a^i b^j$ to be central it must be $ir \equiv i \mod m$. 
Now, $\gcd(m,r-1) = 1$ implies that $i \equiv 0 \mod m$, hence $a^i b^j$ is a power of $b$. 
Since $b^{-j} a b^{j} = a^{r^j}$, the elements $a$ and $b^j$ commute if and only if $r^j \equiv 1 \mod m$. 
In other words, $b^j$ is central if and only if $d \mid j$. 
\smallskip

Next, pick $p$ a prime divisor of $d$ and write $o(b^d) = n/d = s p^{\ell}$ with $\gcd(p,s)=1$, so that the element $b^{sd} = b^{n/p^{\ell}}$ is central of order $p^{\ell}$. 
From \eqref{eq:centerZ-group} it follows that 
\(
B/\langle b^{n/p^{\ell}} \rangle \cong C_m \rtimes_r C_{n/p^{\ell}},
\)
and $p$ does not divide the order of the kernel of the action of $C_{n/p^{\ell}}$ on $C_m$. 
However, $p$ divides $n / p^{\ell} = sd$, while $r^{(n/p^{\ell})/p} \not\equiv 1 \mod m$ by construction. 
Therefore the criterion in \eqref{eq:Z-groupFrobenius} shows that $B/\langle b^{n/p^{\ell}}\rangle$ is not a Frobenius complement when $d \neq 1$, that is, when $B$ is non-abelian. 

Set $N := \langle b^{n/p^{\ell}}\rangle$; it remains to verify the other two properties. 
Since all Sylow subgroups of $B/N$ are cyclic, it follows from \Cref{lem:firdifferentfields} that $B/N$ has a faithful irreducible $\Fi$-representation. 
Lastly, the description $B/N \cong C_m \rtimes_r C_{n/p^{\ell}}$ combined with \eqref{eq:centerZ-group} shows that $\pi^{-1}(\ZZ(B/N)) = \ZZ(B)$; in particular, $B/N$ is not abelian. 
\medskip

\noindent \underline{{Case B:}} The Sylow $2$-subgroups of $B$ are generalized quaternion groups. 
\smallskip

Pick a Sylow $2$-subgroup of $B$, and identify it with $Q_{4m}$. 
Set $N := \ZZ(Q_{4m}) = \langle a^m \rangle \cong C_2$ and recall that $Q_{4m}/N \cong D_{2m}$ is a dihedral group. 
When $|B|$ is even, $a^m$ is the unique involution in $B$ (see \cite[Theorem 11.4.5]{JespersdelRio16}), and is therefore central in $B$. 
Thus the quotient $B/N$ (which does not depend on the choice of Sylow $2$-subgroup), is not a Frobenius complement as its Sylow $2$-subgroups are dihedral. 
Next, if $B \ncong Q_8 \times C$ with $C$ some odd-order cyclic group, then all Sylow subgroups of $B/N$ have cyclic center; hence $B/N$ has a faithful irreducible $\Fi$-representation by \Cref{lem:firdifferentfields}.

If $B/N$ were abelian, then $B$ would be nilpotent of class $2$. 
However, nilpotent Frobenius complements are either cyclic or isomorphic to $Q_{2^n} \times C_k$ for some odd $k$ and $n \geq 3$, see \cite[Corollary 11.4.7]{JespersdelRio16}. 
Since the nilpotency class of $Q_{2^n} \times C_k$ is $n-1$, we deduce that $B/N$ is abelian precisely when $k$ is odd and $n=3$. 
\medskip

For the last part of the statement, we assume that $B \ncong Q_8 \times C_k$ and set $\pi: B \to B/N$ the quotient map. 
We want to determine when $\pi^{-1}(\ZZ(B/N)) \neq \ZZ(B)$. 
Pick $x \in \pi^{-1}(\ZZ(B/N)) \setminus \ZZ(B)$. 
For $g \in B$, we have by definition $\pi(x^g)=\pi(x)$, hence either $x^g = x$ or $x^g = x a^m$; fix $g \in B$ for which $x^g = x a^m$. 

Firstly, suppose $x$ has odd order. 
As $a^m$ is a central involution and $o(x)$ is odd, we have $o(x) = o(xa^m)= 2o(x)$, a contradiction. 
Therefore, all odd-order elements in $\pi^{-1}(\ZZ(B/N))$ are central. 
Switching the roles of $x$ and $g$, the same argument also shows that every element of $\pi^{-1}(\ZZ(B/N))$ commutes with every odd-order element of $B$. 

Next, let $x$ be arbitrary, and write $x = x_2 x_{2'}$ and $g = g_2 g_{2'}$ the decompositions of $x$ and $g$ in $2$- and $2'$-parts. 
Since $g_{2'}$ has odd order, we have just seen that $[x,g_{2'}]=1$. 
Moreover, as $x_{2'} \in \langle x \rangle \leq \pi^{-1}(\ZZ(B/N))$, the previous paragraph also shows that $x_{2'}\in \ZZ(B)$. 
Hence, we have that $[x,g] = [x,g_2]\cdot [x,g_{2'}]^{g_2} = [x,g_2] = [x_2, g_2]$. 
Thus it only remains to understand the case where $x$ and $g$ both have order a power of $2$, which we now assume. 

Let $Q$ be a Sylow $2$-subgroup of $B$ containing $x$. 
Observe that since the order of $x$ is a power of $2$, $\langle x \rangle$ is normal in $B$. 
Indeed, the unique involution $a^{m}$ of $B$ must be contained in $\langle x \rangle$, implying that every conjugate ($x$ or $x a^m$) of $x$ belongs to $\langle x \rangle$. 
In consequence, the following reasoning is actually independent of the choice of $Q$. 

By assumption of this second case, $Q \cong Q_{4m}$ has presentation given by \eqref{eq:presentationquaternion}, and $\pi(x)$ is contained in the Sylow $2$-subgroup $Q/N \cong D_{2m}$ of $B/N$. 
If $m > 2$, the center of $Q/N$ is $\langle \pi(a^{m/2})\rangle$ and $\langle a^{m/2} \rangle = \pi^{-1}(\langle \pi(a^{m/2})\rangle)$. 
Thus if $m >2$ we deduce that 
\begin{equation} \label{eq:centerquaternioncase}
\pi^{-1}(\ZZ(B/N)) \leq \langle \ZZ(B), a^{m/2} \rangle,
\end{equation}
and the right hand side is independent of the choice of $Q$. 
It remains to understand the case $m=2$. 
This will require a finer analysis of the structure of Frobenius complements.
\medskip

First we claim that $\pi^{-1}(\ZZ(B/N))$ is a subgroup of the Fitting subgroup $\Fit(B)$ of $B$. 
Indeed, for $x \in \pi^{-1}(\ZZ(B/N))$ all the conjugates $\pi(x)^g$ for $g \in B$ commute. 
Consequently, all the commutators $[x,x^g] \in N = \langle a^m \rangle$ lie in a central subgroup of order $2$. 
Therefore, the normal closure $\langle x^g \mid g \in B \rangle$ is a normal subgroup of nilpotency class $2$ and hence a subgroup of $\Fit(B)$.

Now we distinguish several cases. 
To start, suppose that $B$ is non-solvable. 
Then by \cite[Theorem 11.4.10]{JespersdelRio16} the group $B$ is isomorphic to $Y \times M$ with $Y$ a $Z$-group of order prime to $60$ and $M$ either equal to $\SL_2(\F_5)$ or its index-$2$ supergroup $\langle v,d \mid d^8=v^3=1, (d^{-2}v)^5=d^5, v^d=vd^2(v,d^{-2}) \rangle$\footnote{The identifier for GAP or Magma's Small Groups Library is [240,90].}. 
If $x \in Y \cap \pi^{-1}(\ZZ(B/N))$, then by the odd-order case above, $x \in \ZZ(B)$. 
On the other hand, if $x \in M \cap \in \pi^{-1}(\ZZ(B/N))$, then we have shown that $x \in \Fit(M)$, and as it turns out, $\Fit(M) = N = \ZZ(M) \leq \ZZ(B)$ for both possibilities for $M$. 
In summary, for $B$ non-solvable we have $\pi^{-1}(\ZZ(B/N)) = \ZZ(B)$.

Next suppose that $B$ is nilpotent. 
As $B$ is assumed non-abelian and not isomorphic to $Q_8 \times C_m$, we must have by \cite[Corollary 11.4.7]{JespersdelRio16} that $B \cong Q_{2^n} \times C_m$ with $m$ odd and $n \geq 4$. 
Hence in this case $\pi(a^{m/2})= \ZZ(Q_{2^n}/\ZZ(Q_{2^n})) \leq \ZZ(B/N)$ and thus $\pi^{-1}(\ZZ(B/N)) = \langle \ZZ(B), a^{m/2} \rangle$. 

Finally, suppose that $B$ is solvable, but not nilpotent. 
In that case, it is shown in the proof of \cite[Theorem 18.2]{Passman68} that exactly one of the following cases occur, denoting $\Fit(B)_2$ the Sylow $2$-subgroup of $\Fit(B)$:
\begin{enumerate}[itemsep=1ex,topsep=1ex,label=\textup{(\arabic*)},leftmargin=2em]
	\item If $\Fit(B)_2$ is cyclic, then $\Fit(B) \leq G_0$ with $G_0$ a Z-group such that $G/G_0$ is $C_2$ or $C_2 \times C_2.$
	\item If $\Fit(B)_2 \cong Q_8$, then $\rC_G(\Fit(B)_2)$ is a Z-group and $G/\rC_G(\Fit(B)_2)$ is a subgroup of $\Aut(Q_8) \cong S_4$.
	\item If $\Fit(B)_2 \cong Q_{2^n} = \langle \overline{a}, \overline{b} \rangle$ with $n \geq 4$, then $\rC_G(\langle \overline{a} \rangle)$ is a Z-group and $G/\rC_G(\langle \overline{a} \rangle) \cong C_2.$
\end{enumerate}

Take $x \in \pi^{-1}(\ZZ(B/N)) \setminus \ZZ(B)$ an element of order a power of $2$. 
As noted earlier, $x \in \Fit(B)_2$. 
Pick a Sylow $2$-subgroup containing $\Fit(B)_2$, and identify it with $Q_{2^n} = \langle a, b \rangle$. 

In Case (1), $\Fit(B)_2 \leq \langle a \rangle$ and every cyclic subgroup of $\Fit(B)_2$ is normal in $B$. 
This leaves for only possibility that $x = a^{m/2}$. 
As $\Fit(B)$ is contained in the Z-group $G_0$, we have $x \in \pi^{-1}(\ZZ(G_0/N))$ and thus $x \in \ZZ(G_0)$ by virtue of Case A above. 
Since the exponent of $G/G_0$ is $2$, this implies that $x$ commutes with all elements of odd order. 
As the image of $a^{m/2}$ is central in the Sylow $2$-subgroups of $B/N$, we obtain altogether that $a^{m/2} \in \pi^{-1}(\ZZ(B/N)) \setminus \ZZ(B)$. 

For the remaining cases write $\Fit(B)_2 = \langle \overline{a}, \overline{b} \rangle$ and pick $Q \cong Q_{4m} = \langle a, b \rangle$ a Sylow $2$-subgroup of $B$, necessarily containing $\Fit(B)_2$.

In Case (3) we can apply \eqref{eq:centerquaternioncase}. 
It only remains to verify whether $a^{m/2} \in \pi^{-1}(\ZZ(B/N)) \setminus \ZZ(B)$. 
Again, as the image of $a^{m/2}$ is central in $Q/N$ it suffices to verify that $a^{m/2}$ commutes with all odd order elements, which holds because $\indx{G}{\rC_G(\langle \overline{a} \rangle)}=2$.

Finally, consider Case (2). 
Note that $\Fit(B)_2 = \langle a^{m/2}, y \rangle$ with $y$ equal to either $b$ or $a^{2^j}b$ for some $j$. 
If $3$ does not divide $\indx{B}{\Fit(B)_2}$, then all odd order elements of $B$ lie in $\rC_G(\Fit(B)_2)$. Hence, $\pi(a^{m/2})$ commutes with the elements of $\ZZ(B/N)$ whose order is either odd, or a power of 2, that is, $a^{m/2} \in \pi^{-1}(\ZZ(B/N))$. 
Now note that $Q$ properly contains $\Fit(B)_2$. 
Indeed, if $Q = \Fit(B)_2 \cong Q_8$, the proof of Case (1) shows that this would imply that $B$ is nilpotent, which was excluded. 
Since $Q \not\cong Q_8$, the only element whose order is a power of $2$ and commutes with $\pi(Q)$ is $\pi(a^{m/2})$. 
The only possibilities for $x \in \pi^{-1}(\ZZ(B/N))\setminus \ZZ(B)$ are therefore $x = a^{m/2}$ or $a^{3m/2}$. 
On the other hand, if $3$ divides $\indx{B}{\Fit(B)_2}$, then such element of order $3$ transitively permutes all the elements of order $4$ \cite[Proposition 9.9]{Passman68} and in particular, non-central elements of $\Fit(B)_2$ do not generate a normal subgroup in $B$. 
So $\pi^{-1}(\ZZ(B/N)) = \ZZ(B)$.

In summary, for $B$ solvable but not nilpotent, we have shown that 
\begin{align*}
a^{m/2} \notin \pi^{-1}(\ZZ(B/N)) &\iff \langle a^{m/2} \rangle \text{ is not normal in } B \\
	&\iff \Fit(B)_2 \cong Q_8 \text{ and $3$ divides } \indx{G}{\rC_G(\Fit(B)_2)}.
\end{align*}
As the same equivalence holds for both the non-solvable and the nilpotent cases, this finishes the proof.
\end{proof}
\medbreak

Next, we determine the groups whose image is a Frobenius complement under every irreducible representation. 
Formally speaking, this theorem is the particular case $H = 1$ in \Cref{thm:almostfaithfulembedding}.\ref{item:notFrobenius}; we record it here as it will be used later, and could be of independent interest. 

\begin{theorem} \label{thm:DedekindiffimagesFrobeniuscomplement}
Let $G$ be a finite group, and $\Fi$ be a field whose characteristic does not divide $|G|$. 
The following are equivalent. 
\begin{enumerate}[itemsep=1ex,topsep=1ex,label=\textup{(\roman*)},leftmargin=2em]
	\item $G$ is a Dedekind group. 
	\item Every irreducible $\Fi$-representation of $G$ is fixed-point-free. 
	\item The image $\rho(G)$ of $G$ under every irreducible representation $\rho \in \Irr_F(G)$ is a Frobenius complement. 
\end{enumerate}
\end{theorem}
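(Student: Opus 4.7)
The plan is to prove the three-way equivalence via (i) $\iff$ (ii) and (ii) $\iff$ (iii), both resting on a representation-theoretic criterion on the element $\wt{g} = \sum_{i=1}^{o(g)} g^i \in \Fi G$ and on Zassenhaus's characterization of Frobenius complements.

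For (i) $\iff$ (ii), I exploit the equivalences recorded in \eqref{eq:fpfviaidempotent}: $\pi_e$ is fixed-point-free if and only if $\wt{g}\,e$ is central in $\Fi G e$ for every $g \in G$. Summing over $e \in \PCI(\Fi G)$, condition (ii) amounts to $\wt{g} \in \ZZ(\Fi G)$ for every $g \in G$. A direct computation yields $h \wt{g} h^{-1} = \wt{hgh^{-1}}$ for all $h \in G$, so that $\wt{g}$ is central if and only if $\langle hgh^{-1} \rangle = \langle g \rangle$ for every $h$---that is, if and only if $\langle g \rangle$ is normal in $G$. Since every subgroup of $G$ is generated by its cyclic subgroups, the normality of every cyclic subgroup is equivalent to $G$ being Dedekind.

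For (ii) $\implies$ (iii), a fixed-point-free irreducible representation $\rho \in \Irr_\Fi(G)$ descends to a faithful fixed-point-free representation $\overline \rho : \rho(G) \to \GL(V)$; hence $\rho(G)$ is a Frobenius complement by Zassenhaus's theorem (\cite[Theorem 6.13]{Serre22}, already invoked in the paper).

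The converse (iii) $\implies$ (ii) is the main obstacle. The na\"ive approach---claiming that every faithful irreducible representation of a Frobenius complement is FPF---is not valid in general: for instance, the twist $\rho \otimes \chi$ of the faithful $2$-dimensional representation of $\SL_2(\F_3)$ by a non-trivial character of the $C_3$-quotient is a faithful irreducible representation of the Frobenius complement $\SL_2(\F_3)$, yet its value at an element of order $3$ has $1$ as an eigenvalue and is therefore not FPF. Instead, I will argue (iii) $\implies$ (i) by induction on $|G|$. Since inflation of an irreducible representation of $\pi_e(G)$ back to $G$ preserves both irreducibility and image, condition (iii) is inherited by every $\pi_e(G)$, so that each proper image $\pi_e(G)$ is Dedekind by the inductive hypothesis. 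In the remaining case where $G$ admits a faithful irreducible representation, $G$ itself is a Frobenius complement satisfying (iii); one then uses the structure theorem for Frobenius complements (cyclic odd Sylows; cyclic or generalized quaternion $2$-Sylows; cyclicity of subgroups of order $pq$), combined with the restrictions imposed by (iii) on all irreducible images, to conclude that $G$ must be Dedekind.
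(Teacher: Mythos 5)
Your treatment of (i) $\iff$ (ii) is correct and in fact slightly cleaner than the paper's: the paper proves (ii) $\implies$ (i) exactly via the centrality of $\wt{g}$, but for (i) $\implies$ (ii) it goes through the Baer--Dedekind classification and inspects the faithful irreducible representations of cyclic groups and of $Q_8 \times C_n$; running the $\wt{g}$-criterion in both directions, as you do, avoids that. Your (ii) $\implies$ (iii) matches the paper, and your $\SL_2(\F_3)$ example correctly identifies why the na\"ive converse fails.

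The problem is (iii) $\implies$ (i), where your argument stops exactly at the point where the real work begins. The sentence ``one then uses the structure theorem for Frobenius complements \dots combined with the restrictions imposed by (iii) on all irreducible images, to conclude that $G$ must be Dedekind'' is an assertion, not a proof. What is actually needed is the following construction: for every non-abelian Frobenius complement $B$ not isomorphic to $Q_8 \times C_m$ ($m$ odd), there is a central subgroup $N \leq \ZZ(B)$ such that $B/N$ is \emph{not} a Frobenius complement yet still admits a faithful irreducible representation. Inflating that faithful representation back to $G$ then contradicts (iii). This is \Cref{lem:centralquotientFrobeniuscomplement} in the paper, and it is the technical heart of the theorem: for $Z$-groups $C_m \rtimes_r C_n$ one must kill a carefully chosen central $p$-power of the acting generator so that elements of prime order no longer all commute (breaking the Frobenius-complement criterion $r^{n/\rad(n)} \equiv 1 \bmod m$) while keeping all Sylow subgroups cyclic; for quaternion Sylow $2$-subgroups one passes to $B/\ZZ(Q_{4m})$, whose Sylow $2$-subgroups are dihedral, and must check its center is still cyclic. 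None of this is in your proposal, and ``restrictions on Sylow subgroups'' alone will not produce the required non-Frobenius-complement image.

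A second, smaller gap sits in your induction: in the case where $G$ has no faithful irreducible representation, you conclude that every $\pi_e(G)$ is Dedekind, but you do not say how this yields that $G$ is Dedekind. It does not follow from $G$ embedding in $\prod_e \pi_e(G)$, since subgroups of direct products of Dedekind groups need not be Dedekind (e.g.\ $Q_8 \times Q_8$ is not). The fix is available from your own setup --- each $\pi_e(G)$ Dedekind forces each $\pi_e$ to be fixed-point-free by your (i) $\implies$ (ii), so (ii) holds for $G$ and hence (i) does --- but it needs to be said. Note that once you argue this way, the induction is superfluous: it suffices to show directly that every $\rho(G)$ is cyclic or $Q_8 \times C_m$, which is again exactly what \Cref{lem:centralquotientFrobeniuscomplement} delivers.
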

\begin{proof}
If $G$ is a Dedekind group, then for any $\rho \in \Irr_F(G)$, $\rho(G)$ is a Dedekind group with cyclic center. 
By the classification theorem, $\rho(G)$ is either cyclic or isomorphic to $Q_8 \times C_n$ with $n$ odd. 
This makes apparent that $\rho$ is indeed fixed-point-free: the faithful irreducible $\Fi$-representations of both cyclic groups and groups of the form $Q_8 \times C_n$ (with $n$ odd) are all fixed-point-free. 
This shows (i) $\implies$ (ii). 
\smallskip

For the converse, suppose that every irreducible $\Fi$-representation of $G$ is fixed-point-free. 
As we have seen in \eqref{eq:fpfviaidempotent} above, this means that for every $e \in \PCI(\Fi G)$, the element $\wt{g}e$ is central in $\Fi G$. 
But then $\wt{g} = \sum_{e \in \PCI(\Fi G)} \wt{g}e$ is central in $\Fi G$.
As $G$ is linearly independent in $\Fi G$, this is equivalent to the subset $\langle g \rangle$ of $G$ being stable under conjugation. 
In other words, every cyclic subgroup of $G$ is normal, or yet, $G$ is Dedekind.
\smallskip

The implication (ii) $\implies$ (iii) is immediate, since a group is a Frobenius complement if and only if it is fixed-point-free, that is, admits a faithful (irreducible) fixed-point-free representation. 
\smallskip

It remains to prove (iii) $\implies$ (ii). 
By assumption, $\rho(G)$ is a Frobenius complement for any $\rho \in \Irr_\Fi(G)$. 
If some $\rho(G)$ were a non-abelian group distinct from $Q_8 \times C_m$ with $m$ odd, 
\Cref{lem:centralquotientFrobeniuscomplement} would exhibit a quotient of $G$ which is not a Frobenius complement, but which does admit a faithful irreducible representation $\sigma$. 
The inflation of $\sigma$ to $G$ would then contradict the hypothesis. 
It must thus be that $\rho(G)$ is cyclic or isomorphic to $Q_8 \times C_m$ with $m$ odd.
For these groups, we have already observed that every (irreducible) representation is fixed-point-free. 
\end{proof}

The last technical lemma before the proof of \Cref{thm:almostfaithfulembedding} concerns the specificities of Dedekind groups. 

\begin{lemma} \label{lem:embeddingalmostdedekind}
Let $G$ be a subgroup of $Q_8^t \times A$ with $t \in \N$ and $A$ a finite abelian group. 
Let $\Fi$ be a field whose characteristic does not divide $|G|$. 
\begin{enumerate}[itemsep=1ex,topsep=1ex,label=\textup{(\roman*)},leftmargin=2em]
	\item For $\rho \in \Irr_\Fi(G)$, the images $\rho(G)$ are either cyclic or isomorphic to $E \times C_m$, where $E$ is an extraspecial $2$-group and $m$ is odd. 
If $G$ is Dedekind, then in fact $E \cong Q_8$. 
	\item If $G$ is not Dedekind, then there exists $\rho \in \Irr_{\Fi}(G)$ with $\ZZ(\rho) \leq \ZZ(G)$ and such that the Sylow $2$-subgroup of $\rho(G)$ is an extraspecial $2$-group different from $Q_8$.
	\item If $G$ is a non-abelian Dedekind group, then there exists $\rho \in \Irr_{\Fi}(G)$ with $\ZZ(\rho) \leq \ZZ(G)$ and
\begin{enumerate}[label=$\bullet$,leftmargin=\parindent]
	\item $\rho(G)$ not a division algebra, if and only if $FG$ is not a product of division algebras.
	\item (when $F$ is a number field) $\rho(G)$ is neither a field nor a totally definite quaternion algebra, if and only if $F$ is not totally real, or $G \ncong Q_8 \times C_2^n$ for any $n \in \N$. 
\end{enumerate}
\end{enumerate}
\end{lemma}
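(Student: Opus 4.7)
The approach splits according to the decomposition of $G$ as a nilpotent group, and proceeds largely by reducing to the analysis of irreducible representations of the Sylow $2$-subgroup of $G$.

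For (i), the plan is to exploit that $G \leq Q_8^t \times A$ is nilpotent, so $G = G_2 \times G_{2'}$ where $G_2$ is the Sylow $2$-subgroup and $G_{2'} \leq A_{\mathrm{odd}}$ is abelian of odd order. Every $\rho \in \Irr_\Fi(G)$ then factors as $\rho_2 \otimes \chi$ with $\rho_2 \in \Irr_\Fi(G_2)$ and $\chi$ a $1$-dimensional character of $G_{2'}$, so $\rho(G) \cong \rho_2(G_2) \times C_m$ with $C_m = \chi(G_{2'})$ cyclic of odd order. Writing $P = \rho_2(G_2)$, I would use that $[G_2, G_2] \leq [Q_8,Q_8]^t = \ZZ(Q_8)^t$ has exponent $2$ and that $G_2/\ZZ(G_2)$ embeds in $(Q_8/\ZZ(Q_8))^t \cong C_2^{2t}$, passing these properties to $P$. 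Since $\rho_2$ is irreducible, $\ZZ(P)$ is cyclic by Schur's lemma, so either $P$ is abelian, hence cyclic, or $[P,P]$ is the unique $C_2 \leq \ZZ(P)$ and $P$ is extraspecial. In the Dedekind case, the classification gives $G_2 \cong Q_8 \times C_2^n$, and since irreducible characters of $C_2^n$ take values in $\{\pm 1\} = \ZZ(Q_8)$, tensoring the faithful $2$-dimensional representation of $Q_8$ with any such character produces an image isomorphic to $Q_8$.

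For (ii), the strategy is to construct a suitable representation exploiting that $G$ is non-Dedekind but still nilpotent. The classification of Dedekind $2$-groups combined with $G = G_2 \times G_{2'}$ forces $G_2$ to be non-Dedekind, so $G_2$ contains either a direct summand $D_8$ in its abelianization structure or multiple $Q_8$-like blocks interacting nontrivially. I would choose an irreducible $\rho_2$ whose restriction to a diagonal or central-product embedding realizes an extraspecial image of rank $\geq 4$ (e.g.\ $Q_8 \circ Q_8 \cong D_8 \circ D_8$), while keeping $\ZZ(\rho) = \ZZ(G)$ by arranging that the kernel of $\rho$ meet $G$ only in $\ZZ(G)$, relying on the existence of suitably chosen characters of the central abelian part.

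For (iii), for non-abelian Dedekind $G \cong Q_8 \times C_2^n \times A_{\mathrm{odd}}$, every $\rho \in \Irr_\Fi(G)$ is of the form $\rho_{Q_8} \otimes \chi$ with $\chi$ a $1$-dimensional character of $C_2^n \times A_{\mathrm{odd}}$. The Wedderburn decomposition $\Fi Q_8 = \Fi^4 \oplus \qa{-1}{-1}{\Fi}$ shows that $\rho(\Fi G)$ is either $\Fi(\chi)$ or a quaternion algebra $\qa{-1}{-1}{\Fi(\chi)}$ depending on whether $\rho_{Q_8}$ is one-dimensional or faithful. Faithfulness of $\rho_{Q_8}$ ensures $\ZZ(\rho) \cap Q_8 = \ZZ(Q_8) \leq \ZZ(G)$. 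The equivalences then follow by comparing when $\qa{-1}{-1}{\Fi(\chi)}$ fails to be a division algebra (respectively, fails to be totally definite over a number field $\Fi$) as $\chi$ varies: if $A_{\mathrm{odd}}$ is nontrivial, a faithful character of an odd-order cyclic factor enlarges $\Fi(\chi)$ beyond $\Fi$, potentially splitting the quaternion algebra or introducing complex places. Conversely, when $G \cong Q_8 \times C_2^n$ and $\Fi$ is totally real, every $\chi$ has values in $\{\pm 1\} \subset \Fi$, so $\Fi(\chi) = \Fi$ and $\rho(\Fi G) \in \{\Fi, \qa{-1}{-1}{\Fi}\}$ is always a field or a totally definite quaternion algebra.

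The main obstacle will be part (iii): I will have to track carefully when the quaternion algebra $\qa{-1}{-1}{\Fi(\chi)}$ becomes non-division or loses total definiteness as $\chi$ runs through characters of $C_2^n \times A_{\mathrm{odd}}$, and explicitly produce the character realizing the ``if'' direction while verifying that $\ZZ(\rho) \leq \ZZ(G)$ is preserved. Matching this with the hypothesis that $G \not\cong Q_8 \times C_2^n$ or $\Fi$ is not totally real (the former providing a nontrivial odd factor or extra exponent, the latter a complex place) requires a careful case distinction that I anticipate to be the most technical part of the proof.
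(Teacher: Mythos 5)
Part (iii) of your proposal is essentially the paper's argument: the paper invokes the Perlis--Walker theorem to identify the components of $\Fi A$ with cyclotomic extensions $\Fi(\zeta_d)$, which play exactly the role of your fields $\Fi(\chi)$, and your verification that the faithful quaternionic representations are center-preserving is correct. The remaining bookkeeping you flag (deciding for which $d$ the algebra $\qa{-1}{-1}{\Fi(\zeta_d)}$ fails to be division, resp.\ totally definite) is indeed all that is left there. Parts (i) and (ii), however, contain genuine gaps.

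In (i), the step ``either $P$ is abelian, hence cyclic, or $[P,P]$ is the unique $C_2 \le \ZZ(P)$ and $P$ is extraspecial'' does not follow from what you have established. Knowing that $\ZZ(P)$ is cyclic, that $[P,P] \cong C_2$, and that $P/\ZZ(P)$ is elementary abelian does \emph{not} force $\ZZ(P) = [P,P]$: the central product $Q_8 \circ C_4$ (order $16$, center $C_4$, derived subgroup $C_2$, elementary abelian central quotient) satisfies all three conditions and is not extraspecial, since no extraspecial $2$-group has order $16$. This configuration actually occurs in the setting of the lemma: for the non-Dedekind group $G = Q_8 \times C_4 \le Q_8 \times A$ with $A = C_4$, tensoring the faithful $2$-dimensional representation of $Q_8$ with a faithful character of $C_4$ yields an irreducible representation whose image is precisely $Q_8 \circ C_4$. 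Your argument has no mechanism for excluding this ``almost-extraspecial'' case, in which a central element of order $4$ coming from $G \cap A$ survives into the image; you should test the dichotomy of (i) against this example and compare it with the subcase ``$B \not\le N$'' of the paper's proof, which is where that interaction is supposed to be controlled.

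In (ii), what you give is a plan that omits the two points carrying all the difficulty. First, the existence of an irreducible image that is extraspecial and different from $Q_8$ needs a proof: a non-Dedekind subgroup of $Q_8^t \times A_2$ need not contain $D_8$ as a subgroup or direct factor (it only has $D_8$ or a larger extraspecial group as a subquotient), so ``multiple $Q_8$-like blocks interacting nontrivially'' does not suffice. The paper obtains this by parametrizing the projection of $G_2$ to $Q_8^t$ by a pair $(U, K_0)$ with $U$ an isotropic-type subspace of $(Q_8/\ZZ(Q_8))^t$ and $K_0 \le C_2^t$, choosing $N_0 \le K_0$ with $K_0/N_0 \cong C_2$, and ruling out $Q_8$ via $\dim_{\F_2} U$. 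Second, center-preservation is not the condition ``$\ker \rho$ meets $G$ only in $\ZZ(G)$'': one must show that the full preimage of $\ZZ(\rho(G))$ lies in $\ZZ(G)$, i.e.\ that no non-central element of $G$ becomes central in the image. The paper achieves this by forcing the whole of $B = G \cap A_2$ (together with a complement of a maximal cyclic subgroup of the odd part) into the kernel, so that $\ZZ(\rho(G))$ reduces to $K_0/N_0 \cong C_2$ times an odd cyclic group; without such a step the image can acquire central elements of order $4$ and the representation fails to be center-preserving, which is exactly the phenomenon behind the counterexample to your step in (i).
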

\begin{proof}
Before approaching Statements (i) to (iii), we first need to classify the subgroups of $Q_8^t \times A$. 
To start with, as $G$ is nilpotent it decomposes into $G_{2} \times G_{2'}$, where $G_2$ is the Sylow $2$-subgroup of $G$ and $G_{2'}$ its Hall $2'$-subgroup (which is abelian). 
There is no restriction on $G_{2'} \leq A$, so we focus on classifying the possibilities for $G_{2}$. 
\smallskip

\noindent \underline{Preliminary work:} Classification of subgroups of $Q_8^{t} \times A_2$ with $A_2$ an abelian $2$-group. 
\smallskip

Denote $Z_0 := \ZZ(Q_8^t)$. Recall that $Z_0 \cong C_2^t$ and $V := Q_8^t/Z_0 \cong C_2^{2t}$; we view $V$ as a $\F_2$-vector space. 
Since the nilpotency class of $Q_8^t$ is $2$, the commutator induces a non-degenerate alternating bilinear form
\[
[{\cdot}\, ,{\cdot}]: V \times V \rightarrow Z_0. 
\]
Every subgroup $M$ of $ Q_8^t$ is uniquely described by a triple $(U,K_0,\lambda)$ where: 
\begin{enumerate}[label=$\bullet$,leftmargin=2em]
	\item $U$ is an $\F_2$-subspace of $V$,
	\item $K_0 \leq Z_0$ is a subgroup containing the commutator $[U,U]$ (i.e.\ $U$ is $K_0$-isotropic for $[{\cdot}\, ,{\cdot}]$).
	\item $\lambda$ is a cocycle in $Z^2(U,K_0)$, or equivalently a section $\tau : U \rightarrow Q_8^t$ is given such that 
\[
\tau (u) \cdot \tau(v) = [u,v] \tau(u + v),
\]
and $o(\tau(u))=4$ for all non-trivial $u\in U$. 
\end{enumerate}
Concretely, to such triple one associates the set
\[
M_{(U,K_0)} := \{ \tau(u) k \mid u \in U, \, k\in K_0\},
\]
which is a subgroup thanks to the $2$-cocycle condition. 
Note that, up to isomorphism, the section $\tau$ involved in the definition is unique up to a $1$-cocycle, so in the sequel we will omit to describe it. 
The condition $o(\tau(u))=4$ comes from the fact the exponent of $Q_8^t$ is $4$, and all elements of order $2$ are central. 
Thus $U$ in fact corresponds to choosing order $4$ elements.

Next, for a given $G \leq Q_8^t\times A_2$ we consider the projection $\pi: G \to Q_8^t: (g,b) \mapsto g$ and $B := \ker(\pi)= G \cap A_2$. Then the subgroup $G \le Q_8^t \times A_2$ fits into a central extension
\begin{equation}\label{eq:descriptionascentralext}
1 \to B \to G \to G_t \to 1,
\end{equation}
where $G_{t} :=\im(\pi) \leq Q_8^t$. In other words, $G$ corresponds to a class in the second cohomology group $H^2(G_t,B)$, where $G_t$ can be described via a triple as above.
\medskip

\noindent \underline{Statement (i):} Description of the images of $G$ under irreducible representations. 
\smallskip

If $\rho$ is an irreducible $\Fi$-representation of $G$, then the image $\rho(G)$ obviously has a faithful irreducible representation. 
Furthermore, $G$ is nilpotent and thus by \Cref{lem:firdifferentfields}, a quotient $G/N$ of $G$ has a faithful irreducible $\Fi$-representation if and only if $\ZZ(G/N)$ is cyclic. 
\smallskip

\textit{Claim:} For $N \triangleleft G$ such that $\ZZ(G/N)$ is cyclic, the quotient $G/N$ is either cyclic or isomorphic to $E \times C_m$ with $E$ an extraspecial $2$-group and $m$ odd.
\smallskip

From the above claim follows already the second part of statement (i). 
Indeed, if $G$ is Dedekind, it is isomorphic to $Q_8 \times C_2^k \times A_\mathrm{odd}$ with $k \in \N$ and $A_\mathrm{odd}$ some odd-order abelian group. 
Therefore $E$ is a quotient of $Q_8 \times C_2^k$, hence is isomorphic to $Q_8$. 
As $\ZZ(G/N)$ is cyclic, we also see that $A_\mathrm{odd}$ is sent to a cyclic group $C_m$ with $m$ odd.
\smallskip

We now focus on proving the above claim. 
Recall the description of $G$ obtained earlier: $G \cong G_2 \times G_{2'}$ with $G_{2'}$ abelian, the group $G_2$ is given by the central extension \eqref{eq:descriptionascentralext}, and $\pi(G_2)$ corresponds to a tuple $(U,K_0)$. 

The orders of $G_2$ and $G_{2'}$ being coprime, a quotient $(G_2 \times G_{2'})/N$ has cyclic center if and only if $N = N_2 \times N_{2'}$ with $\ZZ(G_2/N_2)$ and $G_{2'}/N_{2'}$ cyclic. 
We are thus reduced to understanding quotients of $G_2$ which have cyclic center. 

We start by investigating the quotients $G_2/N$ with $B \leq N$ and $\ZZ(G_2/N)$ cyclic. For such choice of $N$ one has that $G_2/N \cong \pi(G_2)/N_{0}$ for some normal subgroup $N_{0}$ of $\pi(G_2)$. Since $K_0/K_0 \cap N \leq \ZZ (G_2/N)$, the quotient $K_0/K_0 \cap N$ must be cyclic. Next observe that
\[
\ZZ(\pi(G_2)/N_0) = \pi(U_c) \cdot K_0/N_0
\]
with $U_c := \Span_{\F_2} \{ \tau(u) \mid u \in U \text{ and } [u,U] \in N_0\}$. 
Thus if $N_0 \geq K_0$, then $\ZZ(\pi(G_2)/N_0)= \pi(U_c) = \pi(G_2)/N_0$, i.e.\ $G/BN_0$ is abelian. 
Next consider the case where $K_0 \gneq N_0 \cap K_0$. 
In order for $\ZZ(\pi(G_2)/N_0)$ to be cyclic, we need that $K_0/N_0\cap K_0$ be cyclic and $\pi(U_c)$ be trivial. 
Therefore, in that case $\pi(G_2)/N_0$ has a center of order $2$ such that the quotient is an elementary abelian $2$-group, that is, $\pi(G_2)/N_0$ is extraspecial. 

Now consider a quotient $G_2/N$ with $B \nleq N$. 
In this case, in order to have a cyclic center one needs that $\indx{B}{N \cap B}=2$. 
Let $\mathcal{T}_{N\cap B}^B=\{1,x\}$ be coset representatives of $N \cap B$, and let $\langle y \rangle \leq B$ be a maximal cyclic subgroup of $B$ above $\langle x \rangle$. 
Note that $\tau(\pi(G_2)') = 1$ since $G_2' \cap B = 1$ (because $G$ is class $2$ and $B$ abelian). 
Thus $\ZZ (G_2) = \pi^{-1}(\ZZ(\pi(G_2)))$. 
Therefore we need that $\ZZ(G_2/N) = \langle z \rangle$ with $z$ such that $\langle y \rangle \leq \langle z \rangle$ and $\ZZ(\pi(G_2)) = \langle \pi(z) \rangle$. 
The latter is only possible for particular choices of $(U,K_0)$ for which the quotient will be an extraspecial group of order larger than $8$, except if $G$ is a split extension of $B$ with $G_t$; in that case the quotient is cyclic. 
\medskip

\noindent \underline{Statement (ii):} Existence of a center-preserving representation when $G$ is not Dedekind. 
\smallskip

As mentioned earlier, the set of images $\rho(G)$ coincide with the set of quotients $G/N$ such that $\ZZ(G/N)$ is cyclic. 
Denote by $\pi_N: G \rightarrow G/N$ the quotient map. 
We will now construct an explicit quotient such that $G/N$ is extraspecial, distinct from $Q_8$ (hence $\ZZ(G/N)$ is cyclic), and $\pi_N^{-1}(\ZZ(G/N))= \ZZ(G)$. 
In particular, $\pi_N$ composed with any faithful irreducible representation of $G/N$ will yield a center-preserving representation of $G$. 

Recall that to $\pi(G_2)$ was associated a triple $(U,K_0,\lambda)$, in which $K_0$ is central in $G$, $\tau(u)$ has order $4$ for each non-zero $u \in U$ and $\tau(u)^2 \in K_0$. 
Pick a basis $\{u_1, \ldots, u_{\ell}\}$ of $U$ over $\F_2$ and let $\mathcal{C}$ be the elements of $K_0$ which are not of the form $\tau(u_i)^2$ for some $1 \leq i \leq \ell$. 
Set
\(
N_0 := \langle \tau(u_i)^2\tau(u_j)^{-2}, x \mid x \in \mathcal{C}, \, 1 \leq i \neq j \leq \ell \rangle \triangleleft K_0,
\)
so that $K_0/N_0$ is cyclic and $\pi(G_2)/N_0$ is extraspecial. 

Next decompose $G_{2'} = \langle y \rangle \times R$ with $\langle y \rangle$ maximal cyclic. 
We claim that the normal subgroup $N := BN_0R$ of $G$ is such that $G/N$ has the properties required. 

First, note that $N$ is a central subgroup of $G$ with $N \cap G_2=BN_0$ an elementary abelian $2$-group. 
Moreover, by construction, $G/N \cong \pi(G_2)/N_0 \times \langle y \rangle$ with $\pi(G_2)/N_0$ an extraspecial $2$-group. 
From this we deduce that $\ZZ(G/N)= K_0/N_0$. 
Since $N \cap G_2=BN_0$, it follows that $\pi^{-1}_N(\ZZ(G/N))$ contains no element of order $4$ outside of $B$. 
Therefore, $\pi^{-1}_N(\ZZ(G/N)) \leq \ZZ(G)$, as desired. 

It remains to verify that $\pi(G_2)/N_0 \ncong Q_8$. 
For this we analyze further the couple $(U,K_0)$. 
If $\dim_{\F_2} U \geq 3$, then $|\pi(G_2)/N| \geq 2^{\dim_{\F_2} U} \cdot |K_0| \geq 16$ is too large to be $Q_8$. 
If $\dim_{\F_2} U \leq 1$, then $G_2/BK_0$ is cyclic with $BK_0 \leq \ZZ(G)$ and hence $G_2$ abelian, a contradiction to the non-Dedekind assumption. 
Finally, if $\dim_{\F_2} U = 2$, the quotient must be $Q_8$ or $D_8$. 
The former would imply that $G$ is Dedekind, and the latter case is fine, finishing the proof of the claim. 
\medskip

\noindent \underline{Statement (iii):} Existence of a center-preserving representation when $G$ is Dedekind. 
\smallskip

For the first point, it is clearly necessary that $\Fi G$ has a component which is not a division algebra. 
Also, if $G \cong Q_8 \times C_2^n$, then $\Fi G \cong \Fi^{\oplus 2^{n+1}} \oplus \qa{-1}{-1}{\Fi}^{\oplus 2^n}$. 
Thus also in the second point the necessity follows. 

For the sufficiency, recall from the Baer--Dedekind classification \cite{Dedekind97,Baer33} that $G \cong Q_8 \times C_2^n \times A$ with $n \in \N$ and $A$ an odd abelian group. 
A theorem of Perlis and Walker \cite[Theorem 3.3.6]{JespersdelRio16} states that 
\[
\Fi A \cong \bigoplus_{d \, \mid \, |A|} k_d\frac{[\Q(\zeta_d):\Q]} {[\Fi (\zeta_d):\Fi]}\Fi (\zeta_d),
\]
where $k_d$ is the number of cyclic subgroups of order $d$ in $A$ (and $\zeta_d$ a $d$-th root of unity). 
Hence the simple components of $\Fi G$ are of the form $\Fi (\zeta_d)$ or $\qa{-1}{-1}{\Fi (\zeta_d)}$. 
Therefore, if $\Fi$ and $d$ are not such that $\qa{-1}{-1}{\Fi (\zeta_d)}$ is totally definite, then the associated representation, say $\rho_d$, satisfies the requirements. 
Moreover, by construction $\rho_d(G) \cong Q_8 \times C_d$. 
This implies that $\ZZ(\rho_d(G))= \langle a^2, x, \ker(\rho_d(G)) \rangle$ for some $x \in A$ and $\ker(\rho_d(G)) \leq \ZZ(G)$. 
In particular, $\rho_d$ is center-preserving.
\end{proof}
\medbreak

We are now finally ready to prove the embedding theorem.

\subsection{Proof of \texorpdfstring{\Cref{thm:almostfaithfulembedding}}{Theorem \ref{thm:almostfaithfulembedding}}} \label{subsec:proofthmalmostfaithfulembedding}
It suffices to prove \Cref{thm:almostfaithfulembedding} under the stronger assumption that $H$ has a faithful irreducible $\Fi$-representation (cf.\ \Cref{thm:faithfulcenterpreservingrepresentation}). 
Indeed, given an irreducible representation $\sigma$ of $G$ which is center-preserving on $H$, if $\sigma(G)$ does not already satisfy the desired conclusion \ref{item:notFrobenius}, \ref{item:notdivision} or \ref{item:notquaternion}, then $\sigma(G)$ must be a Frobenius complement. 
Thus $\sigma(G)$ and its subgroup $\sigma(H)$ are fixed-point-free groups, and in particular, $\sigma(H)$ admits a faithful irreducible representation. 

The statement (applied to the pair $\sigma(H) \leq \sigma(G)$, which now satisfies the stronger assumption) then yields the existence of an irreducible representation $\rho$ of $\sigma(G)$ satisfying 
\[
\indx{\sigma(H) \cap \ZZ(\rho)}{\sigma(H) \cap \ZZ(\sigma(G))} \leq 2,
\]
as well as Conclusion \ref{item:notFrobenius}, \ref{item:notdivision} or \ref{item:notquaternion} respectively. 
The inflation $\overline{\rho}$ of $\rho$ to $G$ obviously also satisfies the respective conclusion \ref{item:notFrobenius}, \ref{item:notdivision} or \ref{item:notquaternion}. 
By definition, the inverse image of $\sigma(H) \cap \ZZ(\rho)$ under $\restr{\sigma}{H}$ is $H \cap \ZZ(\overline{\rho})$. 
On the other hand, since $\sigma$ is center-preserving on $H$, the inverse image of $\sigma(H) \cap \ZZ(\sigma(G))$ under $\restr{\sigma}{H}$ is precisely $H \cap \ZZ(G)$. 
We deduce that
\[
\indx{H \cap \ZZ(\overline{\rho})}{H \cap \ZZ(G)} = \indx{\sigma(H) \cap \ZZ(\rho)}{\sigma(H) \cap \ZZ(\sigma(G))} \leq 2,
\]
showing that $\overline{\rho}$ is the desired representation. 
Moreover, $H \cap \ZZ(\overline{\rho}) = H \cap \ZZ(G)$ if and only if $\sigma(H) \cap \ZZ(\rho) = \sigma(H) \cap \ZZ(\sigma(G))$, and since \Cref{cond:NQ} descends to quotients, the second equality does indeed hold if $G$, $H$ satisfy \ref{cond:NQ}. 

In addition, note that we may also assume that $H$ is not central in $G$. 
Indeed, if $H \leq \ZZ(G)$, then of course $H \cap \ZZ(\rho) = H \cap \ZZ(G) = H$ is automatically satisfied for any representation $\rho$ of $G$. 
The rest of the statement, which concerns only $G$, would then follow by replacing $H$ by any non-central cyclic subgroup of $G$, for example (by assumption, $G$ is not abelian). 
\medskip

Taking these two reductions into account, we start by supposing that $G$ is not a Dedekind group and tackle part \ref{item:notFrobenius}. 
By \Cref{thm:faithfulcenterpreservingrepresentation} there exists an irreducible $\Fi$-representation $\sigma$ of $G$ such that $H \cap \ZZ(\sigma) = H \cap \ZZ(G)$ and $H \cap \ker(\sigma) =1$. 
Since we assumed $H$ to be non-central, $\sigma(G)$ is certainly not abelian. 

If to begin with $\sigma(G)$ is not a Frobenius complement, then there is nothing more to prove; so we suppose that $\sigma(G)$ is a Frobenius complement. 
In consequence, \Cref{lem:centralquotientFrobeniuscomplement} applies to $\sigma(G)$ and, unless $\sigma(G) \cong Q_8 \times C_m$ with $m$ odd, yields a subgroup $N \leq \ZZ(\sigma(G))$ and a faithful irreducible representation $\tau$ of $\sigma(G)/N$. 
The inflation $\rho$ of $\tau$ to $G$ is then an irreducible representation of $G$ satisfying $\indx{H \cap \ZZ(\rho)}{H \cap \ZZ(G)} \leq 2$ as desired, unless $\sigma(G) \cong Q_8 \times C_m$ with $m$ odd. 
In view of the properties of $\sigma$, the latter case would imply that $H \cong \sigma(H)$ is a subgroup of $Q_8 \times C_m$, and in turn that $H / H \cap \ZZ(\sigma) = H / H \cap \ZZ(G)$ is a non-trivial subgroup of $C_2 \times C_2 \cong \sigma(G) / \ZZ(\sigma(G))$. 
We now deal with such subgroups $H$ of $\sigma(G) \cong Q_8 \times C_m$ separately, taking into account the other irreducible representations of $G$.

Let $\psi$ be a faithful irreducible representation of $H$ and decompose the induced representation
\begin{equation*}
\Ind_H^G(\psi) = \sigma_1 \oplus \cdots \oplus \sigma_{\ell}
\end{equation*}
into irreducible summands. 
By Frobenius reciprocity, $\psi$ is a constituent of the restriction of $\sigma_i$ to $H$, hence the restriction of every $\sigma_i$ to $H$ is faithful. 

Since $H / H \cap \ZZ(G) \cong C_2$ or $C_2\times C_2$, the index $\indx{H \cap \ZZ(\sigma_i)}{H \cap \ZZ(G)}$ equals $1$, $2$ or $4$, with $4$ only occurring if $H / H \cap \ZZ(G) \cong C_2 \times C_2$ and $H \subseteq \ZZ(\sigma_i)$. 
But if $H / H \cap \ZZ(G) \cong C_2 \times C_2$, then $H$ is not abelian (it contains a subgroup isomorphic to $Q_8$), hence cannot be contained in $\ZZ(\sigma_i)$, by faithfulness of $\sigma_i$ on $H$. 
The conditions $H / H \cap \ZZ(G) \cong C_2 \times C_2$ and $H \subseteq \ZZ(\sigma_i)$ are thus mutually exclusive, and as a result, $\indx{H \cap \ZZ(\sigma_i)}{H \cap \ZZ(G)} \leq 2$ for every $1 \leq i \leq \ell$. 
Hence, if some $\sigma_i(G)$ is not a Frobenius complement, then $\rho_i = \sigma_i$ is the desired representation. 
If some $\sigma_i(G)$ is a non-abelian Frobenius complement distinct from $Q_8 \times C_m$, then as before we can consider the inflation $\rho_i$ to $G$ of the faithful irreducible representation $\tau_i$ of $\sigma_i(G)/N_i$ afforded by \Cref{lem:centralquotientFrobeniuscomplement}. 
If $\ZZ(\rho_i) = \ZZ(\sigma_i)$, then $\indx{H \cap \ZZ(\rho_i)}{H \cap \ZZ(G)} = \indx{H \cap \ZZ(\sigma_i)}{H \cap \ZZ(G)} \leq 2$ is immediate. 
Otherwise, \Cref{lem:centralquotientFrobeniuscomplement} ensures that $\ZZ(\rho_i) \gneq \ZZ(\sigma_i)$ only when $\sigma_i(G)$ is a Frobenius complement having a quaternion Sylow $2$-subgroup $Q_{2^n}$ such that $\langle a^{2^{n-3}} \rangle$ is normal in $\sigma_i(G)$, in the notation of \eqref{eq:presentationquaternion}. 
In that case, since $H$ has an element of order $4$, it follows that $a^{2^{n-2}} \in \sigma(H \cap \ZZ(G))$, and if $H/H\cap \ZZ(G) \cong C_2$, then $\sigma_i(H) = \langle a^{2^{n-3}}, H \cap \ZZ(G) \rangle$, while if $H/H\cap \ZZ(G) \cong C_2 \times C_2$, then $\sigma_i(H) = \langle a^{2^{n-3}}, b, H \cap \ZZ(G) \rangle$. 
Because neither $a^{2^{n-3}}$ nor $b$ are central in $Q_{2^n}$, we see that $H \cap \ZZ(\sigma_i) = H \cap \ZZ(G)$, hence $\indx{H \cap \ZZ(\rho_i)}{H \cap \ZZ(G)} \leq 2$. 
Overall, $\rho_i$ constitutes the desired representation, so long as $\sigma_i(G) \not\cong Q_8 \times C_{m_i}$ for any odd $m_i$. 

On that account, we are reduced to the case where every $\sigma_i(G)$ is either abelian or a Frobenius complement of the form $Q_8 \times C_{m_i}$. 
As $\Ind_H^G(\psi)$ is faithful on $G$ (because $\ker(\Ind_H^G(\psi)) = \mathrm{core}_G(\ker(\psi)) = 1$), this means that $G$ is a non-abelian subgroup of $Q_8^t \times A$, with $A$ some abelian group. 
For such $G$, it was shown in \Cref{lem:embeddingalmostdedekind} that there even exists a representation $\rho$ which is center-preserving on the whole of $G$ and such that $\rho(G)$ is not a Frobenius complement, as long as $G$ is not a Dedekind group. 
\medskip

Second, for parts \labelcref{item:notdivision,item:notquaternion}, we examine the existence of an irreducible representation $\rho$ with the weaker property that $\rho(\Fi G)$ is not a product of division algebras, respectively $\rho(\Fi G)$ is neither a field, nor a totally definite quaternion algebra. 
In view of the first part, it remains only to treat the case where $G$ is a Dedekind group. 
For such $G$ (and for any of its subgroups $H$), the desired representation $\rho$ was shown to exist in \Cref{lem:embeddingalmostdedekind}. 
Moreover, the equality $H \cap \ZZ(\rho) = H \cap \ZZ(G)$ always holds in this case. 
\medskip

To conclude, we discuss the validity of the stronger property $H \cap \ZZ(\rho) = H \cap \ZZ(G)$. 
If at the start, $\sigma(G)$ was not a Frobenius complement, then $\rho = \sigma$ satisfies this equality. 

Next, suppose that $\sigma(G)$ is a Frobenius complement distinct from $Q_8 \times C_m$, and consider the representation $\rho$ that was constructed in this case. 
It follows from \Cref{lem:centralquotientFrobeniuscomplement} that $H\cap\ZZ(\rho)$ is larger than $H \cap \ZZ(\sigma)$ exactly when $\sigma(G)$ has a generalized quaternion Sylow $2$-subgroup $Q_{2^n}$, such that $\sigma(H)$ intersects $a^{2^{n-3}} \cdot \ZZ(\sigma(G))$ in the notation of \eqref{eq:presentationquaternion}. 
Let $x \in \sigma(H) \cap a^{2^{n-3}} \cdot \ZZ(\sigma(G))$ and assume, replacing $x$ by its $2$-part if need be, that the order of $x$ is a non-trivial power of $2$. 
Since the unique involution of $\sigma(G)$ is central (cf.\ \cite[Theorem 11.4.5]{JespersdelRio16}) while $x$ is not, the order of $x$ must be divisible by $4$. 
The normality of $\langle x \rangle$ in $\sigma(G)$ goes back to that of $\langle a^{2^{n-3}} \rangle$, see \Cref{lem:centralquotientFrobeniuscomplement}. 
This shows that \Cref{cond:NQ} stated in \Cref{thm:almostfaithfulembedding} is indeed sufficient to ensure $H\cap\ZZ(\rho) = H \cap \ZZ(G)$ in this case. 

The last case left to check is $\sigma(G) \cong Q_8 \times C_m$ for some odd $m$, in which case $H / H \cap \ZZ(G)$ is a non-trivial subgroup of $C_2 \times C_2$. 
But then obviously $\sigma(H) \cap Q_8$ must contain an element of order $4$, generating a necessarily normal subgroup. 
This completes the proof of \Cref{thm:almostfaithfulembedding}. 
\qed

\section{Free products in group rings} \label{sec:amalgamBovdimaps}

In \Cref{subsec:Bovdimaps} we develop further the first-order deformations from \Cref{ex:elementarydeformations}.\ref{ex:elementarydeformations2}, in connection with the construction of shifted bicyclic units (see \Cref{def:shiftedbicyclicunits}). 
We formulate in \Cref{conj:amalgamBovdimaps} our presumption that the images of two opposite shifted bicyclic maps always form a free product amalgamated along their intersection. 
This conjecture echoes \Cref{que:existencefaithfulembeddings}.\ref{que:existencefaithfulembeddings2}, putting forward two explicit conjugates of a subgroup of $\U(FG)$ that should play ping-pong with each other. 

Next, we show in \Cref{subsec:bicyclicgenericallyfree} that profinitely-generically, a bicyclic unit and a shifted bicyclic unit generate a free group. 
In addition, as a consequence of the work carried out in \Cref{sec:faithfulembedding,sec:amalgamBovdimaps}, we can precisely determine when a given finite subgroup admits a bicyclic unit as ping-pong partner.

\subsection{Shifted bicyclic units and a conjecture on free products in group rings} \label{subsec:Bovdimaps}
In this section, we will apply the results of \Cref{subsec:faithfulembedding} to the case of the group ring $\Alg = \Fi G$ over a number field $\Fi$, and finite subgroups of $\V(RG)$ where $R$ is an order in $\Fi$, via the construction from \Cref{ex:elementarydeformations}.\ref{ex:elementarydeformations3}. 

Recall that $\V(RG)$ denotes the kernel of the augmentation map $\epsilon: \Fi G \rightarrow \Fi: \sum_i a_i g_i \mapsto \sum a_i$ restricted to $\U(RG)$; thus $\U (RG) = \U(R) \cdot \V(RG)$. 
The group $\V(RG)$ presents the advantage that its finite subgroups are $R$-linearly independent, by a theorem of Cohn and Livingstone \cite{CohnLivingstone65}.\footnote{In \cite{CohnLivingstone65} this result is shown only for $\Fi$ a number field and $R$ its ring of integers. 
However the proof of \cite[Corollary 2.4]{delRio18} combined with the general version of Berman's theorem stated in \cite[Theorem III.1]{RoggenkampTaylor92} implies it in the generality claimed here. }

\begin{definition}\label{def:shiftedbicyclicunits}
Let $G$ be a finite group, $H$ be a finite subgroup of $\U(RG)$, and pick $x \in RG$. 
Set $\wt{H} = \sum_{h \in H} h$, and for cyclic subgroups, abbreviate $\wt{h} := \wt{\langle h \rangle} = h + h^2 \cdots + h^{o(h)}$ as before. 
The maps
\[
\sbic{H,x} : H \rightarrow \U(RG): h \mapsto h + \wt{H} x (1-h)
\]
and 
\[
\sbic{x,H} : H \rightarrow \U(RG): h \mapsto h + (1-h) x \wt{H}
\]
will be called the \emph{(left, resp.\ right) shifted bicyclic maps associated with $H$ and $x$}. 
An element in $\U(RG)$ of the form $\sbic{x,H}(h)$ or $\sbic{H,x}(h)$ will be called a \emph{(left, resp.\ right) shifted bicyclic unit}. 
\end{definition}

\begin{remark} \label{rem:bicylicunits}
When $H =\langle h \rangle$ is a cyclic group and $x \in G$, the elements $\sbic{\langle h \rangle,x}(h)$ and $\sbic{x,\langle h \rangle}(h)$ have been called Bovdi units in \cite{JanssensJespersTemmerman17} in honor of Victor Bovdi, who initiated the study of such units. 
In \cite{MarciniakSehgal23} these units were renamed shifted bicyclic units. 
Recall that \emph{bicyclic units} are elements of the form
\begin{equation*}
\bic{h,x} = 1 + \wt{\langle h \rangle} x (1-h) \quad \text{and} \quad \bic{x,h} = 1 + (1-h)x \wt{\langle h \rangle}
\end{equation*}
for $h \in G$ and $x \in RG$. 
Note that one can rewrite
\[
h (1 + \wt{H}x(1-h)) = \sbic{H,x}(h) = (1 + \wt{H} x h^{-1}(1-h))h,
\]
hence $\sbic{\langle h \rangle,x}(h) = h \bic{h,x} = \bic{h,xh^{-1}} h$ and similarly $\sbic{x, \langle h \rangle}(h) = \bic{x,h} h = h \bic{h^{-1}x,h}$.
In this sense, shifted bicyclic units are slight (torsion) adaptations of bicyclic units. 
As the terminology used in \cite{MarciniakSehgal23} better reflects the nature of these units, we adopt it here. 
\end{remark}

Note that the shifted bicyclic maps from \Cref{def:shiftedbicyclicunits} are instances of first-order deformations (with $\delta_h = \wt{H} x (1-h)$ or $(1-h) x \wt{H}$, cf.\ \Cref{ex:elementarydeformations}.\ref{ex:elementarydeformations2}). 
As such, the first two properties below follow from the considerations of \Cref{subsec:deformations}.

\begin{proposition}\label{prop:Bovdimaps}
Let $G$ be a finite group, $H$ be a finite subgroup of $\V(RG)$, and pick $x \in RG$. 
\begin{enumerate}[itemsep=1ex,topsep=1ex,label=\textup{(\roman*)},leftmargin=2em]
	\item \label{item:Bovdimaps1} 
The shifted bicyclic maps $\sbic{H,x}$ and $\sbic{x,H}$ are injective group morphisms. 
	\item \label{item:Bovdimaps2} 
The subgroups $H$, $\im(\sbic{H,x})$, and $\im(\sbic{x, H})$ are conjugate in $\U(\Fi G)$. 
	\item \label{item:Bovdimaps3} 
If in addition $H \leq G$ and $g \in G$, then the maps $\sbic{H,g}$ and $\sbic{g^{-1},H}$ are the identity on $H \cap H^{g}$, and $\im (\sbic{H,g}) \cap \im (\sbic{g^{-1},H}) = H \cap H^{g}$.
\end{enumerate}
\end{proposition}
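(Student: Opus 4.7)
My plan is to recognize both shifted bicyclic maps as first-order deformations in the sense of \Cref{def:firstorderdeformation}, and then invoke \Cref{lem:basicdeformationinjective}. For $\sbic{H,x}$, I would set $m = \wt{H}x \in RG$; as $h\wt{H} = \wt{H}$ for every $h \in H$, we have $hm = m$. The assignment $\delta_h = \wt{H}x(1-h) = m(1-h)$ is then the dual version of the situation in \Cref{ex:elementarydeformations}.\ref{ex:elementarydeformations2}, and a direct check establishes (Derivation), since $\delta_{hk} = m(1-hk) = m(1-h)k + hm(1-k) = \delta_h k + h\delta_k$, and (Order~1), since $\delta_h \delta_k = m(1-h)m(1-k) = 0$ from $(1-h)m = 0$. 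The analogous argument applies to $\sbic{x,H}$ with $m = x\wt{H}$ and $\delta_h = (1-h)m$. Injectivity of both maps then follows from \Cref{lem:basicdeformationinjective}, as $|H|$ is invertible in $\Fi$, and the image lies in $\V(RG)$ because $\epsilon(\delta_h) = \epsilon(\wt{H})\epsilon(x)\epsilon(1-h) = 0$.

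\textbf{Plan for (ii).}
By $\Fi$-linear extension, $\sbic{H,x}$ and $\sbic{x,H}$ yield first-order deformations of the semisimple subalgebra $\Fi H \leq \Fi G$, which is separable (its center is a product of number fields, hence étale over $\Fi$, as $\Char \Fi \nmid |H|$). \Cref{thm:separabledeformationsinner} then guarantees at once that both deformations are realized by conjugation by suitable elements of $\U(\Fi G)$. For concreteness, I would also exhibit such conjugating units explicitly: setting $e = \tfrac{1}{|H|}\wt{H}$ and $A = -\wt{H}x(1-e)$, the relation $(1-e)\wt{H} = 0$ forces $A^2 = 0$, and a short computation shows $(1+A)h(1-A) = \sbic{H,x}(h)$ for every $h \in H$. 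Hence $u = 1 - \wt{H}x(1-e) \in \U(\Fi G)$ conjugates $H$ onto $\im(\sbic{H,x})$, and an analogous unit $v = 1 + (1-e)x\wt{H}$ handles $\sbic{x,H}$.

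\textbf{Plan for (iii).}
Part (a) is a direct computation: if $h \in H \cap H^g$, write $gh = h'g$ for the appropriate $h' \in H$, so that $\wt{H}gh = \wt{H}h'g = \wt{H}g$ and $\sbic{H,g}(h) = h$; the same reasoning applies to $\sbic{g^{-1},H}$. For the intersection in (b), the inclusion $H \cap H^g \subseteq \im(\sbic{H,g}) \cap \im(\sbic{g^{-1},H})$ is an immediate consequence of (a). The reverse inclusion is the crux, and the plan is to multiply the hypothetical equality $\alpha = \sbic{H,g}(h) = \sbic{g^{-1},H}(k)$ by $\wt{H}$ on the left. Using $\wt{H}h = \wt{H}$, $\wt{H}^2 = |H|\wt{H}$ and $\wt{H}(1-k) = 0$, this yields
\begin{equation*}
\wt{H} + |H|\,\wt{H}g(1-h) \;=\; \wt{H}\alpha \;=\; \wt{H},
\end{equation*}
hence $\wt{H}g(1-h) = 0$, i.e., $\wt{H}g = \wt{H}gh$. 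This equality of sums over right cosets forces $Hgh = Hg$, which is equivalent to $ghg^{-1} \in H$, that is, $h \in H^g$. Therefore $\alpha = \sbic{H,g}(h) = h \in H \cap H^g$, completing the proof. The main potential obstacle here would be a direct analysis of supports in the group basis of $G$, which quickly degenerates into case distinctions over left versus right cosets and double cosets of $H$ in $G$; the $\wt{H}$-multiplication trick bypasses all of this in a single line.
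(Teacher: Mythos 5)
Your proof is correct and follows essentially the same route as the paper's: parts (i)--(ii) via the first-order-deformation formalism and \Cref{thm:separabledeformationsinner}, and part (iii) via the identical left-multiplication-by-$\wt{H}$ argument to force $\wt{H}g(1-h)=0$ and hence $ghg^{-1}\in H$. Your only additions are cosmetic --- checking the axioms (Derivation) and (Order~1) by hand, deriving injectivity from \Cref{lem:basicdeformationinjective} rather than from conjugacy, and writing out the conjugating units $1-\wt{H}x(1-e)$ and $1+(1-e)x\wt{H}$ explicitly, which the paper leaves implicit in \Cref{ex:elementarydeformations}.\ref{ex:elementarydeformations2}.
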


\begin{proof}
As just noted, $\sbic{x, H}$ and $\sbic{H,x}$ are first-order deformations of $H$ in $\Fi G$; it follows from \Cref{def:firstorderdeformation} that they are group morphisms. 
\Cref{thm:separabledeformationsinner} states that the identity map and the maps $\sbic{x, H}$ and $\sbic{H,x}$ are all conjugate by $\U(\Fi G)$. 
In particular, $\sbic{x, H}$ and $\sbic{H,x}$ are injective, and parts (i) and (ii) are proved. 
\smallskip

For part (iii), let $h \in H$ and note that $\sbic{H,x}(h) = h$ if and only if $\wt{H}x = \wt{H}xh$; for $x \in G$, this is equivalent to $h \in H^x$. 
Similarly $\sbic{g^{-1},H}(h) = h$ if and only if $h \in H^g$, and so $h \in \im(\sbic{H,g}) \cap \im(\sbic{g^{-1},H})$ when $h \in H \cap H^g$. 
Conversely, suppose that
\[
h + \wt{H} g (1-h) = \sbic{H,g}(h) = \sbic{g^{-1},H}(k) = k + (1-k)g^{-1}\wt{H}
\]
for some $h,k \in H$. 
In other words,
\[
\wt{H} g (1-h) - (1-k)g^{-1}\wt{H} = k-h. 
\]
Multiplying on the left by $\wt{H}$, we deduce that $\wt{H} g (1-h) = 0$, equivalently $\wt{H} = \wt{H} ghg^{-1}$. 
This means that $ghg^{-1} \in H$, and by the above, that $\sbic{H,g}(h) = h \in H \cap H^g$ as desired. 
\end{proof}

The shifted bicyclic maps can be used to construct several kinds of subgroups of $\U(RG)$, without the explicit use of the group basis of $RG$. 
For example, they were used (under different terminology) in \cite[Proposition 3.2.]{JanssensJespersTemmerman17} to produce solvable subgroups and free subsemigroups of $\U(RG)$. 
The next proposition displays another construction that makes use of shifted bicyclic maps. 

Recall that $I(RG)$ denotes the kernel of the augmentation map $\epsilon: RG \to R$; as an $R$-module, $I(RG)$ has basis $\{1-g \mid g \in G\}$. 
When $H$ is a subgroup of $G$, we will use the same notation to denote the kernel $I(R[G/H])$ of the $RG$-module map $\epsilon: R[G/H] \to R: gH \mapsto 1$. 

\begin{proposition}\label{prop:Bovdigroup} 
Let $G$ be a finite group, $H \leq G$, $g \in G$, and set $C = H \cap H^g$. Then 
\[
\langle H, \im(\sbic{g^{-1},H}) \rangle \cong I(\Z[H/C]) \rtimes H,
\]
where $h\in H$ acts on $I(\Z[H/C])$ via left multiplication by $h^{-1}$. 
\end{proposition}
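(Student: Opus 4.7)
The plan is to realize $K := \langle H, \im(\sbic{g^{-1},H})\rangle$ as an internal semidirect product $\Omega \rtimes H$, where $\Omega$ is an abelian normal subgroup canonically identified with $I(\Z[H/C])$. The core object is the element $v := g^{-1}\wt{H} \in \Z G$. Two key identities will be $v h = v$ for all $h \in H$ (from $\wt{H} h = \wt{H}$), and $h_1 v = h_2 v$ if and only if $h_1 C = h_2 C$, because the supports of $h_i g^{-1}\wt{H}$ for $h_i$ ranging over distinct left cosets of $C$ in $H$ are pairwise disjoint in $G$. Consequently, right multiplication by $v$ induces an isomorphism of left $\Z H$-modules $\Z[H/C] \xrightarrow{\sim} \Z H \cdot v$, restricting to $I(\Z[H/C]) \xrightarrow{\sim} M := I(\Z H) \cdot v$. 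Up front I would dispose of the degenerate case $g \in \Norm_G(H)$: there $C = H$, $I(\Z[H/C]) = 0$, and $(1-h) v = 0$ for every $h \in H$, so $\sbic{g^{-1},H}$ is the identity on $H$ and $K = H$. In what follows I therefore assume $g \notin \Norm_G(H)$, which in particular implies $g \notin H$.

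Next, I would produce the abelian subgroup $\Omega$. For $z = \sum_i a_i h_i v$ and $z' = \sum_j b_j h_j v$ in $M$ (so $\sum_i a_i = \sum_j b_j = 0$), the identity $v h_j = v$ and the vanishing of the augmentations yield
\[
z z' = \sum_{i,j} a_i b_j (h_i v)(h_j v) = \sum_{i,j} a_i b_j h_i v^2 = \Big(\sum_i a_i h_i\Big) \Big(\sum_j b_j\Big) v^2 = 0.
\]
Hence $\Omega := \{1 + z : z \in M\}$ is a subgroup of $\U(\Z G)$, and $z \mapsto 1+z$ is a group isomorphism $(M, +) \xrightarrow{\sim} (\Omega, \cdot)$, giving $\Omega \cong I(\Z[H/C])$ via the identification from the first paragraph. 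Setting $u_h := \sbic{g^{-1},H}(h) \, h^{-1} = 1 + (1-h)v$, we have $\sbic{g^{-1},H}(h) = u_h \, h$, so $B := \im(\sbic{g^{-1},H}) \subseteq \Omega H$. Since $\{(1-h)v : h \in H\}$ spans $M$ as a $\Z$-module (because $\{1-h\}$ spans $I(\Z H)$), the group $\Omega$ coincides with the subgroup of $K$ generated by the $u_h$'s; in particular $\Omega \leq K$.

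Finally, I would check normality of $\Omega$, triviality of $\Omega \cap H$, and compute the $H$-action. Normality is immediate from $h_0 z h_0^{-1} = \sum_i a_i h_0 h_i v h_0^{-1} = \sum_i a_i (h_0 h_i) v = h_0 \cdot z \in M$ (using $v h_0^{-1} = v$), so $H$ normalizes $\Omega$; combined with $B \subseteq \Omega H$, this yields $K = \Omega H$ with $\Omega$ normal. For $\Omega \cap H = \{1\}$: if $h \in H$ satisfies $h - 1 = z \in M$, then $z$ has support inside $H g^{-1} H$, which under $g \notin H$ does not contain $1 \in G$, forcing $h = 1$. Hence $K \cong \Omega \rtimes H \cong I(\Z[H/C]) \rtimes H$. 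The conjugation computation above shows that the internal action of $h_0 \in H$ on $\Omega$ by $u \mapsto h_0 u h_0^{-1}$ translates to left multiplication by $h_0$ on $I(\Z[H/C])$; equivalently, with the convention where $\rtimes$ is defined via the right action $u^{h} := h^{-1} u h$, this is left multiplication by $h_0^{-1}$, as stated.

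The main technical obstacle is the support argument behind $\Omega \cap H = \{1\}$: the supports of elements of $M$ are finite unions of cosets $h g^{-1} H$ each of size $|H|$, and one must verify cleanly that when $g \notin H$ the identity element is absent from the double coset $H g^{-1} H$, so that comparing the coefficient of $1$ in $h-1 = z$ is enough to conclude. The remainder of the argument is essentially clean book-keeping of the first-order deformation identities already established for the shifted bicyclic maps.
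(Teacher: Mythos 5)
Your proof is correct and follows essentially the same route as the paper: both identify the normal abelian complement generated by the unipotent parts $1+(1-h)g^{-1}\wt{H}$, identify it additively with $I(\Z H)g^{-1}\wt{H}\cong I(\Z[H/C])$, and read off the $H$-action from conjugation. The only (harmless) deviations are in two sub-steps — you establish $\Omega\cap H=1$ and the kernel $I(\Z C)$ via support/coset arguments (which forces your case split on $g\in\Norm_G(H)$), whereas the paper deduces triviality of the intersection from torsion-freeness of the complement and computes the kernel by the algebraic equivalence $ag^{-1}\wt{H}=0\iff a\in I(\Z H^g)$.
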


\begin{proof}
For the sake of convenience, we abbreviate $\sbic{} = \sbic{g^{-1},H}$ and $\bic{h} = \bic{g^{-1},h}$. 
Set $M = \langle H, \sbic{}(H) \rangle \leq \U(\Z G)$. 
Recall that a shifted bicyclic unit is the product of a (generalized) bicyclic unit and the corresponding element of $H$:
\[
\sbic{}(h) = h + (1-h)g^{-1}\wt{H} = (1 + (1-h)g^{-1}\wt{H})h = \bic{h} h. 
\] 
So, $M := \langle H, \im(\sbic{g^{-1},H}) \rangle = \langle h, \bic{k} \mid h, k \in H \rangle$; set $N := \langle \bic{k} \mid k \in H \rangle$. 

We first show that $N$ is a normal complement for $H$ in $M$. 
A short computation relying on the fact $\wt{H} \cdot I(\Z H) = 0$ shows that the map
\[
\varphi: I(\Z H) \to N: a \mapsto 1 + ag^{-1}\wt{H}
\]
is a morphism of groups. 
It is obviously surjective, as $\bic{k} = \varphi(1-k)$. 
Since $\varphi(a)^n = 1 + nag^{-1}\wt{H}$ is the identity only when $na = 0$, we deduce that $N$ is a finitely generated torsion-free abelian group. 
In particular, $N \cap H$ is trivial. 
For $h \in H$, we have 
\[
\varphi(a)^h = h^{-1}(1 + ag^{-1}\wt{H})h = 1 + h^{-1}ag^{-1}\wt{H} = \varphi(h^{-1}a),
\]
so $N$ is normal in $M$, and $M$ is isomorphic to the semidirect product $N \rtimes H$ with the above action of $H$. 

It remains to compute the kernel of $\varphi$. 
Given $a \in I(\Z H)$, we compute inside $\Z G$ that
\[
ag^{-1}\wt{H} = 0 \iff gag^{-1}\wt{H} = 0 \iff gag^{-1} \in I(\Z H) \iff a \in I(\Z H^g).
\]
Thus, $a \in \ker \varphi$ if and only if $a \in I(\Z H) \cap I(\Z H^g) = I(\Z C)$. 
We conclude that $N \cong I(\Z H) / I(\Z C) \cong I(\Z[H/C])$, with the action of $h \in H$ given by $aC \mapsto h^{-1} aC$. 
\end{proof}

Note that $\langle 1 + (1-h)g^{-1}\wt{H} \mid h \in H \rangle \cong I(\Z[H/C])$ is a free abelian group of rank $\indx{H}{H\cap H^g} - 1$. 
\Cref{prop:Bovdigroup} thus shows that $\langle H, \im(\sbic{g^{-1},H}) \rangle$ is the extension of a free abelian group of rank $\indx{H}{H\cap H^g} - 1$ by the finite group $H$. 
In particular, if $g$ normalizes $H$ then unsurprisingly $\langle H, \sbic{g^{-1},H}(H) \rangle = H$, whereas if $H \cap H^g = 1$ then $\langle H, \sbic{g^{-1},H}(H) \rangle \cong I (\Z H) \rtimes H$ is virtually a free abelian group of rank $|H|-1$. 

\begin{corollary}\label{cor:freeabeliansubgroup}
Let $G$ be a finite group and $H$ a cyclic subgroup of $G$ of prime order $p$. 
If $g \in G$ does not normalize $H$, then $\U(\Z G)$ contains a subgroup isomorphic to $\Z^{p-1} \rtimes C_p$, where the action of $C_p = \langle x \rangle$ is given by
\[
\begin{cases}
x \cdot e_i = e_{i+1} -e_1 &\text{for } 1 \leq i \leq p-2,\\
x \cdot e_{p-1} = -e_1,
\end{cases}
\]
denoting $(e_i)_{i=1}^{p-1}$ the canonical basis of $\Z^{p-1}$. 
In particular, if $p = 2$, then $\U (\Z G)$ contains
\(
\Z \rtimes C_2 \cong C_2 \ast C_2,
\)
the infinite dihedral group. 
\end{corollary}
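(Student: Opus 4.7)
The plan is to apply Proposition \ref{prop:Bovdigroup} to the pair $H \leq G$ with the specified $g$. First I would observe that since $H \cong C_p$ has prime order and $g$ does not normalize $H$, the intersection $C = H \cap H^g$ is a proper subgroup of $H$, hence trivial. Thus Proposition \ref{prop:Bovdigroup} yields a subgroup $\langle H, \im(\sbic{g^{-1},H}) \rangle \cong I(\Z H) \rtimes H$ of $\U(\Z G)$, where the generator $x = h$ of $H$ acts on $I(\Z H) \cong \Z^{p-1}$ by left multiplication by $h^{-1}$.

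It then only remains to identify this action with the one displayed in the statement. For this, I would pick the $\Z$-basis $e_i := 1 - h^{p-i}$ of $I(\Z H)$ for $1 \leq i \leq p-1$, and verify by a direct expansion of $h^{-1}(1 - h^{p-i})$ that $x \cdot e_i = e_{i+1} - e_1$ for $1 \leq i \leq p-2$, and $x \cdot e_{p-1} = -e_1$.

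In the case $p = 2$, the action reduces to $x \cdot e_1 = -e_1$, so the subgroup is $\Z \rtimes_{-1} C_2$, the infinite dihedral group, which admits the well-known isomorphism with $C_2 \ast C_2$ (for instance, via the generating involutions $x$ and $e_1 x$). No serious obstacle is expected: the corollary is essentially a specialization of Proposition \ref{prop:Bovdigroup} together with a convenient choice of basis.
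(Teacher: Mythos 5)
Your proposal is correct and is exactly the argument the paper intends: the corollary is stated as an immediate specialization of \Cref{prop:Bovdigroup}, using that $H \cap H^g$ is a proper, hence trivial, subgroup of the order-$p$ group $H$. Your basis $e_i = 1 - h^{p-i}$ does verify the displayed action, since $h^{-1}(1-h^{p-i}) = h^{p-1} - h^{p-i-1} = (1-h^{p-(i+1)}) - (1-h^{p-1})$ for $i \leq p-2$ and $h^{-1}(1-h) = -(1-h^{p-1})$.
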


\begin{remark}
In general, an abelian subgroup $H \leq G$ gives rise to a free abelian subgroup of $\U(\Z H) \leq \U(\Z G)$ of rank $\frac{1}{2}(|H| + 1 + n_2 - 2\ell)$, where $n_2$ is the number of involutions in $H$ and $\ell$ is the number of cyclic subgroups of $H$, cf.\ \cite[Exercise 8.3.1]{PolcinoSehgal02} or \cite[Theorem $7.1.6.$]{JespersdelRio16}. 
\Cref{cor:freeabeliansubgroup} therefore yields a free abelian subgroup larger than one would expect at first. 
\end{remark}
\medskip

\Cref{cor:freeabeliansubgroup} for $p=2$ hints that it might be possible to construct free products of finite subgroups of $\U(R G)$ using appropriate shifted bicyclic maps. 
This belief is supported, via \Cref{thm:pingpongdeformations}, by several results in the literature (and their reformulations in terms of first order deformations), see namely \cite{MarciniakSehgal97,Passman04,GoncalvesPassman06,GoncalvesdelRio08,JanssensJespersTemmerman17,MarciniakSehgal23}. 
The following conjecture, which echoes \Cref{que:existencefaithfulembeddings} in the case of a group ring, expresses this precisely. 

\begin{conjecture}\label{conj:amalgamBovdimaps}
Let $H \leq G$ be finite groups for which $\Cir_{\Q}^G(H)$ is non-empty.
Let $g \in G$ be such that $H \cap H^g = 1$.
Then
\[
\langle \im(\sbic{H,g}), \im(\sbic{g^{-1},H}) \rangle \cong H \ast H \cong \langle \im(\sbic{H,g}), \im(\sbic{H,g})^{\ast} \rangle,
\]
where ${}^{\ast}: g \mapsto g^{-1}$ denotes the canonical involution on $\Q G$.
\end{conjecture}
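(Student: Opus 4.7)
My plan is to combine the embedding theory of \Cref{sec:faithfulembedding} with the ping-pong machinery of \Cref{sec:pingpongreductivegroups,sec:pingpongsemisimplealgebra}, working one irreducible representation at a time. Since $\Cir_\Q^G(H)$ is non-empty, I would first invoke \Cref{thm:almostfaithfulembedding} to select an idempotent $e \in \PCI(\Q G)$ such that $\rho \coloneq \pi_e$ is center-preserving on $H$ and the simple factor $\Q G e$ is neither a field nor a totally definite quaternion algebra. To run the subsequent argument, it is also necessary to arrange that the restriction of $\rho$ to $H$ contains the trivial $H$-representation properly, so that the subspace $V^H$ of $H$-fixed vectors in the underlying module $V$ is a proper non-trivial subspace; otherwise $\rho(\sbic{H,g}(h)) = \rho(h) = \rho(\sbic{g^{-1},H}(h))$ and nothing can be seen in this factor. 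This amounts to a Frobenius-reciprocity calculation, choosing $e$ inside $\mathrm{Ind}_H^G(\mathbf{1})$, which should go through since $H \cap \ZZ(G) \leq H \cap H^g = 1$. After base change to an appropriate completion $\LFi$ of the center of $\Q G e$, we are reduced to studying the dynamics of $\rho(\sbic{H,g}(H))$ and $\rho(\sbic{g^{-1},H}(H))$ inside $\GL_V(\LFi)$.

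Secondly, I would exploit the symmetry $\sbic{H,g}(h)^{\ast} = \sbic{g^{-1},H}(h^{-1})$, which is immediate from $\wt{H}^{\ast} = \wt{H}$ and $g^{\ast} = g^{-1}$, and which yields $\im(\sbic{g^{-1},H}) = \im(\sbic{H,g})^{\ast}$; in particular, this reduces the conjecture to a ping-pong statement between $\im(\sbic{H,g})$ and its image under the canonical involution. The key structural observation is that, with respect to the decomposition $V = V^H \oplus V_H$ where $V_H = \ker(\rho(\wt{H}))$, the operators $\rho(\sbic{H,g}(h))$ are block upper-triangular with diagonal $(1_{V^H}, h|_{V_H})$, while $\rho(\sbic{g^{-1},H}(h))$ are block lower-triangular with the same diagonal. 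The resulting Schottky-like configuration is reminiscent of the classical ping-pong for $\SL_2$: I would then attempt to construct compact neighbourhoods $P$ and $Q$ of $\P(V^H)$ and $\P(V_H)$ respectively, verify the conditions of \Cref{lem:pingpongamalgam}, and use \Cref{prop:Bovdimaps} together with $H \cap H^g = 1$ to ensure that the two images intersect trivially so the amalgam collapses to a free product.

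The main obstacle is that, in contrast with \Cref{thm:pingpongdeformations}, the deformation parameter is fixed at $t = 1$: the off-diagonal blocks $X_h, Y_h$ coming from $\rho(\wt{H}g(1-h))$ cannot be rescaled to be arbitrarily large. Consequently, ensuring that the sets $P$ and $Q$ actually nest as required by ping-pong requires delicate control over the interplay between $\wt{H}g$, $h$, and the $H$-isotypic decomposition of $V$, information that is not furnished by the general existence theorems of \Cref{sec:pingpongreductivegroups,sec:pingpongsemisimplealgebra}. Overcoming this hurdle will likely demand either new representation-theoretic input (for instance, sharp lower bounds on the $X_h, Y_h$ relative to the spectral data of $h$ on $V_H$), or an argument that effectively amplifies the parameter via iteration inside $\U(RG)$ and a subsequent limiting or congruence-approximation step; neither approach is obvious, which is precisely why the statement remains a conjecture.
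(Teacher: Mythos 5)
The statement you were asked about is \Cref{conj:amalgamBovdimaps}, which the paper explicitly leaves as an \emph{open conjecture}: no proof is given, and the authors only point to partial cases in the literature (nilpotent $G$ with $H$ cyclic, via \cite{JanssensJespersTemmerman17} and \cite{MarciniakSehgal23}). So there is no paper proof to compare your attempt against, and your proposal — which is a strategy outline rather than a proof, as you yourself concede in the final paragraph — does not close the gap either.

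That said, your outline is a faithful reconstruction of the intended line of attack, and its individual observations are correct. The identity $\sbic{H,g}(h)^{\ast} = \sbic{g^{-1},H}(h^{-1})$ does hold (from $\wt{H}^{\ast} = \wt{H}$ and the anti-multiplicativity of $\ast$), so the second isomorphism in the conjecture is really an equality of subgroups and the whole statement reduces to ping-pong between $\im(\sbic{H,g})$ and its image under $\ast$. The block-triangular picture is also right: writing $V = V^H \oplus V_H$ with $V_H = \ker(\rho(\wt{H}))$, the operators $\rho(\sbic{H,g}(h))$ are block upper-triangular and $\rho(\sbic{g^{-1},H}(h))$ block lower-triangular with common diagonal $(1, h|_{V_H})$, since $\wt{H}g(1-h)$ kills $V^H$ and lands in $V^H$, while $(1-h)g^{-1}\wt{H}$ kills $V_H$ and lands in $V_H$. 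You are also right to flag the need for $\wt{H}e \neq 0$ (equivalently, $e$ appearing in $\Ind_H^G(\mathbf{1})$ by Frobenius reciprocity) \emph{in addition to} $e \in \Cir_\Q^G(H)$; \Cref{thm:almostfaithfulembedding} does not guarantee both simultaneously, and this is a genuine extra constraint.

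The obstacle you identify is exactly the crux: \Cref{thm:pingpongdeformations} produces ping-pong only for deformation parameters $|t| \geq N$, whereas the shifted bicyclic maps sit at $t = 1$, which cannot be rescaled. The general existence results of \Cref{sec:pingpongreductivegroups,sec:pingpongsemisimplealgebra} give \emph{some} conjugate of $H$ playing ping-pong with $H$, but say nothing about the specific conjugates $\im(\sbic{H,g})$ and $\im(\sbic{g^{-1},H})$. Your diagnosis is therefore accurate, and your proposal should be read as a correct identification of why the statement is open, not as a proof of it.
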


By \Cref{prop:amalgaminproduct}, the condition that $G$ have an irreducible representation which is center-preserving on $H$, is necessary in order for $H$ to appear in a free product. 
Without the condition $H \cap H^g = 1$, one could still hope for a free amalgamated product of the form
\[
\langle \im(\sbic{H,g}), \im(\sbic{g^{-1},H}) \rangle\cong H \ast_C H .
\]
Note that according to \Cref{prop:Bovdimaps}.\ref{item:Bovdimaps3}, the amalgamated subgroup $C$ must in any case contain $H \cap H^g$. 
\medskip

If $F=\Q$, $G$ is nilpotent of class $2$, $H \cong C_m$, and $g \in G$ is such that $H \cap H^g=1$, then \cite[Theorem 4.1]{JanssensJespersTemmerman17} shows that the hypotheses of \Cref{thm:pingpongdeformations} are satisfied, and so $H \ast H$ can be constructed in the conjectured way. 
If $m$ is prime, the same result was obtained in \cite[Theorem 4.2]{JanssensJespersTemmerman17} also for arbitrary (finite) nilpotent groups. 
In both these cases, an explicit embedding of $H$ in a simple component of $\Q G$ was constructed. 
Recently, Marciniak and Sehgal \cite{MarciniakSehgal23} were able to drop the primality condition on $m$, without the use of such an embedding. 
(Note that by virtue of \Cref{thm:faithfulcenterpreservingrepresentation}, $\Cir_\Q^G(H) \cap \Fir_\Q^G(H)$ is always non-empty when $H$ is cyclic.) 
The literature on constructing copies of a free group using bicyclic units is much richer; we will survey it briefly in the next section.

\subsection{Bicyclic units generically play ping-pong} \label{subsec:bicyclicgenericallyfree}

Recall the definition of the \emph{bicyclic units} introduced in \Cref{rem:bicylicunits}: for $x \in RG$, $h \in G$ and $\wt{h} := \sum_{i=1}^{o(h)} h^{i}$, 
\begin{equation*}
\bic{h,x} = 1 + \wt{h} x (1-h) \quad \text{and} \quad \bic{x,h} = 1 + (1-h)x \wt{h}. 
\end{equation*}
As $(1-h)\wt{h}=0 = \wt{h}(1-h)$, any bicyclic unit satisfies $(\bic{h,x} - 1)^2 = 0 = (\bic{x,h} - 1)^2$, hence is unipotent. 
We denote the group they generate by
\[
\Bic_R(G) := \langle \bic{h,x}, \bic{x,h} \mid x \in RG, h \in G \rangle.
\]
Recalling that the idempotent $\frac{\wt{g}}{o(g)}$ is central in $RG$ if and only if $\langle g \rangle$ is normal in $G$, we see that it follows from \Cref{thm:DedekindiffimagesFrobeniuscomplement} and \eqref{eq:fpfviaidempotent} that
\[
\Bic_R(G) \neq 1 \iff \text{$G$ is not a Dedekind group. }
\]

For many years, an overarching belief in the field of group rings has been that two bicyclic units should generically generate a free group:

\begin{conjecture}[Folklore] \label{conj:bicyclicgenericallyfree}
Let $G$ be a finite (non Dedekind) group and $\beta$ a bicyclic unit. 
Then the set $\{\alpha \in \Bic_R(G) \mid \langle \alpha, \beta \rangle \cong \langle \alpha \rangle \ast \langle \beta \rangle \textnormal{ canonically}\}$ is ``large'' in $\Bic_R (G)$. 
\end{conjecture}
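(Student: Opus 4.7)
The plan is to combine the Zariski-density of $\Bic_R(G)$ in the product of the non-fixed-point-free simple factors of $FG$ (the forthcoming \Cref{lem:bicyclicZariskidense}) with the ping-pong criterion \Cref{thm:pingpongdense}, in the same spirit as \Cref{cor:pingpongbicyclicwithdeformation}. The essential difference is that a generic bicyclic unit $\beta=\bic{h,x}$ is unipotent rather than torsion, so the role played there by the finite cyclic group $\langle h\bic{h,x}\rangle$ is now played by the infinite cyclic $\langle\beta\rangle\cong\Z$; this is what one has to work around.

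Concretely, let $\bG_{\mathrm{nfpf}}=\prod\SL_1(FGe)$ where the product ranges over the primitive central idempotents $e$ for which $\pi_e$ is not fixed-point-free; equivalently, $FGe\cong\Mat_{n_e}(D_e)$ with $n_e\geq 2$ (\Cref{rem:Frobeniuscomplement}), so the adjoint of each factor is of the form $\PGL_{D_e^{n_e}}$. Any bicyclic unit acts trivially on every fixed-point-free factor (since $\wt h$ already vanishes there when $h$ itself is non-trivial on that factor), and \Cref{lem:bicyclicZariskidense} will guarantee that $\Bic_R(G)$ is Zariski-dense in $\bG_{\mathrm{nfpf}}$. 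Fix a non-trivial $\beta\in\Bic_R(G)$ and let $I=\{e: \pi_e(\beta)\neq 1\}$. For each $e\in I$ the representation $\pi_e$ satisfies transversality by \Cref{prop:standardtransversal} applied to the non-central element $\pi_e(\beta)$, and proximality of $\pi_e(\Bic_R(G))$ follows from \Cref{cor:standardproximal} together with the Zariski-density. Running the proof of \Cref{thm:pingpongdense} with $\Gamma=\Bic_R(G)$ and with the infinite-cyclic $\langle\beta\rangle$ in place of the finite $B_i$ should then produce a set of ping-pong partners $\alpha\in\Bic_R(G)$ for $\beta$ that is dense in the join of the Zariski and the profinite topologies.

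The main obstacle lies in extending \Cref{thm:pingpongdense} to an infinite-cyclic opponent. On $\P(D_e^{n_e})$ the biproximal semisimple element $\pi_e(\gamma)$ furnished by \Cref{prop:proximaldense} has clean North--South dynamics, whereas $\pi_e(\beta)$ is unipotent: the iterates $\beta^n$ contract only polynomially, away from $\ker(\pi_e(\beta)-1)$ and toward $\im(\pi_e(\beta)-1)$. This replaces \Cref{lem:proximalcriterion}.\ref{lem:proximalcriterion1} by a unipotent contraction statement (classical in spirit, but requiring care), and forces the compact sets $P_e,Q_e$ to be chosen so as to respect \emph{both} the unipotent dynamics of $\beta$ and the biproximal dynamics of $\gamma$ simultaneously. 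A secondary difficulty is the profinite half of the density statement: Zariski-density of the good set of $\alpha$'s follows smoothly, but profinite density demands control of $\Bic_R(G)$ modulo congruence subgroups. The analogous control in \Cref{cor:pingpongbicyclicwithdeformation} is delivered by the finite-index version of \Cref{thm:pingpongdenseSLn} applied to the torsion shifted bicyclic $h\bic{h,x}$; that shortcut is unavailable for an honest unipotent $\beta$, and replacing it is where I expect the genuine work to lie.
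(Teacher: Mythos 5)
The statement you are proving is stated in the paper as a \emph{conjecture} (``Folklore''), and the paper does not prove it; what the paper actually establishes is only the shifted variant (\Cref{thm:pingpongbicyclicwithconjugate} and \Cref{cor:pingpongbicyclicwithdeformation}), in which the prescribed player is the \emph{torsion} element $h\,\bic{h,x}$ (equivalently, a conjugate of the finite subgroup $\langle h\rangle$) rather than the unipotent $\bic{h,x}$ itself. Your proposal correctly diagnoses why: the entire machinery of \Cref{thm:pingpongdense} and \Cref{thm:pingpongdenseSLn} is built around \emph{finite} subgroups $A_i,B_i$. But your plan to ``run the proof of \Cref{thm:pingpongdense} with $\langle\beta\rangle$ in place of the finite $B_i$'' does not go through, and you do not supply the missing step. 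Concretely: in that proof the set $Q_i=(\Ant_i\cup \Bnt_i)\cdot P_i$ is compact and can be separated from $P_i$ and from $\rA'(\gamma_i^{\pm1})$ precisely because $\Ant_i\cup\Bnt_i$ is finite. With $\beta=1+n$ unipotent ($n^2=0$) one needs $\beta^m Q\subset P$ for \emph{every} $m\neq 0$, including $m=\pm1$; since $\beta^{\pm1}$ is a fixed bounded perturbation of the identity, $\beta^{\pm1}Q$ will in general meet $Q$ again, and the attracting set $\im(n)$ sits inside the fixed locus $\ker(n)$, so there is no North--South separation to exploit. The paper's only unipotent contraction statement, \Cref{lem:unipotentdynamics}, handles a \emph{single} large multiple $|t|\geq N$ (this is exactly the degree of freedom used by the deformations $\fod_t$ and by the shifted bicyclic units), and the remark following it explicitly warns that this does not iterate to the whole cyclic group $\langle 1+tn\rangle$. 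Passing to a high power $\beta^N$ or conjugating $\beta$ changes the statement, which prescribes $\beta$ itself.

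In short, the two difficulties you flag at the end --- uniform ping-pong for all powers of a fixed unipotent, and profinite density without the finite-subgroup shortcut of \Cref{thm:pingpongdenseSLn} --- are not side issues to be ``worked around''; they are the unresolved content of the conjecture, and your proposal defers rather than resolves them. The parts you do carry out (Zariski-density of $\Bic_R(G)$ in $\prod_{e\in\Sir_\Fi(G)}\SL_1(\Fi Ge)$ via \Cref{lem:bicyclicZariskidense}, proximality via \Cref{cor:standardproximal}, transversality via \Cref{prop:standardtransversal}) match the paper's strategy for the shifted variant, but they only reduce the conjecture to the genuinely open dynamical problem; they do not prove it. A minor additional point: \Cref{prop:standardtransversal} is invoked in \Cref{thm:pingpongdense} as a hypothesis on the elements of the finite groups $A_i,B_i$; applying it to $\pi_e(\beta)$ alone does not certify the hypotheses of any theorem in the paper for the infinite group $\langle\beta\rangle$.
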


This conjecture has been intensively investigated for $R = \Z$. 
The reader may consult \cite{GoncalvesdelRio13} for a quite complete survey (until 2013), as well as \cite{GoncalvesPassman06,GoncalvesdelRio11,GoncalvesdelRio08,GoncalvesGuralnickdelRio14,JespersOlteanudelRio12,Raczka21} and the references therein. The reason for such active interest in these specific class of units is that Bicyclic units constitute one of the few generic constructions known for units in $RG$. We record in passing the new construction given in \cite{JanssensJespersSchnabel24} which also exhibit the property to yield free products.
\bigskip

In this section, we reap the fruits of \Cref{sec:pingpongreductivegroups,sec:pingpongsemisimplealgebra,sec:faithfulembedding} and prove a variant of \Cref{conj:bicyclicgenericallyfree}, in which one looks for bicyclic units that play ping-pong with the image of a shifted bicyclic map. 

Before stating this result, we introduce a piece of notation. 
In the same style as in \eqref{eq:notationEmb}, let
\[
\Sir_\Fi^G(H) = \{ e \in \PCI (\Fi G) \mid \textnormal{the representation $\pi_e$ is not fixed-point-free} \}
\]
gather the irreducible representations of $G$ whose restriction to $H$ is not fixed-point-free. 
As before, we abbreviate $\Sir_\Fi(G) = \Sir_\Fi^G(G)$. 

\begin{theorem} \label{thm:pingpongbicyclicwithconjugate}
Let $\Fi$ be a number field, and $R$ be its ring of integers. 
Let $H \leq G$ be finite groups, and pick $x \in \U(\Fi G)$. 
Set $C = H^x \cap \ZZ(G)$. 
The set 
\[
S = \{ \alpha \in \Bic_R(G) \textnormal{ of infinite order} \mid \textnormal{$\langle \alpha, H^x \rangle \cong (\langle \alpha \rangle \times C) \ast_C H^x$ canonically} \}
\]
is non-empty if and only if $\Cir_\Fi^G(H) \cap \Sir_\Fi(G) \neq \emptyset$. 
If so, $S$ is dense in $\Bic_R(G)$ for the join of the profinite and Zariski topologies. 

Moreover, if $\Cir_{\Fi}^G(H) \neq \emptyset$, $\Bic_R(G) \neq \{1 \}$, and $G$, $H$ satisfy \Cref{cond:NQ} from \Cref{thm:almostfaithfulembedding}, then $S \neq \emptyset$. 
\end{theorem}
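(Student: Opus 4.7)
The plan is to derive the theorem by applying \Cref{thm:pingpongdense} to $\bG = \GL_{\Fi G}$, $\Gamma = \Bic_R(G)$, and the single pair $(H^x, H^x)$, via the projective representation $\rho = \rho_{\mathrm{st}} \circ \pi_{e_0}: \GL_{\Fi G} \to \PGL_{\Fi G e_0}$ over an archimedean completion of $\Fi$, for some well-chosen $e_0 \in \Cir_\Fi^G(H) \cap \Sir_\Fi(G)$. The forward direction (necessity of $\Cir_\Fi^G(H) \cap \Sir_\Fi(G) \neq \emptyset$) will follow from a commutator analysis via \Cref{lem:normalsubgroupinamalgam}, and the moreover clause from \Cref{thm:almostfaithfulembedding}.\ref{item:notFrobenius}.

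For the forward direction, given $\alpha \in S$, I will examine the normal subgroups $N_e := \ker(\langle \alpha, H^x \rangle \to \PGL_{\Fi G e})$ for $e \in \PCI(\Fi G)$. The direct product decomposition $\PGL_{\Fi G} = \prod_e \PGL_{\Fi G e}$ yields $[N_e, N_{e'}] \subseteq N_e \cap N_{e'}$, so any iterated commutator over all $e$'s lies in $\bigcap_e N_e = \langle \alpha, H^x \rangle \cap \ZZ(\Fi G)^\times$. A normal-form argument (using $\alpha$ of infinite order, and reducing without loss to $H^x \not\subseteq C$) confines this intersection to $C$, which is finite and contains no non-abelian free group. \Cref{lem:normalsubgroupinamalgam} then produces $e_0$ with $N_{e_0} \subseteq C$; as $\pi_e(\alpha) = 1$ forces $\alpha \in N_e \setminus C$ for $e \notin \Sir_\Fi(G)$, necessarily $e_0 \in \Sir_\Fi(G)$. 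The inclusion $N_{e_0} \cap H^x \subseteq C$ then translates to $H \cap \ZZ(\pi_{e_0}) \leq H \cap \ZZ(G)$, giving $e_0 \in \Cir_\Fi^G(H)$.

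For the converse and density, fix $e_0 \in \Cir_\Fi^G(H) \cap \Sir_\Fi(G)$. The algebra $\Fi G e_0$ is then neither a field nor a totally definite quaternion algebra (\Cref{rem:Frobeniuscomplement}), so $\pi_{e_0}(\Bic_R(G))$ is Zariski-dense in $\SL_1(\Fi G e_0)$ by \Cref{lem:bicyclicZariskidense}; \Cref{cor:standardproximal} then supplies a proximal element for $\rho$, verifying proximality. Transversality follows from \Cref{prop:standardtransversal}, as the $\Cir$ condition guarantees every $h \in H^x \setminus C$ has non-central image $\pi_{e_0}(h) \notin \ZZ(\Fi G e_0)$. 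Invoking \Cref{thm:pingpongdense} then yields a subset of regular semisimple elements $\alpha \in \Bic_R(G)$ of infinite order, dense for the join of the Zariski and profinite topologies, such that $\langle \alpha, C \rangle \ast_C H^x \cong \langle \alpha, H^x \rangle$ canonically; note that the amalgamated subgroup $H^x \cap \bZ(\Fi) = H^x \cap \ZZ(\Fi G)^\times = C$ matches the target statement.

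For the moreover clause, $\Bic_R(G) \neq \{1\}$ is equivalent to $G$ being non-Dedekind by \Cref{thm:DedekindiffimagesFrobeniuscomplement}. Combined with $\Cir_\Fi^G(H) \neq \emptyset$ and \ref{cond:NQ}, \Cref{thm:almostfaithfulembedding}.\ref{item:notFrobenius} then delivers an irreducible $\Fi$-representation $\rho'$ center-preserving on $H$ with $\rho'(G)$ not a Frobenius complement, hence not fixed-point-free; its idempotent therefore lies in $\Cir_\Fi^G(H) \cap \Sir_\Fi(G)$, and the first part gives $S \neq \emptyset$. The main obstacle of the proof will be justifying the inclusion $\bigcap_e N_e \subseteq C$ in the forward direction, which requires controlling the center of the amalgamated product $(\langle \alpha \rangle \times C) \ast_C H^x$ when $\alpha$ has infinite order.
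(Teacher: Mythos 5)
Your proposal is correct and follows essentially the same route as the paper: the existence/density direction is the paper's application of the ping-pong machinery to $\Bic_R(G)$ via its Zariski-density in $\SL_1\bigl(\prod_{e \in \Sir_\Fi(G)} \Fi G e\bigr)$ (the paper simply invokes the packaged \Cref{thm:pingpongdenseSLn} rather than re-verifying proximality and transversality by hand), the necessity direction is the paper's appeal to \Cref{rem:almostembeddingnecessary}, which you merely unpack into the underlying commutator argument of \Cref{lem:normalsubgroupinamalgam}, and the moreover clause is handled identically via \Cref{thm:almostfaithfulembedding}.\ref{item:notFrobenius}. Note that the ``main obstacle'' you flag at the end is in fact harmless: the iterated commutators land in $\langle \alpha, H^x \rangle \cap \ZZ(\Fi G)^\times$, which is abelian, and that alone suffices to invoke \Cref{lem:normalsubgroupinamalgam}.
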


The appearance of \Cref{cond:NQ} in the statement of \Cref{thm:pingpongbicyclicwithconjugate} stems from \Cref{thm:almostfaithfulembedding}. 
If \ref{cond:NQ} were not satisfied, one can take for $C$ the subgroup provided by \Cref{thm:almostfaithfulembedding}, and still obtain that $S \neq \emptyset$ (but in this case, one can a priori draw no conclusion on $\Cir_\Fi^G(H) \cap \Sir_\Fi(G)$). 

\begin{remark} \label{rem:simultaneouspingpongbicyclic}
The proof of the first part of \Cref{thm:pingpongbicyclicwithconjugate} is ultimately an application of \Cref{thm:pingpongdenseSLn}. 
Hence the statement of \Cref{thm:pingpongbicyclicwithconjugate} holds, mutatis mutandis, for a finite family of finite subgroups $H_i \leq G$ simultaneously. 
\end{remark}

\Cref{thm:pingpongbicyclicwithconjugate} certainly applies to first-order deformations of $H$ (by virtue of \Cref{thm:separabledeformationsinner}), and in particular, to the image of $H$ under a shifted bicyclic map. 
As an immediate consequence, we obtain the following positive answer to the shifted variant of \Cref{conj:bicyclicgenericallyfree}. 

\begin{corollary} \label{cor:pingpongbicyclicwithdeformation}
Let $\Fi$ be a number field and $R$ be its rings of integers. 
Let $H \leq G$ be finite groups satisfying \ref{cond:NQ}, and pick $x \in RG$. 
Suppose that $\Cir_\Fi^G(H) \neq \emptyset$ and $H \cap \ZZ(G) = 1$. 
Then
\[
S = \{\alpha \in \Bic_R(G) \mid \langle \alpha, \sbic{H,x}(H) \rangle \cong \langle \alpha \rangle \ast \sbic{H,x}(H) \textnormal{ canonically}\}
\]
is dense in $\Bic_R(G)$ for the join of the profinite and Zariski topologies. 

In particular, if $H = \langle h \rangle$ is cyclic and no non-trivial power of $h$ is central, then with $\beta = \bic{h,x} = 1 + \wt{h}x(1-h)$, the set
\[
S_{\beta} = \{\alpha \in \Bic_R(G) \mid \langle \alpha, h \beta \rangle \cong \langle \alpha \rangle \ast \langle h \beta \rangle \textnormal{ canonically}\}
\]
is dense in $\Bic_R(G)$ for the join of the profinite and Zariski topologies. 
\end{corollary}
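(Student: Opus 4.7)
The plan is to deduce the corollary directly from \Cref{thm:pingpongbicyclicwithconjugate} after reinterpreting $\sbic{H,x}(H)$ as a conjugate of $H$ in $\U(FG)$. Indeed, \Cref{prop:Bovdimaps}\ref{item:Bovdimaps2} (or \Cref{thm:separabledeformationsinner} applied to the first-order deformation $\sbic{H,x}$) yields an element $y \in \U(FG)$ such that $\sbic{H,x}(H) = H^y$. Because $Z(G) \subseteq Z(FG)$, any element centralizing $G$ also centralizes all of $FG$, so conjugation by $y$ fixes $Z(G)$ pointwise. Hence
\[
H^y \cap Z(G) = y^{-1}(H \cap Z(G))y = H \cap Z(G) = 1
\]
by hypothesis; the amalgamated subgroup $C$ appearing in \Cref{thm:pingpongbicyclicwithconjugate} (with $y$ playing the role of $x$) is therefore trivial.

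Next, I would verify the hypotheses of the ``Moreover'' clause of \Cref{thm:pingpongbicyclicwithconjugate}. The conditions $\Cir_F^G(H) \neq \emptyset$ and \Cref{cond:NQ} are part of our assumptions, so only $\Bic_R(G) \neq \{1\}$ requires comment. Assuming $H \neq 1$ (the case $H = 1$ being vacuous), the condition $H \cap Z(G) = 1$ forces $G$ to be non-Dedekind: if $G$ were abelian then $H \leq Z(G)$ gives $H = 1$, and if $G \cong Q_8 \times C_2^n \times A$ with $A$ an abelian group of odd order, then every element $g \in G \setminus (\{1\} \times C_2^n \times A)$ has non-trivial square in $Z(G)$ (since $i^2 = j^2 = k^2 = -1$), forcing all of $H$ to lie inside the central subgroup $\{1\} \times C_2^n \times A$, hence $H = 1$ again. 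As recalled in the paragraph preceding \Cref{conj:bicyclicgenericallyfree}, $G$ non-Dedekind is equivalent to $\Bic_R(G) \neq \{1\}$.

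With these inputs in place, \Cref{thm:pingpongbicyclicwithconjugate} provides a set of bicyclic ping-pong partners for $H^y = \sbic{H,x}(H)$ which is dense in $\Bic_R(G)$ for the join of the Zariski and profinite topologies, and because $C = 1$ the amalgamation collapses to a genuine free product. This set is exactly the set $S$ of the corollary, establishing the first assertion. The particular case is then immediate: for $H = \langle h \rangle$ cyclic, $\Cir_F^G(\langle h \rangle)$ is automatically non-empty since any cyclic group admits a faithful irreducible $F$-representation (cf.\ \Cref{lem:firdifferentfields}), and the hypothesis ``no non-trivial power of $h$ is central'' is exactly $\langle h \rangle \cap Z(G) = 1$. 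Since $h\beta = h \cdot \bic{h,x} = \sbic{\langle h \rangle, x}(h)$ (cf.\ \Cref{rem:bicylicunits}) generates $\sbic{H,x}(H)$, the density statement for $S_\beta$ follows from that of $S$.

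There is no substantive obstacle: this corollary is essentially a repackaging of \Cref{thm:pingpongbicyclicwithconjugate}, and the only point worth checking carefully is the reduction to $C = 1$ (via the centrality of $Z(G)$ in $FG$) and the automatic non-triviality of $\Bic_R(G)$ under the standing hypotheses.
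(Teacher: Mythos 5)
Your proof is correct and follows essentially the same route as the paper, which treats the corollary as an immediate consequence of \Cref{thm:pingpongbicyclicwithconjugate} once $\sbic{H,x}(H)$ is recognized, via \Cref{thm:separabledeformationsinner}, as a conjugate $H^y$ of $H$ in $\U(\Fi G)$; your extra verifications (that $C = H^y\cap\ZZ(G)=1$ and that $\Bic_R(G)\neq 1$ follows from $H\neq 1$ and $H\cap\ZZ(G)=1$) are exactly the details the paper leaves implicit. The only nitpick is that for the cyclic case the non-emptiness of $\Cir_\Fi^G(\langle h\rangle)$ needs \Cref{thm:faithfulcenterpreservingrepresentation} (to pass from a faithful irreducible representation of $H$ to a center-preserving one of $G$), not just \Cref{lem:firdifferentfields}.
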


\Cref{cor:pingpongbicyclicwithdeformation} also gives a precise interpretation of the word ``large'' in \Cref{conj:bicyclicgenericallyfree}, in terms of (arguably the only) two natural topologies on $\Bic_R(G)$. 

Note that while \Cref{conj:bicyclicgenericallyfree} and its shifted version would follow from a positive answer to \Cref{que:delaHarpe}, the existence of a ping-pong partner for a prescribed element $\beta$ is always subject to the condition of $\langle \beta \rangle$ (almost) embedding in an adjoint simple quotient. 
When $\beta$ has infinite order (e.g.\ $\beta$ is a bicyclic unit), this is automatic; but when $\beta$ has finite order (e.g.\ $\beta$ is a shifted bicyclic unit), this condition cannot be ignored. 
Verifying it is an essential part of \Cref{thm:pingpongbicyclicwithconjugate}, that goes back to \Cref{thm:almostfaithfulembedding}. 

\begin{remark} \label{rem:pingpongwithunipotent}
By enlarging $\Bic_R(G)$, the condition that $e \in \Sir_\Fi(G)$ can be weakened to supposing that $\Fi Ge$ is not a division algebra, i.e.\ that $e$ belongs to
\[
\Mir_\Fi(G) := \{ e \in \PCI(\Fi G) \mid \text{$\Fi Ge$ is not a division algebra}\} \supseteq \Sir_\Fi(G). 
\]
More precisely, set
\[
\U(R G)^+ = \langle \alpha \in \U(R G) \mid \alpha \text{ unipotent}\rangle.
\]
Then running the same proof as for \Cref{thm:pingpongbicyclicwithconjugate} produces the following statement: 

Let $\Fi$, $G$, $H$, $C$ and $x$ be as in \Cref{thm:pingpongbicyclicwithconjugate}. 
Suppose that $\Cir_{\Fi}^G(H) \cap \Mir_\Fi(G) \neq \emptyset$. 
Then the set 
\[
S = \{ \alpha \in \U(R G)^+ \mid \langle \alpha, H^x \rangle \cong (\langle \alpha \rangle \times C) \ast_C H^x \textnormal{ canonically} \}
\]
is dense in $\U(R G)^+$ for the join of the profinite and Zariski topologies.
\end{remark}
\bigbreak

The next lemma will essentially enable us to use \Cref{thm:pingpongdenseSLn} with $\Gamma = \Bic_R(G)$ or $\U(R G)^+$, in order to prove \Cref{thm:pingpongbicyclicwithconjugate}. 
For $\Subalg$ a subring of a semisimple $\Fi$-algebra $\Alg$, let
\[
\SL_1(\Subalg) = \{ x \in \Subalg \mid \Nrd_{\Alg / \ZZ(\Alg)}(\pi_e(x)) = 1 \textnormal{ for every $e \in \PCI(\Alg)$} \}
\]
denote the set of elements of $\Alg$ whose projections in every simple factor have reduced norm 1. 

\begin{lemma}\label{lem:bicyclicZariskidense}
Let $\Fi$ be a number field, $R$ be its ring of integers, and $G$ be a finite group. 
\begin{enumerate}[itemsep=1ex,topsep=1ex,label=\textup{(\roman*)},leftmargin=2em]
	\item $\Bic_R(G)$ is a Zariski-dense subgroup of $\SL_1(\prod_{e \in \Sir_\Fi(G)} \Fi G e)$. 
	\item $\U(R G)^+$ is a Zariski-dense subgroup of $\SL_1(\prod_{e \in \Mir_\Fi(G)} \Fi G e)$. 
\end{enumerate}
\end{lemma}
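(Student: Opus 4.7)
The plan is to handle containment and Zariski density separately, in parallel for parts (i) and (ii). For \emph{containment} in (i), each generator $\bic{h,x} = 1 + \wt{h}x(1-h)$ is unipotent because $\wt{h}(1-h) = 0$ forces $(\bic{h,x} - 1)^2 = 0$, so it has reduced norm $1$ in every simple factor; in any component $e \notin \Sir_\Fi(G)$, one of $\pi_e(\wt{h})$ or $\pi_e(1-h)$ vanishes by the characterization \eqref{eq:fpfviaidempotent}, forcing $\pi_e(\bic{h,x}) = 1$. The right bicyclic units behave identically. For (ii), unipotency again gives reduced norm $1$, and a non-trivial unipotent in a division algebra would be a non-zero nilpotent, which is impossible; hence $\U(RG)^+$ maps trivially to every component outside $\Mir_\Fi(G)$.

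\textbf{Density for (i).} Let $\bH = \SL_1\bigl(\prod_{e \in \Sir_\Fi(G)} \Fi Ge\bigr)$ and let $\bK$ denote the Zariski closure of $\Bic_R(G)$ in $\bH$. Since $\bic{h,x}^n = 1 + n\wt{h}x(1-h)$, the cyclic group $\langle \bic{h,x}\rangle$ Zariski-closes to the $1$-parameter subgroup $\{1 + t\wt{h}x(1-h) \mid t \in \Fi\}$; hence $\bK$ is connected and its Lie algebra $\mathfrak{k}$ contains $\wt{h}x(1-h)$ and $(1-h)x\wt{h}$ for all $h \in G$ and $x \in \Fi G$. Moreover $\bK$ is normalized by $\pi(G)$, since $g\bic{h,x}g^{-1} = \bic{ghg^{-1},\, gxg^{-1}}$. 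For each $e \in \Sir_\Fi(G)$, since $\pi_e$ is not fixed-point-free, I would choose $h \in G$ with $\pi_e(h) \neq 1$ and $\pi_e(\wt{h}) \neq 0$, and decompose $V_e = V^{\pi_e(h)} \oplus V'_h$ with $V'_h = \im(1 - \pi_e(h))$. Then $\pi_e(\mathfrak{k})$ contains both $\pi_e(\wt{h}) \cdot \Fi Ge \cdot \pi_e(1-h) = \Hom_{D_e}(V'_h, V^{\pi_e(h)})$ and its opposite $\Hom_{D_e}(V^{\pi_e(h)}, V'_h)$, and a standard commutator computation in $\End_{D_e}(V_e)$ shows that these off-diagonal blocks generate the full traceless algebra $\fsl_1(\Fi Ge)$ under Lie brackets. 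Hence $\pi_e(\bK) = \SL_1(\Fi Ge)$ for every $e \in \Sir_\Fi(G)$. To upgrade surjectivity of each projection to $\bK = \bH$, I would invoke Goursat's lemma for products of simple algebraic groups: a proper $\bK$ would produce a $\pi(G)$-equivariant isomorphism between two distinct factors $\SL_1(\Fi Ge)$ and $\SL_1(\Fi Ge')$, forcing $\pi_e$ and $\pi_{e'}$ to be equivalent projective representations of $G$, in contradiction with $e \neq e'$.

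\textbf{Part (ii) and main obstacle.} Part (ii) proceeds identically once one exhibits non-trivial unipotents in $\pi_e(\U(RG))$ for every $e \in \Mir_\Fi(G)$. For $e \in \Sir_\Fi(G) \subseteq \Mir_\Fi(G)$ a bicyclic unit suffices by the argument above; for $e \in \Mir_\Fi(G) \setminus \Sir_\Fi(G)$, the order $\pi_e(RG)$ sits inside the non-division algebra $\Fi Ge$, so $\pi_e(\U(RG))$ is commensurable with the arithmetic lattice $\U(\pi_e(RG))$ in $\GL_1(\Fi Ge)$, which contains non-trivial unipotents whenever $\Fi Ge$ is not a division algebra. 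The Lie algebra and Goursat steps then apply verbatim. The main obstacle is the Lie-algebraic density step: extracting the correct block decomposition from an element $h$ witnessing that $\pi_e$ is not fixed-point-free, and verifying that the off-diagonal blocks generate $\fsl_{n_e}(D_e)$ under Lie brackets while keeping careful track of the $D_e$-linear (rather than merely $\Fi$-linear) structure.
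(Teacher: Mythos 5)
Your containment argument coincides with the paper's, and your overall density strategy (pass to the Lie algebra of the Zariski closure, extract the off-diagonal blocks $\pi_e(\wt{h})\cdot \Fi Ge\cdot \pi_e(1-h)$, and generate $\fsl_1(\Fi Ge)$ by brackets) is a legitimate alternative to the paper's route. The paper instead quotes a result on generalized bicyclic units to the effect that $\Bic_R(G)$ contains, for each $e \in \Sir_\Fi(G)$, a subgroup of the form $1-e+E_{n_e}(I_e)$ supported on the single component $e$, and then uses the classical density of $E_{n_e}(I_e)$ in $\SL_{n_e}(D_e)$; your part (ii) argument via congruence subgroups of $\U(\pi_e(RG))$ is also essentially sound (though ``commensurable'' is too strong --- what you actually get, and all you need, is that $\pi_e(\U(RG))$ contains a congruence subgroup of $\U(\pi_e(RG))$, and that a unipotent there lifts to a unipotent of $\U(RG)$ by taking the other components equal to $1$).

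The genuine gap is the Goursat step. First, the factors $\SL_1(\Fi Ge)$ are $\Fi$-simple but in general not absolutely simple (when $\ZZ(\Fi Ge)\neq \Fi$ they are Weil restrictions), so over $\bar{\Fi}$ a proper closed subgroup surjecting onto all factors could also arise from identifications between Galois-conjugate constituents of a single $e$; your dichotomy does not address this. Second, even granting the dichotomy, your concluding contradiction is unjustified: a $\pi(G)$-equivariant isomorphism $\SL_1(\Fi Ge)\to\SL_1(\Fi Ge')$ only forces the two homomorphisms $G\to\Aut(\SL_1(\Fi Ge))$ to agree, which pins down $\pi_e$ only up to an \emph{outer} automorphism of the factor --- e.g.\ $\pi_{e'}$ could be the dual of $\pi_e$, and distinct primitive central idempotents can perfectly well carry mutually dual irreducible representations, so ``$e\neq e'$'' is not a contradiction. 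Fortunately the whole Goursat step can be deleted: since $x$ ranges over $RG$ and the $\Fi$-span of $RG$ is $\Fi G=\prod_e \Fi Ge$, your Lie algebra $\mathfrak{k}$ already contains the elements $\wt{h}(xe)(1-h)$, which are supported on the single component $e$ and vanish elsewhere. Hence $\mathfrak{k}$ contains each $\fsl_1(\Fi Ge)\times\{0\}\times\cdots\times\{0\}$ individually, and therefore their direct sum; this is the Lie-algebra analogue of the paper's device of exhibiting generators supported on one component at a time, and it closes the argument without any Goursat analysis.
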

\begin{proof}
Note that the image of a unipotent element under any algebra morphism remains unipotent. 
Moreover, unipotent elements have reduced norm $1$, and division algebras have no non-trivial unipotent elements. 
In consequence, $\U(R G)^+$ is a subgroup of $\SL_1(\prod_{e \in \Mir_\Fi(G)} RGe)$. 

Now if $e \notin \Sir_\Fi(G)$, that is, if $\pi_e(G)$ is fixed-point-free, then $\wt{h}e$ is central in $\Fi Ge$ for every $h \in G$. 
This implies that for any $h \in G$, $x \in RG$, the bicyclic units $\bic{h,x} = 1 + \wt{h} x (1-h)$ and $\bic{h,x}$ have trivial image under $\pi_e$. 
In other words, $\Bic_R(G)$ is indeed a subgroup of $\SL_1(\prod_{e \in \Sir_\Fi(G)} \Fi G e)$. 
\medskip

It remains to establish the Zariski-density of both groups. 
To this end, consider the Artin--Wedderburn decomposition
\[
\Fi G \cong \prod_{e \in \PCI(\Fi G)} \Mat_{n_e}(D_e),
\]
and pick for each $e \in \PCI(\Fi G)$ a maximal order $\mathcal{O}_e$ in $D_e$ such that $RGe \subseteq \Mat_{n_e}(\mathcal{O}_e)$. 

For every $e \in \Mir_\Fi(G)$ (that is, every $e$ for which $n_e \geq 2$), we choose an idempotent $f_e$ in $\Fi G$ such that $f_e e$ is non-central in $\Fi G e$. 
We have seen in \eqref{eq:fpfviaidempotent} that when $e \in \Sir_\Fi(G)$, there exists $g_e \in G$ for which $\wt{g_e}e$ not central. 
Accordingly, for $e \in \Mir_\Fi(G)$ we will choose the non-central idempotent $f_e = \widehat{g_e} := \frac{1}{o(g_e)}\wt{g_e}$. 

Associated with $f_e$ is the group of generalized bicyclic units $\mathrm{GBic}^{\{f_e \}}_R(G)$, generated by certain unipotent elements in $RG$ (see \cite[Section 11.2]{JespersdelRio16} for the definition). 
By definition, when $f_e = \widehat{g_e}$, one has $\mathrm{GBic}^{\{\widehat{g_e}\}}_R(G) \leq \Bic_R(G)$. 
Now thanks to \cite[Theorem 6.3]{JanssensJespersSchnabel24}, the group $\mathrm{GBic}^{\{f_e\}}_R(G)$ contains a subgroup $U_e$ of the form $1-e + E_{n_e}(I_e)$, for $I_e$ some non-zero ideal of $\O_e$. 
Here, and for $I$ an ideal of an arbitrary ring $\O$,
\[
E_{n}(I) := \langle e_{ij}(r) \mid 1 \leq i \neq j \leq n, \, r \in I \rangle
\]
is the subgroup of $\GL_{n}(\O)$ generated by the elementary matrices $e_{ij}(r)$ with $1$ on the diagonal and $r$ in the $(i,j)$-entry. 
\medskip

In consequence, we have reduced the lemma to proving that $E_n(I)$ is Zariski-dense in $\SL_n(D)$, for $I$ a non-zero ideal of an order $\O$ in a finite-dimensional division $\Fi$-algebra $D$ and $n \geq 2$. 
This is well-known, and follows from combining the facts that the Zariski closure of $e_{ij}(I)$ is precisely the subgroup $e_{ij}(D)$ of $\SL_n(D)$, and that $E_n(D)$ is Zariski-dense in $\SL_n(D)$.\footnote{In the present setting, it even holds that $E_n(D) = \SL_n(D)$ by virtue of the solution to the Kneser--Tits problem (due to Wang for division algebras over number fields). }
\end{proof}

\Cref{thm:pingpongbicyclicwithconjugate} now follows readily. 

\begin{proof}[Proof of \Cref{thm:pingpongbicyclicwithconjugate}.]
Suppose first that there exists $e \in \Cir_\Fi^G(H) \cap \Sir_\Fi(G)$; in other words, that $H$ (or equivalently, $H^x$) almost embeds in $\PGL_{\Fi Ge}$ and that $\pi_e$ is not a fixed-point-free representation of $G$. 
In view of \Cref{lem:bicyclicZariskidense}, we can apply \Cref{thm:pingpongdenseSLn} to $\bG = \GL_{\Alg}$ with $\Alg = \prod_{e \in \Sir_\Fi(G)} \Fi G e$, $\Gamma = \Bic_R(G)$, and $A = B$ the projection of $H^x$ to $\Alg$, using that adjoint simple quotient $\PGL_{\Fi Ge}$ of $\bG$. 
Set $C_B = B \cap \bZ(\Fi)$. 
We obtain that the set $S$ of regular semisimple elements $\alpha \in \Bic_R(G)$ of infinite order such that the canonical map
\(
\langle \alpha, C_B \rangle \ast_{C_B} B \to \langle \alpha, B \rangle
\)
is an isomorphism, is dense for the join of the Zariski and profinite topologies. 
But of course, since the projection of $\Bic_R(G)$ to $\prod_{e \notin \Sir_\Fi(G)} \Fi G e$ is trivial, this is equivalent to the canonical map
\(
\langle \alpha, C \rangle \ast_{C} H^x \to \langle \alpha, H^x \rangle
\)
being an isomorphism, for $C = H^x \cap \ZZ(G) = H \cap \ZZ(G)$. 
\medskip

Conversely, if there exists an element $\alpha \in \Bic_R(G)$ of infinite order for which $\langle \alpha, H^x \rangle$ is the free amalgamated product of $\langle \alpha \rangle$ and $H^x$ along $C$, then by \Cref{rem:almostembeddingnecessary} (going back to \Cref{prop:amalgaminproduct}), $H^x$ and $\langle \alpha \rangle$ must almost embed in the same adjoint simple quotient of $\bG$. 
As $\Bic_R(G)$ is a subgroup of $\SL_1(\prod_{e \in \Sir_\Fi(G)} \Fi G e)$ by \Cref{lem:bicyclicZariskidense}, this simple quotient must correspond to an element of $\Sir_\Fi(G)$. 
In other words, there exists $e \in \Cir_\Fi^G(H^x) \cap \Sir_\Fi(G) = \Cir_\Fi^G(H) \cap \Sir_\Fi(G)$. 
\medskip

Finally, if $\Bic_R(G) \neq \{1\}$, then $G$ is not Dedekind.
\Cref{thm:almostfaithfulembedding}.\ref{item:notFrobenius} thus shows that $\Fir_{\Fi}(H) \neq \emptyset$ implies $\Cir_{\Fi}^G(H) \cap \Sir_\Fi(G) \neq \emptyset$ under the appropriate hypothesis on $C$. 
\end{proof}

To close this section, we record the following consequence of \Cref{thm:pingpongbicyclicwithconjugate}, for which we can easily give a direct proof thanks to the stronger assumptions on $G$.
These assumptions hold in particular when $G$ is a non-abelian simple group: then $G$ has trivial center (hence is not a Frobenius complement, see e.g.\ \Cref{lem:centralquotientFrobeniuscomplement}), and every non-trivial representation of $G$ is faithful.

\begin{corollary} \label{cor:pingpongbicyclicwithsimple}
Let $\Fi$ be a number field and $R$ be its ring of integers. 
Let $G$ be a finite group. 
Suppose that $G$ is not a Frobenius complement, and possesses a faithful irreducible representation. 
Then the set of $\alpha \in \Bic_R(G)$ such that the canonical map
\[
\langle \alpha , \ZZ(G) \rangle \ast_{\ZZ(G)} G \to \langle \alpha, G \rangle 
\]
is an isomorphism, is dense in $\Bic_R(G)$ for the join of the Zariski topology and the profinite topology. 
\end{corollary}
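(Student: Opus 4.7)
The plan is to specialize Theorem~\ref{thm:pingpongbicyclicwithconjugate} to $H = G$ and $x = 1$, which yields $C = G \cap \ZZ(G) = \ZZ(G)$, matching exactly the amalgamated subgroup appearing in the corollary. With this choice, that theorem produces the desired density as soon as one can exhibit a primitive central idempotent $e \in \Cir_\Fi^G(G) \cap \Sir_\Fi(G)$, so the whole proof reduces to manufacturing one such $e$.

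Let $\sigma$ be a faithful irreducible $\Fi$-representation of $G$ (afforded by hypothesis), and let $e_\sigma \in \PCI(\Fi G)$ be the associated primitive central idempotent. First, I would check that $e_\sigma \in \Cir_\Fi^G(G)$: since $\sigma$ is faithful, $G$ embeds into the simple factor $\Fi G e_\sigma$, whose center $\ZZ(\Fi G e_\sigma)$ is a field, and by Schur's lemma applied to the irreducible $\sigma$, an element $g \in G$ is scalar in $\Fi G e_\sigma$ if and only if $g \in \ZZ(G)$; hence $G \cap \ZZ(\pi_{e_\sigma}) = \ZZ(G)$. Second, I would observe that $e_\sigma \in \Sir_\Fi(G)$: were $\sigma$ fixed-point-free, then $\sigma(G) \cong G$ would be a fixed-point-free finite group, hence a Frobenius complement by the theorem of Zassenhaus, contradicting the assumption that $G$ is not a Frobenius complement.

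Combining these two observations, $e_\sigma \in \Cir_\Fi^G(G) \cap \Sir_\Fi(G)$, and Theorem~\ref{thm:pingpongbicyclicwithconjugate} then delivers at once the density (for the join of the Zariski and profinite topologies) of the set of elements $\alpha \in \Bic_R(G)$ of infinite order for which the canonical map $\langle \alpha, \ZZ(G) \rangle \ast_{\ZZ(G)} G \to \langle \alpha, G \rangle$ is an isomorphism. There is no genuine obstacle here: the two strong assumptions on $G$ — admitting a faithful irreducible representation and not being a Frobenius complement — are precisely calibrated so that a single representation $\sigma$ supplies both the almost-embedding hypothesis of Theorem~\ref{thm:pingpongbicyclicwithconjugate} and the non-fixed-point-free dynamics provided by \Cref{lem:bicyclicZariskidense}, entirely bypassing any appeal to \Cref{thm:almostfaithfulembedding} or to \Cref{cond:NQ}.
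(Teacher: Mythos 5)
Your proof is correct. Every step checks out: with $H=G$ and $x=1$ one gets $C=\ZZ(G)$; a faithful irreducible representation is automatically center-preserving on $G$ (your Schur-lemma argument for $G\cap\ZZ(\pi_{e_\sigma})=\ZZ(G)$ is sound, and the paper itself notes this implication when defining center-preserving representations); and the Zassenhaus characterization of fixed-point-free groups as Frobenius complements gives $e_\sigma\in\Sir_\Fi(G)$. Since the set produced by \Cref{thm:pingpongbicyclicwithconjugate} is contained in the set of the corollary (for $\alpha$ of infinite order, $\langle\alpha,\ZZ(G)\rangle=\langle\alpha\rangle\times\ZZ(G)$ because $\ZZ(G)$ is central in $\Fi G$ and meets $\langle\alpha\rangle$ trivially), its density transfers.

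The only difference from the paper is one of packaging: the paper explicitly declines to specialize \Cref{thm:pingpongbicyclicwithconjugate} and instead gives a direct proof, feeding the faithful irreducible representation $\rho$ straight into \Cref{lem:bicyclicZariskidense} and \Cref{thm:pingpongdenseSLn} and then lifting with \Cref{lem:preimageamalgam}. Your route is the formal deduction from the already-proved theorem that the paper alludes to ("we record the following consequence of \Cref{thm:pingpongbicyclicwithconjugate}"), and it is just as valid; since the proof of that theorem runs through exactly the same three ingredients, the two arguments are mathematically identical, with yours trading the small bookkeeping of matching the theorem's conclusion $(\langle\alpha\rangle\times C)\ast_C H^x$ against the corollary's $\langle\alpha,\ZZ(G)\rangle\ast_{\ZZ(G)}G$ for the convenience of not re-running the machinery. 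You are also right that neither \Cref{thm:almostfaithfulembedding} nor \Cref{cond:NQ} is needed here, since the hypotheses hand you the required representation directly.
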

\begin{proof}
Let $\rho$ be a faithful irreducible representation of $G$. 
In consequence, $\rho$ is center-preserving on $G$. 
By assumption, $\rho(G)$ is not a Frobenius complement; equivalently, $\rho(G)$ is not fixed-point-free. 
In particular, the representation $\rho$ is not fixed-point-free, hence $\rho(\Fi G)$ is not a division algebra. 

By \Cref{lem:bicyclicZariskidense}, $\rho(\Bic_R(G))$ is Zariski-dense in $\PGL_{\rho(\Fi G)}$. 
An application of \Cref{thm:pingpongdenseSLn} now proves that the set of $\alpha \in \Bic_R(G)$ such that the canonical map
\[
\langle \rho(\alpha), \rho(\ZZ(G)) \rangle \ast_{\rho(\ZZ(G))} \rho(G) \to \langle \rho(\alpha), \rho(G) \rangle 
\]
is an isomorphism, is dense in $\Bic_R(G)$ for the join of the Zariski topology and the profinite topology. 
Finally, \Cref{lem:preimageamalgam} enables us to lift this free amalgamated product back to $\U(RG)$. 
\end{proof}


\newpage
\bibliographystyle{alphaabbrv}
\bibliography{bibliography}

\end{document}